\numberwithin{equation}{section} 
\newcommand{\rd}{{\rm d}}
\newcommand{\rC}{{\rm C}}
\newcommand{\rG}{{\rm G}}
\newcommand{\rH}{{\rm H}}
\newcommand{\rI}{{\rm I}}
\newcommand{\rL}{{\rm L}}
\newcommand{\rS}{{\rm S}}
\newcommand{\rW}{{\rm W}}
\newcommand{\rZ}{{\rm Z}} 
\newcommand{\bi}{{\bf i}}
\newcommand{\bE}{{\bf E}}
\newcommand{\bH}{{\bf H}}
\newcommand{\bT}{{\bf T}}
\newcommand{\cA}{\mathcal{A}}
\newcommand{\cB}{\mathcal{B}}
\newcommand{\cE}{\mathcal{E}}
\newcommand{\cF}{\mathcal{F}}
\newcommand{\cG}{\mathcal{G}}
\newcommand{\cH}{\mathcal{H}}
\newcommand{\cJ}{\mathcal{J}}
\newcommand{\cL}{\mathcal{L}}
\newcommand{\cM}{\mathcal{M}}
\newcommand{\cO}{\mathcal{O}}
\newcommand{\sF}{\mathscr{F}}
\newcommand{\fg}{{\mathfrak g}}
\newcommand{\R}{\mathbb{R}}
\newcommand{\C}{\mathbb{C}}
\newcommand{\SU}{{\rm SU}}
\newcommand{\Aut}{\mathrm{Aut}}
\renewcommand{\det}{\mathop\mathrm{det}\nolimits}
\newcommand{\End}{{\mathrm{End}}}
\renewcommand{\epsilon}{\varepsilon}
\newcommand{\Hol}{\mathrm{Hol}}
\newcommand{\dvol}{\mathop\mathrm{dvol}\nolimits}
\newcommand{\im}{\mathop{\mathrm{im}}}
\renewcommand{\Im}{\mathop{\mathrm{Im}}}
\newcommand{\indT}{\mathop{\mathrm{index}_T}}
\renewcommand{\Re}{\mathop{\mathrm{Re}}}
\newcommand{\tr}{\mathop{\mathrm{tr}}\nolimits}
\newcommand{\zbar}{\overline z}
\newcommand{\ol}[1]{\overline{#1}}
\newcommand{\wt}[1]{\widetilde{#1}}
\newcommand{\cl}[1]{\mathcal{#1}}
\newcommand{\qandq}{\quad\text{and}\quad}
\newcommand{\qforq}{\quad\text{for}\quad}
\newcommand{\qwithq}{\quad\text{with}\quad}
\newcommand{\qwhereq}{\quad\text{where}\quad}
\newcommand{\co}{\mskip0.5mu\colon\thinspace}
\def\<{\mathopen{}\left<}
\def\>{\right>\mathclose{}}
\def\({\mathopen{}\left(}
\def\){\right)\mathclose{}}
\definecolor{gold}{rgb}{0.85,.66,0}
\definecolor{cherry}{rgb}{0.9,.1,.2}
\definecolor{burgundy}{rgb}{0.8,.2,.2}
\definecolor{orangered}{rgb}{0.85,.3,0}
\definecolor{orange}{rgb}{0.85,.4,0}
\definecolor{olive}{rgb}{.45,.4,0}
\definecolor{lime}{rgb}{.6,.9,0}
\definecolor{green}{rgb}{.2,.7,0}
\definecolor{grey}{rgb}{.4,.4,.2}
\definecolor{brown}{rgb}{.4,.3,.1}
\newtheorem{theorem}{Theorem}[section] 
\newtheorem{proposition}{Proposition}[section] 
\newtheorem{corollary}[proposition]{Corollary}  
\newtheorem{lemma}[proposition]{Lemma}  
 \newtheorem{conjecture}[proposition]{Conjecture} 
\theoremstyle{remark} 
\newtheorem{remark}[proposition]{Remark} 
\theoremstyle{definition}
\newtheorem{definition}[proposition]{Definition}
\newtheorem{notation}[proposition]{Notation}
\DeclareMathOperator{\Ker}{ker}
\DeclareMathOperator{\Y}{YM}
\newcommand{\f}[1]{\text{#1}}
\newcommand{\dtzero}{\left.\frac d{dt}\right|_{t=0}}
\newcommand{\n}[1]{\mathbb{#1}}
\newcommand{\ii}{{ \textbf{i}}}  
\begin{document}
\title{Instantons on Sasakian  \texorpdfstring{$7$}{Lg}-manifolds}
\author{Luis E. Portilla \quad\&\quad Henrique N. Sá  Earp}
\affil{University of Campinas (Unicamp)}
\date{\today}
\newgeometry{top=2cm,bottom=2cm,right=2.54cm,left=2.54cm}
\maketitle
\begin{abstract}    
  We study a natural \emph{contact instanton (CI)} equation on gauge fields over  $7$-dimensional Sasakian manifolds, which is closely related both to the transverse Hermitian Yang-Mills (HYM) condition and the   $\rG_2$-instanton equation. We obtain, by Fredholm theory, a finite-dimensional local model for the moduli space of irreducible solutions. Following the approach by Baraglia and Hekmati in $5$ dimensions \cites{Baraglia2016}, we derive cohomological conditions for smoothness, and we express its dimension in terms of the index of a transverse elliptic operator. Finally we show that the moduli space of selfdual (SD) contact instantons is Kähler, in the Sasakian case. 
  
  As an instance of concrete interest, we specialise to transversely holomorphic Sasakian bundles over contact Calabi-Yau $7$-manifolds, as studied by Calvo-Andrade,  Rodríguez and Sá Earp \cites{Calvo-Andrade2016}, and we show that in this context the notions of contact instanton,  integrable $\rG_2$-instanton and HYM connection coincide.     
\end{abstract}
\setcounter{tocdepth}{2}
\begin{adjustwidth}{0.85cm}{0.85cm}
\tableofcontents
\end{adjustwidth}
\newpage

\newgeometry{top=2.54cm,bottom=2.54cm,right=2.54cm,left=2.54cm}
\section{Introduction}
\label{sec:1} 
We describe the  moduli space of solutions to a natural gauge-theoretic equation, on a suitable class of vector bundles over a Sasakian manifold $M$.  We recall that on a $d$-dimensional manifolds M $d\geq 4$, if  $F_A$ is the curvature of a connection $A$ the classical  instanton  equation $ \pm F_A=\ast F_A$  \cites{Donaldson1990},   can be generalised  relatively to an appropriate $(d-4)$-form $\sigma$   by \cites{Donaldson1998,Tian2000}
\begin{equation}
\label{eq:introInstEquation}
    \lambda F_A=\ast(\sigma\wedge F_A), 
\end{equation}
for eigenvalues $\lambda$ of the operator $\ast(\sigma\wedge\cdot):\Omega^2(M)\to\Omega^2(M)$. On a  Sasakian $7$-manifolds,  the contact structure $\eta$ induces a natural $3$-form $\sigma=\eta\wedge d\eta$ in which $\ast(\sigma\wedge\cdot)$ provides  $\lambda\in\{\pm1, -2\}$ (see \S \ref{sec: eigenspaces of Lphi}). Indeed, in that context we argue that the meaningful condition is the selfdual contact instanton (SDCI) equation. We adopt  the approach of Baraglia and Hekmati  \cites{Baraglia2016}, who described  the moduli space of contact instantons on contact metric $5$-manifolds, studying obstructions to smoothness and determining its expected dimension as the index of an elliptic operator transverse to the Reeb foliation. We will see that these results admit precise analogues in the appropriate $7$-dimensional setup, while some new distinct gauge-theoretic phenomena also occur.
\subsection{Instantons in \texorpdfstring{$7$}{Lg}-dimensions: contact, Hermitian Yang-Mills and \texorpdfstring{$\rG_2$}{Lg}}
Let $(M^{2n+1},\eta,\xi)$  denote a contact   manifold, with contact form  $\eta$ and  Reeb vector field $\xi$ \cites{Boyer2008,blair2010contact}. Then the natural $(2n-3)$-form $\sigma=\eta\wedge(d\eta)^{n-2}$ provides an instance of \eqref{eq:introInstEquation}: 
\begin{equation}
\label{eq:ContactInstEquation}
       \pm F_A=\ast(\eta\wedge(d\eta)^{n-2}\wedge F_A).
\end{equation}
Solutions of \eqref{eq:ContactInstEquation} are said to be \emph{self dual contact instantons (SDCI)} for $\lambda=1$ (respectively \emph{anti-selfdual contact instanton (ASDCI)}  for $\lambda=-1$).  

When the contact manifold  is endowed in addition with a Sasakian structure, namely  an integrable transverse complex structure  $\Phi$ and a compatible metric $g$, Biswas    \cites{Biswas2010} proposes a natural  notion of Sasakian holomorphic structure for complex vector bundles $E\to M$ (see Appendix \ref{apendixA}). We recall that a connection $A$ on a
complex vector bundle over a Kähler manifold is said to be \emph{Hermitian Yang-Mills (HYM)} if
\begin{equation}
\label{eq: tHYM}
    \hat{F}_A := (F_A,\omega)= 0
    \qandq
    F^{ 0,2}_A = 0.
\end{equation}
This notion indeed extends  to Sasakian bundles, by taking $\omega := d\eta\in\Omega^{1,1}(M)$ as a `transverse Kähler form', and defining HYM connections to be the solutions of \eqref{eq: tHYM} in that sense. The well-known concept of \emph{Chern  connection} also extends,  namely as a connection mutually compatible with the holomorphic structure (\emph{integrable}) and a given Hermitian bundle metric (\emph{unitary}), see \cite{Biswas2010}*{\S~3}. 

An important class of Sasakian manifolds are those endowed with a \emph{contact Calabi-Yau (cCY)} structure [Definition \ref{def:cCY}], the Riemannian metric  of which have transverse holonomy $\SU(n)$, in the sense of foliations, corresponding to the existence of a global  transverse holomorphic volume form $\epsilon\in\Omega^{n,0}(M)$  \cites{habib2015some}. Furthermore, when $n=3$, such cCY   $7$-manifolds are naturally endowed with a $\rG_2$-structure defined by the $3$-form
\begin{equation}
\label{eq:G2equation}
    \varphi :=\eta\wedge d\eta+\Im(\varepsilon),
\end{equation} 
which is \emph{cocalibrated}, in the sense that its Hodge dual $\psi:=\ast_g\varphi $ is closed under the de Rham differential. When a $3$-form $\varphi $ on a $7$-manifold defines a $\rG_2$-structure, the instanton condition \eqref{eq:introInstEquation} for $\sigma=\varphi $ and $\lambda=1$ is referred to as the  \emph{$\rG_2$-instanton equation}. On holomorphic Sasakian bundles over closed cCY  $7$-manifolds, it has the distinctive feature that integrable solutions are indeed Yang-Mills critical points, even though the $\rG_2$-structure has torsion  \cites{Calvo-Andrade2016}.  

As a first point of interest,   we will describe exactly how the three above concepts of instanton interrelate on suitable Sasakian $7$-manifolds. The contact instanton equation \eqref{eq:ContactInstEquation} in this case is determined by the natural $3$-form 
\begin{equation}
\label{eq:varphi}
    \sigma :=\eta\wedge d\eta.
\end{equation}
The operator $\ast(\sigma\wedge\cdot)$ splits the space of $2$-forms into  $\{\pm 1,-2\}$-eigenspaces  [\S \ref{sec: eigenspaces of Lphi}]. However, the $(-2)$-eigenspace  is $1$-dimensional, spanned by $d\eta$ and rather uninteresting, so we will focus on the $(\pm 1)$-eigenspaces,   which in some sense still signify the instanton equation \eqref{eq:introInstEquation} as an (anti-) selfduality condition.  The SDCI and HYM conditions are related as follows.

\begin{proposition}
\label{prop:instantonRelation}
    Let $\bE\to M$ be a complex Sasakian  bundle over $(M^{7},\eta,\xi,g,\Phi)$. If  a connection $A\in\cA(\bE)$ is SDCI, then $A$ is a transverse  HYM connection. Conversely, if moreover $\bE$ underlies a holomorphic Sasakian bundle $\cE$ [Definition \ref{def:SasakianoHol}] and $A\in\cA(\cE)$ is an integrable HYM connection, then $A$ is a SDCI [\S \ref{sec:prop instantonRelation}].
\end{proposition}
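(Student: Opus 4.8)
The plan is to reduce both implications to the fibrewise eigenspace decomposition of the symmetric bundle endomorphism $L_\sigma:=\ast(\sigma\wedge\cdot)$ of $\Lambda^2 T^*M$ worked out in \S\ref{sec: eigenspaces of Lphi}, where $\sigma=\eta\wedge(d\eta)^{n-2}$ and $\omega:=d\eta$. The structural input I would use is twofold. First, because $\eta\wedge\eta=0$ one has $\sigma\wedge F=\sigma\wedge F_H$ with $F_H$ the purely transverse part of $F$, so $\im L_\sigma$ consists of transverse forms and the ``$\xi$-direction'' summand $\eta\wedge\Omega^1_H$ lies in $\ker L_\sigma$. Second, on transverse $2$-forms the type splitting into $\Omega^{2,0}_H\oplus\Omega^{0,2}_H$, the line $\R\,\omega$, and the primitive part $\Omega^{1,1}_{H,0}$ diagonalises $L_\sigma$, the $(+1)$-eigenspace being exactly $\Omega^{1,1}_{H,0}$ and the $(-1)$-eigenspace $\Omega^{2,0}_H\oplus\Omega^{0,2}_H$. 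The latter is a transverse incarnation of the Weil identity relating the transverse Hodge star, the Lefschetz operator $\omega\wedge\cdot$ and the $(p,q)$-grading of the Kähler structure transverse to the Reeb foliation, and is the only genuinely computational ingredient.

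For the first implication, suppose $A$ is SDCI, that is $F_A=\ast(\sigma\wedge F_A)=L_\sigma F_A$. Then $F_A$ lies fibrewise in the $(+1)$-eigenspace of $L_\sigma$, hence in $\Omega^{1,1}_{H,0}(M,\End\bE)$: in particular it is transverse and of type $(1,1)$, so $F_A^{0,2}=0$, and it is primitive, so $\hat F_A=(F_A,\omega)=0$. These are exactly the two defining equations \eqref{eq: tHYM} of a transverse HYM connection, so $A$ is transverse HYM.

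For the converse, assume $\bE$ underlies a holomorphic Sasakian bundle $\cE$ and that $A$ is an integrable HYM connection on $\cE$. The fact to extract from Definition \ref{def:SasakianoHol} (together with the description of the associated Chern connection recalled in Appendix \ref{apendixA}) is that integrability forces the curvature to be transverse of type $(1,1)$, i.e. $\iota_\xi F_A=0$ and $F_A^{2,0}=F_A^{0,2}=0$, so that $F_A$ lies fibrewise in $\R\,\omega\oplus\Omega^{1,1}_{H,0}$. The HYM equation $\hat F_A=0$ is precisely the vanishing of the $\R\,\omega$-component, leaving $F_A\in\Omega^{1,1}_{H,0}(M,\End\cE)$, which is the $(+1)$-eigenspace of $L_\sigma$; hence $L_\sigma F_A=F_A$ and $A$ is SDCI.

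The step I expect to be the main obstacle is the one that also explains why the hypothesis ``$\bE$ underlies $\cE$'' is needed in the converse but not in the first implication: an a priori integrable connection need only satisfy $F_A^{0,2}=0$, which together with $\hat F_A=0$ would merely place $F_A$ in $\Omega^{1,1}_{H,0}\oplus(\eta\wedge\Omega^1_H)$, and the $\eta$-component sits in $\ker L_\sigma$ rather than in the $(+1)$-eigenspace, so $A$ would in general fail to be SDCI. What rescues the converse is the transversality $\iota_\xi F_A=0$ of the curvature of an integrable connection on a holomorphic Sasakian bundle, which is a genuine input from Biswas's formalism; the crux is to state and apply this property cleanly. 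With it in hand, the rest is the linear algebra of the eigenspace splitting, exactly as above.
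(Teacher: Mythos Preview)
Your approach is correct and matches the paper's: both directions reduce to identifying the $(+1)$-eigenspace of $L_\sigma$ with the primitive transverse $(1,1)$-forms, which is exactly the paper's Lemma~\ref{lem:SDcharacterization} and decomposition~\eqref{eq:decompositionPQ}. The one point to tighten is your phrasing in the converse: integrability along $\tilde H^{0,1}$ by itself only yields $F_A^{0,2}=0$ and $(\iota_\xi F_A)^{0,1}=0$; it is the Hermitian/unitary condition (implicit in your Chern-connection parenthetical, and invoked explicitly by the paper via \cite[Proposition~2.1.56]{Donaldson1990}) that supplies $F_A^{2,0}=0$ and the remaining half of $\iota_\xi F_A=0$, so you should attribute the ``transverse of type $(1,1)$'' conclusion to integrable-plus-unitary rather than to integrability alone.
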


In particular, on a contact Calabi-Yau $7$-manifold, the three notions of instanton are related in the following ways:
\begin{theorem}
\label{thm:instantonRelation}
  Let $\cE$ be a holomorphic Sasakian bundle over a contact Calabi-Yau  manifold $(M^7,\eta,\xi,g,\Phi)$ endowed with its natural $\rG_2$-structure \eqref{eq:G2equation}; then the following hold:
    \begin{enumerate}[(i)]
        \item 
        Every solution of the contact instanton equation  $\pm F_A=\ast(\sigma\wedge F_A)$ is also a solution of $\ast(\varphi \wedge F_A)=\pm F_A$, i.e., every contact instanton is a $\rG_2$-instanton  [Proposition \ref{prop:ContactthenG2}].

        \item  A Chern connection is a $\rG_2$-instanton if, and only if, it is a contact instanton  [Proposition  \ref{prop:HYM-ContactCorrespondence}].

        \item A Chern connection is HYM if, and only if, it is a $\rG_2$-instanton \cite{Calvo-Andrade2016}*{Lemma ~21}.
    \end{enumerate}
  In particular, among Chern connections, the three notions are equivalent.
\end{theorem}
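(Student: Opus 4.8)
\textbf{Proof proposal for Theorem \ref{thm:instantonRelation}.}
The plan is to prove the three items sequentially, leveraging the linear algebra of the eigenspace decomposition established in \S\ref{sec: eigenspaces of Lphi} together with Proposition \ref{prop:instantonRelation}. The unifying principle is that all three conditions are, after unwinding, statements about which $2$-forms may appear in the curvature $F_A$; on a cCY $7$-manifold the transverse $\SU(3)$-structure lets us compare the $\rG_2$-instanton projection with the contact one piece by piece.

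For item (i), I would start from the pointwise comparison of the two operators $\ast(\sigma\wedge\cdot)$ and $\ast(\varphi\wedge\cdot)$ acting on $\Omega^2(M)$, where $\varphi=\sigma+\Im(\varepsilon)$ by \eqref{eq:G2equation} and \eqref{eq:varphi}. The key step is to show that the $(+1)$-eigenspace of $\ast(\sigma\wedge\cdot)$ — equivalently, by Proposition \ref{prop:instantonRelation}, the space of curvature components allowed for a SDCI connection, namely transverse $(1,1)$-forms primitive with respect to $d\eta$ together with the basic part pairing correctly — is contained in the $7$-dimensional space $\Omega^2_{14}$ of the $\rG_2$-structure, i.e. the $(+1)$-eigenspace of $\ast(\varphi\wedge\cdot)$. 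This reduces to checking that $\Im(\varepsilon)\wedge\beta=0$ for $\beta$ a transverse primitive $(1,1)$-form (true since $\Im(\varepsilon)$ is of transverse type $(3,0)+(0,3)$, so the wedge lands in transverse degree $(4,1)+(1,4)$ paired with $\eta$, hence vanishes for dimension reasons once one accounts for the Reeb direction) and an analogous check in the ASDCI case; the eigenvalue bookkeeping for $\lambda=-1$ needs the $(-2)$-eigenline spanned by $d\eta$ to be handled separately, which is where I expect the fiddliest sign-chasing. The converse inclusion is false in general, which is exactly why (ii) is restricted to Chern connections.

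For item (ii), the forward direction is (i). For the reverse, suppose $A$ is a Chern connection that is a $\rG_2$-instanton. Being Chern, $A$ is integrable, so $F_A^{0,2}=0$ in the transverse sense, and $F_A$ is a transverse $(1,1)$-form (plus possibly a component involving $\eta$, which must be controlled using unitarity and the contact structure — this is the standard fact that for a Sasakian holomorphic bundle the Chern curvature is basic of type $(1,1)$, cf. Appendix \ref{apendixA} / \cite{Biswas2010}). Then the $\rG_2$-instanton equation $\ast(\varphi\wedge F_A)=F_A$ on a transverse $(1,1)$-form forces, by the standard $\SU(3)$ representation-theoretic splitting $\Omega^{1,1}=\Omega^{1,1}_0\oplus\C\,\omega$, that $F_A$ is primitive ($\hat F_A=0$); combined with $F_A^{0,2}=0$ this is precisely HYM, hence by Proposition \ref{prop:instantonRelation} also SDCI, hence a contact instanton. (Item (iii) is quoted from \cite[Lemma~21]{Calvo-Andrade2016}.) Finally, chaining (ii) and (iii) gives, among Chern connections, HYM $\Leftrightarrow$ $\rG_2$-instanton $\Leftrightarrow$ contact instanton, which is the last assertion.

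The main obstacle I anticipate is not conceptual but bookkeeping: carefully tracking the Reeb-direction ($\eta$-) components of $F_A$ throughout. A priori $F_A$ for a connection on $\bE$ has a component in $\eta\wedge\Omega^1$; showing this vanishes (or is constrained) under each hypothesis — using unitarity plus integrability for Chern connections, and the eigenvalue equation directly for contact instantons — is the technical heart, and the place where the $7$-dimensional case genuinely differs from the $5$-dimensional analysis of \cite{Baraglia2016}. Once the curvature is known to be transverse basic of type $(1,1)$, all three conditions collapse to primitivity and the equivalences are immediate from the eigenspace computation in \S\ref{sec: eigenspaces of Lphi}.
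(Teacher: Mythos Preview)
Your proposal is essentially correct and matches the paper's approach: both reduce to the single observation that $\Im(\varepsilon)\wedge F_A=0$ whenever $F_A$ is horizontal of type $(1,1)$, so that $\ast(\varphi\wedge F_A)=\ast(\sigma\wedge F_A)$ and the two instanton equations collapse into one. For (i) you argue this by bi-degree ($(3,0)+(0,3)$ wedged with $(1,1)$ lands in $(4,1)+(1,4)=0$ in three complex dimensions); the paper instead proves the slightly stronger Lemma~\ref{lem:L|epsilon} by direct computation in Darboux coordinates, showing $L_\varepsilon=\ast(\Im(\varepsilon)\wedge\cdot)$ vanishes on all of $\Omega^2_6\oplus\Omega^2_8\oplus\Omega^2_1$ and is an isomorphism $\Omega^2_V\to\Omega^2_6$. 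Your argument is cleaner for the SDCI case but note that the pure bi-degree count does \emph{not} kill $\Im(\varepsilon)\wedge\beta$ for $\beta\in\Omega^2_6$ (the $(3,2)+(2,3)$ piece survives), so the ASDCI half of your ``analogous check'' would actually need the coordinate lemma or a separate argument. For (ii), the paper is more direct than you: once $F_A\in\Omega^{1,1}$ (Proposition~\ref{prop:ChernConnection}), the equation $F_A\wedge\varphi=F_A\wedge\sigma$ is an identity, so the two instanton conditions are literally the same; your detour through primitivity and HYM via Proposition~\ref{prop:instantonRelation} works but is unnecessary.

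One correction of emphasis: the ``main obstacle'' you anticipate --- controlling the $\eta$-component of $F_A$ --- is not actually present. A contact instanton has $F_A\in\Omega^2_8$ (or $\Omega^2_6$), which is horizontal by construction; a Chern connection has $F_A\in\Omega^{1,1}$, which is likewise transverse by Proposition~\ref{prop:ChernConnection}. So in both cases the vertical component vanishes for free, and the proofs in the paper are correspondingly short (a few lines each), with no ``fiddly sign-chasing'' required.
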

\subsection{Local model of the moduli space}
Our first main result is a complete description of  the local deformation theory of   $7$-dimensional Sasakian contact instantons. Let $\bE\to M$ be a Sasakian  vector bundle with compact, connected, semi-simple structure group $\rG$,  and denote by  $\fg_E$ its adjoint bundle and by  $\Omega^k(\fg_E)$ the $\fg_E$-valued $k$-forms on $M$. The  operator  
\begin{equation}
\label{eq:Phioperator}
    L_{\sigma}:=\ast(\sigma\wedge\cdot)
    \colon \Omega^2(\fg_E)
    \to\Omega^2(\fg_E) 
\end{equation}
induces the irreducible splitting [cf. \eqref{eq:omega2}] 
$$
\Omega^2( \fg_E)
=(\underbrace{\Omega_6^2
\oplus\Omega_8^2
\oplus\Omega_1^2}_{\Omega^2_H}\oplus\;\Omega_V^2)(\fg_E)
$$
where $\Omega_6^2( \fg_E)$, $\Omega_8^2( \fg_E)$ and $\Omega_1^2( \fg_E)$ are the eigenspaces associated to $-1$, $1$ and $-2$, respectively. In Proposition \ref{prop:flatness}, we show that Chern connections with curvature in $\Omega^2_6(\fg_E)$, i.e. anti-selfdual contact instantons,  are necessarily flat. Hence the meaningful notion of Sasakian instanton in this case is that of SDCI, with curvature in $\Omega^2_8(\fg_E)$, which is the kernel of the projection map
\begin{equation}
\label{eq:map P}
    p\co \Omega^2_H(\fg_E)
    \to 
    \Omega^2_{6\oplus 1}(\fg_E)
    :=
    (\Omega_6^2\oplus\Omega_1^2)(\fg_E).    
\end{equation}

The Fréchet  Lie group $\cG$    of smooth gauge transformations acts smoothly on the space  $\cA(\bE)$ of connections on $\bE$,   and the topological quotient  $\cB :=\cA /\cG $ is a  Hausdorff  space.  We denote by  $\cB^\ast \subset\cB$  the open subspace  of irreducible connections, by $\cM\subset\cB $  the set of gauge  equivalence classes of solutions to the SDCI equation:
\begin{equation}
\label{eq:modulispace}
    \cM:=
    \{[A]\in\cB\;\vert\; p(F_A)=0\},
\end{equation}
and, accordingly, by $\cM^\ast \subset \cM$ its irreducible stratum. 
Linearising the SDCI condition, in terms of the  projection \eqref{eq:map P}, we introduce: 
\begin{equation}
    \label{eq:d7}
    d_7:=p\circ d_A\colon\Omega^1_H(\fg_E)\to \Omega^2_{6\oplus 1}(\fg_E).
\end{equation} 
In Proposition \ref{prop:deformationSpace}, we will see that a local model for the moduli space  $\cM^\ast$  of  SDCI is given by  the cohomology group  $H^1(\rC):=\frac{\Ker(d_7)}{\im(d_A)}$  of the  deformation  complex 
\begin{equation}
\label{eq:Complexd7}
    \begin{tikzcd}
        \rC^\bullet\colon 0\arrow[r] 
        &\Omega^0(\fg_E) \arrow[r,"d_A"]
        & \Omega^1(\fg_E) \arrow[r, "d_7"]
        & \Omega^2_{6\oplus 1}(\fg_E) \arrow[r]
        &  0
    \end{tikzcd}. 
\end{equation}
This complex, however, is not elliptic, and in order to compute the dimension of $H^1(\rC)$ we resort to an auxiliary construction, studied in \S \ref{sec:moduli space CI}. We introduce the quotient spaces of $k$-forms modulo the Lie algebra ideal $I$ generated by   
$\Omega^2_8(\fg_E)$:
$$
\rL^k:=\frac{\Omega^k(\fg_E)}{I},
\quad
k=0,1,2,3,
\qwithq
I:=\left\langle\Omega^2_8(\fg_E)\right\rangle
\subset
\left(\Omega^\bullet(\fg_E),\wedge\right).
$$ 
For a natural choice of differentials $D_k$, the $\rL^k$ spaces fit in a complex [Proposition \ref{prop:Lidentification}]: 
\begin{equation}
\label{eq:complexDeformation} 
    \begin{tikzcd}
        \rL^\bullet \colon 0\arrow[r] 
        &\rL^0 \arrow[r,"D_0"]
        & \rL^1 \arrow[r, "D_1"]
        & \rL^2 \arrow[r, "D_2"]
        &\rL^3 \arrow[r]
        &  0
    \end{tikzcd}. 
\end{equation}
We denote by $H^\bullet:=H^\bullet(\rL)$ the cohomology of \eqref{eq:complexDeformation}, and indeed the first cohomology group $H^1$ is isomorphic to the infinitesimal deformations of $[A]\in\cM$ as a contact instanton. 

Relatively to the  Reeb orbits, a $S^1$-invariant differential form    $\alpha\in\Omega^\bullet(M)$ is called \emph{basic}. The graded ring  $\Omega^\bullet_B(M)$  of basic forms inherits a natural \emph{basic de Rham differential}    
$$
d_B:=d\vert_{\Omega^k_B(M)}
\co \Omega^k_B(M)\to \Omega^{k+1}_B(M),
$$
and the cohomology of $d_B$ is referred to as the \emph{basic de Rham cohomology}. Restricting the  differentials $D_k$ in \eqref{eq:complexDeformation} to basic forms in $\rL^k=\Omega^k(M)/\langle \Omega^2_8\rangle$, we obtain a basic complex [Proposition \ref{prop:Lidentification}]:
\begin{equation}
\label{eq:BasicComplexDeformation2}
    \begin{tikzcd}
        \rL^\bullet_B\colon 0\arrow[r] 
        &\Omega^0_B( \fg_E) \arrow[r,"D_B"]
        & \Omega^1_B( \fg_E) \arrow[r, "D_B"]
        & (\Omega^2_{6\oplus1})_B (\fg_E) \arrow[r] 
        &  0
    \end{tikzcd}. 
\end{equation}
We denote by $H^\bullet_B:=H^\bullet(\rL_B)$ the corresponding \emph{basic cohomology}. 
 If $A$ is a contact instanton, the \emph{transverse index} of $A$ is defined as the index of the basic complex   \eqref{eq:BasicComplexDeformation2}: 
$$
\indT(A)= \text{dim}(H^0_B)-\text{dim}(H^1_B)+\text{dim}(H^2_B).
$$
In particular, when $A$ is irreducible,  $\indT(A)= \text{dim}(H^2_B) -\text{dim}(H^1_B)$. 
In summary, we formulate a local model for the moduli space  $\cM^\ast$ in terms of basic cohomology:
\begin{theorem}
\label{thm:Intro} 
    Let $\bE\to M$ be a Sasakian $G$-bundle over a  closed  connected Sasakian manifold $(M^7, \eta,\xi,g,\Phi)$, with adjoint bundle  $\fg_E$, and denote by $\cM^\ast$  the moduli space \eqref{eq:modulispace} of irreducible selfudal contact instantons, i.e. solutions of \eqref{eq:introInstEquation} for $\lambda=1$:
    $$
    F_A=\ast(\eta\wedge d\eta\wedge F_A).
    $$
    Then, the following hold:
\begin{enumerate}[(i)]
\item 
    The tangent space of  $\cM^\ast$    at    $[A]$, i.e. the space  of infinitesimal deformations of $[A]$ as a contact instanton,  is isomorphic to the finite-dimensional cohomology group $H^1(\rC):=\frac{\Ker(d_7)}{\im(d_A)}$ of the complex \eqref{eq:Complexd7}, and
        $$
        \dim(T_{[A]}\cM^\ast)=\dim(H^1(\rC)).
        $$ 
\item 
    The dimension of  $\cM^\ast$ near $[A]$ can be computed from the  cohomology   of the basic complex \eqref{eq:BasicComplexDeformation2}, which is elliptic transversely to the Reeb foliation, namely there is an isomorphism  $H^1\cong H^1_B$, where  $H^1_B$ is the cohomology of  \eqref{eq:BasicComplexDeformation2}:
        $$
        \dim(T_{[A]}\cM^\ast)
        = \dim(H^2_B) -\indT (A).
        $$
 
        \item 
        The local model of $\cM^*$ is cut out  as the zero set of an obstruction map [Definition \ref{eq:obstructionmap}], which vanishes  precisely when  $H^2_B=0$ [Proposition \ref{prop:vanishingH2}]. Thus,   for an irreducible selfdual contact instanton $A$ such that  $ H^2_B=0$, $\cM^\ast$ is smooth near $A$, with finite dimension  $\dim \cM^\ast=-\indT (A)$ [Corollary \ref{cor:ModuliSuave}]. 
    \end{enumerate}
\end{theorem}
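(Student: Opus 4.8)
The plan is to run the standard Atiyah--Hitchin--Singer/Donaldson deformation argument, with the Reeb foliation taking over the role normally played by ellipticity: a slice theorem for the $\cG$--action yields a local Kuranishi model for $\cM^\ast$ near an irreducible solution, and transverse Hodge theory for the basic complex \eqref{eq:BasicComplexDeformation2} supplies the finiteness and index statements. The single structural input is that an SDCI connection $A$ has curvature $F_A\in\Omega^2_8(\fg_E)$; in particular $\iota_\xi F_A=0$, and since bracketing with a $0$--form preserves the pointwise eigenspace decomposition of $\Lambda^2$ one gets $d_7\circ d_A=p\bigl([F_A,\,\cdot\,]\bigr)=0$, so $\rC^\bullet$ in \eqref{eq:Complexd7}, and likewise $\rL^\bullet$ and $\rL^\bullet_B$ (Proposition \ref{prop:Lidentification}), are genuine complexes.

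\emph{Part (i).} Since $\rG$ is semi--simple and $A$ irreducible, $d_A\co\Omega^0(\fg_E)\to\Omega^1(\fg_E)$ is injective with closed range, so the usual slice theorem identifies a neighbourhood of $[A]$ in $\cB^\ast$ with a neighbourhood of $0$ in the Coulomb slice $\{a:d_A^\ast a=0\}$, on which $\cM^\ast$ is cut out by $a\mapsto p(F_{A+a})=d_7a+\tfrac12\,p[a\wedge a]$, a map with linearisation $d_7$ at the origin. Hence $T_{[A]}\cM^\ast\cong\Ker d_7\cap\Ker d_A^\ast$, the harmonic model of $H^1(\rC)=\Ker d_7/\im d_A$. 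Its finite dimensionality and the equality $\dim T_{[A]}\cM^\ast=\dim H^2(\rC)$, with $H^2(\rC)=\coker d_7$, then follow from part (ii): the rolled--up operator $d_A^\ast\oplus d_7$ is transversely elliptic once restricted to basic forms, its kernel and cokernel compute $H^1(\rC)$ and $H^2(\rC)$ respectively (using $H^0(\rC)=0$), and these two spaces are matched by the transverse Poincar\'e--type pairing on the compact Sasakian manifold.

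\emph{Part (ii).} The crux is the isomorphism $H^1\cong H^1_B$: every infinitesimal deformation has a basic representative. Given $a$ with $d_A^\ast a=0$ and $d_7 a=0$, using $\iota_\xi F_A=0$ together with the fact that the Reeb field is Killing and preserves $\Phi$ (this is where the Sasakian, not merely contact--metric, hypothesis enters) one shows $\iota_\xi a=0$ and $\cL_\xi a=0$, i.e.\ $a$ is basic; the same computation in degrees $0$ and $2$ matches $\rL^\bullet$ with $\rL^\bullet_B$ in the relevant degrees. A symbol computation shows $\rL^\bullet_B$ is transversely elliptic, so by the transverse Hodge theory for Riemannian foliations (El Kacimi--Alaoui) the cohomology $H^\bullet_B$ is finite--dimensional with a well--defined transverse index $\indT(A)=\dim H^0_B-\dim H^1_B+\dim H^2_B$, and $H^0_B=\Ker(d_A|_{\Omega^0_B})=0$ by irreducibility. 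Combining $T_{[A]}\cM^\ast\cong H^1(\rC)\cong H^1_B$ with $\dim H^1_B=\dim H^2_B-\indT(A)$ yields the stated formula.

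\emph{Part (iii).} In transverse Sobolev completions the implicit function theorem eliminates the $\im d_7$--component of $a\mapsto p(F_{A+a})$, so that near $[A]$ the space $\cM^\ast$ is the zero set of an obstruction map $\kappa\co H^1_B\to H^2_B$ with $\kappa(a)=\tfrac12\,\mathrm{pr}_{H^2_B}\,p[a\wedge a]+O(|a|^3)$ [Definition \ref{eq:obstructionmap}], whose target is identified with $H^2_B$ in Proposition \ref{prop:vanishingH2}; when $H^2_B=0$ we get $\kappa\equiv0$, so $\cM^\ast$ is a smooth manifold near $[A]$ with $\dim\cM^\ast=\dim H^1_B=-\indT(A)$ [Corollary \ref{cor:ModuliSuave}]. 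The step I expect to be the main obstacle is exactly this reduction to transverse geometry --- proving that deformations and obstructions are automatically basic, and setting up transverse Sobolev spaces, the transverse Hodge decomposition and the transverse index carefully enough to run the Fredholm and Kuranishi arguments; granting that, the rest is a routine adaptation of the four--dimensional theory.
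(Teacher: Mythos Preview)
Your overall plan—slice theorem, linearisation, Kuranishi—matches the paper's, but two of your key technical moves diverge from what the paper actually does, and one of them contains a genuine gap.

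For Part~(i), you deduce finite-dimensionality of $H^1(\rC)$ from transverse ellipticity of the basic complex. The paper instead constructs an \emph{extended} complex
\[
0\to\Omega^0(\fg_E)\xrightarrow{\ d_A\ }\Omega^1(\fg_E)\xrightarrow{\ L_{\ast\sigma}\circ d_A\ }\Omega^6(\fg_E)\xrightarrow{\ d_A\ }\Omega^7(\fg_E)\to 0
\]
(using the identification $d_7\cong L_{\ast\sigma}\circ d_A$ via $\ast\sigma=\tfrac12(d\eta)^2$) and proves this is \emph{genuinely elliptic} on $M$, not merely transversely. Finiteness of $H^1(\rC)$ follows from ordinary elliptic theory, and the equality $\dim H^1(\rC)=\dim H^2(\rC)$ comes from self-duality of this four-term complex on an odd-dimensional manifold, not from a transverse Poincar\'e pairing as you suggest.

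For Part~(ii), your central claim is that a harmonic deformation $a$ with $d_A^\ast a=0$ and $d_7a=0$ automatically satisfies $\iota_\xi a=0$ and $\cL_\xi a=0$. This is precisely what needs proving, and the paper does \emph{not} argue it directly. Writing $a=b+f\eta$ with $b$ horizontal, the condition $d_7a=0$ only kills the $\Omega^2_{6\oplus1}$-component of $d_Aa$; neither the vertical component nor the function $f=\iota_\xi a$ is immediately eliminated by the slice condition alone, and invoking that $\xi$ is Killing and preserves $\Phi$ does not obviously close the argument. The paper's route is substantially more elaborate: it passes to the quotient complex $\rL^\bullet=\Omega^\bullet(\fg_E)/\langle\Omega^2_8\rangle$ (shown to be elliptic in Proposition~\ref{prop:complexDeformation}), introduces a transverse Laplacian $\Delta_T$, proves $\cH^{k,0}_T\cong H^k_B$, builds an auxiliary finite-dimensional complex $(A^\bullet,\hat\rd)$ quasi-isomorphic to $(\rL^\bullet,D)$, and assembles a Gysin-type long exact sequence relating $H^\bullet(\rL)$ to $H^\bullet_B$. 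The isomorphism $H^1\cong H^1_B$ then follows from injectivity of $L_\omega\colon H^0_B\to H^2_B$ (Proposition~\ref{prop:omegaMap}), which in turn uses that $M$ is closed. Your one-line ``one shows\ldots'' elides all of this; if you have a genuinely direct argument for basicness of harmonic representatives it would be a real simplification, but as written it is a gap.

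Part~(iii) is essentially as in the paper.
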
 

\begin{remark}
\label{rem:theorem intro}
    Parts $(i)$ and   $(ii)$  in Theorem \ref{thm:Intro} establish somewhat independently that the tangent space near an irreducible contact instanton is finite-dimensional, since it occurs as the first cohomology group in both complexes \eqref{eq:complexDeformation} and \eqref{eq:BasicComplexDeformation2}. However, in terms of the obstruction theory, we learn something finer from $(ii)$ and $(iii)$. In the context of $(i)$, the moduli space near an \emph{acyclic} point, i.e. $h^0(\rC)=h^2(\rC)=0$ in \eqref{eq:Complexd7}, would be necessarily $0$-dimensional, whereas the complex \eqref{eq:BasicComplexDeformation2} in terms of basic cohomology is merely transverse-elliptic, hence the moduli space near an acyclic smooth point, with $h^0_B=h^2_B=0$, can in principle have nonzero dimension $-\indT (A)$.
\end{remark}

\begin{remark}
    For most steps in our arguments, it suffices to assume $M$  compact  and connected, with possibly nontrivial boundary. However, in Theorem  \ref{thm:instantonRelation}--$(iii)$, taken from \cite{Calvo-Andrade2016}*{Lemma ~21}, and in Propositions   \ref{lem:TopologicalCharge} and \ref{prop:omegaMap}, one actually needs  $M$ to be closed.
\end{remark}

\begin{remark}
    For a Sasakian $7$-manifold  with  positive transverse scalar curvature, the second basic cohomology group $H_B^2 $ should vanish, and therefore $\cM^*$ is smooth. This is announced here as Conjecture \ref{conj: t-sc=0 => H_B^2=0}, to be expanded in subsequent work. In particular,   $\indT(A)=-\text{dim}(H^1_B)$, by Theorem \ref{thm:Intro}--(ii). 
\end{remark}

Regarding the various notions of instanton related by Theorem \ref{thm:instantonRelation}, 
Theorem \ref{thm:Intro} has the following significance: 

\begin{corollary}
    Let $\cE\to M$ be a holomorphic Sasakian bundle  over a  $7$-dimensional cCY  manifold $(M^7,\eta,\xi,g,\Phi)$,  endowed with its natural $\rG_2$-structure \eqref{eq:G2equation}. Among Chern connections in  $\cA(\bE)$, the three notions of instanton coincide: SDCI, HYM connections and $\rG_2$-instantons. The complex \eqref{eq:Complexd7}  describes their local deformations, and Theorem \ref{thm:Intro} describes their moduli space.
\end{corollary}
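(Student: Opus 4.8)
The Corollary is a synthesis of Theorems~\ref{thm:instantonRelation} and~\ref{thm:Intro}, so the plan is to check that their hypotheses apply verbatim to $\cE$ and then to verify that the three candidate moduli spaces coincide as topological spaces, so that the local model transfers. First I would record the elementary bookkeeping: a $7$-dimensional cCY manifold is, by its definition in \S\ref{sec:1}, a closed connected Sasakian manifold $(M^7,\eta,\xi,g,\Phi)$, and the holomorphic Sasakian bundle $\cE$ has an underlying complex Sasakian $\rG$-bundle $\bE$ with the standing compact, connected, semi-simple structure group. Hence Theorem~\ref{thm:instantonRelation} applies directly, and its concluding assertion --- that among Chern connections in $\cA(\bE)$ the SDCI, HYM and $\rG_2$-instanton conditions are mutually equivalent --- is precisely the first half of the Corollary, requiring no further argument.

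Second, I would promote this pointwise equivalence to an equality of moduli spaces. The key observation is that every SDCI connection $A\in\cA(\bE)$ is automatically a Chern connection: since $\rG$ is compact the elements of $\cA(\bE)$ are unitary, and by Proposition~\ref{prop:instantonRelation} an SDCI connection is transverse HYM, so in particular $F^{0,2}_A=0$; an integrable unitary connection is then the Chern connection of the Sasakian holomorphic structure it induces (in the sense of Biswas, Appendix~\ref{apendixA}). Consequently the moduli space $\cM^\ast$ of \eqref{eq:modulispace} consists entirely of gauge classes of Chern connections, on which --- by the previous paragraph --- the SDCI, HYM and $\rG_2$-instanton equations carve out exactly the same subset of $\cA(\bE)$; the flat connections, which are the ASDCI Chern connections by Proposition~\ref{prop:flatness}, satisfy all three trivially and introduce no ambiguity. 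As all three deformation problems share the same gauge group $\cG$, their irreducible moduli spaces are literally the same topological quotient, namely $\cM^\ast$.

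Third, with $\cM^\ast$ identified simultaneously as the moduli space of irreducible SDCI connections, of irreducible HYM connections, and of irreducible $\rG_2$-instantons, I would invoke Theorem~\ref{thm:Intro}: its hypotheses are met ($M^7$ closed connected Sasakian, $\bE$ a Sasakian $\rG$-bundle), so the deformation complex \eqref{eq:Complexd7} computes $T_{[A]}\cM^\ast\cong H^1(\rC)$, and the full smoothness and dimension statements of Theorem~\ref{thm:Intro} hold at $[A]$. Since $\cM^\ast$ is the same space under each of the three names, the complex \eqref{eq:Complexd7} governs the infinitesimal deformations of $[A]$ in all three guises; in particular the deformation theory of these $\rG_2$-instantons (for a $\rG_2$-structure with torsion) is purely transverse, reducing to the basic complex \eqref{eq:BasicComplexDeformation2}, which is the noteworthy point of the Corollary.

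The only step that is not pure bookkeeping is the second one: confirming that an SDCI, hence integrable and unitary, connection is \emph{the} Chern connection of an honest \emph{Sasakian} holomorphic structure, and not merely a transverse $\delbar$-operator. The hard part is therefore to unwind Biswas's definitions far enough to check that $\delbar_A$ is Reeb-invariant with vanishing transverse $(0,2)$-curvature and that $A$ coincides with the Chern connection attached to it; once this is in place the equivalences of Theorem~\ref{thm:instantonRelation} upgrade to an equality of moduli spaces, and Theorem~\ref{thm:Intro} does the rest.
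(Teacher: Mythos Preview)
The paper provides no explicit proof of this Corollary; it is recorded in the Introduction as an immediate consequence of Theorems~\ref{thm:instantonRelation} and~\ref{thm:Intro}, and nothing further is said. Your proposal follows exactly this route --- invoke Theorem~\ref{thm:instantonRelation} for the pointwise equivalence among Chern connections, then Theorem~\ref{thm:Intro} for the local model --- so at the level the paper operates, your first and third paragraphs already suffice.

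Your second paragraph goes beyond what the paper does, and usefully so: you notice that ``Theorem~\ref{thm:Intro} describes their moduli space'' is ambiguous unless one knows that the full SDCI moduli space $\cM^\ast$ coincides with the moduli of \emph{Chern} SDCI connections, and you resolve this by arguing that every SDCI is automatically a Chern connection for the Sasakian holomorphic structure it induces. This is correct: $F_A\in\Omega^2_8\subset\Omega^{1,1}_\perp\cap\Omega^2_H$ gives both $F^{0,2}_A=0$ and $i_\xi F_A=0$, so $\bar\partial_A:=d_A|_{\wt H^{0,1}}$ is a flat partial connection extending $D_0$ in Biswas's sense, and compactness of $G$ supplies the compatible Hermitian structure. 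The paper simply does not address this point.

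One small correction: the cCY definition (Definition~\ref{def:cCY}) does not itself impose that $M$ be closed or connected; you should state this as an additional hypothesis inherited from Theorem~\ref{thm:Intro}, rather than as part of the definition.
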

\subsection{Geometric structures on the moduli space}
We are furthermore interested in geometric structures on the SDCI moduli space $\cM^\ast$ [cf. \eqref{eq:modulispace}], in the hope that this  might lead in the future to constructing new invariants of Sasakian manifolds. 

By means of comparison, under suitable assumptions, the moduli space of ASD contact instantons on a Sasakian $5$-manifold $(M^5,\eta,\xi,g,\Phi)$ is K\"ahler \cite{Baraglia2016}*{\S~4.3}, and moreover hyper-Kähler in the transverse Calabi-Yau case. We will see that, even though this does not generalise verbatim to the $7$--dimensional contact Calabi-Yau case,  $\cM^\ast$ still inherits a natural Kähler structure: 
\begin{theorem} 
\label{thm:M Kahler}
    In the situation of Theorem \ref{thm:Intro}, the moduli space $\cM^\ast$  of irreducible SDCI carries a natural Riemannian metric [cf. \eqref{eq:inner product in TAalpha}], a complex structure $\cJ$ induced by the transverse complex structure $J:=\Phi\vert_{\rH}$, and a symplectic $2$-form $\Omega$, such that $(\cM^*,\cJ,\Omega)$ is a Kähler manifold.
\end{theorem}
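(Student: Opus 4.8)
The plan is to transport the classical construction of the K\"ahler structure on moduli of Hermitian--Yang--Mills connections, and its $5$-dimensional contact incarnation in \cite[\S 4.3]{Baraglia2016}, to the present setting, replacing every operator by its counterpart transverse to the Reeb foliation. First I would fix a concrete tangent model: by Theorem \ref{thm:Intro} and the transverse Hodge theory of the basic complex \eqref{eq:BasicComplexDeformation2}, the tangent space $T_{[A]}\cM^\ast\cong H^1(\rC)\cong H^1_B$ is realised by the space $\cH^1$ of \emph{basic horizontal} $\fg_E$-valued $1$-forms $a$ (so $\xi\lrcorner a=0$, $\cL_\xi a=0$) that solve the linearised SDCI equation $p(d_A a)=0$, i.e. $d_A a\in\Omega^2_8(\fg_E)$, and sit in the Coulomb slice $d_A^\ast a=0$. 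On this model the $\cG$-invariant $L^2$ pairing $\langle a,b\rangle=\int_M\langle a\wedge\ast b\rangle$, with $\langle\cdot,\cdot\rangle$ the Killing form on $\fg_E$, descends to the natural Riemannian metric \eqref{eq:inner product in TAalpha} on $\cM^\ast$; the remaining tasks are to produce $\cJ$ and $\Omega$ on $\cH^1$ and check compatibility.

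For the almost complex structure I would set $\cJ a:=-a\circ\Phi$ on horizontal $1$-forms, extended by zero along $\xi$; then $\cJ^2=-\id$ is immediate from $\Phi^2|_{\rH}=-\id$, $\cJ$ preserves basicness since $\cL_\xi\Phi=0$ on a Sasakian manifold, and $\cJ$ is a $g$-isometry because $\Phi$ preserves $g|_{\rH}$. The crucial point -- and the step I expect to be the main obstacle -- is that $\cJ$ must preserve $\cH^1$. By Proposition \ref{prop:instantonRelation} an SDCI $A$ is transverse HYM, so $F_A$ is a primitive basic $(1,1)$-form; and by the eigenspace analysis of \S \ref{sec: eigenspaces of Lphi}, $\Omega^2_8(\fg_E)$ is precisely the bundle of primitive basic $(1,1)$-forms with values in $\fg_E$. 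Hence $d_A a\in\Omega^2_8(\fg_E)$ is a pure type-and-primitivity condition, and I would show it is preserved by $\cJ$ using the transverse K\"ahler identities for basic forms together with the fact that the curvature correction term appearing in $d_A(\cJ a)-\cJ(d_A a)$ is built from $F_A$, hence itself of type $(1,1)$ and primitive. The transverse analogue of $[\Lambda,\bar\partial_A]=-i\,\partial_A^\ast$, applied to basic forms, likewise yields $d_A^\ast(\cJ a)=0$ and $\cJ(\im d_A)\perp\cH^1$, so the gauge slice is preserved. A recurring subtlety here is that the full complex \eqref{eq:Complexd7} is not elliptic, so one must argue consistently at the level of the basic complex and invoke the isomorphism $H^1\cong H^1_B$ of Theorem \ref{thm:Intro}.

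Finally I would set $\Omega(a,b):=\langle\cJ a,b\rangle$ and use the pointwise identity on $\rH$ relating the transverse Hodge star to wedging with powers of $\omega=d\eta$ to rewrite it as $\Omega(a,b)=c\int_M\langle a\wedge b\rangle\wedge d\eta\wedge d\eta\wedge\eta$ for a universal nonzero constant $c$. This last expression is a translation-invariant, hence closed, $\cG$-invariant $2$-form on the affine space $\cA$; to see it reduces to $\cM^\ast$ and stays closed there, I would check that for $b\in\cH^1$ and basic $\phi\in\Omega^0(\fg_E)$ an integration by parts gives
\[
\int_M\langle d_A\phi\wedge b\rangle\wedge d\eta\wedge d\eta\wedge\eta
=\pm\int_M\langle\phi\wedge d_A b\rangle\wedge d\eta\wedge d\eta\wedge\eta\ \pm\ \int_M\langle\phi\wedge b\rangle\wedge(d\eta)^3 ,
\]
where the first term vanishes because $d_A b\in\Omega^2_8(\fg_E)$ is primitive, so $d_A b\wedge(d\eta)^2=0$, and the second vanishes because $b$ is horizontal while $(d\eta)^3$ already spans $\Lambda^6\rH^\ast$, so $b\wedge(d\eta)^3\in\Lambda^7\rH^\ast=0$. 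With $\Omega$ closed and non-degenerate and $(g,\cJ,\Omega)$ formally compatible, it remains to verify that $\cJ$ is integrable; for this I would identify, via $a\mapsto(d_A+a)^{0,1}$ and the transverse Kobayashi--Hitchin correspondence, a neighbourhood of $[A]$ in $\cM^\ast$ with an open set in the moduli space of stable transversely holomorphic Sasakian bundle structures in the sense of Biswas \cite{Biswas2010}, which is complex-analytic, so that $(\cM^\ast,\cJ,\Omega)$ is K\"ahler. In the contact Calabi--Yau case one might hope for a hyper-K\"ahler refinement as in dimension $5$, but the odd Reeb direction obstructs the additional parallel complex structures, so K\"ahler is the expected conclusion.
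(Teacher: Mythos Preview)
Your definitions of the metric, of $\cJ$ via $J=\Phi|_{\rH}$ on basic horizontal $1$-forms, and of $\Omega(a,b)=c\int_M\langle a\wedge b\rangle\wedge\eta\wedge\omega^2$ coincide exactly with the paper's, and your verification that $\cJ$ preserves the harmonic tangent model via the transverse K\"ahler identities is essentially the paper's Lemma~\ref{lem:iso harmonic-basic}. The divergence is in how you establish closedness of $\Omega$ and integrability of $\cJ$.

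For closedness you invoke the Atiyah--Bott picture: $\tilde\Omega$ is constant on $\cA$, hence closed, and you check it kills gauge directions $d_A\phi$ against $b\in\cH^1$. This is not enough. To descend $\tilde\Omega$ from $\cA^+$ to $\cM^\ast=\cA^{+,\ast}/\cG$ you need $\tilde\Omega(d_A\phi,\beta)=0$ for \emph{every} $\beta\in T_A\cA^+$, not just harmonic ones, and this fails: your integration by parts produces the boundary term $\int_M\langle\phi\wedge\beta\rangle\wedge\omega^3$, which vanishes for horizontal $\beta$ but not when $\beta$ has a vertical component $f\eta$ (and nothing in the linearised SDCI condition forces $f=0$). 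One can repair this by working consistently inside the \emph{basic} affine slice --- where tangent vectors are horizontal by definition --- but then you must argue that the smooth structure on $\cM^\ast$ defined via the full gauge quotient agrees with the one coming from basic Kuranishi charts; that is precisely the content of Corollary~\ref{coro:H1=Hb} and the surrounding analysis, which you are bypassing. The paper instead proves $d\Omega=0$ directly: in the Kuranishi chart $\sF^{-1}\colon U_\epsilon\subset\cH^1_T\to S_{A,\epsilon}^+$ it computes the harmonic representative $\gamma(\alpha,\lambda)=\lambda-\delta_T[\lambda,\alpha]+D_Tf+o(|\alpha|)$ (Lemmas~\ref{lem:previo Moduli kahler 1}--\ref{lem:previo Moduli kahler 2}) and shows $\Omega(\gamma(\alpha,\lambda_1),\gamma(\alpha,\lambda_2))=\Omega(\lambda_1,\lambda_2)+o(|\alpha|)$, so $\Omega$ has vanishing first derivative at every point.

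For integrability you appeal to a transverse Kobayashi--Hitchin correspondence to obtain holomorphic charts. That correspondence is not established in the paper (nor, to my knowledge, available off-the-shelf for Sasakian $7$-manifolds in the required form), so this step is a genuine gap. The paper's route is again via Kuranishi normal coordinates: the same first-order computation shows the metric satisfies $(\bar\lambda_1,\bar\lambda_2)_{A+\alpha}=(\lambda_1,\lambda_2)+o(|\alpha|)$ (Proposition~\ref{prop:J integrable}), and since $\Omega$ is likewise constant to first order, so is $\cJ=g^{-1}\Omega$; hence its Nijenhuis tensor vanishes at the centre of every chart. This is more elementary and self-contained than invoking a correspondence, at the cost of the somewhat delicate harmonic-representative formulas.
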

\bigskip
\noindent\textbf{Outline:}  
    In \S  \ref{sec: eigenspaces of Lphi}, we describe the local splitting \eqref{eq:omega2}  of  $\Omega^2(M)$  under the contact structure and the operator $L_\sigma:=\ast(\sigma\wedge\cdot)$ from \eqref{eq:Phioperator}. Another natural decomposition of  $\Omega^2(\fg_E)$ comes from the transverse complex structure induced by $\Phi\in\End(TM)$,  and both are related by \eqref{eq:decompositionPQ}. Furthermore, the endomorphism $\Phi$  provides a notion of transverse holomorphicity for complex vector bundles over Sasakian manifolds   [Appendix \ref{apendixA}],   hence also notions of  unitary and integrable connections. We show, in Proposition \ref{prop:flatness}, that imposing these conditions on  a connection forces its curvature component   in     $\Omega_6^2(\fg_E)$ to vanish, hence the only nontrivial theory in this context is the SDCI case. 

    Parts \emph{(i)} and \emph{(ii)} of Theorem \ref{thm:instantonRelation} are proven respectively  in Propositions \ref{prop:ContactthenG2} and \ref{prop:HYM-ContactCorrespondence}. The proof of Theorem \ref{thm:Intro} is organised as follows: \emph{(i)} is the content of Proposition \ref{prop:deformationSpace}, which uses an auxiliary elliptic complex [Proposition \ref{prop:ElliptExtendComplex}] to establish that this local model has finite dimension; \emph{(ii)} is an immediate consequence [Corollary \ref{coro:H1=Hb}] of Proposition \ref{prop:gysinSequence}; and  \emph{(iii)} requires a thorough  study of the moduli space of the obstruction theory of SDCI, under the $5$-dimensional paradigm from \cites{Baraglia2016}, culminating in Proposition \ref{prop:vanishingH2}. Finally, in \S \ref{sec:Geometry} we study the geometry of the moduli space, 
    showing that $\cM^\ast$ inherits a complex structure [ Proposition \ref{prop:J integrable}] and a Kähler $2$-form [Proposition \ref{prop:Omega closed}], thus proving Theorem \ref{thm:M Kahler}.

\bigskip
\noindent\textbf{Funding:} This work was supported by the Higher Education Improvement Coordination - Brasil (CAPES) - Finance Code 001; the Brazilian National Council for Scientific and Technological Development (CNPq) [141215/2019-4 to L.P., 307217/2017-5 to H.S.E.]; and São Paulo Research Foundation (Fapesp) [2017/20007-0, 2018/21391-1 to H.S.E.].
\section{Preliminaries on Sasakian geometry}
We follow the standard references for Sasakian geometry   \cites{Boyer2008,blair2010contact}. A Sasakian structure on a smooth manifold $M^{2n+1}$ is a quadruple $( \eta,\xi, g,\Phi)$ such that $(M, g)$ is a Riemannian manifold, $(M, \eta)$ is a contact manifold with Reeb field $\xi$   and  $\Phi\in \End(TM)$ is a transverse complex structure, satisfying the following compatibility relations:  
\begin{multicols}{2}
\raggedcolumns
\begin{enumerate}[(i)]
    \item $g(\xi,\xi)=1,$
    \item $\Phi\circ\Phi=-\rI_{TM}+\eta\otimes \xi,$
    \item $g(\Phi X,\Phi Y )= g(X,Y)-\eta(X)\eta(Y), $
    \item $\nabla^g_X\xi=-\Phi X,$
    \item $(\nabla^g_X\Phi)(Y)= g(X,Y)\xi-\eta(Y) X,$
\end{enumerate} 
\end{multicols}
\noindent where $ X, Y$ are vector fields on $M$ and $\nabla^g$ is the Levi-Civita connection of
$g$. In that case we say $(M,\eta,\xi, g,\Phi)$ is a \emph{Sasakian manifold}, equivalently, $M$ is Sasakian manifold if and only if the metric cone $(\R_{+}\times M,dr^{2}+r^{2}\cdot g)$ is Kähler. 

\subsection{Horizontal forms and the contact instanton equation}

\subsubsection{Vertical and horizontal forms}
Hereinafter we set  $\sigma:=\eta\wedge d\eta$. For a $p$-form $\alpha$ and a vector field $v$ on $M$, the metric $g$ is compatible,  in the sense that $\ast(v\lrcorner \alpha)=v^{\flat}\wedge\ast\alpha$, where $v^{\flat}:=g(v,\cdot)$ and $\ast$ is the Hodge operator of $g$.  Notice that $\xi^{\flat}=\eta$, thus applying the above formula to $v=\xi$ and $\alpha=\ast\beta$ for a $p$-form $\beta$, we obtain
    \[
    \ast(\xi\lrcorner \ast\beta)=\xi^{\flat}\wedge(\ast^{2}\beta)=(-1)^{p(n-p)}\eta\wedge \beta,
    \]
or, equivalently, 
\begin{equation}
\label{eq:compaEtaystar}
    i_{\xi}(\ast \beta)=(-1)^{p}\ast(\eta\wedge\beta),
    \quad\forall \beta\in\Omega^p(M).
\end{equation}
Furthermore, the contact structure induces a natural operator 
\begin{equation}
    \label{eq:projection T}
    T:= \eta\wedge i_\xi(\cdot)\colon\Omega^{p}(M)\to\Omega^{p}(M), 
\end{equation}
which is a projection:
\begin{align*}
    T^{2}(\alpha)
    &=(\eta\wedge i_\xi) (\eta\wedge i_\xi\alpha)
    =(\eta\wedge i_\xi) (i_\xi\eta\wedge\alpha-i_\xi(\eta\wedge\alpha))
    =(\eta\wedge i_\xi)(\alpha)\\
    &=T(\alpha).
\end{align*}
As such, it splits any $\alpha\in\Omega^\bullet(M)$ into \emph{horizontal} and \emph{vertical} components:
\begin{equation}
\label{eq:HorizontalVerticalDecomposition}
    \alpha
    =(1-\eta\wedge i_\xi)\alpha+\eta\wedge i_{\xi}\alpha=\alpha_{H}+\alpha_{V} 
    = \underbrace{i_{\xi}(\eta\wedge\alpha)}_{\alpha_{H}}+\underbrace{ \eta\wedge i_{\xi}\alpha}_{\alpha_{V}}, 
\end{equation} 
this provides a splitting $\Omega^\bullet(M)=\Omega^\bullet_H(M)\oplus\Omega^\bullet_V(M)$, where $\Omega^\bullet_H(M)$ and  $\Omega^\bullet_V(M)$  are the horizontal  and  vertical parts, respectively. 

\subsubsection{The contact instanton equation in $7$ dimensions}

From now on, unless otherwise stated, we will fix $\dim M=7$. Equation \eqref{eq:HorizontalVerticalDecomposition} suggests a natural `instanton equation' as follows: consider  $\alpha\in\Omega^2(M)$, applying  the contraction  \eqref{eq:compaEtaystar} to  $d\eta\wedge\alpha\in\Omega^4(M)$, we obtain 
$$ 
i_\xi(\ast(d\eta\wedge\alpha))
=\ast \left( \sigma\wedge \alpha \right),
\qwithq
\sigma=\eta\wedge d\eta.
$$ 
This motivates the introduction of operator $L_{\sigma} \colon\Omega^2_H(M)\to\Omega^2_H(M)$ in  \eqref{eq:Phioperator}. 
In next section  we show  that $\pm 1$ are eigenvalues of $L_\sigma$ and that $L_{\sigma\vert_{\Omega^2_V(M)}}=0$, and this extends  in  the natural way to $\Omega^2(\fg_E)$. If $\alpha=F_A$ is   the curvature of a connection $A$ on a suitable vector bundle $\bE\to M$, a natural instance of the \emph{contact instanton equation} is
$
\ast(\sigma\wedge F_A)=i_\xi(\ast( d\eta\wedge F_A))=\pm F_A ,
$
or,  equivalently,  
\begin{equation}
\label{eq:contacIns1}
    \ast F_A=\pm \sigma\wedge F_A.
\end{equation} 
Notice that  $\eta\wedge(d\eta)^3\neq 0$ is a volume form on $M$, and $\ast(\sigma\wedge d\eta)=cd\eta$ for a constant $c\neq 1$. This contrasts with the   $5$-dimensional case in \cites{Baraglia2016} and classical $4$-dimensional gauge theory, in both of which $d\eta$ is selfdual.

\subsubsection{Transverse  Hodge star  operator and Sasakian Kähler identities}
\label{sec: Sasakian Kähler identities}
  Let us briefly describe the transverse complex geometry  on a Sasakian manifold $(M,\eta,\xi,g,\Phi)$, referring to \cites{Boyer2008} for a thorough treatment. 
  Relatively to the Reeb foliation, the usual Hodge star induces a \emph{transverse  Hodge star} operator $ \ast_T\colon \Omega_H^k(M) \to \Omega_H^{m-(k+1)}(M)  $ by the formula \cite{tondeur2012foliations}*{\S~12}
\begin{equation}
\label{eq:transverse star}
    \ast_T(\beta)  =(-1)^{m-1-k}\ast(\beta\wedge\eta).
\end{equation}
Both operators are compatible, in the sense that 
\begin{equation}
\label{eq:star transverse star}
    \ast\alpha= \ast_T\alpha\wedge \eta,
    \quad\forall
    \alpha\in\Omega^\bullet_H(M).
\end{equation} 

The following result appears in the literature in various guises [ibid.], but since it will play a central role in \S\ref{sec:khaler M}, we include a full proof here.  
\begin{lemma}
\label{lem:eq star_T}
    Let $(M,\eta,\xi,g,\Phi)$ be a Sasakian manifold and denote by $J:=\Phi\vert_{\rH}$ the restriction of  $\Phi\in \End(TM)$ to the horizontal distribution, cf. \eqref{eq:TangentDecomposition}. If $\beta\in\Omega^1_H(\fg)$ is a transverse $1$--form,  then 
    $$
    \ast_T(\beta)= \frac{1}{2}J\beta\wedge \omega^2.
    $$
\end{lemma}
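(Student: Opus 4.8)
The statement is a local (pointwise) identity between two elements of $\Omega^5_H(M)$ applied to a transverse $1$--form $\beta$, so the natural strategy is to verify it on a convenient adapted local frame and then invoke linearity. The plan is to fix a point $x\in M$ and choose a local orthonormal coframe $(\eta, e^1, \dots, e^6)$ for which $\Phi$ acts in standard complex form, say $\Phi e^{2i-1} = e^{2i}$ and $\Phi e^{2i} = -e^{2i-1}$ for $i=1,2,3$, so that the transverse K\"ahler form is $\omega = d\eta = e^1\wedge e^2 + e^3\wedge e^4 + e^5\wedge e^6$. By linearity of both sides in $\beta$, it suffices to check the identity for $\beta = e^1$ (all other basis cases being identical after relabelling, and $J$ being horizontal so the vertical direction never enters).

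First I would compute the left-hand side $\ast_T(e^1)$ directly from the defining formula \eqref{eq:transverse star}: with $m = 7$ and $k=1$ we get $\ast_T(e^1) = (-1)^{7-1-1}\ast(e^1\wedge\eta) = -\ast(e^1\wedge\eta)$, and the ordinary Hodge star on the full $7$--manifold sends $e^1\wedge\eta$ to $\pm$ the wedge of the complementary five covectors $e^2\wedge e^3\wedge e^4\wedge e^5\wedge e^6$, with a sign dictated by the chosen orientation $\dvol = \eta\wedge e^1\wedge\cdots\wedge e^6$. Then I would compute the right-hand side $\tfrac12 Je^1\wedge\omega^2 = \tfrac12 e^2\wedge(e^1\wedge e^2 + e^3\wedge e^4 + e^5\wedge e^6)^{\wedge 2}$; expanding $\omega^2$ and noting that only the cross terms survive after wedging with $e^2$ (the $e^1\wedge e^2$ summand is killed), one gets $\tfrac12 e^2 \wedge \cdot 2(e^3\wedge e^4\wedge e^5\wedge e^6) = e^2\wedge e^3\wedge e^4\wedge e^5\wedge e^6$. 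Comparing the two expressions and tracking the orientation/sign conventions fixes the identity.

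The only real subtlety is the bookkeeping of signs: one must pin down the induced orientation on $M$ compatibly with the conventions already in force in the excerpt (the choice making $\eta\wedge\omega^3$ a positive volume form, cf. \S\ref{sec: Sasakian Kähler identities}), and make sure the sign $(-1)^{m-1-k}$ in \eqref{eq:transverse star} and the sign produced by the Hodge star on $e^1\wedge\eta$ combine to give exactly $+1$. A clean way to organise this is to use the compatibility relation \eqref{eq:star transverse star}, $\ast\alpha = \ast_T\alpha\wedge\eta$ for horizontal $\alpha$, together with the fact that on the horizontal distribution $\ast_T$ is the Hodge star of the transverse K\"ahler metric, for which the identity $\ast_T\beta = \tfrac12 J\beta\wedge\omega^{n-1}$ (here $n=3$) is the standard K\"ahler formula $\ast\beta = -J\beta\wedge\frac{\omega^{n-1}}{(n-1)!}$ on a Hermitian surface of complex dimension $3$, specialised and with the sign absorbed into the orientation. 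I expect the sign reconciliation to be the main (and essentially only) obstacle; the underlying algebra is routine once a single explicit frame computation is carried out.
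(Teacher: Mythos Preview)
Your frame computation is correct and will certainly work; the only delicate point is indeed the sign bookkeeping, which you have identified. However, the paper takes a genuinely different and more coordinate-free route. Rather than choosing a frame, it starts from the observation that $\beta\wedge\omega^3=0$ by excess of transverse degree, contracts this identity with $\beta^\sharp$ to obtain
\[
\beta(\beta^\sharp)\,\omega^3 = 3\,\beta\wedge\omega^2\wedge(\beta^\sharp\lrcorner\,\omega),
\]
and then recognises $\beta^\sharp\lrcorner\,\omega = J\beta$ together with the defining property $\beta\wedge\ast_T\beta = (\beta,\beta)\tfrac{\omega^3}{3!}$ to read off the result.

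The contrast is instructive. The paper's contraction trick sidesteps all orientation and sign issues, since no explicit Hodge star is ever computed; the identity falls out of pure exterior algebra once one remembers that $\beta^\sharp\lrcorner\,\omega = J\beta$. Your approach, by contrast, is more elementary and makes the formula completely explicit in a model frame, at the cost of having to reconcile the conventions in \eqref{eq:transverse star} and \eqref{eq:star transverse star} with the chosen volume form. Either is fine for this lemma; the paper's argument is shorter and generalises verbatim to any complex dimension $n$ (yielding $\ast_T\beta = \tfrac{1}{(n-1)!}J\beta\wedge\omega^{n-1}$), while yours would require redoing the frame computation.

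One small remark on the paper's argument that you might notice if you try to reconstruct it: as written it only checks $\beta\wedge\ast_T\beta = \beta\wedge\bigl(\tfrac12 J\beta\wedge\omega^2\bigr)$, i.e.\ the diagonal case. To conclude equality of $5$-forms one should contract $\beta\wedge\omega^3=0$ with a general $\alpha^\sharp$ instead (or polarise), which goes through identically.
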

\begin{proof}
    By excess in degree, for any $\beta\in\Omega^1_H(\fg)$  the transverse $7$--form    $\beta\wedge\omega^3$ is zero, so 
\begin{align*}
    0 &=\beta^\sharp\lrcorner(\beta\wedge\omega^3)\\
    &=\beta(\beta^\sharp)\wedge\omega^3-3\beta\wedge(\omega^2\wedge(\beta^\sharp\lrcorner\omega)).
\end{align*}
    The conclusion now follows by a short computation: 
\begin{align*}
    \beta\wedge\ast_T\beta&=(\beta,\beta)\frac{\omega^3}{3!}=\frac{1}{6}\beta(\beta^\sharp)\omega^3 =\frac{1}{2}\beta\wedge\omega^2\wedge(\beta^\sharp\lrcorner\omega)\\
    &=\frac{1}{2}\beta\wedge\omega^2\wedge J\beta.
\qedhere
\end{align*}
\end{proof}
 Now, acting on $(p,q)$-forms, we have well-defined operators 
\begin{equation}
    \label{eq: partial_B}
\begin{array}{lll}
  \partial_B :\Omega^{p,q}_B\to\Omega^{p+1,q}_B   &\text{and } &
  \bar{\partial}_B :\Omega^{p,q}_B\to\Omega^{p,q+1}_B,
 \end{array}
\end{equation}
which naturally extend to $\fg$--valued forms.  In terms of the transverse Hodge star \eqref{eq:transverse star}, the operators  \eqref{eq: partial_B} have adjoints
\begin{equation}
    \label{eq:adjoin partial_B}
    \begin{array}{ll }
  \partial_B^\ast\colon       &\Omega^{p,q}_B\to\Omega^{p-1,q}_B  \\  [4pt]
                              &  \partial_B^\ast:= -\ast_T \partial_B^\ast\ast_T   
 \end{array}
 \qandq
 \begin{array}{ll }
     \bar{\partial}_B^\ast\colon &\Omega^{p,q}_B\to\Omega^{p,q-1}_B\\ [4pt] 
                                 &\bar{\partial}_B^\ast:=-\ast_T\bar{\partial}_B^\ast\ast_T.
 \end{array}
\end{equation}
  Let $\langle\cdot,\cdot\rangle_\fg $ be an invariant metric on $\fg$, an inner product   on $\Omega^\bullet(\fg_E)$ is defined  by: 
\begin{equation}
\label{eq:innerProduct}
    (\alpha,\beta)_M=\displaystyle\int_M\langle\alpha\wedge\ast\beta\rangle_\fg.
\end{equation}
  Note that from \eqref{eq:star transverse star} the inner product in  \eqref{eq:innerProduct} can be    rewritten as 
$ 
  (\alpha,\beta)_M=\displaystyle\int_M\langle\alpha\wedge\ast_T\beta\rangle\wedge\eta.
$ 
  Denoting the exterior product with $\omega:=d\eta$ by
  \begin{equation}
      \label{eq:mult omega}
      \rL_\omega\colon \alpha\in \Omega^k(\cF_\xi)\mapsto \alpha\wedge \omega\in \Omega^{k+2}(\cF_\xi),
  \end{equation} 
  its adjoint with respect to the transverse Hodge star \eqref{eq:transverse star}  is the \emph{transverse Lefschetz operator}
  \begin{equation}
      \label{eq:adjoin mult omega}
  \Lambda:=\rL_\omega^\ast=-\ast_T \rL_\omega\ast_T.
  \end{equation}
For later use in \S \ref{sec:Geometry}, we recall the Sasakian Kähler identities:
\begin{lemma}[{\cite{Boyer2008}*{Lemma~7.2.7}}]
\label{lem:deltaB caudrado}
On a Sasakian manifold, the following properties hold:
\begin{multicols}{2}
\raggedcolumns
\begin{enumerate}[(i)]
\item
  $[\Lambda, \partial_B]=-\ii \bar{\partial}_B^\ast$
\item   
  $[\Lambda, \bar{\partial}_B]= \ii\partial_B^\ast$
\item
  $\partial_B\bar{\partial}_B^\ast+\bar{\partial}_B^\ast\partial_B = \partial_B^\ast\bar{\partial}_B+\bar{\partial}_B\partial_B^\ast=0$
\item
  $\partial_B^\ast\partial_B+\partial_B\partial_B^\ast=\bar{\partial}_B^\ast\bar{\partial}_B+\bar{\partial}_B\bar{\partial}_B^\ast$
\end{enumerate}
\end{multicols}
\begin{itemize}
\item[(v)]
  Defining $\Delta_{\partial_B} :=\partial_B^\ast\partial_B +\partial_B\partial_B^\ast$ and $\Delta_{\bar{\partial}_B} :=\bar{\partial}_B^\ast\bar{\partial}_B +\bar{\partial}_B\bar{\partial}\textbf{}^\ast$, then:
$$
  \Delta_{d_B}=2\Delta_{\bar{\partial}_B}=2\Delta_{ \partial_B}.
$$
\end{itemize}
\end{lemma}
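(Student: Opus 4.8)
The plan is to reduce everything to the classical Kähler identities by passing to transverse (foliated) charts. Recall that a Sasakian structure is in particular transversely Kähler: around any $p\in M$ one may choose a Reeb flow-box $U\cong(-\epsilon,\epsilon)\times V$, with $V\subset\C^n$ open, such that the projection $\pi\colon U\to V$ pulls a genuine Kähler triple $(\omega_V,J_V,g_V)$ on $V$ back to $(d\eta,\Phi\vert_{\rH},g\vert_{\rH})$ over $U$, and the basic forms on $U$ are exactly the $\pi^\ast$ of forms on $V$ (see \cite[Ch.~7]{Boyer2008}). First I would check that under this identification every operator in play is ``transversely local'' and corresponds to its Kähler counterpart on $V$: namely $\partial_B\leftrightarrow\partial_V$, $\bar\partial_B\leftrightarrow\bar\partial_V$, $\rL_\omega\leftrightarrow L_{\omega_V}$, and $\ast_T\leftrightarrow\ast_V$, the last because the transverse Hodge star \eqref{eq:transverse star} is computed pointwise from $g\vert_{\rH}$ alone. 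By the pointwise formulae \eqref{eq:adjoin partial_B} this forces $\Lambda=\rL_\omega^\ast$, $\partial_B^\ast$ and $\bar{\partial}_B^\ast$ to correspond to $\Lambda_V$, $\partial_V^\ast$ and $\bar\partial_V^\ast$ as well.

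With this dictionary, identities $(i)$--$(iv)$ are precisely the classical Kähler identities on $(V,\omega_V,J_V,g_V)$, read back through $\pi^\ast$; since $M$ is covered by such charts and all the operators are local, the identities hold globally on $M$. One may note further that $(ii)$ is simply the complex conjugate of $(i)$ (using that $\Lambda$ is real and $\overline{\partial_B}=\bar\partial_B$), and that $(iii)$--$(iv)$ also follow formally from $(i)$--$(ii)$ together with the anticommutation relations $\partial_B^2=\bar\partial_B^2=\partial_B\bar\partial_B+\bar\partial_B\partial_B=0$ (which hold on basic forms by $d_B^2=0$ and type considerations). Finally $(v)$ is a purely formal consequence: on basic forms $d_B=\partial_B+\bar\partial_B$ and $d_B^\ast=\partial_B^\ast+\bar{\partial}_B^\ast$, so
$$
\Delta_{d_B}=(d_B+d_B^\ast)^2=\Delta_{\partial_B}+\Delta_{\bar\partial_B}+\big(\partial_B\bar{\partial}_B^\ast+\bar{\partial}_B^\ast\partial_B\big)+\big(\partial_B^\ast\bar\partial_B+\bar\partial_B\partial_B^\ast\big);
$$
the last two bracketed terms vanish by $(iii)$ and $\Delta_{\partial_B}=\Delta_{\bar\partial_B}$ by $(iv)$, giving $\Delta_{d_B}=2\Delta_{\partial_B}=2\Delta_{\bar\partial_B}$.

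The main obstacle is the analytic point concealed in treating $\partial_B^\ast,\bar{\partial}_B^\ast$ as \emph{adjoints}: they are introduced in \eqref{eq:adjoin partial_B} pointwise via $\ast_T$, and one must verify this coincides with the adjoint for the global inner product \eqref{eq:innerProduct}, i.e. that basic integration by parts produces no boundary or curvature defect. This uses tautness of the Reeb foliation and the splitting of the volume form as $\eta\wedge(d\eta)^n$, so that $\int_M\langle\cdot\wedge\ast_T\cdot\rangle\wedge\eta$ effectively integrates over the leaf space; it is the step that genuinely invokes the Sasakian hypothesis beyond the purely local Kähler model. Once this is granted, the chart reduction is routine. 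Alternatively one sidesteps it entirely by taking \eqref{eq:adjoin partial_B} as the \emph{definition} of the adjoints, in which case $(i)$--$(v)$ become pointwise identities and the local Kähler computation applies verbatim.
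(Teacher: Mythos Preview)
The paper does not supply its own proof of this lemma: it is stated with a citation to \cite[Lemma~7.2.7]{Boyer2008} and used as a black box thereafter. Your proposal is a correct and standard sketch of how one actually establishes these transverse K\"ahler identities, namely by working in foliated charts where the transverse geometry is genuinely K\"ahler and pulling the classical identities back through $\pi^\ast$; this is essentially the argument in Boyer--Galicki. Your observation that, since the paper \emph{defines} $\partial_B^\ast$ and $\bar\partial_B^\ast$ pointwise via $\ast_T$ in \eqref{eq:adjoin partial_B}, identities $(i)$--$(v)$ are in fact pointwise statements is well taken and makes the chart reduction immediate; the subtlety about $L^2$-adjoints you flag is real but, as you note, not needed for the lemma as stated.
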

\subsubsection{Eigenspaces of  \texorpdfstring{$L_{\sigma}=\ast(\sigma\wedge\cdot)$}{Lg} from the contact structure}
\label{sec: eigenspaces of Lphi}
We will now examine how  \eqref{eq:contacIns1} splits into components according to the eigenspaces of  $L_\sigma$ defined in \eqref{eq:Phioperator}. Let $(x_1,\dots,x_7)$ be Sasakian Darboux coordinates on $M$ \cite{blair2010contact}*{Theorem ~3.1}, such that the contact form $\eta$ is given by      
$$
\eta=dx^7-(x_4dx^1+x_5dx^2+x_6dx^3).
$$
Let $X_i:=\frac{\partial}{\partial x_i}$ 
and  $dx^{i_1\cdots i_k}:=dx^{i_1}\wedge \cdots\wedge dx^{i_k}$; in particular, $X_7=\xi$ is the Reeb vector field, and the transverse symplectic $2$-form is expressed by 
$\omega:=d\eta=dx^{14}+dx^{25}+dx^{36}$.
In these coordinates, the projection $T$ defined in \eqref{eq:projection T} acts as follows:
$$
\begin{array}{ll}
    T(dx^{ij})=\eta\wedge i_\xi(dx^{ij})=0,
    &\qforq 1\leq i<j\leq 6;\\
    T(dx^{i7})=\eta\wedge i_\xi(dx^{i7})=-\eta\wedge dx^i,
    &\qforq
    i=1,\cdots, 6.
\end{array}
$$
Therefore, the decomposition \eqref{eq:HorizontalVerticalDecomposition} determines a $15$-dimensional \emph{horizontal space} $\Omega^2_{H}$  and a $6$-dimensional \emph{vertical space} $\Omega^2_{V}$:  
\begin{equation}
\label{eq:H-Vdecomposition}
    \Omega^2(M)= \underbrace{
    \f{Span}\{  dx^{ij}\vert 1\leq i<j\leq 6 \}}_{\Omega^2_H:=\Omega^2_{15}}\oplus \underbrace{\f{Span}\{ dx^{i7}\vert i=1\cdots 6\} }_{\Omega^2_{V}}.
\end{equation}
Moreover, $L_\sigma$ acts on the horizontal space 
$$
\Omega^2_{H}=\f{Span}\{ 
dx^{12},dx^{13},dx^{14},dx^{15},dx^{16},
dx^{23},dx^{24},dx^{25},dx^{26},
dx^{34},dx^{35},dx^{36},
dx^{45},dx^{46},
dx^{56}\},
$$  
as follows:
$$
\begin{array}{*{5}c}
L_{\sigma}(dx^{12})=dx^{45} & L_{\sigma}(dx^{13})=dx^{46} & L_{\sigma}(dx^{15})=dx^{24} & L_{\sigma}(dx^{14})= -(dx^{36}+dx^{25}) &  L_{\sigma}(dx^{45})=dx^{12}  \\[3pt]
L_{\sigma}(dx^{16})=dx^{34} & L_{\sigma}(dx^{23})=dx^{56} & L_{\sigma}(dx^{24})=dx^{15} & L_{\sigma}(dx^{25})=-(dx^{36}+dx^{14})  & L_{\sigma}(dx^{46})=dx^{13}  \\[3pt]
L_{\sigma}(dx^{26})=dx^{35} & L_{\sigma}(dx^{34})=dx^{16} & L_{\sigma}(dx^{35})=dx^{26} & L_{\sigma}(dx^{36})=-(dx^{25}+dx^{14})  & L_{\sigma}(dx^{56})=dx^{23}. 
\end{array}
$$
For immediate convenience, let us fix the following notation: 
$$ 
v:=dx^{14}+dx^{25}+dx^{36}  
$$
$$
\begin{array}{*{3}c}
v_1 := dx^{12}-dx^{45} & v_2 := dx^{15}-dx^{24}  & v_3 := dx^{13}-dx^{46} \\[3pt]
v_4 := dx^{16}-dx^{34} & v_5 := dx^{23}-dx^{56}  & v_6 := dx^{26}-dx^{35}
\end{array}
$$
$$
\begin{array}{*{4}c}
w_1 := dx^{12}+dx^{45}  & w_2 := dx^{15}+dx^{24}  & w_3 := dx^{13}+dx^{46}  & w_4 := dx^{16}+dx^{34}\\[3pt]
w_5 := dx^{23}+dx^{56}  & w_6 := dx^{26}+dx^{35}  & w_7 :=dx^{14}- dx^{36}  & w_8 := dx^{25}- dx^{36}. 
\end{array}
$$
It is easy to check that 
\[
\begin{array}{cl}
L_\sigma(v_i)=-v_i,     &\qforq i=1,\cdots,6;  \\
L_\sigma(w_j)=w_j,      &\qforq j=1,\cdots,8;  \\
L_\sigma(v)=-2v.
\end{array}
\]
Hence  the operator $L_\sigma$ defined in \eqref{eq:Phioperator} splits $\Omega^2_H(M)$ into eigenspaces associated to  $\{-2 ,-1 ,1\} $,   respectively: 
\begin{equation}
\label{eq:omega2} 
    \Omega^2_6:= \Omega^{2,-}_{H,6} =  \f{Span}\{  v_1,\dots,v_6\},
    \quad 
    \Omega^2_8:=\Omega^{2,+}_{H,8} =  \f{Span}\{ w_1\dots,w_8  \},
    \quad 
    \Omega^2_1:=\Omega^2_{H,1}=\f{Span}\{ v\}.
    \quad 
\end{equation}
Therefore $2$-forms decompose irreducibly as 
\begin{equation}
\label{eq:2formsDecomposition}
   \Omega^2(M) =\Omega^{2}_{1} \oplus\Omega^{2}_{6} \oplus \Omega^{2}_{8} \oplus\Omega^{2}_V.
\end{equation}
Of course, this decomposition extends naturally to $\fg_E$-valued  $2$-forms.
\begin{lemma}
\label{lem:orthogonality}
    The decompositions   \eqref{eq:HorizontalVerticalDecomposition} and \eqref{eq:2formsDecomposition}   are orthogonal  with respect to the   inner product   \eqref{eq:innerProduct}.
\begin{proof}
    For $\alpha\in \Omega^2_H(M)$ and $\beta\in \Omega^2_V(M)$ we have
    \begin{align*}
        (\alpha,\beta)
        &=(i_\xi(\eta\wedge\alpha),\beta)
        =(\alpha,\eta\wedge i_\xi\beta)
        =-a_i(\alpha,\eta\wedge dx^i)\\
        &=(i_\xi\alpha,\beta)=0.
    \end{align*}
  Moreover, the operator $\ast(\sigma\wedge(\cdot))\colon \Omega^2_H(M)\to\Omega^2_H(M)$ is self-adjoint:  for $\alpha,\beta\in\Omega^2_H(M)$, 
\begin{align*}
  \left(\alpha,\ast(\sigma\wedge\beta)\right)_M &= \int_M 
  \alpha\wedge\ast^2(\sigma\wedge \beta)\dvol 
  =\int_M 
  \beta\wedge \sigma\wedge\alpha \dvol \\
  &=\int_M \beta\wedge\ast^2(\sigma\wedge\alpha) \dvol\\ 
  &=\left(\ast(\sigma\wedge\alpha),\beta\right)_M,
\end{align*}
  hence its eigenspaces are  orthogonal.
\end{proof}
\end{lemma}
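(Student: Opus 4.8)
The strategy is to deduce both orthogonality statements from a single observation: the operator $L_\sigma=\ast(\sigma\wedge\cdot)$ of \eqref{eq:Phioperator} is self-adjoint for the inner product \eqref{eq:innerProduct}, and the decomposition \eqref{eq:2formsDecomposition} is exactly its eigenspace decomposition on $\Omega^2(\fg_E)$. Indeed, by \S\ref{sec: eigenspaces of Lphi} the operator $L_\sigma$ acts on $\Omega^2_H$ with the three distinct eigenvalues $-2,-1,1$ (eigenspaces $\Omega^2_1,\Omega^2_6,\Omega^2_8$) and vanishes on $\Omega^2_V$, so $\Omega^2_V=\ker L_\sigma$ is the $0$-eigenspace and \eqref{eq:2formsDecomposition} lists all four eigenspaces. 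Granting self-adjointness, the standard linear-algebra fact that eigenvectors of a symmetric operator attached to distinct eigenvalues are orthogonal immediately yields the mutual orthogonality of $\Omega^2_1,\Omega^2_6,\Omega^2_8,\Omega^2_V$; in particular $\Omega^2_H=\Omega^2_1\oplus\Omega^2_6\oplus\Omega^2_8$ is orthogonal to $\Omega^2_V$, which is the orthogonality of \eqref{eq:HorizontalVerticalDecomposition}.

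To prove self-adjointness I would compute, for $\alpha,\beta\in\Omega^2(\fg_E)$, that
$$(\alpha,L_\sigma\beta)_M=\int_M\langle\alpha\wedge\ast\ast(\sigma\wedge\beta)\rangle_\fg=\int_M\langle\alpha\wedge\sigma\wedge\beta\rangle_\fg,$$
using $\ast\ast=(-1)^{k(7-k)}\id=+\id$ on $k=4$ forms, and likewise $(L_\sigma\alpha,\beta)_M=\int_M\langle\beta\wedge\sigma\wedge\alpha\rangle_\fg$ (using the symmetry $\mu\wedge\ast\nu=\nu\wedge\ast\mu$ of the pointwise inner product). It then remains to note $\langle\alpha\wedge\sigma\wedge\beta\rangle_\fg=\langle\beta\wedge\sigma\wedge\alpha\rangle_\fg$: expanding $\alpha,\beta$ in a basis of $\fg$, this reduces to graded-commutativity of $\wedge$ for scalar forms --- every relevant sign exponent $\deg\alpha\cdot\deg\sigma=6$, $\deg\alpha\cdot\deg\beta=4$, $\deg\beta\cdot\deg\sigma=6$ is even --- combined with the symmetry of the invariant pairing $\langle\cdot,\cdot\rangle_\fg$.

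As a complement, and a sanity check, the horizontal--vertical orthogonality also admits a one-line direct proof: the compatibility identity $\ast(v\lrcorner\alpha)=v^\flat\wedge\ast\alpha$ behind \eqref{eq:compaEtaystar} shows that pointwise $i_\xi$ is the metric adjoint of $\eta\wedge\cdot$ (since $|\xi|=1$); every vertical $2$-form being $\eta\wedge\gamma$ for some $1$-form $\gamma$, for horizontal $\alpha$ one gets $(\alpha,\eta\wedge\gamma)_M=(i_\xi\alpha,\gamma)_M=0$. I do not expect a genuine obstacle here: the whole argument is sign bookkeeping, the only delicate points being that $\ast^2=+\id$ on $4$-forms in dimension $7$, the parities that make $\wedge$ commute in the degrees at hand, and the trivial check that passing to $\fg_E$-valued forms costs nothing beyond symmetry of the fibre metric $\langle\cdot,\cdot\rangle_\fg$.
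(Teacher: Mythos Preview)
Your proof is correct and follows essentially the same approach as the paper: both establish self-adjointness of $L_\sigma$ and invoke the orthogonality of eigenspaces. Your version is slightly more streamlined in that you treat $\Omega^2_V=\ker L_\sigma$ as the $0$-eigenspace and deduce the horizontal--vertical orthogonality from the same self-adjointness argument, whereas the paper proves $\Omega^2_H\perp\Omega^2_V$ separately (via the adjointness of $i_\xi$ and $\eta\wedge\cdot$, which you also include as a sanity check) and only applies self-adjointness on $\Omega^2_H$. One small slip: $\sigma\wedge\beta$ is a $5$-form, not a $4$-form; but since $\ast^2=\id$ on all forms in odd dimension, this does not affect the argument.
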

\subsection{Splitting of complexified differential forms}
\label{sec:complex spliting}
  We establish some notation and elementary facts about the complexified tangent bundle, which are largely adapted from \cites{Biswas2010} and reviewed in Appendix \ref{apendixA}.  The contact structure splits the tangent bundle as $TM=  H\oplus N_\xi$ \eqref{eq:TangentDecomposition}, where $H=\Ker(\eta)$ and $N_\xi$ is the real line bundle spanned by the Reeb field $\xi$. The transverse complex structure $\Phi$  satisfies $(\Phi\vert_{ H })^{2}=-1$,  so the  eigenvalues of the complexified operator $\Phi^{\C}$ are $\pm\bi$, with $\bi:=\sqrt{-1}$. The complexification $H_\C:=H\otimes_\R\C$ splits   as 
  $H_\C = H^{1,0}\oplus  H ^{0,1}$ \eqref{eq:Hpq}, so we obtain a decomposition of direct sum of vector bundles \eqref{eq:Quebra1} 
$$
  \Lambda^{d}(H_\C)^\ast  
  =\bigoplus_{i=0}^{d} (H^{i,d-i})^\ast.
$$
  This induces the decomposition of vector bundles \eqref{eq:decompostion k forms} 
\[
  \Omega^{d}(M)=\left(\bigoplus_{i=0}^{d} \Omega_{H }^{i,d-i}(M) \right) \oplus\left(\eta\otimes\left( \bigoplus_{j=0}^{d-1}\Omega_{H }^{j,d-j-1}(M)\right)\right), 
\]
  where $\Omega^{p,q}_{H}(M):=\Gamma(M,\Lambda^p(H_\C)^\ast\otimes\Lambda^q(H_\C)^\ast)$. Now,  let us study more closely the space of $2$-forms, from the `transverse complex' point of view. Still in local Darboux coordinates $(x_1,\cdots,x_7)$, we denote the transverse complex coordinates by  
\begin{equation}
\label{eq:zi}
    z_1:=x_1+\ii x_4, 
    \quad z_2:=x_2+\ii x_5 
    \qandq z_3:=x_3+\ii x_6.
\end{equation}  
  We will denote, as usual, $dz^j:=dx^j+\ii dx^{j+3}$ and $d\bar{z}^j:=dx^j-\ii dx^{j+3}$ for $j=1,2,3.$   In terms of the bases $\{v_i\}_{i=1}^6$ and $\{w_i\}_{i=1}^8$, from \eqref{eq:omega2}, the space $\Omega^{2,0}(M)$ is locally spanned by 
\begin{align*}
    dz^1\wedge dz^2 &= (dx^{12}-dx^{45})+\ii (dx^{15}-dx^{24}) = v_1 + \ii  v_2 \\
    dz^1\wedge dz^3 &= (dx^{13}-dx^{46})+\ii (dx^{16}-dx^{34}) = v_3 + \ii  v_4 \\
    dz^2\wedge dz^3 &= (dx^{23}-dx^{56})+\ii (dx^{26}-dx^{35}) = v_5 + \ii  v_6,
\end{align*}
we also compute  
\begin{align*}
    dz^1\wedge d\zbar^2  &= (dx^{12}+dx^{45}) - \ii (dx^{15} + dx^{24}) = w_1 - \ii w_2 \\
    dz^1\wedge d\zbar^3  &= (dx^{13}+dx^{46}) - \ii (dx^{16} + dx^{34}) = w_3 - \ii w_4 \\
    dz^2\wedge d\zbar^3  &= (dx^{23}+dx^{56}) - \ii (dx^{26} + dx^{35}) = w_5 - \ii w_6 \\
    dz^1\wedge d\zbar^1  &= -2\ii dx^{14} \\
    dz^2\wedge d\zbar^2  &= -2\ii dx^{25} \\
    dz^3\wedge d\zbar^3  &= -2\ii dx^{36} \\ 
    dz^2\wedge d\zbar^1  &= (dx^{12}+dx^{45}) - \ii (dx^{15} + dx^{24}) = -(w_1 + \ii w_2) \\
    dz^3\wedge d\zbar^1  &= (dx^{13}+dx^{46}) - \ii (dx^{16} + dx^{34}) = -(w_3 + \ii w_4)\\
    dz^3\wedge d\zbar^2  &= (dx^{23}+dx^{56}) - \ii (dx^{26} + dx^{35}) = -(w_5 + \ii w_6).
\end{align*}
Since $\omega:=d\eta $ is nowhere-vanishing and has type $(1,1)$ \cite{Biswas2010}*{Corollary ~3.1}, it determines an orthogonal complement  $\Omega^{1,1}_{\perp}(M)$ in 
$$
\Omega^{1,1}(M)=\left( \Omega^{0}(M)\cdot d\eta\right) \oplus\Omega^{1,1}_{\perp}(M),
$$ 
 which is expressed in local Darboux coordinates by 
\begin{align*}
    \Omega^{1,1}_{\perp}(M)
    &= \f{Span}\{\Re(dz^i\wedge \zbar^j) ,  -\im(dz^i\wedge d\zbar^j) , w_7 , w_8\vert \; 1\leq i<j\leq 3 \}\\
    &\cong \Omega^2_8.
\end{align*}
Therefore we obtain two decompositions for the horizontal $2$-forms: 
\begin{equation}
\label{eq:decompositionPQ}
    \begin{matrix}
        \Omega^{2}_H(M)=  & \underbrace{\Omega^{2,0}(M) \oplus \Omega^{0,2}(M) } & \oplus & \Omega^{0}(M)\cdot d\eta                  & \oplus                                  &  \Omega^{1,1}_{\perp}(M) \\
                   &  \rotatebox{90}{\scalebox{1}[1]{$\cong$}}&         & \rotatebox{90}{\scalebox{1}[1]{$\cong$}} &                                         & \rotatebox{90}{\scalebox{1}[1]{$\cong$}} \\
                   & \Omega^2_6                               &         & \Omega^{2}_{ 1}                          &                                         &
                   \Omega^{2 }_{ 8}  
    \end{matrix} 
\end{equation}
Note that  $\Omega^{0,2}(M)$ is spanned by
\begin{align*}
    d\zbar^1\wedge d\zbar^2 
    &= (dx^{12}-dx^{45})-\ii (dx^{15}-dx^{24}) = v_1 - \ii  v_2 ,\\
    d\zbar^1\wedge d\zbar^3 
    &= (dx^{13}-dx^{46})-\ii (dx^{16}-dx^{34}) = v_3 - \ii  v_4 ,\\
    d\zbar^2\wedge d\zbar^3 
    &= (dx^{23}-dx^{56})-\ii (dx^{26}-dx^{35}) 
    = v_5 - \ii  v_6 ,
\end{align*}
consistently with the fact that $\Omega^{0,2}(M)\cong\ol{\Omega^{2,0}}(M)$. Still by inspection, we have: 
\begin{equation}
    \label{eq:wi}
\begin{array}{*{2}c}
w_1 = \frac{1}{2}(dz^1\wedge d\zbar^2-dz^2\wedge d\zbar^1) , & w_2 = \frac{\ii}{2}(dz^1\wedge d\zbar^2+dz^2\wedge d\zbar^1)\\[3pt]
w_3 = \frac{1}{2}(dz^1\wedge d\zbar^3-dz^3\wedge d\zbar^1),  & w_4 = \frac{\ii}{2}(dz^1\wedge d\zbar^3+dz^3\wedge d\zbar^1),\\[3pt] 
w_5 = \frac{1}{2}(dz^2\wedge d\zbar^3-dz^3\wedge d\zbar^2) , & w_6 = \frac{\ii}{2}(dz^2\wedge d\zbar^3+dz^3\wedge d\zbar^2),\\[3pt] 
w_7 = \frac{\ii}{2}(dz^1\wedge d\zbar^1-dz^3\wedge d\zbar^3),& w_8 = \frac{\ii}{2}(dz^2\wedge d\zbar^2-dz^3\wedge d\zbar^3)    
\end{array} 
\end{equation}  
\begin{equation}
\label{eq:omega}
\omega=\frac{\ii}{2}(dz^1\wedge d\zbar^1+dz^2\wedge d\zbar^2+dz^3\wedge d\zbar^3),
\end{equation} 
\begin{equation}
    \label{eq:vi}
    \begin{array}{*{3}c}
v_1 = \frac{1}{2}(dz^1\wedge dz^2+d\zbar^1\wedge d\zbar^2) , & v_2 = \frac{\ii}{2}(d\zbar^1\wedge d\zbar^2-dz^1\wedge dz^2), & v_3 =\frac{1}{2}(dz^1\wedge dz^3+d\zbar^1\wedge d\zbar^3) ,\\[3pt]
v_4 = \frac{\ii}{2}(d\zbar^1\wedge d\zbar^3-dz^1\wedge dz^3), & v_5 =  \frac{1}{2}(dz^2\wedge dz^3+d\zbar^2\wedge d\zbar^3), & v_6 = \frac{\ii}{2}(d\zbar^2\wedge d\zbar^3-dz^2\wedge dz^3).
\end{array}
\end{equation} 
From the expressions  \eqref{eq:wi} and \eqref{eq:vi}, we obtain immediately:
\begin{lemma} 
\label{lem:SDcharacterization} 
    Let $\Omega^2_8$ and $\Omega^2_6$  be the eigenspaces of the operator $L_\sigma$ from \eqref{eq:Phioperator}, associated to the eigenvalues $1$ and $-1$, respectively [cf.  \eqref{eq:omega2}]; then 
\begin{enumerate}[(i)]
\item
  A $2$-form $\alpha$ belongs to $\Omega^2_8$    if, and only if, $\alpha$ it is a real form  of type $(1,1)$ and  orthogonal to $\omega$.
\item
  A $2$-form $\alpha$ belongs to $\Omega^2_6$ if, and only if,  $\alpha=\beta +\ol{\beta} $, for some $\beta$ of type $(2,0)$.
\end{enumerate} 
\end{lemma}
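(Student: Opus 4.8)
The plan is to read both equivalences directly off the explicit bases $\{w_i\}_{i=1}^{8}$ of $\Omega^2_8$ and $\{v_i\}_{i=1}^{6}$ of $\Omega^2_6$ recorded in \eqref{eq:wi}--\eqref{eq:vi}. For the two "only if" directions I would simply inspect those formulas. Each $w_i$ is, by \eqref{eq:wi}, a real-coefficient combination of the $(1,1)$-forms $dz^j\wedge d\zbar^k$, so every element of $\Omega^2_8=\Span_{\R}\{w_1,\dots,w_8\}$ has type $(1,1)$; the particular combinations are conjugation-invariant (using $\overline{dz^j\wedge d\zbar^k}=-dz^k\wedge d\zbar^j$ one checks $\overline{w_i}=w_i$ for each $i$), hence real; and comparison with \eqref{eq:omega} gives $(w_i,\omega)=0$ for all $i$ — equivalently, this is exactly the isomorphism $\Omega^{1,1}_\perp(M)\cong\Omega^2_8$ already noted after \eqref{eq:decompositionPQ}. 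Symmetrically, reading \eqref{eq:vi}, each $v_i$ is of the form $\beta_i+\overline{\beta_i}$ with $\beta_i\in\Omega^{2,0}(M)$ (for instance $\beta_1=\tfrac12 dz^1\wedge dz^2$, $\beta_2=-\tfrac{\ii}{2}dz^1\wedge dz^2$, and so on), so every element of $\Omega^2_6$ has the stated shape.

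For the converse of $(ii)$ I would observe that the real-linear map $\beta\mapsto\beta+\overline{\beta}$ from $\Omega^{2,0}(M)$ into $\Omega^2(M)$ is injective, since a $(2,0)$-form and a $(0,2)$-form sum to zero only if both vanish. As $\Omega^{2,0}(M)$ is spanned over $\C$ by $dz^1\wedge dz^2$, $dz^1\wedge dz^3$, $dz^2\wedge dz^3$, its image under this map is spanned over $\R$ by the six forms $dz^i\wedge dz^j+d\zbar^i\wedge d\zbar^j$ and $\ii\,dz^i\wedge dz^j-\ii\,d\zbar^i\wedge d\zbar^j$, $1\le i<j\le 3$, which by \eqref{eq:vi} are, up to nonzero scalars, precisely $v_1,\dots,v_6$. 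Hence $\{\beta+\overline{\beta}\mid\beta\in\Omega^{2,0}(M)\}=\Omega^2_6$, which gives both inclusions.

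For the converse of $(i)$ I would close the remaining gap by a dimension count. Using the transverse Hermitian metric to identify real horizontal $(1,1)$-forms with Hermitian $3\times 3$ matrices, that space has real dimension $9$, and imposing orthogonality to $\omega$ — the trace direction, by \eqref{eq:omega} — cuts it down to dimension $8$; since the first step exhibits $\Omega^2_8$ as a subspace of it and $\dim_{\R}\Omega^2_8=8$ by \eqref{eq:omega2}, the two spaces coincide. I expect this last point to be the only real subtlety: the predicate "real, type $(1,1)$, orthogonal to $\omega$" a priori cuts out a potentially larger space than $\Span\{w_1,\dots,w_8\}$, and one needs this bookkeeping — or, equivalently, the explicit splitting \eqref{eq:decompositionPQ} — to see that there is nothing extra. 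Everything else is an immediate consequence of the coordinate identities already assembled above.
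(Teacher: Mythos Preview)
Your proposal is correct and follows exactly the paper's approach: the paper states that the lemma is obtained ``immediately'' from the expressions \eqref{eq:wi} and \eqref{eq:vi}, and you have simply spelled out the details of that inspection, including the dimension count for the converse of $(i)$ and the explicit identification for the converse of $(ii)$, both of which the paper leaves implicit.
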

\subsection{Proof of Proposition \ref{prop:instantonRelation}}
\label{sec:prop instantonRelation}
  The proof is based on Lemma \ref{lem:SDcharacterization}, which yields a characterisation curvature forms in $\Omega^2_6(\fg_E)$ and $\Omega^2_8(\fg_E)$. We denote by $\Lambda^k(T^\ast M)_\C$ the complexification of   $\Lambda^k(T^\ast M)$. Let also: 
\begin{equation}
\label{eq: pq forms}
    \Omega^k(\fg_E)
    :=\Gamma(\Lambda^k(T^\ast M)_\C \otimes \fg_E)
    \qandq
    \Omega^{p,q}(\fg_E)
    :=\Gamma(\Lambda^{p,q}(T^\ast M)_\C \otimes \fg_E).
\end{equation}
  The conjugation on $\Lambda^k(T^\ast M)_\C$  naturally induce a conjugation  on   $\Omega^k(T^\ast M)_\C\otimes\fg_E$ such that $\ol{ \Omega^{p,q}(\fg_E)}=\Omega^{q,p}(\fg_E)$. We have the following characterisation of SDCI solutions:   
\begin{proposition} 
\label{prop: selfdual = HYM}
    Let $\bE\to M$ be a Sasakian $G$-bundle with adjoint bundle $\fg_E$. A connection $A\in\cA$ satisfies the SDCI equation $\ast F_A=  \sigma\wedge F_A$ if, and only if: 
\begin{equation}
    \label{eq: selfdual = HYM}     
    F_A\in\Omega^{1,1}(\fg_E) 
    \qandq  
    \hat{F}_A:=\langle F_A,\omega \rangle=0.
\end{equation}
    In that case, moreover, $\ol{F}_A=F_A.$
\end{proposition}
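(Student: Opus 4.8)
The plan is to show that the SDCI equation $\ast F_A=\sigma\wedge F_A$ is equivalent to the single algebraic condition $F_A\in\Omega^2_8(\fg_E)$, and then to read off \eqref{eq: selfdual = HYM} directly from Lemma \ref{lem:SDcharacterization}.

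First I would rewrite the equation in terms of the operator $L_\sigma$. Since $\ast^2=\id$ on $2$-forms of a $7$-manifold, applying $\ast$ to both sides gives the equivalent form $F_A=\ast(\sigma\wedge F_A)$. Splitting $F_A=F_H+F_V$ as in \eqref{eq:HorizontalVerticalDecomposition}, one has $F_V=\eta\wedge i_\xi F_A$ and hence $\sigma\wedge F_V=\eta\wedge d\eta\wedge\eta\wedge i_\xi F_A=0$ because $\eta\wedge\eta=0$; therefore $\ast(\sigma\wedge F_A)=\ast(\sigma\wedge F_H)=L_\sigma(F_H)$, which lies in $\Omega^2_H(\fg_E)$ by the explicit action of $L_\sigma$ tabulated in \S\ref{sec: eigenspaces of Lphi}. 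Comparing horizontal and vertical parts forces $F_V=0$ together with $L_\sigma(F_A)=F_A$. Decomposing $F_A=F_1+F_6+F_8$ along \eqref{eq:omega2} (extended $\fg_E$-linearly) and using the eigenvalues $-2,-1,1$, the identity $L_\sigma(F_A)=F_A$ becomes $-2F_1-F_6+F_8=F_1+F_6+F_8$; since \eqref{eq:2formsDecomposition} is a direct sum --- indeed orthogonal, by Lemma \ref{lem:orthogonality} --- this yields $F_1=F_6=0$. Thus $A$ is SDCI if and only if $F_A\in\Omega^2_8(\fg_E)$.

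It then remains to translate the condition $F_A\in\Omega^2_8(\fg_E)$. For the forward implication, Lemma \ref{lem:SDcharacterization}(i) applied coefficient-wise says that such an $F_A$ is a \emph{real} $2$-form of type $(1,1)$ orthogonal to $\omega$, i.e. $F_A\in\Omega^{1,1}(\fg_E)$, $\hat F_A=\langle F_A,\omega\rangle=0$ and $\bar F_A=F_A$; this is exactly \eqref{eq: selfdual = HYM} together with the final assertion. (The reality $\bar F_A=F_A$ is in any case automatic for the curvature of a connection on a bundle with compact structure group, $F_A$ lying in the real locus $\Omega^2(\fg_E)\subset\Omega^2(\fg_E\otimes\C)$; I would simply record it here as part of the characterisation of $\Omega^2_8$.) For the converse I would argue directly: by \eqref{eq:decompositionPQ}, $\Omega^{1,1}(M)=\Omega^0(M)\cdot d\eta\oplus\Omega^{1,1}_\perp(M)$ with $\Omega^{1,1}_\perp\cong\Omega^2_8$ and $\Omega^0\cdot d\eta\cong\Omega^2_1$, so $F_A\in\Omega^{1,1}(\fg_E)$ means $F_A=f\,d\eta+F_8$ with $f\in\Omega^0(\fg_E)$ and $F_8\in\Omega^2_8(\fg_E)$; pairing with $\omega=d\eta$ and using $F_8\perp\omega$ gives $0=\hat F_A=f\,|\omega|^2$, hence $f=0$ and $F_A\in\Omega^2_8(\fg_E)$, which is precisely $L_\sigma(F_A)=F_A$, i.e. the SDCI equation.

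I do not anticipate a real obstacle: once Lemma \ref{lem:SDcharacterization} and the orthogonal splitting \eqref{eq:2formsDecomposition} are available, the statement is essentially a dictionary entry. The only delicate point is the bookkeeping around the vertical component --- verifying that $\sigma$ annihilates $\Omega^2_V$ and that $\ast(\sigma\wedge\cdot)$ introduces no spurious vertical term, so that the reduction to $L_\sigma(F_A)=F_A$ is legitimate --- which, as noted, follows at once from $\eta\wedge\eta=0$ and the coordinate computations already carried out in \S\ref{sec: eigenspaces of Lphi}.
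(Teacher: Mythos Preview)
Your proof is correct and follows essentially the same route as the paper: both reduce the SDCI condition to $F_A\in\Omega^2_8(\fg_E)$ and then invoke Lemma~\ref{lem:SDcharacterization}(i) together with the identification $\Omega^{1,1}_\perp\cong\Omega^2_8$ from \eqref{eq:decompositionPQ}. The paper's version is much terser and leaves the step ``SDCI $\Leftrightarrow F_A\in\Omega^2_8$'' implicit, whereas you spell out the vanishing of $F_V$ and the eigenvalue argument explicitly --- a welcome clarification, but not a different idea.
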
  
\begin{proof}
 That $A$ is a SDCI if, and only if,  \eqref{eq: selfdual = HYM} holds is an immediate consequence of Lemma \ref{lem:SDcharacterization}--(i) and  \eqref{eq:decompositionPQ}, since $\Omega^{1,1}_\perp(\fg_E)\cong\Omega^2_8(\fg_E)$. Now, assuming that is the case, the  reality condition is manifest in local Darboux coordinates, cf.  \eqref{eq:zi}. Finally, we see in \eqref{eq:wi}  that the generators $\{w_i\}_{i=1}^8$ of $\Omega^2_8$ are all real: $\ol{w_i}=w_i$.
\end{proof}
\begin{proof}[Proof of Proposition \ref{prop:instantonRelation}] 
  If $A$ is a SDCI on a Sasakian bundle $\bE\to M$ [Definition \ref{def:fibradoSasakiano}],  it  follows from   \eqref{eq:decompositionPQ} that $F_A\in\Omega^{1,1}_8(M)$, in particular  $F_A^{0,2}=0$. Furthermore, Proposition \ref{prop: selfdual = HYM} gives immediately $\hat{F}_A=0$, hence $A$ is HYM. Conversely, if moreover $\bE\to M$ underlies a holomorphic bundle $\cE \to M$ [Definition \ref{def:SasakianoHol}] and  $A$ is a an integrable HYM connection, then $\hat{F}_A=0$ and $F^{0,2}=0$. Since $A$ is also unitary, the curvature $F_A$ is of type $(1,1)$, cf.  \cite[Proposition~2.1.56]{Donaldson1990}, and we conclude form \eqref{eq:decompositionPQ} that $A$ is a SDCI.
  \end{proof}
\section{Gauge theory on \texorpdfstring{$7$}{Lg}-dimensional Sasakian manifolds}
\label{sec: GT on Sasakian 7-mfds}
  In $4$-dimensional gauge theory, reversing orientation of the base manifold interchanges SD and ASD connections, thus the two theories are equivalent. On the other hand, in the $5$-dimensional contact case, the SD and ASD equations are studied separately \cites{Baraglia2016} and some differences appear; for instance, the  coboundary map in the long exact sequence in  \cite[Proposition~3.3]{Baraglia2016} is only zero in the selfdual case.  On a merely contact manifold $(M^7,\eta)$,  one could a priori study connections with curvature in $\Omega^2_6$ or $\Omega^2_8$. However, on a Sasakian manifold $(M^7,\eta,g,\Phi)$, there is a natural choice favouring the SDCI  equation ($\lambda= 1$)  in \eqref{eq:ContactInstEquation}:
$$
  F_A=\ast(\eta\wedge d\eta \wedge F_A).
$$
  That the alternative theory is  trivial follows immediately from   Lemma \ref{lem:SDcharacterization}--$(ii)$:
\begin{proposition}
\label{prop:flatness}
  Let  $\cE\to M^7$ be a holomorphic Sasakian vector bundle [Definition \ref{def:SasakianoHol}]. A Chern connection $A\in\cA(\cE) $ such that $F_A\in\Omega^2_6(\fg_E)$ is necessarily flat.
\end{proposition}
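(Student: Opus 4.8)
The plan is to combine the characterisation of $\Omega^2_6(\fg_E)$ in Lemma \ref{lem:SDcharacterization}--(ii) with the integrability built into the notion of a Chern connection. Recall that a Chern connection $A\in\cA(\cE)$ on a holomorphic Sasakian bundle is by definition integrable, so that its curvature has vanishing transverse $(0,2)$-component, $F_A^{0,2}=0$ (see \cite[\S~3]{Biswas2010} and Appendix \ref{apendixA}); it is moreover unitary, so that $F_A$ takes values in the real adjoint bundle $\fg_E$ and hence satisfies $\overline{F_A}=F_A$ under the conjugation with $\overline{\Omega^{p,q}(\fg_E)}=\Omega^{q,p}(\fg_E)$.

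First I would apply Lemma \ref{lem:SDcharacterization}--(ii): the hypothesis $F_A\in\Omega^2_6(\fg_E)$ means $F_A=\beta+\overline{\beta}$ for some $\fg_E$-valued form $\beta$ of transverse type $(2,0)$, so in particular the $(0,2)$-part of $F_A$ equals $\overline{\beta}$ and its $(2,0)$-part equals $\beta$. Integrability now gives $\overline{\beta}=F_A^{0,2}=0$, whence $\beta=0$ as well, and therefore $F_A=0$; that is, $A$ is flat. (Equivalently, one reads off from \eqref{eq:decompositionPQ} that $\Omega^2_6(\fg_E)\cong(\Omega^{2,0}\oplus\Omega^{0,2})(\fg_E)$ contains no nonzero element with a $(1,1)$- or vertical component, so a curvature lying in $\Omega^2_6(\fg_E)$ whose $(0,2)$-part vanishes must itself vanish.)

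The argument is short once Lemma \ref{lem:SDcharacterization} is available; the only point requiring a little care is to make sure that the transverse $(0,2)$-component occurring in the splitting $\Omega^2_6\cong\Omega^{2,0}\oplus\Omega^{0,2}$ of \S\ref{sec:complex spliting} is literally the component annihilated by integrability of the Chern connection, i.e. that there is no mismatch between the horizontal $(p,q)$-decomposition used here and the transverse Dolbeault theory of Appendix \ref{apendixA}. Unwinding the relevant definitions settles this and poses no genuine obstacle.
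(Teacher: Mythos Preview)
Your proof is correct and follows precisely the route the paper intends: the paper states that the result ``follows immediately from Lemma~\ref{lem:SDcharacterization}--$(ii)$'', and you have unpacked exactly that argument, using the characterisation $\Omega^2_6\cong\Omega^{2,0}\oplus\Omega^{0,2}$ together with the fact that the Chern connection has curvature of type $(1,1)$ (cf.\ Proposition~\ref{prop:ChernConnection}). Your closing remark about the compatibility of the horizontal $(p,q)$-decomposition with the transverse Dolbeault theory of Appendix~\ref{apendixA} is a reasonable sanity check but, as you note, causes no difficulty.
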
 
\subsection{Infinitesimal deformations of contact instantons}
\label{sec:infinitesimal deformations}
  The main purpose  this Section is to prove Theorem \ref{thm:Intro}--$(i)$. Namely,  the linearisation of the moduli space of  SDCI $\cM^\ast$ is given by  the cohomology group $H^1(\rC):=\frac{\Ker(d_7)}{\im(d_A)}$  of the  deformation  complex 
\[
  C\bullet:\quad 0\to\Omega^0(\fg_E) 
  \xrightarrow[{  }]{ d_A}\Omega^1(\fg_E) \xrightarrow[{  }]{d_7}\Omega_{6\oplus 1}(\fg_E)\to 0.
\]
  We will show that, even though the complex  $C\bullet$ is manifestly not elliptic, one can establish that $ H^1(\rC)$ is finite-dimensional by means of an auxiliary extended complex. Finally, we will relate contact instantons and $\rG_2$-instantons in the $7$-dimensional contact Calabi-Yau setting, as outlined in \cites{Calvo-Andrade2016}. Under the hypotheses of  Theorem \ref{thm:Intro},  we describe the linearisation  of the moduli space of SDCI over a Sasakian $7$-manifold, following the approach of \cites{itoh1983moduli}.
\begin{proposition}
\label{prop:kerd7}
  An element $\alpha\in \Omega^1(\fg_E)$ gives an infinitesimal SDCI deformation if, and only if, $\alpha\in\Ker(d_7)$.
\end{proposition}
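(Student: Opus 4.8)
The plan is to linearise the SDCI condition $p(F_A)=0$ directly. First I would fix an SDCI connection $A$ and, for a path of connections $A_t := A + t\alpha$ with $\alpha\in\Omega^1(\fg_E)$, compute the curvature to first order: $F_{A_t} = F_A + t\,d_A\alpha + O(t^2)$, so that $\dt F_{A_t} = d_A\alpha$. Applying the projection $p\co\Omega^2_H(\fg_E)\to\Omega^2_{6\oplus 1}(\fg_E)$ from \eqref{eq:map P}, and using that $F_A\in\Omega^2_8(\fg_E)$ so that $p(F_A)=0$ already, the linearised equation for $A_t$ to remain SDCI to first order becomes $p(d_A\alpha) = 0$, i.e. $d_7\alpha = 0$ by the very definition \eqref{eq:d7} of $d_7 = p\circ d_A$.

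The only subtlety I would need to address carefully is that $d_A\alpha$ a priori lives in all of $\Omega^2(\fg_E) = \Omega^2_1\oplus\Omega^2_6\oplus\Omega^2_8\oplus\Omega^2_V$, whereas $d_7$ is defined on $\Omega^1_H(\fg_E)$ with target $\Omega^2_{6\oplus 1}(\fg_E)\subset\Omega^2_H(\fg_E)$. So I would first observe that it suffices to consider horizontal deformations (a vertical component of $\alpha$ can be absorbed, or one restricts attention to the horizontal complex as set up in \S\ref{sec:infinitesimal deformations}), and then note that the SDCI equation \eqref{eq:contacIns1} constrains only the horizontal part of $F_A$ — indeed $L_\sigma$ vanishes on $\Omega^2_V(M)$ — so the vertical component of $d_A\alpha$ is unconstrained and plays no role in the linearised equation. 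With that understood, ``$\alpha$ is an infinitesimal SDCI deformation'' means precisely that the $\Omega^2_{6\oplus1}$-component of $d_A\alpha$ vanishes, which is the statement $\alpha\in\Ker(d_7)$.

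The remaining point is to make precise what ``infinitesimal SDCI deformation'' means, namely that $\alpha$ is tangent to the space of SDCI connections modulo nothing yet (the gauge quotient is taken later when passing to $H^1(\rC)$). Concretely, $\alpha$ is an infinitesimal deformation iff there is a smooth family $A_t$ of SDCI connections with $A_0=A$ and $\dt A_t = \alpha$, \emph{to first order} — i.e. iff $\alpha$ lies in the kernel of the derivative of the SDCI operator $A\mapsto p(F_A)$ at $A$. Since that derivative is exactly $\alpha\mapsto p(d_A\alpha) = d_7\alpha$, the equivalence is immediate. I expect no real obstacle here; the one thing to be careful about is bookkeeping of the horizontal/vertical and $6/8/1$ splittings so that the target of $d_7$ is correctly identified, and flagging that genuine (not merely infinitesimal) deformations are governed by the full nonlinear equation whose obstruction is measured by $H^2(\rC)$, treated in the subsequent propositions.
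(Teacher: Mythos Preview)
Your approach is correct and more direct than the paper's. The paper works through the transverse complex structure: it decomposes $\alpha = \alpha^{(1,0)} + \alpha^{(0,1)}$, splits $d_A\alpha$ into bidegree components via $\partial_A,\bar\partial_A$, invokes Lemma~\ref{lem:SDcharacterization}(i) (that $\Omega^2_8$ consists of real $(1,1)$-forms orthogonal to $\omega$) to rewrite ``$d_A\alpha\in\Omega^2_8$'' as the three conditions $\partial_A\alpha^{(1,0)}=0$, $\bar\partial_A\alpha^{(0,1)}=0$, $\langle\partial_A\alpha^{(0,1)}+\bar\partial_A\alpha^{(1,0)},\omega\rangle=0$, and then verifies these are precisely the components of $d_7\alpha$. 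You short-circuit this: since $d_7=p\circ d_A$ by definition and the linearised SDCI condition is $p(d_A\alpha)=0$, the equivalence is essentially tautological. Your route is cleaner for the statement at hand; the paper's bidegree computation, while more elaborate, establishes in passing that $\partial_A^2=\bar\partial_A^2=0$ at an SDCI and makes the link to the transverse-holomorphic picture explicit, which feeds into \S\ref{sec:Geometry}.

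One correction to your handling of the vertical issue: the claim that ``the SDCI equation constrains only the horizontal part of $F_A$'' is not right. The equation $F_A=L_\sigma F_A$ has right-hand side in $\Omega^2_H$ (since $L_\sigma$ lands horizontally), so it \emph{forces} $(F_A)_V=0$. The way the paper absorbs the vertical constraint is through the identification \eqref{eq:identificationOfd7}: the map $L_{\ast\sigma}$ has kernel $\Omega^2_8\oplus\Omega^2_6$ and restricts to an isomorphism $\Omega^2_1\oplus\Omega^2_V\xrightarrow{\sim}\Omega^6$, so the vertical component of $d_A\alpha$ is carried into the target of $d_7$ via this identification (and then matched with $\Omega^2_6$ through $T$ in \eqref{eq:iso Omega 6 omega V}). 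The paper is itself somewhat loose here --- compare the domain $\Omega^1_H$ in \eqref{eq:d7} with the $\Omega^1$ in \eqref{eq:Complexd7} --- so your flagging of this subtlety is well-placed; just state the resolution as ``the vertical component is accounted for by the identification with $\Omega^6$'' rather than ``unconstrained''.
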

\begin{proof}
  Given a connection $A$ on the $G$-bundle  $E\to M$, the induced covariant exterior derivative $d_A\colon \Omega^k(\fg_E)\to \Omega^{k+1}(\fg_E)$ squares to an algebraic curvature operator:
\begin{equation}
\label{eq:d^2}
  d_A \circ d_A(\beta)
  =[\beta\wedge F_A],\quad \beta\in  \Omega^k(\fg_E).
\end{equation}
  Furthermore, the transversal complex structure splits $d_A=\partial_A +\ol{\partial}_A$, acting on  $\Omega^{p,q}_H(\fg_E)$ [cf. \eqref{eq: pq forms}] according to bi-degree:
\begin{equation}
\label{eq:partialA}
\begin{diagram}
  \node{}
  \node[2]{\Omega^{p+1,q}_H(\fg_E)}\\
  \node{\Omega^{p,q}_H(\fg_E)} \arrow{ene,t}{\partial_A } \arrow{ese,r}{\ol{\partial}_A }   \\
  \node{ }  \node[2]{\Omega^{p,q+1}_H(\fg_E)}
\end{diagram}.    
\end{equation}
  Let $\alpha\in\Omega^{p,q}(\fg_E)$, and note that 
$$
  d_A\circ d_A\alpha = \partial_A\partial_A\alpha  +\ol{\partial}_A\ol{\partial}_A\alpha+(\partial_A\ol{\partial}_A +\ol{\partial}_A\partial_A)\alpha. 
$$
  If $A$ is SDCI, i.e.,  $F_A$ is of type $(1,1)$ and orthogonal to $\omega$ [cf. Proposition \ref{prop: selfdual = HYM}], replacing  \eqref{eq:d^2} in the above formula and comparing bi-degree, we obtain:  
$$
  \partial_A\partial_A\alpha=0, \quad\ol{\partial}_A\ol{\partial}_A\alpha=0,
  \quad 
  (\partial_A\ol{\partial}_A +\ol{\partial}_A\partial_A)\alpha= [\alpha\wedge F_A].
$$
  On the other hand, let $\{A_t\}_{|t|<\epsilon}$ be a path of SDCI connections  with $A_0=A$. An infinitesimal deformation $\alpha:=\dtzero(A_t)\in\Omega^1(\fg_E)$ varies the curvature  by 
$
  F_{A_t}=F_A+td_A\alpha+t^2\alpha\wedge \alpha.
$ 
  Decomposing $\alpha=\alpha^{(1,0)}+\alpha^{(0,1)}$, in $(1,0)$ and $(0,1)$ part respectively,
\begin{align*}
        d_A\alpha 
        &= (\partial_A+\ol{\partial}_A)(\alpha^{(1,0)}+\alpha^{(0,1)})\\
        &=\partial_A\alpha^{(1,0)}+\ol{\partial}_A\alpha^{(0,1)}+(\partial_A\alpha^{(0,1)}+\ol{\partial}_A\alpha^{(1,0)}),
\end{align*}
  so, by Lemma \ref{lem:SDcharacterization} item $(i)$, the infinitesimal deformation $\alpha$ satisfies: 
$$
  \partial_A\alpha^{(1,0)}=0, 
  \quad
  \ol{\partial}_A\alpha^{(0,1)}=0,
  \qandq
  \langle (\partial_A\alpha^{(0,1)}+\ol{\partial}_A\alpha^{(1,0)}),\omega\rangle=0.
$$
  Using the definition of $d_7$ in \eqref{eq:d7}, the above relations give  
  \begin{align*}
  d_7(\alpha^{(1,0)}+\alpha^{(0,1)})
  &=\partial_A\alpha^{(1,0)}+\omega\otimes \langle \partial_A\alpha^{(0,1)}+\ol{\partial}_A\alpha^{(1,0)},\omega\rangle
  +\ol{\partial}_A\alpha^{(0,1)}.
  \qedhere
  \end{align*}
\end{proof}
  The linearisation at $A\in\cA$ of the $\cG$-action   is $-d_A\colon \Omega^0(\fg_E)\to\Omega^1(\fg_E)$, so a natural transverse `Coulomb' slice is given by the orthogonal complement $\Ker(d_A^\ast)$ of $\im(d_A)\subset\Omega^1(\fg_E)$ \cite[p.~131]{Donaldson1990}. For small $\epsilon>0$, we set:  
\begin{equation}
\label{eq:Te slice}
    S_{A,\epsilon}:=\{\alpha\in\Omega^1(M) \;\vert\;d^\ast_A(\alpha)=0,\;\Vert\alpha\Vert<\epsilon \},
\end{equation}  
where $\Vert\cdot\Vert$ is a suitable Sobolev norm. Following \cite[(4.2.6)]{Donaldson1990}, a   neighbourhood of $[A]\in\cB$ is described by the quotient of $S_{A,\epsilon}$ under $\pi\colon A\in \Omega^1(\fg_E)\mapsto [A]\in \cB=\cA/\cG$, as in the following \textit{slice lemma} (see also \cite{Freed1991}):
\begin{lemma}
\label{lem:slice lemma}
For every $A\in \cM^\ast$, there exists $\epsilon=\epsilon(A)>0$  such that the restriction
$\pi|_{{S_{A,\epsilon}}}$
is a homeomorphism onto an open set $\pi(S_{A,\epsilon})\subset\cB$.
\end{lemma}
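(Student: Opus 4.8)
The plan is to establish the slice lemma by the standard implicit function theorem argument adapted to the Hilbert Lie group action of $\cG$ on $\cA$, taking care of the Reeb-foliation setting only insofar as it affects the relevant Sobolev spaces. First I would work in the appropriate Sobolev completions $\cA^p_\ell$ and $\cG^p_{\ell+1}$ so that $\cG^p_{\ell+1}$ is a Hilbert Lie group acting smoothly on $\cA^p_\ell$, with $\ell$ large enough that the gauge group embeds in continuous maps; this is exactly the setup behind \cite[\S4.2]{Donaldson1990}. The local model of the orbit through $A$ is governed by the infinitesimal action $\rho_A\colon\Omega^0(\fg_E)\to\Omega^1(\fg_E)$, $u\mapsto d_A u$, whose formal adjoint is $d_A^\ast$; irreducibility of $A$ means $\Ker d_A=0$ on $\Omega^0(\fg_E)$, hence the stabiliser of $A$ in $\cG$ is the (finite) centre of $G$, which acts trivially, so the orbit map is an immersion near $A$.

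The key steps, in order: (1) Define $F\colon S_{A,\epsilon}\times\cG\to\cA$ by $F(\alpha,g)=g\cdot(A+\alpha)$ and show its differential at $(0,\id)$ is surjective onto $\Omega^1(\fg_E)$ with the splitting $\Omega^1(\fg_E)=\Ker d_A^\ast\oplus\im d_A$; surjectivity here uses precisely that $d_A d_A^\ast$ is an elliptic (self-adjoint, nonnegative) operator on $\Omega^0(\fg_E)$ and that, by irreducibility, it is \emph{invertible}, so every $\xi\in\Omega^1(\fg_E)$ decomposes as $\xi=\xi_0+d_A u$ with $\xi_0\in\Ker d_A^\ast$. (2) Apply the implicit function theorem in Hilbert manifolds to conclude that $F$ restricts to a diffeomorphism from a neighbourhood of $(0,\id)$ onto a neighbourhood of $A$ in $\cA$; equivalently, for $\epsilon$ small, every connection $\cG$-near $A$ is gauge-equivalent to a unique point of $S_{A,\epsilon}$. (3) Translate this into the statement that $\pi|_{S_{A,\epsilon}}$ is injective and open onto its image, hence a homeomorphism onto an open subset of $\cB$; here one also uses that $\cB$ is Hausdorff (already recorded in the excerpt) to upgrade the continuous bijection to a homeomorphism, and one passes from the Sobolev model back to smooth connections by elliptic regularity, since $d_A^\ast\alpha=0$ together with any elliptic slice condition bootstraps smoothness.

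The main obstacle I expect is \emph{injectivity} of $\pi|_{S_{A,\epsilon}}$, i.e.\ ruling out that two distinct small Coulomb-gauge representatives $\alpha,\alpha'\in S_{A,\epsilon}$ lie in the same $\cG$-orbit. This is the familiar subtlety: if $g\cdot(A+\alpha)=A+\alpha'$ with both in Coulomb gauge, one must show $g$ is close to $\id$ and then invoke the local diffeomorphism from step (2); the standard remedy is a compactness/continuity argument showing that a gauge transformation moving a point of $S_{A,\epsilon}$ to another point of $S_{A,\epsilon}$ must itself be $\cC^0$-close to a constant central element, which forces $\alpha=\alpha'$ once $\epsilon$ is chosen small enough depending on $A$. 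Everything else — smoothness of the action, ellipticity of $d_A^\ast d_A$, the Hodge-type splitting of $\Omega^1(\fg_E)$ — is routine and foliation-independent, since these are pointwise-elliptic (not merely transverse-elliptic) operators on the compact manifold $M$; the transverse structure plays no role until the deformation complex of contact instantons is analysed in the subsequent propositions.
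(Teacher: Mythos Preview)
Your proposal is correct and is precisely the standard argument the paper has in mind: the paper does not actually prove this lemma but merely cites \cite[(4.2.6)]{Donaldson1990} and \cite{Freed1991}, and your outline is exactly the Donaldson--Kronheimer/Freed--Uhlenbeck proof (Sobolev completions, Hodge splitting $\Omega^1(\fg_E)=\Ker d_A^\ast\oplus\im d_A$ via invertibility of $d_A^\ast d_A$ on an irreducible connection, implicit function theorem, and the injectivity step). You are also right that the transverse structure plays no role here and that the operators involved are genuinely elliptic on the compact $M$.
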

In view of the slice condition in \eqref{eq:Te slice}, we consider the restriction of  \eqref{eq:d7} to $\Ker(d^\ast_A)$ as follows:
$$
d_7\vert_{\Ker(d^\ast_A)} \colon \Ker(d^\ast_A)\subset \Omega^1(\fg_E)\to\Omega_{6\oplus 1}^2(\fg_E). 
$$
\begin{proposition}
\label{prop:deformationSpace}
    In the context of Theorem \ref{thm:Intro}, the infinitesimal deformations of an irreducible  selfdual  contact instanton $A$ are described by the complex 
    \begin{equation}
    \label{eq:deforamtionComplex 1}
        \rC^\bullet:\quad 0\to\Omega^0(\fg_E) \xrightarrow[{  }]{ d_A}\Omega^1(\fg_E) \xrightarrow[{  }]{d_7=p\circ d_A}\Omega_{6\oplus 1}(\fg_E)\to 0.
    \end{equation}
    The tangent space $T_{[A]}\cM^\ast$ is isomorphic to the first cohomology group  $H^1(\rC):=\frac{\Ker(d_7)}{\im(d_A)}$. 
\begin{proof}
  We know from Proposition \ref{prop:kerd7} that infinitesimal SDCI deformations of $A$ lie in $\Ker(d_7)$. On the other hand, if $A$ is an irreducible SDCI connection, then   $d_7\circ d_A=p\circ F_A=0$, so   \eqref{eq:deforamtionComplex 1} is indeed a complex. Furthermore the derivative of the action of the gauge group $\cG$ at $A$ is $-d_A\colon \Omega^0(\fg)\to \Omega^1(\fg)$, i.e.,  the   tangent space to the    $\cG$-orbit is $\im (d_A)$  \cite[p.~37]{Donaldson1990}, hence $H^1(\rC)$ represents the tangent space $T_{[A]}\cM^\ast$ at $[A]$.   
\end{proof}
\end{proposition}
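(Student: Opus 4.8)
The plan is to deduce the statement from Proposition~\ref{prop:kerd7}, which already identifies the infinitesimal SDCI deformations of $A$ with $\Ker(d_7)$, together with the standard description of the gauge orbit through $A$. Two things then remain to be checked: that \eqref{eq:deforamtionComplex 1} is genuinely a cochain complex, so that $H^1(\rC)$ is defined, and that the quotient of $\Ker(d_7)$ by the tangent space to the $\cG$-orbit is exactly $H^1(\rC)$. The one point that does not follow from this formal bookkeeping is that $H^1(\rC)$ is finite-dimensional, since $\rC^\bullet$ is not elliptic; that is where I expect the real work to lie.

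\emph{The complex condition.} I would first check that $\rC^\bullet$ is a complex. For $\xi\in\Omega^0(\fg_E)$, formula~\eqref{eq:d^2} gives $d_Ad_A\xi=[\xi\wedge F_A]$. Since $A$ is a selfdual contact instanton, Proposition~\ref{prop: selfdual = HYM} places $F_A\in\Omega^2_8(\fg_E)$, and as the bracket acts only on the $\fg_E$-factor it preserves the pointwise decomposition~\eqref{eq:2formsDecomposition}, so $[\xi\wedge F_A]\in\Omega^2_8(\fg_E)=\Ker p$ and hence $d_7\circ d_A=p\circ d_A\circ d_A=0$. Thus $\rC^\bullet$ is a complex and $H^1(\rC)=\Ker(d_7)/\im(d_A)$ is well defined; moreover, since $G$ is semisimple and $A$ is irreducible, $d_A\colon\Omega^0(\fg_E)\to\Omega^1(\fg_E)$ is injective, so $H^0(\rC)=0$.

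\emph{The tangent space.} By the slice lemma (Lemma~\ref{lem:slice lemma}), a neighbourhood of $[A]$ in $\cB^\ast$ is parametrised by the Coulomb slice $S_{A,\epsilon}$ of \eqref{eq:Te slice}, and under this identification $\cM^\ast$ corresponds to $\{\alpha\in S_{A,\epsilon}\mid p(F_{A+\alpha})=0\}$. Writing $F_{A+\alpha}=F_A+d_A\alpha+\tfrac12[\alpha\wedge\alpha]$ and using $p(F_A)=0$, the defining equation reads $d_7(\alpha)+\tfrac12\,p([\alpha\wedge\alpha])=0$, whose linearisation at $\alpha=0$ is $\alpha\mapsto d_7(\alpha)$; hence the Zariski tangent space of $\cM^\ast$ at $[A]$ is $\Ker(d_7)\cap\Ker(d_A^\ast)\subset\Omega^1(\fg_E)$, consistent with Proposition~\ref{prop:kerd7}. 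The $L^2$-orthogonal splitting $\Omega^1(\fg_E)=\im(d_A)\oplus\Ker(d_A^\ast)$ on the closed manifold $M$ then shows that each class in $H^1(\rC)$ has a unique representative in $\Ker(d_A^\ast)$, necessarily lying in $\Ker(d_7)$ by the complex property above, and conversely $\Ker(d_7)\cap\Ker(d_A^\ast)$ injects into $H^1(\rC)$; this gives the isomorphism $T_{[A]}\cM^\ast\cong H^1(\rC)$. Equivalently, one may argue directly: infinitesimal SDCI deformations form $\Ker(d_7)$, the $\cG$-orbit through $A$ has tangent space $\im(d_A)$, and the moduli tangent space is their quotient.

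\emph{The main obstacle: finiteness.} The complex $\rC^\bullet$ is not elliptic, since $p$ annihilates the rank-eight summand $\Omega^2_8$ and the principal symbol sequence of $\rC^\bullet$ therefore fails to be exact; consequently Hodge theory does not apply to $\rC^\bullet$ as it stands, and finite-dimensionality of $H^1(\rC)$ cannot be concluded directly. The strategy I would follow is the one indicated in the outline: enlarge $\rC^\bullet$ to an elliptic complex as in Proposition~\ref{prop:ElliptExtendComplex}, restoring the missing $\Omega^2_8$-component together with a top-degree term so that the symbol sequence becomes (essentially) the de Rham one, apply elliptic regularity and finite-dimensionality of harmonic spaces on the closed manifold $M$ to the enlarged complex, and then verify that its degree-one cohomology agrees with $H^1(\rC)$. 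This both makes $T_{[A]}\cM^\ast$ finite-dimensional and provides the scaffolding for the index-theoretic statements of Theorem~\ref{thm:Intro}.
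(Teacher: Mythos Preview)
Your proposal is correct and follows essentially the same approach as the paper: invoke Proposition~\ref{prop:kerd7} for $\Ker(d_7)$, verify $d_7\circ d_A=0$ using $F_A\in\Omega^2_8=\Ker p$, and identify the gauge-orbit tangent space with $\im(d_A)$. Your version is in fact more explicit than the paper's (which compresses the complex condition to a single line), and your final paragraph correctly anticipates that finite-dimensionality is deferred to the extended elliptic complex of Proposition~\ref{prop:ElliptExtendComplex} rather than being part of this proposition's proof.
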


Proposition \ref{prop:deformationSpace} shows Theorem \ref{thm:Intro}--(i). Observe  that, in contrast to the $4$-dimensional case, the complex $\rC^\bullet$ in \eqref{eq:Complexd7}   is not elliptic (eg. by comparing bundle ranks  in \eqref{eq:deforamtionComplex 1}), so we do not know a priory whether $H^1(\rC)$ is finite-dimensional. In the next section,  we show  in Proposition \ref{prop:ElliptExtendComplex} that this is indeed the case, but we must resort to an auxiliary construction.

\subsection{The extended complex of selfdual contact instantons}
\label{sec:extended complex}

Let us prove that the first cohomology group $ H^1(\rC)$ in \eqref{eq:Complexd7} is finite-dimensional.
\begin{lemma}
\label{lem:L|estrellaPhi}
    Let $\sigma=\eta\wedge d\eta$ the natural $3$-form on the Sasakian manifold $(M^7,\eta,\xi,g,\Phi)$, as in \eqref{eq:varphi}. The linear map
    \begin{equation}
    \label{eq:L|estrellaPhi}
    \begin{matrix}
        {L_{\ast\sigma}\colon }&{\Omega^2(M) }&{\to}&{\Omega^6(M) }\\
        { }&{\alpha }&{\mapsto } &{\alpha\wedge\ast\sigma}
    \end{matrix} 
    \end{equation}
    satisfies 
        \[
        L_{\ast\sigma}\vert_{\Omega^{2}_{8}\oplus\Omega^{2}_{6}}\equiv 0,
        \qandq
        L_{\ast\sigma}\vert_{\Omega^{2}_{1}\oplus\Omega^{2}_{V}}\colon\Omega^{2}_{1}\oplus\Omega^{2}_{V }\xrightarrow[{}]{\simeq}\Omega^6(M).   \]
\end{lemma}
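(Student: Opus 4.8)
The plan is to first compute $\ast\sigma$ explicitly and then reduce the whole statement to pointwise transverse Kähler linear algebra on the horizontal distribution.

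\emph{Identifying $\ast\sigma$.} Since $\sigma=\eta\wedge d\eta$ with $d\eta=\omega$ horizontal, I would apply \eqref{eq:compaEtaystar} with $\beta=d\eta\in\Omega^2(M)$ to get $\ast\sigma=i_\xi(\ast\,d\eta)$. Then \eqref{eq:star transverse star} together with the transverse Kähler identity $\ast_T\omega=\tfrac12\omega^2$ (valid because the transverse complex dimension is $3$) gives $\ast\,\omega=\tfrac12\,\omega^2\wedge\eta$; contracting with $\xi$ and using $i_\xi\omega=0$ and $i_\xi\eta=1$ yields $\ast\sigma=\tfrac12\,\omega^2$, so that $L_{\ast\sigma}(\alpha)=\tfrac12\,\alpha\wedge\omega^2$ for every $\alpha\in\Omega^2(M)$.

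\emph{Vanishing on $\Omega^2_8\oplus\Omega^2_6$.} For $\alpha\in\Omega^2_8$, Lemma \ref{lem:SDcharacterization}(i) identifies $\alpha$ with a real $(1,1)$-form orthogonal to $\omega$, hence primitive, and the hard Lefschetz relations on the transverse complex $3$-fold force $\alpha\wedge\omega^2=0$. For $\alpha\in\Omega^2_6$, Lemma \ref{lem:SDcharacterization}(ii) writes $\alpha=\beta+\ol\beta$ with $\beta$ of type $(2,0)$; then $\beta\wedge\omega^2$ has type $(4,2)$ and $\ol\beta\wedge\omega^2$ type $(2,4)$, and both vanish because $\Lambda^{4,2}H_\C^\ast=\Lambda^{2,4}H_\C^\ast=0$ on a rank-$3$ transverse complex bundle. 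Hence $L_{\ast\sigma}$ annihilates $\Omega^2_6\oplus\Omega^2_8$.

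\emph{Isomorphism on $\Omega^2_1\oplus\Omega^2_V$.} Here I would use the coframe splitting $\Omega^6(M)=\Omega^6_H(M)\oplus\bigl(\eta\wedge\Omega^5_H(M)\bigr)$, with $\Omega^6_H(M)=\R\,\omega^3$ one-dimensional and $\eta\wedge\Omega^5_H(M)$ six-dimensional, so $\dim\Omega^6(M)=7=\dim(\Omega^2_1\oplus\Omega^2_V)$. Writing a general element of $\Omega^2_1\oplus\Omega^2_V$ as $c\,\omega+\eta\wedge\gamma$ with $c\in\R$ and $\gamma\in\Omega^1_H(M)$, one gets $L_{\ast\sigma}(c\,\omega+\eta\wedge\gamma)=\tfrac12\bigl(c\,\omega^3+\eta\wedge\gamma\wedge\omega^2\bigr)$, whose two summands lie in the complementary pieces $\Omega^6_H(M)$ and $\eta\wedge\Omega^5_H(M)$. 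Since $\omega^3\neq 0$ and $\gamma\mapsto\gamma\wedge\omega^2$ is injective $\Omega^1_H(M)\to\Omega^5_H(M)$ (hard Lefschetz for the transverse symplectic form on a rank-$6$ bundle), this restriction is injective, hence bijective by the dimension count. Combined with the previous step and the orthogonal splitting \eqref{eq:2formsDecomposition}, this gives the Lemma. The only genuine obstacle is bookkeeping with the transverse Kähler/Lefschetz facts; since these are pointwise statements on the Hermitian space $(H_x,J_x,\omega_x)$ they hold verbatim, but one should pin down conventions — alternatively, Steps two and three can be checked by brute force in the Darboux frame, verifying $v_i\wedge\ast\sigma=w_j\wedge\ast\sigma=0$, $v\wedge\ast\sigma=\tfrac12\omega^3\neq 0$, and that $\{dx^{i7}\wedge\ast\sigma\}_{i=1}^{6}$ are linearly independent $6$-forms.
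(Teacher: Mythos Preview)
Your argument is correct. Both you and the paper first identify $\ast\sigma=\tfrac12\,\omega^2$, but from there the routes diverge. The paper works entirely in the Darboux frame: it writes out $\ast\sigma=-(dx^{2356}+dx^{1346}+dx^{1245})$ and then verifies both claims by direct computation on the explicit bases $\{w_i\}_{i=1}^8$, $\{v_j\}_{j=1}^6$ and $\{dx^{i7}\}_{i=1}^6$. Your approach is more structural: the vanishing on $\Omega^2_8$ and $\Omega^2_6$ comes from the pointwise Lefschetz relation $\omega^2\wedge\alpha=0$ for primitive $2$-forms and from bidegree excess for $(2,0)+(0,2)$-forms, while the isomorphism on $\Omega^2_1\oplus\Omega^2_V$ follows from the dimension count together with injectivity of $L_\omega^2\colon\Lambda^1H^\ast\to\Lambda^5H^\ast$. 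What your route buys is transparency about \emph{why} these subspaces behave as they do (and it would port directly to other transverse complex dimensions), whereas the paper's brute-force check requires no appeal to transverse K\"ahler linear algebra and is self-contained once the Darboux bases have been set up in \S\ref{sec: eigenspaces of Lphi}. Your closing remark that one can always fall back on the coordinate verification is exactly the paper's proof.
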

\begin{proof}
  The transverse  symplectic $2$-form $d\eta=:\omega$ is given in local Darboux coordinates  \cite[Theorem.~3.1]{blair2010contact}  by $d\eta=dx^{14}+dx^{25}+dx^{36}$, so $d\eta^2=-2(dx^{1245}+dx^{1346}+dx^{2356})$, and therefore $\ast\sigma$ is given   by: 
\begin{equation}
\label{eq:ast phi}
  \ast\sigma=-\left( dx^{2356}+dx^{1346}+dx^{1245}\right)=\frac{1}{2}d\eta^2.
\end{equation}
  Using the decomposition \eqref{eq:2formsDecomposition} of $\Omega^2(M)$ to express $L_\sigma$ in the bases  $\{w_i\}_{i=1}^8$ and $\{v_j\}_{j=1}^6$ from \eqref{eq:omega2}, we obtain both claims by direct computation. 
\end{proof}

In terms of the projected differential \eqref{eq:Complexd7}, the previous Lemma gives the identification
\begin{equation}
\label{eq:identificationOfd7}
  d_7=L_{\ast\sigma}\circ d_A,
\end{equation}
since   $L_{\ast\sigma}\vert_{\Omega^{2}_{1}\oplus\Omega^{2}_{V}}\colon\Omega^{2}_{1}\oplus\Omega^{2}_{V }\xrightarrow[{}]{\sim}\Omega^6(M)$ [cf. Lemma \ref{lem:L|estrellaPhi}]  and,  in local Darboux coordinates  \cite[Theorem.~3.1]{blair2010contact}, we can define an isomorphism $T\colon\Omega_V^2\xrightarrow[]{\sim} \Omega^2_6$ by   \begin{equation}
 \label{eq:iso Omega 6 omega V}
\begin{array}{*{3}c}
  T(dx^{17})=-v_5 & T(dx^{27})=-v_3, & T(dx^{37})=-v_1  \\[3pt]
  T(dx^{47})=-v_6 & T(dx^{57})=-v_4, & T(dx^{67})=-v_2  
\end{array} .
\end{equation} 
These identifications can be  summarised  in the following diagram:
 \[
  \begin{diagram}
  \node{\Omega^1(\fg_E)}\arrow{se,t}{ d_A}\arrow[2]{e,t}{d_7:=p\circ d_A}\node[2]{\Omega^2_6\oplus\Omega^2_1}\\
  \node{} 
  \node{\Omega^2(\fg_E)}\arrow{ne,t}{P} \arrow{s,l}{L_{\ast\sigma}} 
  \node{}\\ 
  \node{} 
  \node{\Omega^6(M)}\arrow{e,b}{L_{\ast\sigma}^{-1}} \node{\Omega^2_V\oplus\Omega^2_1.} \arrow[2]{n,b}{T\oplus \mathbbm{1} } 
  \end{diagram}
\]
It is worth mentioning that a \emph{canonical} isomorphism $\Omega_{V}^2\simeq\Omega_{6}^{2}$ is given by the global transverse holomorphic volume form, in the  special case of contact Calabi-Yau $7$-manifolds, cf. Definition \ref{def:cCY} and Lemma \ref{lem:L|epsilon} below.
  
  From the definition \eqref{eq:modulispace} of $\cM$, we know that this space is described  (modulo gauge) near an instanton $A$ as the zero locus of the map
\begin{equation}
\label{eq:mapZeroSection}
    \Psi\colon \alpha\in S_{A,\epsilon}\subset\cA 
    \to p(d_A\alpha+\alpha\wedge\alpha)\in\Omega^2_{6\oplus 1}(\fg_E),
\end{equation}
  where the neighbourhood $ S_{A,\epsilon}$ from \eqref{eq:Te slice} is transversal to $\cG $-orbits. Let us check that $\Psi$ is a Fredholm map, so that standard theory provides a finite-dimensional local model for $\cM$.  The linearisation of $\Psi$ at the origin is 
$$ 
D(\Psi)_0=p\circ d_A=d_7,
$$ 
and  $d_7|_{\Ker(d^\ast_A)} \colon \Omega^1(\fg_E)\to \Omega^2_{6\oplus 1}(\fg_E)$  is shown to be Fredholm via the `Euler characteristic' map 
\begin{equation}
\label{eq:Euler characteristic}
    \mathbbm{D}_A:=d_7\oplus d^\ast_A\colon\Omega^1(\fg_E)\to\left(\Omega^2_{6\oplus 1}\oplus\Omega^0\right)(\fg_E)
\end{equation}
associated to the complex $\rC^\bullet$ \eqref{eq:Complexd7}.  Note that: 
$$  
H^0(\rC)\cong \Ker(d_A), 
\quad  
H^1(\rC):=\frac{\Ker(d_7)}{\im(d_A)}\cong \Ker(\mathbbm{D}_A)
\qandq 
H^2(\rC)\cong \text{Coker}(d_7).
$$
By \eqref{eq:identificationOfd7}, we identify $d_7$ with  $L_{\ast\sigma}\circ d_A\colon \Omega^1(\fg_E)\to \Omega^6(\fg_E)$, then  we  can consider the \emph{extended complex}
\begin{equation}
\label{eq:extendedComplex}
    \begin{tikzcd}
        D^\bullet\colon 0\arrow[r] 
        &\Omega^0(\fg_E) \arrow[r,"d_A"]
        & \Omega^1(\fg_E) \arrow[r, "L_{\ast\sigma}\circ d_A"]
        & \Omega^6(\fg_E) \arrow[r,"d_A"]
        & \Omega^7(\fg_E) \arrow[r]
        &  0
    \end{tikzcd}. 
\end{equation}
If  \eqref{eq:extendedComplex} is elliptic, then $\mathbbm{D}_A$ is Fredholm and, in particular, $\Ker(\mathbbm{D}_A)$ is finite-dimensional. To see that explicitly, we will need the following elementary technical facts:
\begin{lemma}
\label{lem:adjunto d7}
    \quad
    \begin{enumerate}[(i)]
        \item 
        Let $L_{\ast\sigma}\colon \Omega^2(M)  \to \Omega^6(M)$ the operator defined in  \eqref{eq:L|estrellaPhi}, then $[L_{\ast\sigma},d_A]=0$.
    
        \item 
        The formal adjoint of $d_7$ defined by \eqref{eq:d7} is given by  $d_7^\ast=\ast d_7\ast\colon\Omega^6(\fg_E)\to\Omega^1(\fg_E)$.
    \end{enumerate}
\end{lemma}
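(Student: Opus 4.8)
The statement to prove is Lemma~\ref{lem:adjunto d7}, with two parts: (i) $[L_{\ast\sigma},d_A]=0$, and (ii) the formal adjoint of $d_7$ is $d_7^\ast = \ast d_7 \ast$ on $\Omega^6(\fg_E)\to\Omega^1(\fg_E)$.

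For part (i), the plan is to recall that $L_{\ast\sigma}$ is just wedging with the fixed $6$-form $\ast\sigma = \tfrac12 d\eta^2$ (computed explicitly in \eqref{eq:ast phi}), which is a \emph{closed} form since $d(d\eta^2)=0$. For any $\fg_E$-valued $2$-form $\beta$, the Leibniz rule for $d_A$ on the product of a scalar form and a $\fg_E$-valued form gives $d_A(\beta\wedge \ast\sigma) = (d_A\beta)\wedge\ast\sigma + (-1)^{2}\beta\wedge d(\ast\sigma) = (d_A\beta)\wedge\ast\sigma$. Hence $d_A\circ L_{\ast\sigma} = L_{\ast\sigma}\circ d_A$ on $\Omega^2(\fg_E)$, which is exactly $[L_{\ast\sigma},d_A]=0$. (Strictly, $L_{\ast\sigma}$ should be read as landing in $\Omega^6(\fg_E)$ and $d_A$ as the subsequent differential $\Omega^6(\fg_E)\to\Omega^7(\fg_E)$, while on the left $d_A\colon\Omega^1\to\Omega^2$ precedes $L_{\ast\sigma}$; the identity is the commutativity of the relevant square.)

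For part (ii), the plan is to use the identification $d_7 = L_{\ast\sigma}\circ d_A$ from \eqref{eq:identificationOfd7}, viewing $d_7$ as a map $\Omega^1(\fg_E)\to\Omega^6(\fg_E)$ (via the isomorphism $L_{\ast\sigma}|_{\Omega^2_1\oplus\Omega^2_V}$ and the summary diagram). Its formal adjoint with respect to the inner product \eqref{eq:innerProduct} is $d_7^\ast = (L_{\ast\sigma}\circ d_A)^\ast = d_A^\ast \circ L_{\ast\sigma}^\ast$. Now $L_{\ast\sigma}^\ast$ is the adjoint of wedging with the $6$-form $\ast\sigma$; since $\ast\sigma = \ast(\sigma\wedge 1)$ behaves like the other Hodge-type operators, one checks on homogeneous forms that $L_{\ast\sigma}^\ast = \ast\, L_{\ast\sigma}\,\ast$ up to the usual sign $(-1)^{k(n-k)}$ which is trivial here because the relevant degrees are $2$ and $6$ in dimension $7$ (indeed $\ast L_{\ast\sigma}\ast$ maps $\Omega^6\to\Omega^2$, matching $L_{\ast\sigma}^\ast$). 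Combined with $d_A^\ast = -\ast d_A \ast$ (in odd dimension $7$ the sign on $2$- and $6$-forms works out cleanly), we get $d_7^\ast = d_A^\ast\circ (\ast L_{\ast\sigma}\ast) = (-\ast d_A\ast)\circ(\ast L_{\ast\sigma}\ast) = -\ast d_A L_{\ast\sigma}\ast$; by part (i) this equals $-\ast L_{\ast\sigma} d_A \ast = -\ast(L_{\ast\sigma}\circ d_A)\ast$. One then absorbs the sign into the convention so that, on $\Omega^6(\fg_E)$, $d_7^\ast = \ast\, d_7\, \ast$, as claimed. The cleanest way to present this is to verify the adjunction directly: for $\alpha\in\Omega^1(\fg_E)$ and $\gamma\in\Omega^6(\fg_E)$, write $(d_7\alpha,\gamma)_M = \int_M \langle d_7\alpha\wedge\ast\gamma\rangle_\fg$, substitute $d_7\alpha = L_{\ast\sigma}d_A\alpha$, integrate by parts using that $M$ has no boundary (or the boundary terms are handled as elsewhere) and the Leibniz rule, and push the Hodge stars through using $\ast^2 = (-1)^{k(7-k)} = \mathrm{id}$ on even-degree forms in dimension $7$.

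The main obstacle will be bookkeeping the signs and the precise domains/codomains: $L_{\ast\sigma}$ is only an isomorphism when restricted to $\Omega^2_1\oplus\Omega^2_V$, so one must be careful that the composite $\ast d_7 \ast\colon \Omega^6(\fg_E)\to\Omega^1(\fg_E)$ is literally the adjoint of $d_7\colon\Omega^1(\fg_E)\to\Omega^6(\fg_E)$ in the extended-complex picture \eqref{eq:extendedComplex}, and not off by a sign or a projection. Everything reduces to the standard facts $\ast^2=\mathrm{id}$ on $\Omega^{\text{even}}$ in dimension $7$, $d_A^\ast=-\ast d_A\ast$, the closedness of $\ast\sigma$, and the Leibniz rule; there is no conceptual difficulty, only the need to fix conventions consistently with \eqref{eq:innerProduct} and \eqref{eq:identificationOfd7}.
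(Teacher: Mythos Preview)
Your approach is essentially the same as the paper's: for (i) you use closedness of $\ast\sigma=\tfrac12(d\eta)^2$ and Leibniz, exactly as the paper does; for (ii) the paper carries out precisely the direct verification you call ``cleanest'', computing $\langle d_7\alpha,\beta\rangle=\langle\alpha,\ast d_7\ast\beta\rangle$ by rewriting $d_7\alpha=\ast\sigma\wedge d_A\alpha$, commuting factors, inserting $\ast^2=\mathrm{id}$, and finally invoking (i).

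One correction to your algebraic sketch: in dimension $7$ (odd), $\ast^2=\mathrm{id}$ on \emph{all} degrees, and the codifferential on $\Omega^k$ is $d_A^\ast=(-1)^k\ast d_A\ast$; in particular on $\Omega^2$ one has $d_A^\ast=+\ast d_A\ast$, not $-\ast d_A\ast$. So there is no sign to ``absorb into the convention'': the computation $d_7^\ast=d_A^\ast\circ(\ast L_{\ast\sigma}\ast)=\ast d_A L_{\ast\sigma}\ast=\ast L_{\ast\sigma}d_A\ast=\ast d_7\ast$ goes through cleanly once you use the correct sign. Your direct-verification route sidesteps this entirely and matches the paper line for line.
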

\begin{proof}
    For $(i)$,  note from \eqref{eq:ast phi} that $d(\ast\sigma)=d(d\eta^2)=0$, so the assertion is straightforward: 
    $$
    d_A(L_{\ast\sigma}\alpha)
    =d_A(\ast\sigma\wedge\alpha) 
    =(d\ast\sigma)\wedge\alpha
    +\ast\sigma\wedge (d_A\alpha)
    =L_{\ast\sigma}(d_A\alpha).
    $$
    For  \emph{(ii)} we use  \emph{(i)} and   the identification in   \eqref{eq:identificationOfd7}; for any $\alpha\in\Omega^1(\fg_E)$ and $\beta\in \Omega^6(\fg_E)$, 
\begin{align*}
    \langle d_7\alpha,\beta  \rangle
    &= (\ast\sigma\wedge d_A\alpha)\wedge\ast\beta= d_A\alpha\wedge\ast^2(\ast\sigma\wedge\ast\beta)\\
    &= d_A\alpha\wedge \ast(\ast L_{\ast\sigma}\ast(\beta))\\
    &= \langle d_A\alpha,\ast L_{\ast\sigma}\ast(\beta) \rangle\\
    &= \langle  \alpha,\ast d_A L_{\ast\sigma}\ast(\beta) \rangle\\
    &= \langle  \alpha,\ast d_7\ast(\beta) \rangle
    \qedhere
\end{align*}
\end{proof}

\begin{proposition}
\label{prop:ElliptExtendComplex}

    If $A$ is a connection with curvature $F_A\in\Omega^2_{8}$, the extended complex \eqref{eq:extendedComplex} is elliptic.
\end{proposition}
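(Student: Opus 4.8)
The plan is to verify ellipticity by computing the symbol complex at an arbitrary nonzero covector $\zeta \in T_x^*M$ and showing it is exact. Since ellipticity is a pointwise, zeroth-order-torsion-free condition, I may replace $d_A$ by the ordinary exterior derivative $d$ and work in the local Darboux coordinates set up in \S\ref{sec: eigenspaces of Lphi}. The symbol of $d$ at $\zeta$ is $e_\zeta := \zeta \wedge (\cdot)$, and by Lemma \ref{lem:adjunto d7}--(i) the symbol of $L_{\ast\sigma} \circ d_A$ is $e_{\ast\sigma} \circ e_\zeta = \ast\sigma \wedge \zeta \wedge (\cdot)$, since $L_{\ast\sigma}$ is an algebraic (order-zero) operator commuting with $d_A$. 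So the symbol sequence reads
\begin{equation}
\label{eq:symbolseq}
    \begin{tikzcd}
        0\arrow[r]
        &\Lambda^0 \arrow[r,"e_\zeta"]
        & \Lambda^1 \arrow[r, "(\ast\sigma)\wedge\zeta\wedge\,\cdot"]
        & \Lambda^6 \arrow[r,"e_\zeta"]
        & \Lambda^7 \arrow[r]
        &  0
    \end{tikzcd}
\end{equation}
(tensored with $\fg_E$, which plays no role), where $\Lambda^k := \Lambda^k(T_x^*M)$, and I must show it is exact for every $\zeta \neq 0$.

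\textbf{Key steps.} Exactness at $\Lambda^0$ is immediate: $e_\zeta$ is injective on $\Lambda^0$ since $\zeta \neq 0$. Exactness at $\Lambda^7$ is the dual statement: $e_\zeta \colon \Lambda^6 \to \Lambda^7$ is surjective, again because $\zeta \neq 0$ (any $7$-form is $\zeta \wedge \beta$ for a suitable $\beta$). For exactness at $\Lambda^1$: the composite is zero because $\zeta \wedge \zeta = 0$, so $\im(e_\zeta) \subseteq \Ker\big((\ast\sigma)\wedge\zeta\wedge\,\cdot\big)$; conversely, if $\alpha \in \Lambda^1$ with $\ast\sigma \wedge \zeta \wedge \alpha = 0$, I claim $\alpha \in \zeta \wedge \Lambda^0 = \R\zeta$. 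Equivalently, after wedging with $\zeta$, the two-form $\zeta \wedge \alpha$ lies in the kernel of $\ast\sigma \wedge (\cdot) = L_{\ast\sigma}$ composed with nothing — more precisely I need: if $\beta \in \Lambda^2$ is decomposable of the form $\zeta \wedge \alpha$ and $\ast\sigma \wedge \beta = 0$, then $\beta = 0$. By Lemma \ref{lem:L|estrellaPhi}, $\Ker(L_{\ast\sigma}) = \Omega^2_8 \oplus \Omega^2_6$, so the claim reduces to: a decomposable $2$-form $\zeta \wedge \alpha$ lying in $\Omega^2_8 \oplus \Omega^2_6 = \Omega^2_H$ and annihilated by $\ast\sigma$ must vanish — but actually a cleaner route is to observe $\zeta \wedge \alpha \in \Omega^2_6 \oplus \Omega^2_8$ forces (using the explicit basis $\{v_i\},\{w_j\}$ from \eqref{eq:omega2}) that $\zeta\wedge\alpha$, being simultaneously decomposable and in this $14$-dimensional non-decomposable-generic subspace, is zero; then $\zeta \wedge \alpha = 0$ with $\zeta \neq 0$ gives $\alpha \in \R\zeta$. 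Exactness at $\Lambda^6$ is Hodge-dual to exactness at $\Lambda^1$: applying $\ast$ and using $\ast e_\zeta \ast = \pm \iota_{\zeta^\sharp}$ together with Lemma \ref{lem:adjunto d7}--(ii), the symbol sequence \eqref{eq:symbolseq} is self-adjoint up to the Hodge star, so exactness at $\Lambda^1$ and at $\Lambda^6$ are equivalent statements; alternatively one checks directly that $\Ker(e_\zeta\colon\Lambda^6\to\Lambda^7)$ has dimension $\binom{6}{5}=6$ and coincides with the image of $(\ast\sigma)\wedge\zeta\wedge\,\cdot$ by a rank count, using that this middle map has rank $7 - \dim\Ker = 7 - 1 = 6$.

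\textbf{Main obstacle.} The delicate point is exactness in the middle, i.e. establishing that the only decomposable $2$-form of the form $\zeta \wedge \alpha$ annihilated by $\ast\sigma \wedge (\cdot)$ is the zero form, equivalently that $\Ker(L_{\ast\sigma}) = \Omega^2_6 \oplus \Omega^2_8$ contains no nonzero form of rank $\le 2$ divisible by a \emph{fixed} nonzero $\zeta$ — and the rank/dimension bookkeeping has to be done uniformly in $\zeta$. I expect the most efficient way to do this honestly is a short linear-algebra argument: decompose $\zeta = \zeta_H + \zeta_V$ into horizontal and vertical parts \eqref{eq:HorizontalVerticalDecomposition}, split $\alpha$ likewise, and use the explicit action of $L_\sigma$ and the formula \eqref{eq:ast phi} for $\ast\sigma = \tfrac12 (d\eta)^2$ to show $\ast\sigma \wedge \zeta \wedge \alpha = 0$ forces $\zeta \wedge \alpha$ to have no component in $\Omega^2_1 \oplus \Omega^2_V$ \emph{and} (by decomposability plus a dimension count against the $14$-dimensional $\Omega^2_6\oplus\Omega^2_8$) to vanish identically. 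Once the middle exactness is secured, exactness at the two ends and at $\Lambda^6$ follow formally by the injectivity/surjectivity of $e_\zeta$ and Hodge duality, and the complex \eqref{eq:extendedComplex} is elliptic, so $\mathbbm{D}_A$ is Fredholm and $H^1(\rC) \cong \Ker(\mathbbm{D}_A)$ is finite-dimensional.
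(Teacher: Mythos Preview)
Your setup of the symbol complex is correct, and exactness at $\Lambda^0$ and $\Lambda^7$ is immediate as you say. You also correctly identify exactness at $\Lambda^1$ as the crux. However, the key claim you rely on there --- that no nonzero decomposable $2$-form $\zeta\wedge\alpha$ can lie in $\Ker(L_{\ast\sigma})=\Omega^2_6\oplus\Omega^2_8$ --- is false, and no amount of splitting into horizontal and vertical parts will rescue it. In the Darboux frame of \S\ref{sec: eigenspaces of Lphi}, take $\zeta=dx^1$ and $\alpha=dx^2$: then $\zeta\wedge\alpha=dx^{12}=\tfrac12(v_1+w_1)\in\Omega^2_6\oplus\Omega^2_8$, and from \eqref{eq:ast phi} every monomial of $\ast\sigma=-(dx^{2356}+dx^{1346}+dx^{1245})$ shares an index with $\{1,2\}$, so $\ast\sigma\wedge dx^{12}=0$. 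Hence $dx^2$ lies in the kernel of $\alpha\mapsto\ast\sigma\wedge\zeta\wedge\alpha$ without being a multiple of $\zeta=dx^1$; in fact $dx^3$, $dx^5$, $dx^6$ are in that kernel as well, so for $\zeta=dx^1$ the symbol sequence is \emph{not} exact at $\Lambda^1$. Your rank-$6$ dimension count at $\Lambda^6$ then collapses too, since it presupposed the middle map has rank $6$.

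For comparison, the paper's proof takes the same route and meets the same obstacle: it passes from $\varsigma\wedge\alpha\in\Omega^2_6\oplus\Omega^2_8$ to the eigenvalue identity \eqref{eq:ElliptExtendComplex equation}, but that identity holds only on each eigenspace separately, not on their direct sum (for $dx^{12}$ one computes $\sigma\wedge dx^{12}=dx^{12367}$ while $\ast\,dx^{12}=dx^{34567}$). There is in fact an underlying issue with the identification \eqref{eq:identificationOfd7}: $L_{\ast\sigma}$ annihilates all of $\Omega^2_6\oplus\Omega^2_8$, so $L_{\ast\sigma}\circ d_A$ detects only the $\Omega^2_1\oplus\Omega^2_V$ components of $d_A\alpha$, whereas $d_7=p\circ d_A$ is defined to project onto $\Omega^2_{6\oplus1}$ --- these are different maps. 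The finite-dimensionality of $H^1(\rC)$ is nonetheless secured independently by the transversely elliptic argument culminating in Corollary~\ref{coro:H1=Hb}, so you should regard that as the robust route rather than attempt to repair the present argument.
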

\begin{proof}
    We follow the argument of \cite[Proposition~1.22]{SaEarp2009}. Fix  a non zero  section $\varsigma$ of $ \pi\colon T^\ast M\setminus\{0\}\to M$, so we have the symbol complex  
\begin{equation}
    \label{eq:symbol complex}
    0\to\pi^\ast\left( \Omega^0(\fg_E)\right)_\varsigma \xrightarrow[{  }]{ \varsigma\wedge\cdot}\pi^\ast\left(\Omega^1(\fg_E)\right)_\varsigma  \xrightarrow[{  }]{\ast\sigma\wedge\varsigma\wedge\cdot}\pi^\ast\left(\Omega^6(\fg_E)\right)_\varsigma \xrightarrow[{  }]{\varsigma\wedge\cdot} \pi^\ast\left(\Omega^7(\fg_E)\right)_\varsigma \to 0.
\end{equation}
    To see the exactness of \eqref{eq:symbol complex} at the middle, take $\alpha\in \Omega^1(\fg_E)$ such that $\ast\sigma\wedge\varsigma\wedge\alpha=0$. We need to show that $\alpha$ lies in $(\Omega^0(\fg_E)).\varsigma$. Note that  
 $\varsigma\wedge\alpha\in\Ker(L_{\ast\sigma})=\Omega^2_8\oplus\Omega^2_6$ [cf. \eqref{eq:L|estrellaPhi}],  so, by definition of the eigenspaces  $\Omega^2_8$ and $\Omega^2_6$ [cf.  \eqref{eq:omega2}], we obtain:  
\begin{equation}
\label{eq:ElliptExtendComplex equation}
    \pm\sigma\wedge\varsigma\wedge\alpha =\ast(\varsigma\wedge\alpha).
\end{equation}
    We show that \eqref{eq:ElliptExtendComplex equation} implies   that    $\varsigma\wedge\alpha=0$ and this finish the proof.  Let $\{e^i\}_{i=1}^7$ be a basis of $T^\ast_x M$ with   $e^1 = \varsigma$. Then  $\mu \in\Lambda^k T_x^\ast M$, can be written 
    $$
    \mu = e^1\wedge\gamma+\beta
    $$
    where $\gamma$ and $\beta$ are products just involving  $e^2,\cdots,e^7$. Let   $\alpha=\varsigma\wedge\gamma+\beta$, $\eta=\varsigma\wedge \gamma'+\beta'$ and $d\eta=\varsigma\wedge\gamma''+\beta''$ where $\gamma,\gamma',\gamma''$ and $\beta,\beta',\beta''$ are products just involving  $e^2,\cdots,e^7$, hence \eqref{eq:ElliptExtendComplex equation} becomes 
\begin{align*}
    \eta \wedge d\eta\wedge\varsigma\wedge\alpha &= 
    (\varsigma\wedge \gamma'+\beta' )\wedge  (\varsigma\wedge\gamma''+\beta'')\wedge\varsigma\wedge(\varsigma\wedge\gamma+\beta)  \\
    &=  \beta' \wedge  \beta'' \wedge\varsigma\wedge  \beta 
\end{align*} 
so $  \beta' \wedge  \beta'' \wedge\varsigma\wedge  \beta =\ast(\varsigma\wedge\beta)$
the left-hand side of the above equality involves $\varsigma$, while the right-hand side does not, so $  \varsigma\wedge\beta =0$. Hence
$$
   \varsigma\wedge\alpha  =\varsigma\wedge(\varsigma\wedge\gamma+\beta)=\varsigma\wedge \beta  =0 
$$
as claimed, and so $\alpha=f\wedge\varsigma.$
\end{proof}
Together, Propositions \ref{prop:deformationSpace} and \ref{prop:ElliptExtendComplex} prove Theorem \ref{thm:Intro}--$(i)$, i.e., that the space of infinitesimal deformations of $A$ is finite-dimensional and   isomorphic to the first cohomology  $H^1(\rC)$ of  \eqref{eq:Complexd7}.   In \S \ref{sec:moduli space CI}, we address such deformations from another perspective, showing that this local model is isomorphic to the first cohomology group of the complex \eqref{eq:complexDeformation} [cf. Theorem  \ref{thm:Intro}--$(ii)$]. As observed in Remark \ref{rem:theorem intro}, to assume that  $H^2(\rC)$ vanishes would in general be much too strong, leading to a $0$-dimensional local model. Instead, we can show the smoothness of the moduli space under a weaker  obstruction theory, in terms of  $H^2_B$  [cf. Proposition \ref{prop:vanishingH2}].


\subsection{ \texorpdfstring{$\rG_2$}{Lg}-instantons on contact Calabi-Yau manifolds}
\label{sec:contact and G2}

We have seen that the Sasakian structure $(\eta,\xi,g,\Phi)$ on $M^7$ naturally induces a moduli space of SDCI. In the \emph{contact Calabi-Yau} case, moreover, the Sasakian $7$-manifold carries in fact a transverse $\SU(3)$-structure, and hence  a natural $\rG_2$-structure \cites{habib2015some}. As such, it may indeed be seen as somewhat of an interpolation between $CY^3$-geometry and $\rG_2$-geometry. We explain the relationship between $\rG_2$-instantons and contact instantons [see Proposition \ref{prop:HYM-ContactCorrespondence}] in that context, following the approach of \cites{Calvo-Andrade2016}.

\begin{definition}
\label{def:cCY}
    A Sasakian manifold $(M^{2n+1},\eta,\xi,\Phi,\varepsilon)$ is said to be a \emph{contact Calabi-Yau manifold} (cCY) if $\varepsilon$ is a nowhere-vanishing transverse form of horizontal type $(n,0)$ [cf. \eqref{eq:TangentDecomposition}] such that 
$$
\varepsilon\wedge\bar{\varepsilon}
=(-1)^{\frac{n(n+1)}{2}}\ii^{n}\omega^n
\qandq 
d\varepsilon=0.
$$
\end{definition} 

It is well-known that, for a Calabi-Yau 3-fold $(\rZ, \omega, \varepsilon)$, the product $\rZ\times \rS^1$  has a natural torsion-free  $\rG_2$-structure defined by: $\varphi:= dt\wedge\omega+\Im(\varepsilon),$
where $ t$ is the coordinate on $\rS^1$. The Hodge dual of $\varphi$ is 
\begin{equation}
    \label{eq:psi cCY}
\psi:=\ast\varphi=\frac{1}{2}\omega\wedge\omega+dt\wedge\Re(\varepsilon)
\end{equation}
and the induced metric $g_\varphi=g_\rZ + dt\otimes dt $ is the Riemannian product metric on $\rZ\times S^1$ with holonomy $\Hol(g_\varphi)=\f{SU}(3)\subset\rG_2$. The contact Calabi-Yau structure essentially emulates all of these features, albeit its $\rG_2$-structure has some symmetric torsion. Sasakian manifolds with transverse holonomy $\SU(n)$ are studied by Habib and Vezzoni;  a number of facts from \cite[{\S~6.2.1}]{habib2015some}  can be summarised as follows:
\begin{proposition} 
\label{prop:G2estruturaCCY} 
 Every cCY manifold $(M^7,\eta,\xi,\Phi,\varepsilon)$  carries a cocalibrated $\rG_2$-structure
\begin{equation}
\label{eq:G2structure}
    \varphi 
    :=\eta\wedge \omega+\Im(\epsilon)
    =\sigma + \Im(\epsilon),
\end{equation} 
with torsion $d\varphi= \omega\wedge\omega$   and Hodge dual  $ 4$-form
$ \psi=\ast\varphi = \frac{1}{2}\omega\wedge\omega+ \eta\wedge\Re(\varepsilon)$. Here $\omega:=d\eta$ and $\sigma:=\eta\wedge \omega$, as in \eqref{eq:varphi}.
\end{proposition}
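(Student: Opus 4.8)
\textbf{Proof proposal for Proposition \ref{prop:G2estruturaCCY}.}

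The plan is to verify directly that the $3$-form $\varphi:=\eta\wedge\omega+\Im(\varepsilon)$ is a (cocalibrated) $\rG_2$-structure by checking, at a point, that it coincides with the standard $\rG_2$-form in a suitable adapted coframe, and then to compute its exterior derivative and its Hodge dual using the Sasakian structure equations. First I would fix a point $x\in M$ and choose a local transverse unitary coframe $(\theta^1,\theta^2,\theta^3)$ of horizontal $(1,0)$-forms, so that $\omega=\tfrac{\ii}{2}\sum_j\theta^j\wedge\bar\theta^j$ and $\varepsilon=\theta^1\wedge\theta^2\wedge\theta^3$, normalised so that the cCY compatibility relation $\varepsilon\wedge\bar\varepsilon=(-1)^{n(n+1)/2}\ii^n\,\omega^n$ holds (this is exactly the normalisation in Definition \ref{def:cCY} with $n=3$). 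Writing $\theta^j=e^{2j-1}+\ii\,e^{2j}$ with $(e^1,\dots,e^6)$ a transverse orthonormal coframe and $e^7:=\eta$, a direct expansion shows that $\varphi=e^7\wedge(e^{12}+e^{34}+e^{56})+\Im(\theta^1\wedge\theta^2\wedge\theta^3)$ is precisely the model $\rG_2$-three-form in the coframe $(e^1,\dots,e^7)$; hence $\varphi$ defines a $\rG_2$-structure inducing exactly the metric $g$ (since the coframe is $g$-orthonormal) and a compatible orientation. This is the standard linear-algebra identification and is routine; here the reference \cite[\S~6.2.1]{habib2015some} already records it, so I would cite it rather than reproduce the computation.

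Next I would compute $d\varphi$. Using $d\eta=\omega$ and $d\omega=d(d\eta)=0$, one gets $d(\eta\wedge\omega)=d\eta\wedge\omega-\eta\wedge d\omega=\omega\wedge\omega$. For the second term, the cCY condition $d\varepsilon=0$ gives $d(\Im\varepsilon)=\Im(d\varepsilon)=0$. Therefore $d\varphi=\omega\wedge\omega=\omega^2$, which is the claimed torsion. Then I would identify the Hodge dual: in the adapted coframe above, $\ast_g(\eta\wedge\omega)=\tfrac12\omega\wedge\omega$ and $\ast_g(\Im\varepsilon)=\eta\wedge\Re(\varepsilon)$ — these are again model computations for the standard $\rG_2$-form, consistent with \eqref{eq:ast phi} and \eqref{eq:psi cCY} — so $\psi:=\ast_g\varphi=\tfrac12\omega\wedge\omega+\eta\wedge\Re(\varepsilon)$. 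Finally, cocalibration means $d\psi=0$: we have $d(\tfrac12\omega^2)=\omega\wedge d\omega=0$, and $d(\eta\wedge\Re\varepsilon)=d\eta\wedge\Re\varepsilon-\eta\wedge d(\Re\varepsilon)=\omega\wedge\Re\varepsilon$, so one must check that $\omega\wedge\Re\varepsilon=0$. This holds by type: $\Re\varepsilon=\tfrac12(\varepsilon+\bar\varepsilon)$ is a sum of a $(3,0)$- and a $(0,3)$-form on the rank-$3$ horizontal complex distribution, so wedging with the $(1,1)$-form $\omega$ lands in $(4,1)\oplus(1,4)$, which vanishes since there are only $3$ holomorphic and $3$ antiholomorphic transverse directions. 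Hence $d\psi=0$.

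The only genuinely delicate point is the first step — confirming that $\varphi$ really is a positive $\rG_2$-three-form inducing the given metric $g$ and not merely a generic stable $3$-form — and the compatibility of the two normalisations (the cCY volume condition on $\varepsilon$ versus the $\rG_2$-normalisation of $\varphi$); this is where the orthonormality of the adapted coframe and the precise constant $(-1)^{n(n+1)/2}\ii^n$ in Definition \ref{def:cCY} enter. Since this identification is precisely the content of \cite[\S~6.2.1]{habib2015some}, I would state it with that citation and devote the written proof mainly to the short exterior-derivative computations $d\varphi=\omega^2$ and $d\psi=0$, together with the type argument $\omega\wedge\Re\varepsilon=0$, which are the parts specific to the contact (as opposed to product) setting.
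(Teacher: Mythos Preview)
Your proposal is correct, and in fact goes beyond the paper: the paper does not supply its own proof of this proposition but presents it as a summary of facts from \cite[\S~6.2.1]{habib2015some}. Your direct verification --- matching $\varphi$ to the model $\rG_2$-form in an adapted unitary coframe, computing $d\varphi=\omega^2$ from $d\eta=\omega$ and $d\varepsilon=0$, identifying $\psi=\ast\varphi$ via the model (consistently with \eqref{eq:ast phi}), and closing $d\psi=0$ with the type argument $\omega\wedge\Re\varepsilon=0$ --- is precisely what one finds in that reference, so there is nothing to compare.
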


\begin{lemma}
\label{lem:L|epsilon}
On a cCY manifold $(M^7,\eta,\xi,\Phi,\varepsilon)$, the operator 
\begin{equation}
\label{eq:L|epsilon}
\begin{matrix}
    {L_{\epsilon }\colon }&{\Omega^2(M)}&{\to}&{\Omega^2(M) }\\
    { }&{\alpha }&{\mapsto }&{\ast(\alpha\wedge \Im(\epsilon))},
\end{matrix}
\end{equation}
satisfies 
$L_{\epsilon}\vert_{\Omega_{6}^{2}\oplus\Omega_{8}^{2}\oplus \Omega_{1}^2}\equiv 0$
and $L_{\epsilon}\vert_{ \Omega_{V}^2(M)}\colon \Omega_{V}^2(M)\xrightarrow[{}]{\sim}\Omega_{6}^{2}(M)$ is an isomorphism.
\end{lemma}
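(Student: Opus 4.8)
The plan is to verify the claim by a direct computation in local Darboux coordinates, exactly as was done for the operators $L_\sigma$ and $L_{\ast\sigma}$ in Lemma~\ref{lem:L|estrellaPhi}. First I would fix such coordinates $(x_1,\dots,x_7)$, so that $\omega=d\eta=dx^{14}+dx^{25}+dx^{36}$ and the bases $\{v_i\}_{i=1}^6$, $\{w_j\}_{j=1}^8$, $v$ of the eigenspaces $\Omega^2_6,\Omega^2_8,\Omega^2_1$ are as in \eqref{eq:omega2}, with $\Omega^2_V=\Span\{dx^{i7}\}$. The essential new ingredient is an explicit local model for the transverse holomorphic volume form $\epsilon\in\Omega^{3,0}(M)$: up to a constant of modulus one (absorbed into the normalisation $\varepsilon\wedge\bar\varepsilon=(-1)^{3}\ii^{3}\omega^3 = \ii\,\omega^3$, using $n=3$), one has $\epsilon = dz^1\wedge dz^2\wedge dz^3$ with $dz^j=dx^j+\ii\,dx^{j+3}$, so that
\[
\Im(\epsilon)=\Im\big((dx^1+\ii\,dx^4)\wedge(dx^2+\ii\,dx^5)\wedge(dx^3+\ii\,dx^6)\big),
\]
which expands to a fixed sum of six $dx^{ijk}$ terms (the "imaginary" selection of the expansion of the product). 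I would record this explicit $3$-form once and for all.

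\textbf{Key steps.}
Step 1: Expand $\Im(\epsilon)$ explicitly in the $dx^{ijk}$ basis. Step 2: For each generator of $\Omega^2_6$, $\Omega^2_8$ and $\Omega^2_1$ — i.e.\ each $v_i$, $w_j$, and $v$ — compute the wedge product with $\Im(\epsilon)$ and observe that it vanishes, so that $L_\epsilon\equiv 0$ on $\Omega^2_{6\oplus 8\oplus 1}$. The vanishing on $\Omega^2_6$ and $\Omega^2_1$ is forced by type considerations: $\Omega^2_6\cong\Omega^{2,0}\oplus\Omega^{0,2}$ and $\Omega^2_1=\Omega^0\cdot\omega$ has type $(1,1)$, while $\Im(\epsilon)$ lives in $\Omega^{3,0}\oplus\Omega^{0,3}$ — in each case the wedge product lands in a bidegree component of $\Lambda^5 H^\ast_\C\otimes\cdots$ that is zero on a $3$-complex-dimensional transverse distribution (e.g.\ $\Omega^{2,0}\wedge\Omega^{3,0}\subset\Omega^{5,0}_H=0$, $\Omega^{2,0}\wedge\Omega^{0,3}\subset\Omega^{2,3}_H=0$, etc.), so only $\varsigma$-free bookkeeping is needed; the vanishing on $\Omega^2_8$ (type $(1,1)$, $\perp\omega$) is likewise of type $(1,1)\wedge[(3,0)\oplus(0,3)]$, again landing in $\Omega^{4,1}_H\oplus\Omega^{1,4}_H=0$. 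Hence the \emph{only} nontrivial computation is on $\Omega^2_V$. Step 3: For $\beta\in\Omega^2_V$, write $\beta=\sum_i a_i\,dx^{i7}=\eta'\wedge(\text{horizontal }1\text{-form})+\dots$; more directly, compute $dx^{i7}\wedge\Im(\epsilon)$ for $i=1,\dots,6$. Since $\Im(\epsilon)$ is horizontal (has no $dx^7$), $dx^{i7}\wedge\Im(\epsilon)=\pm dx^7\wedge(dx^i\wedge\Im(\epsilon))$, and $dx^i\wedge\Im(\epsilon)$ is a horizontal $4$-form whose Hodge dual (taking care of the $\eta$-factor via \eqref{eq:star transverse star}) is a horizontal $2$-form; matching against the basis $\{v_j\}$ gives an explicit, invertible assignment. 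I expect the outcome to be, up to signs/constants, $L_\epsilon(dx^{17})=\pm v_?$, etc., mirroring the table \eqref{eq:iso Omega 6 omega V} for $T$, thereby exhibiting $L_\epsilon|_{\Omega^2_V}$ as an isomorphism onto $\Omega^2_6$. Step 4: Conclude: the $6\times 6$ matrix of $L_\epsilon|_{\Omega^2_V\to\Omega^2_6}$ is a permutation matrix up to nonzero scalars, hence invertible, and combined with Step 2 this is precisely the statement.

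\textbf{Main obstacle.}
The computation is entirely routine, so the only real care needed is bookkeeping: getting the signs right in $dx^{i7}\wedge\Im(\epsilon)$ and in the Hodge dual (orientation convention, the sign in $\ast\alpha=\ast_T\alpha\wedge\eta$ from \eqref{eq:star transverse star}, and the $(-1)^{p(n-p)}$ factors from \eqref{eq:compaEtaystar}), and making sure the normalisation of $\epsilon$ is consistent with Definition~\ref{def:cCY}. A slightly cleaner route that avoids most of the sign juggling is to argue invariantly: $L_\epsilon$ commutes with the transverse $\U(3)\subset\SO(6)$ action preserving $(\omega,\epsilon)$ on $H$, and $\Omega^2_V\cong H^\ast$ as a $\U(3)$-representation (via $\beta\mapsto i_\xi\beta$) while $\Omega^2_6\cong\Lambda^{2,0}H^\ast\oplus\Lambda^{0,2}H^\ast$; contraction of $\Im(\epsilon)$ (a real $(3,0)+(0,3)$ form) with a horizontal vector, followed by the transverse Hodge star, intertwines these as $\U(3)$-modules, both being irreducible of real dimension $6$ — so the map is either zero or an isomorphism, and a single nonzero evaluation (say on $dx^{17}$) rules out the former. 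I would present the coordinate computation as the primary proof for concreteness, with this representation-theoretic remark as a cross-check.
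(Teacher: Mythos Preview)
Your overall strategy---compute in Darboux coordinates with $\epsilon=dz^1\wedge dz^2\wedge dz^3$, expand $\Im(\epsilon)$, and evaluate $L_\epsilon$ on the bases $\{v_i\},\{w_j\},\{v\},\{dx^{i7}\}$---is exactly what the paper does. The paper records $\Im(\epsilon)=(dx^{234}+dx^{126})-(dx^{135}+dx^{456})$ and then tabulates $L_\epsilon(dx^{i7})$ explicitly, obtaining the same permutation-type isomorphism onto $\Omega^2_6$ that you anticipate in Step~3 (indeed, it coincides with the map $T$ of \eqref{eq:iso Omega 6 omega V}). Your representation-theoretic cross-check via Schur's lemma for the $\SU(3)$-modules $\Omega^2_V\cong H^\ast$ and $\Omega^2_6\cong\Lambda^{2,0}\oplus\Lambda^{0,2}$ is a nice addition and is valid.

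There is, however, a genuine error in your bidegree argument for $\Omega^2_6$. You assert $\Omega^{2,3}_H=0$, but on a rank-$3$ complex distribution $\dim_\C\Omega^{2,3}_H=\binom{3}{2}\binom{3}{3}=3$, so the wedge of a $(2,0)$-form with $\bar\epsilon\in\Omega^{0,3}$ need not vanish. Your type argument \emph{is} correct for $\Omega^2_8\oplus\Omega^2_1$ (type $(1,1)$ wedged with $(3,0)\oplus(0,3)$ lands in $\Omega^{4,1}_H\oplus\Omega^{1,4}_H=0$), and this is cleaner than brute force. But if you actually carry out the direct computation you propose for $\Omega^2_6$, you will find for instance $v_5\wedge\Im(\epsilon)=-2\,dx^{23456}$, hence $L_\epsilon(v_5)$ is a nonzero multiple of $dx^{17}\in\Omega^2_V$. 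This is forced by the self-adjointness of $L_\epsilon$ (since $\langle L_\epsilon\alpha,\beta\rangle\,\dvol=\alpha\wedge\beta\wedge\Im\epsilon$ is symmetric in $\alpha,\beta\in\Omega^2$): if $L_\epsilon$ maps $\Omega^2_V$ isomorphically onto $\Omega^2_6$, it must also map $\Omega^2_6$ isomorphically onto $\Omega^2_V$, not to zero. So the lemma as stated overclaims on the $\Omega^2_6$ summand; the paper's proof asserts this vanishing ``immediately'' by computation without displaying it, and the computation does not in fact support it. Fortunately the only application (Proposition~\ref{prop:ContactthenG2}) needs merely $\Omega^2_8\subset\Ker L_\epsilon$, which your corrected type argument does establish.
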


\begin{proof}
  In the local Darboux transverse complex  coordinates $z_i$ defined in \eqref{eq:zi}, we have $\epsilon =dz^1\wedge dz^2\wedge dz^3$. In particular, $\Im(\epsilon)=(dx^{234}+dx^{126})-(dx^{135}+dx^{456})$. Using the decomposition \eqref{eq:2formsDecomposition} of  $2$-forms to compute $L_\epsilon$ in the bases $\{w_i\}_{i=1,\dots,8}$ and $\{v_j\}_{j=1,\dots,6}$ of  \eqref{eq:omega2}, we obtain immediately 
  $
  L_{\epsilon}\vert_{\Omega_{6}^{2}\oplus\Omega_{8}^{2}\oplus \Omega_{1}^2}\equiv 0
  $
  and, by inspection, 
\[
\begin{array}{*{3}c}
L_{\epsilon }(dx^{17})=-v_5, & L_{\epsilon }(dx^{27})=-v_3, & L_{\epsilon }(dx^{37})=-v_1,  \\[3pt]
L_{\epsilon }(dx^{47})=-v_6, & L_{\epsilon }(dx^{57})=-v_4, & L_{\epsilon }(dx^{67})=-v_2.  
\end{array} 
\]
So $L_{\epsilon}\vert_{ \Omega_{V}^2(M)}\colon \Omega_{V}^2(M)\xrightarrow[{}]{\sim}\Omega_{6}^{2}(M)$ is an isomorphism and coincides with $T$ in \eqref{eq:iso Omega 6 omega V}.
\end{proof}
 
The  \emph{$\rG_2$-instanton equation} on a cCY $7$-manifold reads:
\begin{equation}
    \label{eq:G2 instantons}
\ast(\underbrace{(\eta\wedge \omega 
+ \Im(\epsilon))}_{\varphi }\wedge F_A)
=  F_A
\end{equation}
or, equivalently, 
$ 
F_A\wedge\psi =0,
$
where  $\psi$ is the dual $4$-form \eqref{eq:psi cCY}.   
\begin{proposition}
\label{prop:ContactthenG2} 
    On a cCY $7$-manifold, the SDCI equation \eqref{eq:ContactInstEquation}  implies the $\rG_2$-instanton equation \eqref{eq:G2 instantons}.  
\end{proposition}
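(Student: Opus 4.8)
The plan is to use the additive decomposition $\varphi = \sigma + \Im(\epsilon)$ of the $\rG_2$-structure provided by Proposition \ref{prop:G2estruturaCCY}, and to reduce the $\rG_2$-instanton condition to the SDCI condition plus a vanishing statement for the operator $L_\epsilon$ of \eqref{eq:L|epsilon}. Concretely, for any connection $A$ I would write
\[
\ast(\varphi\wedge F_A) \;=\; \ast(\sigma\wedge F_A) \;+\; \ast\bigl(\Im(\epsilon)\wedge F_A\bigr) \;=\; \ast(\sigma\wedge F_A) \;+\; L_\epsilon(F_A),
\]
where $L_\epsilon$ is understood to act componentwise on $\fg_E$-valued forms, which is legitimate since the decomposition \eqref{eq:2formsDecomposition} simply tensors with $\fg_E$. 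If $A$ satisfies the SDCI equation \eqref{eq:ContactInstEquation} with $\lambda = 1$, i.e. $\ast(\sigma\wedge F_A) = F_A$, then the identity above reduces the claim to showing $L_\epsilon(F_A) = 0$.

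For that vanishing, the key input is that an SDCI curvature lies in $\Omega^2_8(\fg_E)$: by Proposition \ref{prop: selfdual = HYM} together with the identification $\Omega^{1,1}_\perp(\fg_E)\cong\Omega^2_8(\fg_E)$ in \eqref{eq:decompositionPQ} (equivalently, by the characterisation in Lemma \ref{lem:SDcharacterization}--(i) of $\Omega^2_8$ as the real $(1,1)$-forms orthogonal to $\omega$), an SDCI connection has $F_A\in\Omega^2_8(\fg_E)$. Now invoke Lemma \ref{lem:L|epsilon}, which states precisely that $L_\epsilon$ vanishes identically on $\Omega^2_6\oplus\Omega^2_8\oplus\Omega^2_1$; in particular $L_\epsilon(F_A) = 0$. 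Substituting back gives $\ast(\varphi\wedge F_A) = F_A$, which is the $\rG_2$-instanton equation \eqref{eq:G2 instantons}.

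I do not expect any genuine obstacle here: the entire content is packaged in Proposition \ref{prop: selfdual = HYM} and Lemma \ref{lem:L|epsilon}, already proved in the excerpt, and the only point requiring a line of care is that $L_\epsilon$, defined on ordinary $2$-forms, extends to $\Omega^2(\fg_E)$ compatibly with the eigenspace splitting — which is immediate. One may add, for completeness, the parallel observation that running the same computation with $\lambda = -1$ relates an ASDCI connection (curvature in $\Omega^2_6(\fg_E)$, on which $L_\epsilon$ also vanishes) to the equation $\ast(\varphi\wedge F_A) = -F_A$; but since in the Chern case such connections are flat by Proposition \ref{prop:flatness}, only the SDCI statement is of substantive interest.
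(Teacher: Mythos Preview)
Your proof is correct and follows essentially the same route as the paper: decompose $\varphi=\sigma+\Im(\epsilon)$, use that an SDCI has $F_A\in\Omega^2_8(\fg_E)\subset\Ker L_\epsilon$ by Lemma~\ref{lem:L|epsilon}, and conclude $\ast(\varphi\wedge F_A)=\ast(\sigma\wedge F_A)=F_A$. The only cosmetic difference is that the paper reads off $F_A\in\Omega^2_8$ directly from the eigenspace definition rather than via Proposition~\ref{prop: selfdual = HYM}.
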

\begin{proof}
If $A$ is a SDCI, then  
$ F_A=\ast(\sigma\wedge F_A)\in\Omega^2_8(\fg_E)
\subset \Ker L_{\epsilon}$, by Lemma \ref{lem:L|epsilon}. Therefore
\begin{align*}
    \ast(\varphi\wedge F_A)
    &=\ast( \sigma\wedge F_A )+L_{\epsilon }(F_A)\\
    &= F_A. \qedhere
\end{align*}
\end{proof}
If the complex Sasakian  bundle $\bE\to M^7$ has a holomorphic structure, then, at least among Chern connections (mutually compatible with the holomorphic structure and a Hermitian metric),  the sets of solutions of both equations actually coincide:     

\begin{proposition}
\label{prop:HYM-ContactCorrespondence}
Let $\cE\to M$ be a  Sasakian holomorphic vector bundle [cf. Definition  \ref{def:SasakianoHol}]  
on a   cCY   $7$-manifold   with its natural $\rG_2$-structure \eqref{eq:G2structure}. Then a Chern connection $A$  on $\cE$ is a $\rG_2$-instanton  if, and only if, $A$ is a SDCI as in  \eqref{eq:contacIns1}.
\begin{proof}
If  $A$ is the Chern connection, then $F_A\in\Omega^{1,1}(M)$ [cf. Proposition \ref{prop: omega (1,1)}], so taking account of the bi-degree of the transverse holomorphic volume form $\epsilon$ [cf. Definition \ref{def:cCY}], it follows that   $F_A\wedge \Im(\varepsilon)=0$. Therefore 
\begin{align*}
    F_A\wedge \varphi  
    &= F_A\wedge(\underbrace{\eta\wedge d\eta 
    + \Im(\epsilon)}_{\varphi })
    =F_A\wedge\sigma+F_A\wedge \Im(\varepsilon)\\
    &=F_A\wedge\sigma. \qedhere
\end{align*}
\end{proof}
\end{proposition}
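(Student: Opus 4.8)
The plan is to read off the claim directly from the already-established type decomposition: a Chern connection on a Sasakian holomorphic bundle has curvature of type $(1,1)$, and the transverse holomorphic volume form $\varepsilon$ on a cCY $7$-manifold has type $(3,0)$, so the wedge product $F_A\wedge\Im(\varepsilon)$ must vanish for bidegree reasons alone. Granting that, the equation $F_A\wedge\varphi = F_A\wedge\sigma + F_A\wedge\Im(\varepsilon) = F_A\wedge\sigma$ follows by splitting $\varphi = \sigma + \Im(\varepsilon)$ as in Proposition \ref{prop:G2estruturaCCY}, and then one only needs to note that both the $\rG_2$-instanton equation and the SDCI equation can be phrased as the vanishing of (the relevant component of) $F_A\wedge\varphi$, respectively $F_A\wedge\sigma$, composed with the Hodge star.

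First I would invoke Proposition \ref{prop: omega (1,1)} (referenced in the excerpt as the statement that a Chern connection has $F_A\in\Omega^{1,1}(M)$, the Sasakian analogue of \cite[Proposition~2.1.56]{Donaldson1990}) to fix $F_A$ of type $(1,1)$. Next I would recall from Definition \ref{def:cCY} that $\varepsilon\in\Omega^{3,0}(M)$, hence $\Im(\varepsilon)$ is a combination of a $(3,0)$ and a $(0,3)$ form; wedging a $(1,1)$-form with a $(3,0)$-form lands in $\Omega^{4,1}$, which vanishes since the horizontal distribution is $6$-dimensional (only $(p,q)$ with $p,q\le 3$ survive), and symmetrically for the $(0,3)$ piece. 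This is exactly the short computation in the displayed \texttt{align*} of the proof as written, and it is the one genuinely new input. Then substituting $\varphi = \sigma + \Im(\varepsilon)$ gives $F_A\wedge\varphi = F_A\wedge\sigma$, so $\ast(\varphi\wedge F_A) = \ast(\sigma\wedge F_A)$.

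Finally I would close the logical loop: the $\rG_2$-instanton equation \eqref{eq:G2 instantons} reads $\ast(\varphi\wedge F_A)=F_A$, while the SDCI equation \eqref{eq:contacIns1} (in the $\lambda=1$ form) reads $\ast(\sigma\wedge F_A)=F_A$, equivalently $F_A\in\Omega^2_8(\fg_E)$. Since the two Hodge-starred expressions have just been shown equal, each equation holds if and only if the other does — no eigenvalue bookkeeping is needed beyond the identification $F_A\wedge\sigma = F_A\wedge\varphi$. I do not expect any serious obstacle here: the only place to be careful is the bidegree count that kills $F_A\wedge\Im(\varepsilon)$, which relies on $M$ being $7$-dimensional so that the horizontal part is $6$-dimensional and the transverse $(p,q)$-grading truncates at $3$; this is automatic under the standing hypothesis $\dim M = 7$.
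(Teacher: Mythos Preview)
Your proposal is correct and follows essentially the same approach as the paper: both arguments hinge on the observation that $F_A$ has type $(1,1)$ for a Chern connection while $\Im(\varepsilon)$ is of type $(3,0)+(0,3)$, so $F_A\wedge\Im(\varepsilon)=0$ by bidegree, whence $F_A\wedge\varphi=F_A\wedge\sigma$ and the two instanton equations coincide. Your write-up is slightly more explicit about why the $(4,1)$ and $(1,4)$ pieces vanish, but the logic is identical.
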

\subsection{The Yang-Mills and Chern-Simons functionals}
\label{sec: YM and CS}
We will describe two natural gauge-theoretic action functionals on a Sasakian $G$-bundle $\bE\to M^7 $ [Definition \ref{def:fibradoSasakiano}],  adapting the approach of \cites{sa2014generalised}. This section culminates at topological Yang-Mills energy bounds on holomrphic Sasakian bundles, in terms of integrable (anti-)selfdual contact instantons.
  
The \emph{Yang-Mills functional} acts on the space $\cA$ of connections on $\bE$ and it is defined by  
\begin{equation}
\label{eq:YMfunctional}
  \Y\colon A\in\cA \mapsto \Vert F_A\Vert^2:=
  \displaystyle\int_M\langle F_A\wedge\ast F_A\rangle_{\fg_E}.
\end{equation}
  The curvature $F_A$ splits orthogonally \eqref{eq:HorizontalVerticalDecomposition}   into horizontal and vertical parts, respectively:  
$$
  F_H=i_\xi(\eta\wedge F_A)
  \qandq F_V=\eta\wedge i_\xi F_A,
$$  
  so $\Y(A)$ has at least two independent components. Moreover, the horizontal part further splits orthogonally, by Lemma \ref{lem:orthogonality}:
$$
  F_H= \underbrace{\frac{1}{2}(F_H+\ast(\sigma\wedge F_H))}_{F_H^+}+\underbrace{\frac{1}{2}(F_H-\ast( \sigma\wedge F_H))}_{F_H^-},
$$
  hence 
\begin{equation}
\label{eq:YM decomposition}
  \Y(A)
  =\Vert F_H^{+}\Vert^2+\Vert F_H^{-}\Vert^2
  +\Vert F_V\Vert^2.
\end{equation}
  Given   $A\in \cA$, the \emph{ charge} of $A$ is defined by 
\begin{equation}
    \label{eq:topolgical charge}
    \kappa(A):= \displaystyle\int_M\tr (F_A^2) \wedge\sigma.  
\end{equation} 
\begin{proposition}
  Let $A\in\cA$ be a connection on $\bE$, the   charge of A \eqref{eq:topolgical charge} is determined by the horizontal curvature:
\begin{equation}
\label{eq:topological charge relation 1}
  \kappa(A) = \Vert F_H^{+}\Vert^2 -\Vert F_H^{-}\Vert^2. 
\end{equation}
  Furthermore,  
\begin{equation}
\label{eq:topological charge relation 2}
  \vert \kappa(A)\vert
  \leq 
  \Vert F_H^{+}\Vert^2
  \leq
  \Y(A).
\end{equation}
   These bounds are saturated if and only if   $A$ is ASDCI or SDCI.  
\end{proposition}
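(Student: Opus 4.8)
The plan is to reduce the charge $\kappa(A)$ to a purely algebraic quadratic form in the horizontal curvature and then diagonalise it over the eigenspace splitting $\Omega^2_H(\fg_E)=\Omega^2_6(\fg_E)\oplus\Omega^2_8(\fg_E)\oplus\Omega^2_1(\fg_E)$ of $L_\sigma$. First I would discard the vertical part: writing $F_A=F_H+F_V$ with $F_V=\eta\wedge i_\xi F_A$, every summand of $F_A\wedge F_A$ containing a factor $F_V$ also carries a factor $\eta$, and $\sigma=\eta\wedge d\eta$ supplies another, so $\eta\wedge\eta=0$ gives the pointwise identity $\tr(F_A\wedge F_A)\wedge\sigma=\tr(F_H\wedge F_H)\wedge\sigma$; hence $\kappa(A)=\int_M\tr(F_H\wedge F_H)\wedge\sigma$ depends only on $F_H$.

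Next I would recognise the integrand as an inner product. Since $\sigma\wedge F_H$ is a $5$-form and $\ast\ast=\mathrm{id}$ on $5$-forms in dimension $7$, we have $\ast(L_\sigma F_H)=\ast\ast(\sigma\wedge F_H)=\sigma\wedge F_H$, so $\tr(F_H\wedge F_H)\wedge\sigma=\langle F_H\wedge\ast(L_\sigma F_H)\rangle_{\fg}$ for the normalisation fixed in \eqref{eq:innerProduct}, and integrating yields $\kappa(A)=(F_H,L_\sigma F_H)_M$ — the same manipulation used in the proof of Lemma \ref{lem:orthogonality}. Decomposing $F_H$ into its three $L_\sigma$-eigencomponents, which are mutually orthogonal by Lemma \ref{lem:orthogonality} and on which $L_\sigma$ acts by $+1,-1,-2$ respectively, a short bookkeeping using $F_H^{\pm}=\tfrac12\bigl(F_H\pm\ast(\sigma\wedge F_H)\bigr)$ turns $(F_H,L_\sigma F_H)_M$ into $\Vert F_H^{+}\Vert^2-\Vert F_H^{-}\Vert^2$, which is \eqref{eq:topological charge relation 1}.

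For the bounds \eqref{eq:topological charge relation 2} I would combine \eqref{eq:topological charge relation 1} with the orthogonal Yang--Mills splitting \eqref{eq:YM decomposition}. Suppose first $\kappa(A)\ge 0$ (the case $\kappa(A)\le 0$ is symmetric, interchanging $F_H^{+}\leftrightarrow F_H^{-}$ and SDCI $\leftrightarrow$ ASDCI); then
$$
0\le\kappa(A)=\Vert F_H^{+}\Vert^2-\Vert F_H^{-}\Vert^2\le\Vert F_H^{+}\Vert^2\le\Vert F_H^{+}\Vert^2+\Vert F_H^{-}\Vert^2+\Vert F_V\Vert^2=\Y(A).
$$
The first inequality is an equality exactly when $F_H^{-}=0$ and the last exactly when $F_H^{-}=0$ and $F_V=0$, so both hold precisely when $F_V=0$ and $F_A=F_H^{+}$ lies in $\Omega^2_8(\fg_E)$, i.e. when $\ast(\sigma\wedge F_A)=F_A$ — the SDCI equation; the symmetric argument gives ASDCI when $\kappa(A)\le 0$.

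The one delicate point is the second step: one must be scrupulous with the Hodge-star conventions (the sign of $\ast\ast$, and the relation \eqref{eq:star transverse star} between $\ast$ and $\ast_T$) and with the normalisation matching $\tr(F_A\wedge F_A)$ to the invariant pairing $\langle\cdot\wedge\cdot\rangle_{\fg}$, since these pin down the sign of $\kappa(A)$ and hence which of SDCI/ASDCI saturates the bound. Everything else is formal linear algebra on the $15$-dimensional fibre $\Omega^2_H$ (together with $\Omega^2_V$), using that $M$ is closed so that $\Y(A)$ and all the $L^2$-norms are finite.
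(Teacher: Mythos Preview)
Your argument is correct and leads to the same identity, but your route to \eqref{eq:topological charge relation 1} is organised differently from the paper's. The paper expands each of $\Vert F_H^{\pm}\Vert^2$ separately, obtaining
\[
\Vert F_H^{\pm}\Vert^2=\tfrac14\bigl(\Vert F_H\Vert^2\pm 2\kappa(A)+\Vert\sigma\wedge F_H\Vert^2\bigr),
\]
and then subtracts. You instead first observe that the vertical part of $F_A$ drops out of the charge (the paper uses this implicitly but does not spell it out), then recognise $\kappa(A)=(F_H,L_\sigma F_H)_M$ and apply the polarisation $F_H=F_H^{+}+F_H^{-}$, $L_\sigma F_H=F_H^{+}-F_H^{-}$ to read off the difference of squares directly. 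Your version is a bit more streamlined and avoids carrying the auxiliary term $\Vert\sigma\wedge F_H\Vert^2$; the paper's version has the mild advantage of giving closed formulas for each of $\Vert F_H^{\pm}\Vert^2$ individually. Note also that your mention of the three-eigenspace decomposition of $\Omega^2_H$ is not actually needed for this step: the identity $(F_H^{+}+F_H^{-},F_H^{+}-F_H^{-})=\Vert F_H^{+}\Vert^2-\Vert F_H^{-}\Vert^2$ uses only symmetry of the inner product, not orthogonality of $F_H^{\pm}$.

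For the bounds \eqref{eq:topological charge relation 2} and the saturation statement, your argument via the chain of inequalities when $\kappa(A)\geq 0$, together with the symmetric case, matches the paper's use of $\lvert a^2-b^2\rvert\leq a^2+b^2$ and the subsequent discussion. Your caveat about sign and normalisation conventions is well placed.
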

\begin{proof}
It suffices to show \eqref{eq:topological charge relation 1}, because then \eqref{eq:topological charge relation 2} follows from \eqref{eq:YM decomposition}. Using the definition of $\kappa(A)$ in \eqref{eq:topolgical charge} and the definition of $F_H^+$ given above, we compute:  
\begin{align*}
    \Vert F_H^{+}\Vert^2 
    &=\frac{1}{2}\int_{M}\tr \left( F_H^{+}\wedge\ast (F_H+\ast(\sigma\wedge F_H))\right) \\
    &=\frac{1}{2} \left( \int_{M}\tr (F_H^{+} \wedge\ast F_H) 
     +\int_{M}\tr \left( F^{+}_H\wedge\sigma\wedge F_H\right)   \right)\\
    &=\frac{1}{4} \left( \int_{M}\tr \left( (F_H+\ast(\sigma\wedge F_H)) \wedge\ast F_H \right) 
    +\int_{M}\tr \left( ( F_H+\ast(\sigma\wedge F_H)\wedge\sigma\wedge F_H\right)   \right)\\
    &= \frac{1}{4}\left(\Vert F_H \Vert^2+\int_{M}\tr \left(\ast(\sigma\wedge F_H)\wedge \ast F_H \right)
+\int_{M}\tr \left(\sigma\wedge F_H^2 \right)+ \Vert \sigma\wedge F_H\Vert^2\right) \\
    &= \frac{1}{4}\left( \Vert F_H \Vert^2 +2\kappa(A)+  \Vert \sigma\wedge F_H\Vert^2\right). 
\end{align*}

Analogously,   
\begin{align*}
\Vert F_H^{-}\Vert^2 &=
\frac{1}{2}\int_{M}\tr \left( F_H^{-}\wedge\ast (F_H-\ast(\sigma\wedge F_H))\right) \\
&= \frac{1}{4}\left( \Vert F_H \Vert^2 -2\kappa(A)+  \Vert \sigma\wedge F_H\Vert^2\right). 
\end{align*}
Therefore,    
$ 
\kappa(A) = \Vert F_H^{+}\Vert^2 -\Vert F_H^{-}\Vert^2. 
$ 
Now  using the inequality $ \vert a^2-b^2 \vert\leq a^2+b^2 $, we obtain the following bounds:
$$
\vert \kappa(A)\vert\leq\int_{M}\tr (F_H\wedge\ast F_H)
\qandq \vert \kappa(A)\vert\leq\int_{M}\tr (F_A\wedge\ast F_A). 
$$
Under the hypothesis $F_V = 0$, if $A$ is a SDCI, then $F^{-}_H = 0$, and it follows from \eqref{eq:topological charge relation 1}  that the latter bound   is saturated. 
\end{proof}
Now, fix a reference connection $A_0\in\cA$ on $\bE$. The \emph{Chern-Simons action} is defined by: $\f{CS}(A_0)=0$ and 
$$
\begin{array}{rll}
 \f{CS}                &\colon                                                                & \cA  \cong A_0+\Omega^1( \fg_E)\to  \R  \\
  \f{CS}(A_0+\alpha)&:=     & \frac{1}{2}\displaystyle\int_M \tr (d_{A_0}\alpha\wedge \alpha+\frac{2}{3}\alpha\wedge \alpha\wedge \alpha)\wedge\ast\sigma.
\end{array}
$$

\begin{lemma}
\label{lem:TopologicalCharge}
Let  $\bE\to M^7$ be a Sasakian $G$-bundle over a  closed  $7$-manifold. For  any connection $A'=A+\alpha\in\cA$, for $\alpha\in\Omega^1(\fg_E)$, on the underlying complex vector bundle $E$, we have: 
\begin{equation}
\label{eq:kapaA kapa A'}
    \kappa(A)=\kappa(A')+\int_M\tr (F_A\wedge \alpha\wedge d\eta^2). 
\end{equation}
\end{lemma}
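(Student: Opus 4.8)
The plan is to run the classical Chern--Simons transgression computation; the only ingredient specific to the Sasakian setting is the identity $d\sigma = d\eta\wedge d\eta$, which holds because $d(d\eta)=0$. It is precisely the failure of $\sigma$ to be closed that produces the correction term in \eqref{eq:kapaA kapa A'}.

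Write $\alpha := A'-A\in\Omega^1(\fg_E)$ and interpolate by $A_t := A+t\alpha$, $t\in[0,1]$, so that $F_{A_t} = F_A + t\,d_A\alpha + t^2\,\alpha\wedge\alpha$ and $\tfrac{d}{dt}F_{A_t} = d_{A_t}\alpha$. Using the Bianchi identity $d_{A_t}F_{A_t}=0$ and the fact that $\tr$ intertwines $d_{A_t}$ with the de Rham differential $d$, one obtains $\tfrac{d}{dt}\tr\!\big(F_{A_t}\wedge F_{A_t}\big) = 2\,\tr\!\big(d_{A_t}\alpha\wedge F_{A_t}\big) = 2\,d\,\tr\!\big(\alpha\wedge F_{A_t}\big)$. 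Integrating in $t$ over $[0,1]$ yields the transgression identity $\tr(F_{A'}^2)-\tr(F_A^2) = d\beta$ for an explicit Chern--Simons $3$-form $\beta = \tr\!\big(2\alpha\wedge F_A + \alpha\wedge d_A\alpha + \tfrac23\,\alpha\wedge\alpha\wedge\alpha\big)$.

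Next I would wedge this with $\sigma=\eta\wedge d\eta$, integrate over the \emph{closed} $7$-manifold $M$, and integrate by parts: since $\partial M=\varnothing$, Stokes gives $\int_M d(\beta\wedge\sigma)=0$, and because $\beta$ has odd degree this reads $\int_M d\beta\wedge\sigma = \int_M \beta\wedge d\sigma$. Substituting $d\sigma = d\eta\wedge d\eta = d\eta^2$, and noting that the Lie-algebra factors commute under $\tr$ (so that $\tr(\alpha\wedge F_A)\wedge d\eta^2 = \tr(F_A\wedge\alpha\wedge d\eta^2)$), one reads off $\kappa(A')-\kappa(A)$ as an integral of $\tr(\,\cdot\,)\wedge d\eta^2$ over $M$, which after expanding $\beta$ and simplifying gives \eqref{eq:kapaA kapa A'}. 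I do not expect a genuine obstacle: this is a routine transgression-plus-Stokes argument, and the only points needing care are the sign bookkeeping in the integration by parts (controlled by the degree of $\beta$) and in the Bianchi/trace manipulations. Conceptually, the single salient observation is that $d\sigma = d\eta^2\neq 0$, in contrast with the closed form $\ast\sigma=\tfrac12 d\eta^2$ used in Lemma~\ref{lem:adjunto d7}, which is why $\kappa$ is not a topological invariant and a correction term appears at all.
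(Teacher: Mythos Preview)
Your approach --- transgression plus Stokes, exploiting $d\sigma=d\eta^2$ --- is exactly the paper's. The gap is in the phrase ``after expanding $\beta$ and simplifying'': once you reach $\kappa(A')-\kappa(A)=\int_M\beta\wedge d\eta^2$, you still have to explain why the higher-order pieces
\[
\int_M\tr(\alpha\wedge d_A\alpha)\wedge d\eta^2
\qandq
\int_M\tr(\alpha\wedge\alpha\wedge\alpha)\wedge d\eta^2
\]
vanish. This is not an algebraic cancellation; it is the second genuinely Sasakian input besides $d\sigma=d\eta^2$. The paper's argument (stated for $\alpha$ transverse) is that these integrands are \emph{basic} $7$-forms: since $d\eta^2$ is horizontal of degree $4$ and the remaining factors are horizontal, the whole form is horizontal; but the horizontal distribution $\ker\eta$ has rank $6$ on $M^7$, so any horizontal $7$-form vanishes pointwise. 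You should supply this step explicitly rather than absorb it into ``simplifying'', and you should note that it uses the transversality of $\alpha$ (which the paper's proof quietly assumes even though the lemma is stated for general $\alpha$).

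A minor secondary point: your transgression form $\beta$ carries a leading coefficient $2$ in front of $\tr(\alpha\wedge F_A)$, while the paper's $\tr\delta$ does not; together with the sign in the integration by parts this will give you a factor-and-sign reconciliation against \eqref{eq:kapaA kapa A'}. That is bookkeeping, not a conceptual obstacle.
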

\begin{proof}
Let  $A\in\cA$    and let $\alpha\in\Omega^1(\fg_E)$ be a variation of $A$. From standard Chern-Weil theory, we know that: 
$$
\tr (F^2_{A+\alpha})-\tr (F^2_A)=d(\tr \delta)   
$$
where  
$
\delta=\delta(A,\alpha)\in\Omega^1(\fg_E)
$ 
given by 
 $
\delta = F_A\wedge \alpha+\frac{1}{2}d_A \alpha\wedge \alpha+\frac{1}{3}\alpha \wedge \alpha\wedge \alpha.
 $ 
Since $M$ is closed, by Stokes' theorem   we obtain  
\begin{align*}
    \kappa(A)  
    &:=  \int_{M}\tr (F_A^2)\wedge\sigma
     =\int_{M}\left(  \tr (F_{A+\alpha}^2)-d\tr \delta\right) \wedge      \sigma\\
    &=  \kappa(A')-\left(\int_{M}d(\tr \delta\wedge\sigma)-\int_M\tr \delta\wedge d\sigma      \right)\\ &=  \kappa(A')+\int_M\tr \delta\wedge d\sigma. 
\end{align*}
Now, we analyse the term $\int_M\tr \delta\wedge d\sigma$ for a transverse  $1$-form $\alpha\in \Omega^1(\fg_E)$:
\begin{align*}
    \int_M\tr\delta\wedge d\sigma&=\int_M\tr (F_A\wedge \alpha+\frac{1}{2}d_A \alpha\wedge \alpha+\frac{1}{3}\alpha \wedge \alpha\wedge \alpha)\wedge d\eta^2\\
    &=\int_M\tr (F_A\wedge \alpha)\wedge d\eta^2+\frac{1}{2}\int_M\tr (d_A \alpha\wedge \alpha)\wedge d\eta^2 +\frac{1}{3}\int_M\tr (\alpha \wedge \alpha\wedge \alpha)\wedge d\eta^2\\
    &=\int_M\tr (F_A\wedge \alpha)\wedge d\eta^2.
\end{align*}
The last equality holds since the  $7$-form $\tr (\alpha \wedge \alpha\wedge \alpha)\wedge d\eta^2$ is basic; the same is true for $\tr (d_A \alpha\wedge \alpha)\wedge d\eta^2$. 
\end{proof}

\begin{corollary}
\label{cor:TopologicalCharge}
Among Chern connections, the charge is independent of the Hermitian structure; it is a holomorphic Sasakian topological invariant,   denoted   by $\kappa(\cE)$. 
\end{corollary}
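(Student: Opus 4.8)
The plan is to feed the difference of two Chern connections into the transmutation formula of Lemma~\ref{lem:TopologicalCharge}. Fix a holomorphic Sasakian bundle $\cE\to M^7$ and let $A,A'\in\cA(\cE)$ be the Chern connections determined by two Hermitian metrics $h,h'$; put $\alpha:=A'-A\in\Omega^1(\fg_E)$. By Lemma~\ref{lem:TopologicalCharge},
\[
\kappa(A)=\kappa(A')+\int_M\tr\bigl(F_A\wedge\alpha\wedge d\eta^2\bigr),
\]
so everything reduces to showing that the correction term vanishes.

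I claim the integrand $\tr(F_A\wedge\alpha\wedge d\eta^2)$ is identically zero, because $F_A\wedge\alpha\wedge d\eta^2$ is a \emph{horizontal} $7$-form while the horizontal distribution $\rH=\Ker(\eta)$ has rank $6$ [cf. \eqref{eq:TangentDecomposition}], so $\Lambda^7\rH^\ast=0$. Three ingredients go into this. First, the curvature of a Chern connection on a Sasakian holomorphic bundle has transverse type $(1,1)$, in particular $F_A\in\Omega^2_H(\fg_E)$ [cf. Proposition~\ref{prop: omega (1,1)}]. Second, $d\eta^2=\omega^2$ is a basic, hence horizontal, $4$-form. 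Third, the two Chern connections share the same $(0,1)$-part (both being integrable) and the same covariant derivative along the Reeb field (fixed by the transverse holomorphic structure), so their difference is a horizontal $1$-form, in fact $\alpha\in\Omega^{1,0}_H(\fg_E)$ [cf. \cite[\S~3]{Biswas2010} and Appendix~\ref{apendixA}]. Hence $F_A\wedge\alpha\wedge\omega^2$ is a wedge of horizontal forms of total degree $2+1+4=7$, so it vanishes pointwise; equivalently, its transverse bidegree would be $(4,3)$, impossible in transverse complex dimension $3$.

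It follows that $\kappa(A)=\kappa(A')$, i.e. $\kappa$ takes a common value on all Chern connections of $\cE$; denoting it $\kappa(\cE)$ proves the Corollary. Observe that this is strictly finer than a characteristic number of the underlying complex bundle: since $\sigma=\eta\wedge d\eta$ is not closed, the same argument fails for connections inducing different holomorphic structures, where $\alpha$ acquires a nonzero $(0,1)$-part. The only delicate point is the third ingredient above, which rests on the structure theory of unitary–integrable connections on Sasakian holomorphic bundles (Reeb-invariance of the admissible Hermitian metrics and of the resulting connection); granting that input, the proof is a one-line degree count.
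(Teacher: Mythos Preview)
Your proof is correct and follows the same approach as the paper: apply the charge variation formula from Lemma~\ref{lem:TopologicalCharge}, observe that the difference of two Chern connections on the same holomorphic Sasakian bundle lies in $\Omega^{1,0}_H(\fg_E)$, and conclude that the defect integrand vanishes by degree excess. The paper phrases the last step as the bidegree argument ($(4,3)$ in transverse complex dimension $3$), which you also mention; your horizontal rank count ($7>6$) is an equivalent and slightly more elementary way to see the same vanishing.
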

\begin{proof}
Fixing a reference Chern connection $A_0\in \cA(\cE)$, we know from Proposition \ref{prop: omega (1,1)} that $F_{A_0}\wedge d\eta^2$ has type $(3,3)$, so the defect term $\int_M\tr (F_{A_0})\wedge d\eta^2\wedge \alpha$ vanishes, by excess in bi-degree, for any $\alpha\in\Omega^{1,0}(\fg_E)$. Therefore $\kappa(A_0)=\kappa(A_0+\alpha)$.
\end{proof} 
 
Now, let $A\in\cA $ be a connection on $\bE$. From \eqref{eq:YM decomposition} and \eqref{eq:topological charge relation 1} we have 
\begin{align*}
    \Y(A) 
    &=  \kappa(A)+2\Vert F_H^{-}\Vert^2+\Vert F_V\Vert^2\\
    &=  -\kappa(A)+2\Vert F_H^{+}\Vert^2+\Vert F_V\Vert^2
\end{align*}
It follows that   $\Y$ attains an absolute minimum either at  selfdual  contact instantons, i.e. when $F_H^{-}=0 $ and $F_V=0$, or at  anti-selfdual  contact instantons, i.e. when $F_H^{+}=0 $ and $F_V=0$. Furthermore,   from Corollary \ref{cor:TopologicalCharge}, among Chern connections we can replace $\kappa(A)=\kappa(\cE)$ in the above equalities, thus the sign of $\kappa(\cE)$ obstructs the existence of one or the other type of solution.  

\newpage
\section{The moduli space of contact instantons in \texorpdfstring{$7$}{Lg}-dimensions}
\label{sec:moduli space CI} 

We showed in \S \ref{sec: GT on Sasakian 7-mfds} that there exists a finite-dimensional local model for the moduli space near an irreducible SDCI. In this Section we will show part \emph{(ii)} of Theorem \ref{thm:Intro}, namely, that its dimension can be computed from an associated  transverse elliptic complex. Our strategy is inspired by \cite[\S~3]{Baraglia2016}.
\subsection{The associated elliptic complex of a contact instanton}
Consider a Sasakian $G$-bundle $\bE\to (M^7,\eta,\xi,g,\Phi)$ and $\fg_E$ its adjoint bundle. Recalling that   $\Omega^k_H(\fg_E)$ denotes the space of \emph{horizontal forms}, i.e.,  $\alpha\in\Omega^k( \fg_E)$ for which $i_\xi\alpha=0$, we introduce the following operators: 
\begin{equation}
\begin{matrix}
    {d_V\colon}&{\Omega_H^k( \fg_E)}&{\to}&{\Omega_H^{k}( \fg_E)}&{}\\
    {}&{\alpha}&{\mapsto}&{i_\xi(d_A \alpha)}&{}
\end{matrix}, 
\qquad
\begin{matrix}
    {d_T\colon}&{\Omega_H^k( \fg_E)}&{\to}&{\Omega_H^{k+1}( \fg_E)}&{}\\
    {}&{\alpha}&{\mapsto}&{d_A \alpha-\eta\wedge d_V\alpha}&{}
\end{matrix}    
\end{equation}
Moreover, $\alpha\in\Omega^k_H(\fg_E) $ is called \emph{basic}  if $i_\xi d_A\alpha=0$; we denote by $\Omega^k_B(\fg_E)$ the space of basic forms.
\begin{remark}
  The operators $d_T$ and $d_V$  are defined in \cite[\S~3.1]{Baraglia2016}  for a $5$-dimensional  contact manifold. Note that the restriction of the Lie derivative along the Reeb field $\cL_{\xi}\vert_{\Omega^k_H(\fg_E)}=d_V$, thus $\Omega^k_B(\fg_E)=\Ker(d_V)$. Moreover, we can write $d_T(\alpha)=i_\xi(\eta\wedge d_A\alpha)$, i.e.,   $d_T\alpha$ is just the horizontal part of $ d_A\alpha $. In particular, $d_T$ coincides  with the usual covariant exterior differential  $d_A$ on basic forms.
\end{remark} 
\begin{remark}
\label{rem:closed}
Even though almost all results in this section hold for  $M^7$ just compact, in   Proposition \ref{prop:omegaMap} we need it to be actually closed. 
\end{remark}
  The graded ring $\left(\Omega^\bullet( \fg_E),\wedge\right)$ has a natural graded Lie algebra structure, given in local coordinates by the bi-linear map  
\begin{equation}
\label{eq:brackets}   
    \begin{array}{rrcl}
    [\cdot\wedge\cdot]\colon &\Omega^p(\fg_E)\times\Omega^q(\fg_E) &\longrightarrow& \Omega^{p+q}(\fg_E)\\
    & [\Theta\wedge\Psi]&=&\sum_k\left(\Theta^k_j\wedge\Psi^i_k-(-1)^{pq}\Psi^k_j\wedge\Theta^i_ k \right)
    \end{array}.
\end{equation}
  The following properties are immediate to check, for $\Phi\in\Omega^p(\fg_E)$, $\Psi\in\Omega^q(\fg_E)$ and $\Theta\in\Omega^p(\fg_E)$:
$$  
  [[\Phi\wedge \Psi]\wedge  \Theta]+(-1)^{pq+qr}[[\Psi\wedge \Theta]\wedge \Phi]+(-1)^{qr+pr}[[ \Theta\wedge \Phi]\wedge \Psi]=0,
$$
$$
  [\Phi\wedge \Psi]=(-1)^{pq}[\Psi\wedge \Phi]. 
$$
The next result is a $7$-dimensional adaptation of \cite[Lemma~3.1]{Baraglia2016}, under the assumption that $(M^7,\eta,\xi,g) $ is a  \emph{K-contact}   manifold, i.e., that the Reeb field $\xi$ is Killing. Every Sasakian manifold is $K-$contact but, at dimensions greater than $3$, the $K-$contact condition  is strictly weaker.  Yet, if the manifold is compact and Einstein, both notions again coincide \cites{boyer2001einstein}:
\begin{lemma}
\label{Lem:Lema I}
    Let $\rI=\f{Span}\{\Omega^2_8 \}\subset\Omega^\bullet( \fg_E)$ be the algebraic ideal of the graded Lie algebra $\left(\Omega^\bullet( \fg_E),\wedge\right)$   generated by  $\Omega^2_8(\fg_E)$; if the Reeb field $\xi$ is a Killing vector field, then $d_A (\rI)\subset \rI$. 
\end{lemma}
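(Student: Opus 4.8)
The plan is to reduce the statement to the single inclusion $d_A\big(\Omega^2_8(\fg_E)\big)\subseteq\rI$, and then, for $w\in\Omega^2_8(\fg_E)$, to split $d_Aw$ into its horizontal and vertical parts and treat them separately; the Killing hypothesis enters only through the vertical part. For the reduction: every element of $\rI$ is a finite sum of terms $[\alpha\wedge w]$ with $\alpha\in\Omega^\bullet(\fg_E)$, $w\in\Omega^2_8(\fg_E)$, and $d_A$ is a graded derivation of $[\cdot\wedge\cdot]$, so $d_A[\alpha\wedge w]=[d_A\alpha\wedge w]\pm[\alpha\wedge d_Aw]$. The first summand lies in $\rI$ because $w\in\Omega^2_8(\fg_E)$, and the second lies in $\rI$ as soon as $d_Aw\in\rI$, since $\rI$ is an ideal. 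So it suffices to prove $d_Aw\in\rI$ for each $w\in\Omega^2_8(\fg_E)$. Fix such a $w$ and write $d_Aw=d_Tw+\eta\wedge d_Vw$ with $d_Tw\in\Omega^3_H(\fg_E)$ and $d_Vw\in\Omega^2_H(\fg_E)$, as in the definitions of $d_T,d_V$.

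I would handle the vertical term first. Since $\xi$ is Killing, $\mathcal L_\xi$ preserves $g$ and hence commutes with $\ast$; it also annihilates $\sigma=\eta\wedge d\eta$, because $\mathcal L_\xi\eta=d\,i_\xi\eta+i_\xi d\eta=0$. Therefore $\mathcal L_\xi$ commutes with the (zeroth-order) bundle operator $L_\sigma=\ast(\sigma\wedge\cdot)$ on $\Omega^2_H$. In a local trivialisation of $E$ one has $d_V=i_\xi d_A=\mathcal L_\xi+[\,i_\xi A\,,\,\cdot\,]$ on horizontal forms, and both summands commute with $L_\sigma$: the first by the above, the second because $[\,i_\xi A,\cdot\,]$ acts only on the $\fg_E$-factor while $L_\sigma$ acts only on the form-factor. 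Consequently $d_V$ preserves each eigenbundle of $L_\sigma$; in particular $d_Vw\in\Omega^2_8(\fg_E)$, so $\eta\wedge d_Vw\in\eta\wedge\Omega^2_8(\fg_E)\subseteq\rI$.

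For the horizontal term $d_Tw$: because $\Phi$ is an \emph{integrable} transverse complex structure, $d_T$ respects transverse bidegree up to $\pm1$, i.e. $d_T=\partial_A+\bar\partial_A$ with $\partial_A\colon\Omega^{p,q}_H(\fg_E)\to\Omega^{p+1,q}_H(\fg_E)$ and $\bar\partial_A\colon\Omega^{p,q}_H(\fg_E)\to\Omega^{p,q+1}_H(\fg_E)$ [cf. \eqref{eq:partialA}]. Since $w$ has transverse type $(1,1)$ [Lemma \ref{lem:SDcharacterization}(i) and \eqref{eq:decompositionPQ}], this forces $d_Tw\in\Omega^{2,1}_H(\fg_E)\oplus\Omega^{1,2}_H(\fg_E)$, with no transverse $(3,0)$ or $(0,3)$ component. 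Finally, a short pointwise computation in the transverse complex Darboux coordinates \eqref{eq:zi} — extracting the primitive part of each $dz^j\wedge d\zbar^j$ from $\sum_j dz^j\wedge d\zbar^j=-2\ii\,\omega$, and using the identity $\omega\wedge dz^j=3\,dz^j\wedge p_j$ for suitable primitive $(1,1)$ forms $p_j$ — identifies $\Omega^{2,1}_H\oplus\Omega^{1,2}_H$ with $\Omega^1_H\wedge\Omega^2_8$ (the transverse Kähler fact that every $(2,1)+(1,2)$ form is $\Lambda^1_H$-divisible into a primitive $(1,1)$ form). Tensoring with $\fg_E$ gives $d_Tw\in\Omega^1_H\wedge\Omega^2_8(\fg_E)\subseteq\rI$, and combined with the previous step $d_Aw=d_Tw+\eta\wedge d_Vw\in\rI$, as required.

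I expect the main obstacle to be the inclusion $d_V\big(\Omega^2_8(\fg_E)\big)\subseteq\Omega^2_8(\fg_E)$: this is the one place where the Killing hypothesis is genuinely essential, since without $\mathcal L_\xi$ commuting with $\ast$ (hence with $L_\sigma$) the vertical term $\eta\wedge d_Vw$ could acquire components in $\eta\wedge\Omega^2_6$ or $\eta\wedge\Omega^2_1=\langle\sigma\rangle$, neither of which lies in $\rI$. The transverse linear-algebra identity $\Omega^{2,1}_H\oplus\Omega^{1,2}_H=\Omega^1_H\wedge\Omega^2_8$ is routine but should be verified with care, as it is exactly what excludes a spurious $\Omega^{3,0}\oplus\Omega^{0,3}$ obstruction in $d_Tw$.
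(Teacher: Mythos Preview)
Your proof is correct and follows essentially the same strategy as the paper's: reduce to generators, split $d_Aw=d_Tw+\eta\wedge d_Vw$, and use $[\mathcal L_\xi,L_\sigma]=0$ (from the Killing hypothesis) to conclude $d_Vw\in\Omega^2_8(\fg_E)$. For the horizontal term the paper simply invokes the pointwise linear-algebra fact $\Omega^3_H\subset\rI$ (established in the subsequent Proposition~\ref{prop:Lidentification}), whereas you first restrict $d_Tw$ to $\Omega^{2,1}_H\oplus\Omega^{1,2}_H$ via the transverse bidegree of $w$ and then verify the same inclusion; the two routes are minor variants of one another.
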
   
\begin{proof}
    Since $\rI$ lies in the image of the exterior product $\Omega^2_8\otimes\Omega^\bullet(\fg_E)\to\Omega^\bullet(\fg_E)$, by linearity it suffices to show that $d_A(\alpha\otimes \psi )\in \rI$, where  $\alpha\in\Omega^{2}_{8}$ in \eqref{eq:omega2} and $   \psi\in \Omega^0( \fg_E)$. Indeed,
\begin{align*}
    d_A(\alpha\otimes \psi)
    &=d_A\alpha\otimes \psi +\alpha\wedge d_A\psi\\
    &=\eta\wedge d_V\alpha\otimes \psi+d_T\alpha\otimes\psi+\alpha\wedge d_A\psi,
\end{align*}
    and the last two terms clearly lie in $\rI$.  In order to show that $\eta\wedge d_V\alpha\in \rI$, we check that $d_V\alpha= i_\xi d_A\alpha=\cL_\xi\alpha $ belongs to $\Omega^2_8$. By the eigenspace decomposition \eqref{eq:omega2}, if $\alpha\in \Omega^2_8$,  then $\alpha=\ast(\sigma\wedge \alpha)$. Furthermore, since $\xi$ is Killing, the Lie derivative $\cL_\xi$ commutes with the star Hodge operator, so 
\begin{align*}
    \cL_\xi\alpha &=\cL_\xi\ast(\sigma\wedge\alpha)=\ast\cL_\xi(\sigma\wedge\alpha)\\
    &=\ast(\cL_\xi\eta\wedge d\eta\wedge\alpha+\eta\wedge \cL_\xi d\eta\wedge\alpha+\sigma\wedge \cL_\xi\alpha)\\
    &=\ast(\sigma\wedge \cL_\xi\alpha).
\end{align*}
    We deduce  that $\cL_\xi\alpha$ satisfies the instanton equation  \eqref{eq:contacIns1} for $\lambda=1$, i.e.,   $\cL_\xi\alpha\in\Omega^2_8$.
\end{proof} 

Lemma \ref{Lem:Lema I} shows that $d_A$ descends to a derivation $D\colon \rL^k\to \rL^{k+1}$ on the quotient  
$
\rL^\bullet=\rL^\bullet( \fg_E)
:=\Omega^\bullet( \fg_E)/\rI,
$ 
which is indeed a complex, since $d_A^2=F_A\in \Omega^2_8\subset \rI$. Therefore the quotient has the structure of a differential graded Lie algebra:  
\begin{equation}
\label{eq:complex L D}
     (\rL^\bullet,D):=(\Omega^\bullet( \fg_E)/\rI,d_A)
\end{equation}

\begin{remark}
The statement of Lemma \ref{Lem:Lema I}  follows analogously if one takes, instead, the ideal $\rI=\f{Span}\{\Omega^2_6 \}$, with $\rL^\bullet= \Omega^{\bullet}( \fg_E)/\rI$. This will have no further bearing in this article, since after all we know that the corresponding instantons in that case are trivial [Proposition \ref{prop:flatness}], but, whatever the case, we have explicit characterisations for the spaces $\rL^k$ as follows.  
\end{remark} 
\begin{proposition}
\label{prop:Lidentification}
The spaces $\rL^k$ of complex  \eqref{eq:complex L D}, for 
$k=0,\dots,3$,  admit the following decompositions:
\begin{enumerate}[(i)]
  \item For $\rI=\f{Span}\{\Omega^2_8(\fg_E)\}$, 
   $$
\begin{array}{ll}
\rL^0  \cong\Omega^0(\fg_E), & \rL^2 \cong\Omega^{2}_{6}(\fg_E)\oplus\Omega^{2}_{1}(\fg_E)\oplus\eta\wedge\Omega^{1}_H(\fg_E), \\[3pt]
\rL^1  \cong\Omega^1(\fg_E), & \rL^3 \cong\eta\wedge\left(\Omega^{2}_{6}(\fg_E)\oplus\Omega^{2}_{1}(\fg_E)\right).  
\end{array}
    $$ 
\item  For $\rI=\f{Span}\{\Omega^2_6(\fg_E)\}$,
$$
\begin{array}{ll}
  \rL^0 \cong\Omega^0(\fg_E),    &  \rL^2\cong   \Omega^{2}_{8}(\fg_E)\oplus\Omega^{2}_{1}(\fg_E)\oplus\eta\wedge\Omega^{1}_H( \fg_E),  \\[3pt]
   \rL^1\cong\Omega^1(\fg_E),    &\rL^3\cong \eta\wedge\left( \Omega^{2}_{8}(\fg_E)\oplus\Omega^{2}_{1}(\fg_E)\right).
\end{array}
 $$
 \end{enumerate}
In either case,  $\rL^k=0$ for $k\geq 4$.
\end{proposition}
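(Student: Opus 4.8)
The plan is to compute the graded pieces $\rI\cap\Omega^k(\fg_E)$ one degree at a time. Since $\rI$ is the ideal of the wedge algebra $\left(\Omega^\bullet(\fg_E),\wedge\right)$ generated by the degree-two subspace $\Omega^2_8(\fg_E)$ (respectively $\Omega^2_6(\fg_E)$), it splits as $\rI=\bigoplus_{k\geq 2}\left[\Omega^2_8(\fg_E)\wedge\Omega^{k-2}(\fg_E)\right]$; the $\fg_E$-coefficients only contribute the Lie bracket and do not affect the type decomposition of the form part, so the task reduces to identifying the scalar subspaces $\Omega^2_8\wedge\Omega^{k-2}(M)\subseteq\Omega^k(M)$ and exhibiting the stated complements. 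In degrees $k=0,1$ the ideal has no component, so $\rL^0\cong\Omega^0(\fg_E)$ and $\rL^1\cong\Omega^1(\fg_E)$. In degree $k=2$ one has $\rI\cap\Omega^2(\fg_E)=\Omega^2_8(\fg_E)$ (the only products landing in degree two are $[\Omega^0(\fg_E)\wedge\Omega^2_8(\fg_E)]\subseteq\Omega^2_8(\fg_E)$), and the description of $\rL^2$ follows at once from the irreducible splitting \eqref{eq:2formsDecomposition} together with the identification $\Omega^2_V(M)=\eta\wedge\Omega^1_H(M)$ read off from \eqref{eq:H-Vdecomposition}.

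The heart of the argument is degree $k=3$. Using $\Omega^1(M)=\Omega^1_H(M)\oplus\C\eta$ and $\Omega^3(M)=\Omega^3_H(M)\oplus\eta\wedge\Omega^2_H(M)$, I would first note that $\Omega^2_8\wedge\Omega^1=\left(\Omega^2_8\wedge\Omega^1_H\right)\oplus\left(\eta\wedge\Omega^2_8\right)$, the first summand purely horizontal and the second purely vertical. The vertical summand removes exactly the $\Omega^2_8$-part of $\eta\wedge\Omega^2_H=\eta\wedge\left(\Omega^2_1\oplus\Omega^2_6\oplus\Omega^2_8\right)$, leaving $\eta\wedge\left(\Omega^2_6\oplus\Omega^2_1\right)$. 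The remaining point is to compute the horizontal image $\Omega^2_8\wedge\Omega^1_H\subseteq\Omega^3_H$ and check that it is the complement claimed by the statement. I would do this directly in Sasakian Darboux coordinates, wedging the explicit basis $\{w_i\}_{i=1}^8$ of $\Omega^2_8$ (resp. $\{v_i\}_{i=1}^6$ of $\Omega^2_6$) against the horizontal coframe $\{dx^j\}_{j=1}^6$ from \S\ref{sec: eigenspaces of Lphi}; conceptually this is a rank computation for an $\SU(3)$-equivariant map, since under the transverse $\SU(3)$-structure $\Omega^2_8\cong\mathfrak{su}(3)$, $\Omega^1_H\cong\Omega^{1,0}_H\oplus\Omega^{0,1}_H$, and $\Omega^3_H$ splits into $\SU(3)$-irreducibles, so surjectivity onto each isotypic summand can be tested on a single well-chosen element. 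Running the identical computation with $\Omega^2_6$ in place of $\Omega^2_8$ yields part (ii).

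For $k\geq 4$ the claim $\rL^k=0$ amounts to surjectivity of the wedge map $\Omega^2_8\wedge\Omega^{k-2}(M)\to\Omega^k(M)$. Splitting again into horizontal and vertical parts, the case $k=4$ reduces to a finite rank computation with the same bases, or to a transverse Lefschetz argument (wedging with $\omega=d\eta$ and with the $w_i$ together spanning $\Lambda^4 H^\ast$); the cases $k=5,6,7$ are successively easier — for instance $\Omega^7(M)$ is one-dimensional and a single product of some $w_i$ with a suitable $5$-form is a volume form — and $\Omega^k(M)=0$ for $k>7$ is immediate. I expect the degree-three step to be the main obstacle: the real content is pinning down precisely which horizontal $3$-forms lie in the span of the $\{w_i\wedge dx^j\}$ (resp. $\{v_i\wedge dx^j\}$), and the efficient route is to lean on the transverse $\SU(3)$-representation theory rather than carry out the full $8\times 6$ (resp. $6\times 6$) table of wedge products by hand.
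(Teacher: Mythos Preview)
Your proposal is correct and follows essentially the same route as the paper: after the trivial degrees $k=0,1,2$, the crux is showing $\Omega^2_8\wedge\Omega^1_H=\Omega^3_H$ (respectively $\Omega^2_6\wedge\Omega^1_H=\Omega^3_H$), and the paper does exactly this by organizing $\Lambda^3 H^\ast$ into its $\SU(3)$-irreducible pieces $\Lambda^3_{\Re}\oplus\Lambda^3_{\Im}\oplus\Lambda^3_6\oplus\Lambda^3_{12}$ and then exhibiting each summand explicitly as combinations of $w_i\wedge dx^j$ (for part~(ii) it bypasses the $\SU(3)$ splitting and writes every Darboux basis element of $\Omega^3_H$ directly as $v_i\wedge dx^j$). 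The higher degrees $k\geq 4$ are dispatched in the paper with a one-line ``similarly, one can easily check'', so your slightly more detailed sketch there is fine.
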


\begin{proof}
For $(i)$, the identifications are immediate for $\rL^0,\rL^1$ and $\rL^2$, combining $\Omega^{k}( \fg_E)=\Omega^{k}_H( \fg_E)\oplus \eta\wedge\Omega^{k-1}_H(\fg_E)$, as in \eqref{eq:decompostion k forms}, and   the  natural decomposition  of  $\fg_E$-valued $2$-forms $\Omega^2(\fg_E)$ induced by \eqref{eq:2formsDecomposition}: 
\begin{equation}
\label{eq:2formsDecomposition2}
    \Omega^{2}(\fg_E)=\underbrace{\Omega^{2}_{6}(\fg_E)\oplus\Omega^{2}_{8}(\fg_E)\oplus\Omega^{2}_{1}(\fg_E)}_{\Omega^2_H(\fg_E)}\oplus\;\Omega^{2}_V(\fg_E).
\end{equation} 
  To show that  $\Omega^3_H(\fg_E)\subset \rI$, let $\{z_j\}_{j=1}^3$ be the  Darboux transverse complex coordinates as in  \eqref{eq:zi}. Following \cite[\S~2.1]{bedulli2007ricci}, we  denote by  
$$
  \Theta_+=(dx^{123}+dx^{246})-(dx^{345}+dx^{156})\qandq \Theta_-=(dx^{126}+dx^{234})-(dx^{456}+dx^{135})
$$
the   real and imaginary parts of the local transverse holomorphic volume form $dz^1\wedge dz^2\wedge dz^3$, respectively. 
  For every point $x\in M$,   $(H_x,\omega=d\eta)$ is a transverse symplectic vector space, and 
$ 
  J:=\Phi\vert_ {H}
$  
  is a complex structure. The group $\SU(3)$ acts irreducibly on $\Lambda^5H^\ast$ and $H^\ast$, while $\Lambda^2H^\ast$ and $\Lambda^3H^\ast$ decompose as follows \cite[\S~2]{bedulli2007ricci}:
$$
  \Lambda^2H^\ast   = \Lambda^2_1H^\ast\oplus\Lambda^2_6 H^\bullet\oplus\Lambda^2_8H^\ast, \quad
$$ 
where 
\begin{align*}
   \Lambda^2_1H^\ast  &= \R\cdot\omega \\
   \Lambda^2_6H^\ast  &= \{\alpha\in \Lambda^2H^\ast\vert J^\ast\alpha=-\alpha \}\\
   \Lambda^2_8H^\ast  &= \{\alpha\in\Lambda^2H^\ast\vert J^\ast\alpha=\alpha,\;\alpha\wedge\omega^2=0  \}\\
   \Lambda^3H^\ast    &= \Lambda^3_{\Re}H^\ast\oplus\Lambda^3_{\Im} H^\ast\oplus\Lambda^3_6H^\ast\oplus\Lambda^3_{12}H^\ast
\end{align*} 
and  
\begin{align*}
   \Lambda_{\Re}^3 H^\ast &= \R\cdot \Theta_+ \\
   \Lambda_{\Im}^3 H^\ast   &= \R\cdot\Theta_-=\{ \alpha\in \Lambda^3 H^\ast \vert \alpha\wedge\omega=0,\;                  \alpha \wedge \Theta_+=c\omega^3,\;c\in\R \}  \\
    \Lambda_{6}^3 H^\ast         &=\{\alpha\wedge\omega \vert \alpha\in\Lambda^1 H^\ast  \} \\
    \Lambda_{12}^3 H^\ast       &=\{\alpha \in\Lambda^3 H^\ast \vert \alpha\wedge\omega=0, \; \alpha\wedge\Theta_+=0, \; \alpha\wedge\Theta_-= 0 \}.  
   \end{align*}   
Our goal is to show that all subspaces in the decomposition of $\Lambda^3 H^\ast$ lie  in $\rI$.
The following exterior multiplication table (row $\wedge$ column) is easy to compute: 

\[
\begin{tabular}{|l|c|c|c|}
\hline
    $ \wedge$       & $\omega$ & $ \Theta_+$ & $ \Theta_-$ \\
\hline
$dx^{345}$ & $0$                 & $0$                   & $dx^{123456}$         \\[3pt]
\hline
$dx^{156}$ & $0$                 & $0$                   & $dx^{123456}$         \\[3pt]
\hline
$dx^{126}$ & $0$                 & $dx^{123456}$         & $0$                   \\[3pt]
\hline
$dx^{135}$ & $0$                 & $-dx^{123456}$        & $0$                   \\[3pt]
\hline
$dx^{456}$ & $0$                 & $-dx^{123456}$        & $0$                   \\[3pt]
\hline
$dx^{234}$ & $0$                 & $dx^{123456}$         & $0$                   \\[3pt]
\hline
$dx^{123}$ & $0$                 & $0$                   & $-dx^{123456}$        \\[3pt]
\hline
$dx^{246}$ & $0$                 & $0$                   & $-dx^{123456}$        \\[3pt]
\hline
\end{tabular}
\]
From that we obtain a set of generators for $\Lambda^3_{12}H^\ast$:
$$
\left\{\begin{array}{cccc}
   dx^{126}+dx^{135}, & dx^{126}+dx^{456}, & dx^{126}-dx^{234}, & dx^{135}-dx^{456} \\ 
        dx^{135}+dx^{234}, & dx^{456}+dx^{234}, & dx^{345}-dx^{156}, & dx^{345}+dx^{123}\\ 
        dx^{345}+dx^{246}, & dx^{156}+dx^{123}, & dx^{156}+dx^{246}, & dx^{123}-dx^{246}. 
\end{array}\right\} 
$$
In terms of the  $\{w_i\}_{i=1}^8$ of  \eqref{eq:omega2}, this translates into
\begin{align*} 
    \Lambda_{12}^3H^\ast      
    &=\f{Span}\{ w_6\wedge dx^1,w_1\wedge dx^6,-w_4\wedge dx^2,w_3\wedge dx^5,-w_2\wedge dx^3,w_5\wedge dx^4,\\
    &\quad\quad\quad\quad w_4\wedge dx^5, w_1\wedge dx^3,-w_6\wedge dx^4,w_5\wedge dx^1,w_2\wedge dx^6,-w_3\wedge dx^2  \}
    \subset\rI.
\end{align*}
To see that $\Lambda_{6}^3H^\ast \subset\rI$, note that  $\omega\wedge dx^i=dx^{14i}+dx^{25i}+dx^{36i}$. Equivalently, in terms of $w_7$ and $w_8$ in \eqref{eq:omega2},
\begin{align*}
    \omega\wedge dx^i&= w_7\wedge dx^i+w_8\wedge dx^i+3dx^{36i}.
\end{align*} 
Let us have a closer look at terms of the form $dx^{36i}$. For $i=3$ and $i=6$, clearly $dx^{36i}=0$; for  $i=1$ and $i=4$, necessarily  $dx^{36i}=\pm w_7\wedge dx^i$;  for $i=2$ and $i=5$, one has  $dx^{36i}=\pm w_8\wedge dx^i$, hence in all instances  
 $\Lambda_{6}^3H^\ast  =\{w_7\wedge dx^i+w_8\wedge dx^i+3dx^{36i}\}_{i=1}^6 
\subset\rI$.  
Finally,  we observe that:
$$
J^\ast(X_i\lrcorner \Theta_+) =-J^\ast(J^2X_i\lrcorner\Theta_ +)=-JX_i\lrcorner J^\ast\Theta_+=-JX_i\lrcorner\Theta_-=X_i\lrcorner \Theta^+,
$$
whereas 
\begin{align*}
    (X_i\lrcorner\Theta_+)\wedge\omega^2  
    &= X_i\lrcorner(\Theta_+\wedge\omega^2)+\Theta_+\wedge(X_i\lrcorner\omega^2)=\Theta_+\wedge(X_i\lrcorner\omega^2)\\
    &=\Theta_+\wedge(X_i\lrcorner\omega\wedge\omega+\omega \wedge X_i\lrcorner\omega)\\
    &=2(\Theta_+\wedge\omega)\wedge X_i\lrcorner\omega, 
\end{align*}
i.e.,  $(X_i\lrcorner\Theta_+)\in \Lambda^2_8 H^\ast$, and thus  $ \Lambda_{\Re}^3 H^\ast \subset\rI$. That $ \Lambda_{\Im}^3 H^\ast\subset\rI$ follows analogously.

For $(ii)$, it is still manifest that $\rL^0,\rL^1,\rL^2\subset\rI$. As to  $\rL^3$, we can inspect directly the basis elements $\{v_i\}_{i=1}^6$  for $\Omega^2_6$ introduced in  \eqref{eq:omega2}. We claim  that every element in the Darboux coordinate basis for $\Omega^{3}_H(\fg_E)$ is obtained from an element of $\Omega^1_6(\fg_E)$. Indeed,
$$
\left\lbrace \begin{array}{*{3}c}
dx^{124}=v_1\wedge dx^4, &       \; dx^{125}=\;\;\;v_1\wedge dx^5, &      \; dx^{134}=\;\;\;v_3\wedge dx^4   \\[3pt]
dx^{136}=v_3\wedge dx^6, &       dx^{145}=-v_1\wedge dx^1,         &      dx^{146}= -v_4\wedge dx^4          \\[3pt]
dx^{235}=v_5\wedge dx^5, &       \; dx^{236}=\;\;\;v_5\wedge dx^6, &      \; dx^{245}=\;\;\;v_2\wedge dx^5   \\[3pt] 
dx^{256}=v_6\wedge dx^5, &       \; dx^{346}=\;\;\;v_3\wedge dx^3, &      \; dx^{356}=\;\;\;v_6\wedge dx^6
\end{array}\right\rbrace \subset \rI,
$$
as well as  
$$
\left\lbrace \begin{array}{*{3}c}
 -2dx^{345} =v_2\wedge dx^6-v_6\wedge dx^4+v_4\wedge dx^5,          &       \;\;\;\;\;2dx^{156} =\;\; v_2\wedge dx^6-v_6\wedge dx^4-v_4\wedge dx^5, \\[3pt]
\;\;\;\;2dx^{126} = v_1\wedge dx^6+v_3\wedge dx^5+v_6\wedge dx^1, &      \;\;\;\;\;2dx^{135} =\;\;v_1\wedge dx^6+v_3\wedge dx^5-v_6\wedge dx^1,  \\[3pt]
-2dx^{456} =v_1\wedge dx^6+v_4\wedge dx^2+v_5\wedge dx^4,           &  -2dx^{234} =  v_1\wedge dx^6+v_4\wedge dx^2-v_5\wedge dx^4,   \\[3pt] 
-2dx^{123} =v_3\wedge dx^2-v_2\wedge dx^6-v_5\wedge dx^1,           &      -2dx^{246} = v_3\wedge dx^2+v_2\wedge dx^6+v_5\wedge dx^1, 
\end{array}\right\rbrace \subset \rI.
$$
Similarly, one can easily check that  $\Omega^{4}_H(\fg_E),\; \Omega^5_H(\fg_E)$ and $\Omega^6_{H}(\fg_E)$ lie in $\rI$, thus $\rL^k=0$ for $k\geq 4$. 
\end{proof} 
\begin{lemma}
\label{lem:mapa eta omega contraction}
The followings  maps preserve $\rI$ (whether $\rI$ be  generated by  $\Omega^2_8(\fg_E)$ or by $\Omega^2_6(\fg_E)$)  
$$
i_\xi\colon\Omega^\bullet(\fg_E)\to\Omega^{\bullet-1}(\fg_E),  
\quad
\eta\wedge \colon\Omega^{\bullet}(\fg_E)\to\Omega^{\bullet+1}(\fg_E) 
\qandq   
\omega\wedge\colon\Omega^{\bullet}(\fg_E)\to\Omega^{\bullet+2}(\fg_E),
$$
so these maps  descend to  the quotient $L^\bullet$.  
\end{lemma}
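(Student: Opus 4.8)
The plan is to treat the three maps one at a time, reducing each to the explicit basis computations already carried out in Proposition \ref{prop:Lidentification}, and then exploit multilinearity. Since $\rI$ is the algebraic ideal generated by $\Omega^2_8(\fg_E)$ (respectively $\Omega^2_6(\fg_E)$), every element of $\rI$ is a sum of terms $\beta\wedge\gamma$ with $\beta$ a generator (i.e. $\beta\in\Omega^2_8(\fg_E)$ or $\beta\in\Omega^2_6(\fg_E)$) and $\gamma\in\Omega^\bullet(\fg_E)$; by $\fg_E$-linearity and the Leibniz-type identities for $i_\xi$ and for wedging, it suffices to verify each claim on such generating terms, and in fact — since $\eta\wedge$ and $\omega\wedge$ are $\Omega^0$-linear while $i_\xi$ satisfies $i_\xi(\beta\wedge\gamma)=(i_\xi\beta)\wedge\gamma\pm\beta\wedge i_\xi\gamma$ — it is enough to know how each map acts on a single generator $\beta\in\Omega^2_8$ (resp.\ $\Omega^2_6$) and on the horizontal forms $\Omega^k_H(\fg_E)$ for $k\le 3$, all of which were shown in Proposition \ref{prop:Lidentification} to lie in $\rI$ already for $k\ge 3$.

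Concretely: for $\eta\wedge$, note $\eta\wedge\Omega^2_8(\fg_E)\subset\eta\wedge\Omega^2_H(\fg_E)$, and wedging with $\eta$ any element of the ideal produces another element of the ideal because $\rI$ is an ideal for $\wedge$ and $\eta\wedge(\beta\wedge\gamma)=\beta\wedge(\eta\wedge\gamma)$; thus $\eta\wedge\rI\subset\rI$ is immediate from the ideal property, and one only needs the base case that $\eta\wedge\alpha\in\rI$ for $\alpha$ a generator, which holds trivially since $\eta\wedge\alpha=\alpha\wedge(\pm\eta)\in\rI$. For $\omega\wedge$, likewise $\omega\wedge(\beta\wedge\gamma)=\beta\wedge(\omega\wedge\gamma)\in\rI$, so again the ideal property does the job; no new computation is needed. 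For $i_\xi$, the Leibniz rule gives $i_\xi(\beta\wedge\gamma)=(i_\xi\beta)\wedge\gamma+(-1)^{\deg\beta}\beta\wedge i_\xi\gamma$; the second term is in $\rI$ since $\rI$ is an ideal, so the only thing to check is that $(i_\xi\beta)\wedge\gamma\in\rI$ for $\beta$ a generator. But the generators $\Omega^2_8(\fg_E)$ and $\Omega^2_6(\fg_E)$ are horizontal by construction [cf. \eqref{eq:omega2} and \eqref{eq:H-Vdecomposition}], so $i_\xi\beta=0$ and that term vanishes outright. Hence $i_\xi\rI\subset\rI$.

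The only mild subtlety — and the step I expect to require the most care in the write-up — is confirming that the generators of $\rI$ really are horizontal, i.e. that $\Omega^2_8(\fg_E)$ and $\Omega^2_6(\fg_E)$ are contained in $\Omega^2_H(\fg_E)=\Ker(i_\xi)\cap\Omega^2(\fg_E)$; this is exactly the content of the eigenspace decomposition \eqref{eq:H-Vdecomposition}--\eqref{eq:omega2}, where $\Omega^2_6$ and $\Omega^2_8$ sit inside $\Omega^2_H$, so $i_\xi$ annihilates them. Once that is recorded, all three inclusions follow formally, and the induced maps on the quotient $\rL^\bullet=\Omega^\bullet(\fg_E)/\rI$ are well defined. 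I would phrase the proof in two short paragraphs: first the observation that $\rI$ being a two-sided algebraic ideal makes $\eta\wedge$ and $\omega\wedge$ automatic, then the Leibniz computation for $i_\xi$ together with horizontality of the generators.
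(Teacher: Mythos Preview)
Your proposal is correct and follows essentially the same approach as the paper: the paper's proof also reduces to the Leibniz rule for $i_\xi$ applied to a generating product $\alpha\wedge\beta$ with $\beta\in\Omega^2_8(\fg_E)$, observing that $i_\xi\beta=0$ by horizontality, and treats the $\eta\wedge$ and $\omega\wedge$ cases as immediate from the ideal property. Your write-up is more explicit about the latter two maps, but the argument is the same.
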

\begin{proof} 
If $\alpha\wedge\beta\in \rI$, for some $\alpha\in\Omega^\bullet(\fg_E)$ and $\beta\in \Omega^{2}_8(\fg_E)$,  
\begin{align*}
    i_\xi( \alpha\wedge\beta)
    &=i_\xi\alpha\wedge\beta+(-1)^{\f{deg}(\alpha)}\alpha\wedge i_\xi\beta\\
    &= i_\xi\alpha\wedge\beta\in \rI.
\end{align*} 
Therefore  the   map   $i_\xi$ is a well-defined contraction of bi-degree $(0,-1)$ on $\rL^\bullet$. We also denote by $L_H^k$ the kernel of contraction inside of $\rL^k$, so that $\rL^{k,0}=\rL^k_H$. 
\end{proof}
We denote the induced maps on   the quotient  $\rL^\bullet$  as follows:  
\begin{equation}
    \label{eq:mapa eta omega}
    L_\eta:=\eta\wedge \colon\rL^{\bullet}(\fg_E)\to\rL^{\bullet+1}(\fg_E) 
\qandq   
L_\omega:=\omega\wedge\colon\rL^{\bullet}(\fg_E)\to\rL^{\bullet+2}(\fg_E),
\end{equation}
In summary, by Lemma \ref{prop:Lidentification} we have the following 
\begin{proposition}
\label{prop:complexDeformation}
If a connection $A$ has curvature  $F_A\in\Omega^2_8(\fg_E)$, the associated complex \eqref{eq:complexDeformation} is     elliptic: 
$$
\begin{tikzcd}
 \rL^\bullet \colon \quad
 0\arrow[r] 
 &\rL^0 \arrow[r,"D_0"]
 & \rL^1 \arrow[r, "D_1"]
 & \rL^2 \arrow[r, "D_2"]
 &\rL^3 \arrow[r]
 &  0
\end{tikzcd}.
$$
\end{proposition}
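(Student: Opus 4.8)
The plan is to prove ellipticity in the usual way: write down the symbol complex of $\rL^\bullet$ at an arbitrary covector $0\neq\varsigma\in T^\ast_xM$ and check it is exact. By Lemma~\ref{Lem:Lema I} the differential $D$ is the one induced by $d_A$ on $\Omega^\bullet(\fg_E)/\rI$, so its principal symbol at $\varsigma$ is exterior multiplication $\varsigma\wedge(\cdot)$ followed by the quotient projection; since $\rI$ is an ideal of $(\Omega^\bullet(\fg_E),\wedge)$ it is preserved by $\varsigma\wedge(\cdot)$, and the symbol sequence is the complex of exterior multiplications
\[
0\longrightarrow\rL^0_x\xrightarrow{\varsigma\wedge}\rL^1_x\xrightarrow{\varsigma\wedge}\rL^2_x\xrightarrow{\varsigma\wedge}\rL^3_x\longrightarrow0,
\qquad
\rL^k_x=\big(\Lambda^kT^\ast_xM/\rI_x\big)\otimes(\fg_E)_x.
\]
The adjoint bundle being an inert tensor factor, it suffices to treat the scalar complex; and by Proposition~\ref{prop:Lidentification}(i) we have $\rL^0_x=\Lambda^0T^\ast_xM$, $\rL^1_x=\Lambda^1T^\ast_xM$, $\rL^2_x\cong\Omega^2_6\oplus\Omega^2_1\oplus(\eta\wedge\Omega^1_H)$, $\rL^3_x\cong\eta\wedge(\Omega^2_6\oplus\Omega^2_1)$ and $\rL^k_x=0$ for $k\ge4$, together with the structural inclusions $\Omega^2_8\subset\rI$, $\eta\wedge\Omega^2_8\subset\rI$ and $\Omega^3_H\subset\rI$ isolated in its proof.

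Exactness in the bottom two spots costs essentially nothing new. At $\rL^0_x$ the map $f\mapsto f\varsigma$ is injective. At $\rL^1_x$, if $\varsigma\wedge\alpha\in\rI\cap\Omega^2=\Omega^2_8$, then — $\Omega^2_8$ being the $(+1)$-eigenspace of $L_\sigma$ in \eqref{eq:omega2} — the identity $\sigma\wedge\varsigma\wedge\alpha=\ast(\varsigma\wedge\alpha)$ holds, and the linear-algebra argument from the proof of Proposition~\ref{prop:ElliptExtendComplex} (pick a coframe $\{e^i\}$ with $e^1=\varsigma$ and separate the $\varsigma$-divisible terms from the $\varsigma$-free ones) forces $\varsigma\wedge\alpha=0$, hence $\alpha=f\varsigma\in\im(\varsigma\wedge\colon\rL^0_x\to\rL^1_x)$. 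This is verbatim the computation already recorded there.

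The substantive part — and the one genuinely outside the range of the extended complex of \S\ref{sec:extended complex} — is exactness at $\rL^2_x$ together with surjectivity at $\rL^3_x$. I would prove it by a direct computation in Sasakian Darboux coordinates, writing $\varsigma=a\,\eta+\beta$ with $\beta\in H^\ast_x$. For $a\neq0$ one normalises $\varsigma=\eta$ at the base point, whereupon the two nonzero symbol maps become $\alpha\mapsto[\eta\wedge\alpha_H]$ and $\mu_H+\eta\wedge\mu_1\mapsto[\eta\wedge\mu_H]$, whose exactness is immediate from $\eta\wedge\Omega^2_8\subset\rI$ and the model $\rL^3_x\cong\eta\wedge(\Omega^2_6\oplus\Omega^2_1)$. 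For $\beta\neq0$, $\SU(3)$-equivariance of the whole construction on $H^\ast_x$ reduces the check to one normalised horizontal covector, and the two remaining identities are then read off the multiplication tables already assembled in the proof of Proposition~\ref{prop:Lidentification} — the memberships of $w_i\wedge dx^j$, of $\omega\wedge dx^j$, and of $\Lambda^3H^\ast$ in $\rI$. The main obstacle is exactly this horizontal case: there $\varsigma\wedge(\cdot)$ sends $\Omega^2_6\oplus\Omega^2_1\subset\rL^2_x$ straight into $\Omega^3_H\subset\rI$, i.e.\ to zero in $\rL^3_x$, so the top symbol map must be shown to be carried entirely by the vertical summand $\eta\wedge\Omega^1_H$ of $\rL^2_x$; verifying that this still covers all of $\rL^3_x\cong\eta\wedge(\Omega^2_6\oplus\Omega^2_1)$ — and, in tandem, pinning down the kernel at $\rL^2_x$ — is the delicate point, and is where the full web of algebraic relations among the $v_i$, $w_j$ of \eqref{eq:omega2} has to be used rather than any dimension bookkeeping.
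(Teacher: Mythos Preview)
Your setup of the symbol complex and your treatment at $\rL^0_x$ and $\rL^1_x$ are fine, and your instinct that any $\varsigma$ with nonzero vertical component $a\neq0$ is straightforward is correct (though you cannot literally normalise $\varsigma=\eta$ unless also $\beta=0$; a direct computation handles the general $a\neq0$ case). The genuine problem is the purely horizontal case, which you rightly flag as ``the delicate point'' but leave unverified---and the verification actually \emph{fails}. Take $\varsigma=dx^1$. Since $dx^1\wedge\alpha_H\in\Omega^3_H\subset\rI$ for every $\alpha_H\in\Omega^2_{6\oplus1}$, the symbol map $\rL^2_x\to\rL^3_x$ reduces to $\alpha_H+\eta\wedge\gamma\mapsto-\eta\wedge(dx^1\wedge\gamma)_{6\oplus1}$. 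From the bases \eqref{eq:omega2} one computes $(dx^1\wedge dx^j)_{6\oplus1}\in\mathrm{span}\{v_1,v_2,v_3,v_4,\omega\}$ for $j=2,\dots,6$, so the image has dimension $5$, whereas $\rL^3_x\cong\eta\wedge\Omega^2_{6\oplus1}$ has dimension $7$: the elements $\eta\wedge v_5$, $\eta\wedge v_6$ are never reached. Dually, $v_5,v_6\in\Omega^2_6\subset\rL^2_x$ sit in the kernel of $\rL^2_x\to\rL^3_x$ but not in the image of $\rL^1_x\to\rL^2_x$. The underlying obstruction is the count $\dim\Omega^1_H-1=5<7=\dim\Omega^2_{6\oplus1}$; in dimension five both sides equal $3$, which is why the analogous Baraglia--Hekmati argument goes through there.

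For comparison, the paper's own proof tries to sidestep this by asserting that $\varsigma\wedge\alpha\in\langle\Lambda^2_8\rangle$ (with $\alpha$ in the explicit complement of $\Lambda^2_8$) forces $\varsigma\wedge\alpha=0$, then appealing to de Rham exactness. That assertion is false for precisely the example above: with $\varsigma=dx^1$ and $\alpha=v_5=dx^{23}-dx^{56}$ one has $\varsigma\wedge\alpha=dx^{123}-dx^{156}\in\Omega^3_H\subset\rI$ yet $\varsigma\wedge\alpha\neq0$. So your proposal and the paper's argument break at the same step; your version at least isolates the failure honestly.
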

\begin{proof}
Denote by $ \pi\colon T^\ast M\setminus\{0\}\to M$ the fibre bundle obtained by removal of the zero section in $T^*M$, and by   $p: \Omega^\bullet(\fg_E)\to \rL^\bullet$ the quotient projection, i.e., $D_k=p\circ d_A^k$. Given $\varsigma\in  T^\ast M\setminus\{0\}$, the first order symbol functor $\sigma_1$ satisfies:  
$$
\sigma_1(p \circ d_A^k)=p\left(\sigma_1(d_A^k)\right)  =p(\varsigma\wedge\cdot),
\qforq
k=0,\dots,3.
$$
The fibre $\pi^\ast\left( \rL^k\right)_\varsigma$ is isomorphic to $\Lambda^k T_x^\ast M  / \rI$, where $\rI=\langle \Omega^2_8\rangle$,  hence the associated $1$-symbol is
 $$
 \sigma_1(D_k)(x,\varsigma)=p(\varsigma\wedge\cdot) \colon  \Lambda^k T_x^\ast M  / \rI \to \Lambda^{k+1} T_x^\ast M  / \rI,
 $$
and we assert that the associated symbol complex is exact:  
\begin{equation}
    \label{eq:symbol complex L}
    \begin{tikzcd}
   \sigma_1(\rL^\bullet) \colon\quad
   0                            \arrow[r] 
 & \Lambda^0 T_x^\ast M  / \rI   \arrow[r, "p(\varsigma\wedge \cdot)"]
 & \Lambda^1 T_x^\ast M  / \rI  \arrow[r, "p(\varsigma\wedge \cdot)"]
 & \Lambda^2 T_x^\ast M  / \rI  \arrow[r, "p(\varsigma\wedge \cdot)"]
 & \Lambda^3 T_x^\ast M  / \rI \arrow[r                           ]
 &  0
\end{tikzcd}.
\end{equation}
Using the identifications from Proposition  \ref{prop:Lidentification}, the above complex becomes 
$$
\begin{tikzcd}
  \Lambda^0(T_x^\ast M)                                           \arrow[r, "\varsigma\wedge  "]
 & \Lambda^1(T_x^\ast M)                                           \arrow[r, "p(\varsigma\wedge \cdot)"]
 & (\Lambda^2_{6\oplus 1} \oplus\eta\wedge(\Lambda^1_H ))(T_x^\ast M)   \arrow[r, "p(\varsigma\wedge \cdot) "]
 &  \eta\wedge(\Lambda^2_{6\oplus 1}(T_x^\ast M)).
 \end{tikzcd}
$$
Exactness  at positions $k=0,1,2$ is shown by the same argument, so we prove explicitly for  $k=2$ and $k=3$.

\medskip
\noindent \underline{$k=2$}: 
Let $\alpha\in (\Lambda^2_{6\oplus 1} \oplus\eta\wedge(\Lambda^1_H ))(T_x^\ast M)$ such that $\varsigma\wedge \alpha\in \langle\Lambda^2_8\rangle$. Since $\varsigma$ is non zero, this forces to  $\varsigma\wedge \alpha=0$, and we know   that the de Rham  complex is elliptic, so there exists $\beta\in \Lambda^1(T_x^\ast M) $ such that $\varsigma\wedge\beta=\alpha$. Hence we have exactness at $ (\Lambda^2_{6\oplus 1} \oplus\eta\wedge(\Lambda^1_H ))(T_x^\ast M)$.  

\medskip
\noindent \underline{$k=3$}: 
This is similar to the $k=2$ case, but we need to show  that $\beta\in(\Lambda^2_{6\oplus 1} \oplus\eta\wedge(\Lambda^1_H ))(T_x^\ast M)$. A priori, $\beta=\beta_1+\beta_2$, with
$$
\beta_1\in (\Lambda^2_{6\oplus 1} \oplus\eta\wedge(\Lambda^1_H ))(T_x^\ast M)\qandq \beta_2\in (\Lambda^2_{8}\oplus\Lambda_V)(T_x^\ast M).
$$
Since $\beta_2$ projects to zero under $p$, we have $\alpha=p(\varsigma\wedge\beta_1)$, so \eqref{eq:symbol complex} is exact. 
\end{proof}

The above complex in Proposition \ref{prop:complexDeformation} is referred to as the  \emph{associated complex}  to the  selfdual contact  instanton   $A$ (Proposition \ref{prop:complexDeformation} can be shown in the same way if  $F_A\in\Omega^2_6(\fg_E)$).  We denote by $H^k:=H^k(\rL)$ the cohomology groups of $\rL^\bullet$  \eqref{eq:complexDeformation},   $H^1 $ can be interpreted as the space of infinitesimal deformations of the contact instanton $A$, so that 
\begin{equation}
    \label{eq: h1}
    h^1:=\dim_\R (H^1) 
\end{equation}
represents  the expected dimension of the moduli space.

\subsection{Deformation  theory of  SDCI }
 \label{sec:Deformation theory}

The  splitting \eqref{eq:TangentDecomposition} of the  tangent space  $TM=H\oplus N_\xi$  by the Reeb vector field defines a bi-grading on 
$
\Omega^\bullet(\fg_E)$:
\begin{equation}
\label{eq:bi-grading}    
\Omega^{k,d}(\fg_E):=\Gamma(M,\Lambda^k H^\ast\otimes\Lambda^d N_\xi^\ast\otimes \fg_E),
\qforq
(k,d)\in \{0,\dots,6\}\times\{0,1\}.
\end{equation}
In particular,  
$$
 \Omega^{k,0}(\fg_E)=\Omega^k_H(\fg_E)
 \qandq 
 \Omega^{k,1}(\fg_E)=\eta\wedge\Omega^k_H(\fg_E),
 \qforq
 k=0,1,2,3. 
$$  
Since the ideal $\rI $ is bi-graded, the bi-grading descends to the quotients  $\rL^\bullet$ to define components $\rL^{k,d}$. In our case of main interest, $\rI=\langle\Omega^2_8(\fg_E) \rangle $ induces the complex \eqref{eq:complexDeformation}:
\begin{equation} 
    \label{eq:Lk,0andL^k1}
   \begin{matrix}
\Omega^{0}_H(\fg_E) &  \to & \Omega^{1}_H(\fg_E) \oplus \eta\wedge\Omega^{0}_H(\fg_E) & \to &\Omega^{2}_{6\oplus 1}(\fg_E) \oplus   \eta\wedge\Omega^{1}_H(\fg_E) &\to & \eta\wedge \Omega^{2}_{6 \oplus 1}(\fg_E)\\
   & & \begin{matrix}
{\rotatebox{90}{\scalebox{1}[1]{$\cong$}}}\\
{\rL^{1,0}}
\end{matrix}\quad\quad\quad \begin{matrix}
{\rotatebox{90}{\scalebox{1}[1]{$\cong$}}}\\
{\rL^{0,1}}
\end{matrix}  \quad&  & \begin{matrix}
{\rotatebox{90}{\scalebox{1}[1]{$\cong$}}}\\
{\rL^{2,0}}
\end{matrix}\quad\quad\quad \begin{matrix}
{\rotatebox{90}{\scalebox{1}[1]{$\cong$}}}\\
{\rL^{1,1}}
\end{matrix} \quad & & \begin{matrix}
{\rotatebox{90}{\scalebox{1}[1]{$\cong$}}}\\
{\rL^{2,1}}
\end{matrix}   \quad
  \end{matrix} 
\end{equation}
  Define in the quotient the followings maps 
\begin{equation}
\label{eq:DV DT}
\begin{array}{rrcl}
    D_V\colon &\rL_H^\bullet &\to      &\rL_H^\bullet  \\
    & \alpha & \mapsto &i_\xi D\alpha,
\end{array}
\qandq
\begin{array}{rrcl}
    D_T\colon &\rL_H^\bullet &\to      &\rL_H^{\bullet+1}  \\
    & \alpha & \mapsto & D\alpha-\eta\wedge D_V\alpha,
\end{array}    
\end{equation} 
\begin{lemma}
  Let $D_T$ and $D_V$ be the operators defined in \eqref{eq:DV DT}, then we have the following identities  
\begin{equation}
    \label{eq:DVcomuts DT}
    D_TD_V=D_VD_T
\end{equation}
  and 
\begin{equation}
    \label{eq:DT2}
  D_T^2\alpha=-\omega\wedge D_V\alpha\quad\text{for}\quad \alpha\in\Omega^0(\fg_E).   
\end{equation}
\end{lemma}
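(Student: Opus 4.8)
The plan is to deduce both identities at once from the single relation $D^2=0$ on the quotient complex $\rL^\bullet$ \eqref{eq:complex L D}. Recall that $D^2=0$ holds because $d_A^2\beta=[\beta\wedge F_A]$ and $F_A\in\Omega^2_8\subset\rI$, and that, by Lemma \ref{lem:mapa eta omega contraction}, the operators $i_\xi$, $\eta\wedge(\cdot)$ and $\omega\wedge(\cdot)$ descend to $\rL^\bullet$; since $\eta$ is a scalar $1$-form with $d\eta=\omega$, the graded Leibniz rule yields $D(\eta\wedge\gamma)=\omega\wedge\gamma-\eta\wedge D\gamma$ on $\rL^\bullet$. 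First I would observe that, for $\alpha\in\rL_H^\bullet$, both $D_V\alpha=i_\xi D\alpha$ and $D_T\alpha=D\alpha-\eta\wedge D_V\alpha$ are again horizontal — the former because $i_\xi^2=0$, the latter because $i_\xi D_T\alpha=i_\xi D\alpha-D_V\alpha+\eta\wedge i_\xi D_V\alpha=0$ — so that $D_T$ and $D_V$ may legitimately be iterated, and that the defining relation $D\alpha=D_T\alpha+\eta\wedge D_V\alpha$ is precisely the horizontal–vertical splitting \eqref{eq:HorizontalVerticalDecomposition} of $D\alpha$ in $\rL^\bullet$.

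Next I would apply $D$ to this relation and expand, using $D^2=0$ and the Leibniz rule above:
\begin{align*}
0=D^2\alpha &= D(D_T\alpha)+D(\eta\wedge D_V\alpha)\\
&= D(D_T\alpha)+\omega\wedge D_V\alpha-\eta\wedge D(D_V\alpha).
\end{align*}
Since $D_T\alpha$ and $D_V\alpha$ are horizontal, the same splitting gives $D(D_T\alpha)=D_T^2\alpha+\eta\wedge D_VD_T\alpha$ and $D(D_V\alpha)=D_TD_V\alpha+\eta\wedge D_V^2\alpha$; substituting these in and using $\eta\wedge\eta=0$ collapses everything to
\begin{align*}
0=\bigl(D_T^2\alpha+\omega\wedge D_V\alpha\bigr)+\eta\wedge\bigl(D_VD_T\alpha-D_TD_V\alpha\bigr).
\end{align*}
The decisive point is that $\omega=d\eta$ is horizontal, so the first bracket is a horizontal class in $\rL^\bullet$ while the second summand is vertical; by uniqueness of the horizontal–vertical decomposition each bracket must vanish on its own, giving $D_TD_V\alpha=D_VD_T\alpha$ and $D_T^2\alpha=-\omega\wedge D_V\alpha$ for every horizontal $\alpha$, in particular for $\alpha\in\Omega^0(\fg_E)\cong\rL^0$.

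I do not expect a genuine obstacle here: once $D^2=0$ and the descent of $i_\xi$, $\eta\wedge(\cdot)$ and $\omega\wedge(\cdot)$ to $\rL^\bullet$ are in place, the two identities are forced purely by degree-counting and horizontality. The only points demanding attention are bookkeeping ones — checking that each intermediate term really lies in $\rL_H^\bullet$ so that $D_T,D_V$ may act on it, and confirming that the Leibniz rule $D(\eta\wedge\gamma)=\omega\wedge\gamma-\eta\wedge D\gamma$ is inherited by the quotient (which it is, because $\rI$ is a bigraded ideal stable under $\eta\wedge(\cdot)$, $\omega\wedge(\cdot)$ and $i_\xi$, and $D$ is the descent of $d_A$). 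This mirrors the $5$-dimensional computation of Baraglia and Hekmati \cite[Lemma~3.1]{Baraglia2016}, the only new input being the structure of $\rL^\bullet$ recorded in Proposition \ref{prop:Lidentification}.
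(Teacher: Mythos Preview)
Your proof is correct and is essentially the same argument as the paper's: both expand $D^2=0$ via the splitting $D=D_T+\eta\wedge D_V$, use the Leibniz rule $D(\eta\wedge\gamma)=\omega\wedge\gamma-\eta\wedge D\gamma$, and separate horizontal and vertical parts to conclude. You are simply more explicit than the paper about why $D_T\alpha$ and $D_V\alpha$ remain horizontal and why the Leibniz rule descends to $\rL^\bullet$.
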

\begin{proof}
  From $D^2=0$, we obtain  
\begin{align*}
  0 &=(D_T+\eta\wedge D_V)^2=D_T^2+D_T(\eta\wedge D_V)+\eta\wedge D_V(D_T)+(\eta\wedge D_V)^2\\
  &=D_T^2+\omega\wedge D_V-\eta\wedge D_T D_V+-\eta\wedge D_V D_T+\eta\wedge D_V(\eta\wedge D_V) \\ 
  &= D_T^2+\omega\wedge D_V+\eta\wedge(D_V D_T-D_T D_V)+\eta\wedge(D_V\eta\wedge D_V-\eta \wedge D_V^2)\\
  &= D_T^2+\omega\wedge D_V+ \eta\wedge(D_V D_T-D_T D_V),
\end{align*}
  so, $D_T^2+\omega\wedge D_V=0= D_V D_T-D_T D_V$,  hence \eqref{eq:DVcomuts DT} and \eqref{eq:DT2} follow.
\end{proof}
  The basic forms in $\rL^k$ are denoted by $\rL^k_B=\Ker(D_V)\subset \rL^k$ and, by Proposition \ref{prop:Lidentification},     $\Ker(D_V) $ can be identified with $\Omega^\bullet_B(\fg_E)$. Since  $D_V$ and $D_T$ commute \eqref{eq:DVcomuts DT}, 
$$
  D_T(\Omega^\bullet_B(\fg_E))\subset \Omega^{\bullet+1}_B(\fg_E).
$$ 
Moreover, $D_T$   restricts to $D_B\colon \rL_B^k\to \rL_B^{k+1}$, so  $D_B$ defines a \emph{basic deformation complex}. Considering $\rI=\langle\Omega^2_8(\fg_E)\rangle$,  the associated basic complex   \eqref{eq:BasicComplexDeformation2}  becomes  
$$
\begin{tikzcd}
 \rL^\bullet_B\colon 0\arrow[r] 
 &\Omega^0_B( \fg_E) \arrow[r,"D_B"]
 & \Omega^1_B( \fg_E) \arrow[r, "D_B"]
 & (\Omega^2_{6\oplus1})_B (\fg_E) \arrow[r] 
 &  0.
\end{tikzcd}
$$ 
Denote by  $H_B^\bullet:=H^\bullet(\rL_B)$ the cohomology of  the basic complex $ \rL^\bullet_B$. This is not an elliptic complex, yet it is elliptic transversely to the Reeb foliation. In particular, its cohomology  $H_B^\bullet$ is finite-dimensional 
\cite[Theorem~ 3.2.5]{kacimi1990operateurs}, and  we will see that this complex computes the dimension of the moduli space of contact instantons.  
\begin{lemma} 
Let $D_T$ and $D_V$ be the operators defined in \eqref{eq:DV DT}, then the formal adjoints of $D_V$ and $D_T$ are given by:
\begin{equation}
    \label{eq:adjoint DV}
    D_V^\ast=-D_V,
\end{equation} 
\begin{equation}
    \label{eq:adjoint DT}
    D_T^\ast=-\ast_T D_T\ast_T.
\end{equation}
Furthermore, 
\begin{equation}
\label{eq:comD_VD_T}
D_V D_T^\ast=D_T^\ast D_V.
\end{equation}
\end{lemma}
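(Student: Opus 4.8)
The plan is to prove the three identities in order, deriving the last from the first two together with the commutation relation \eqref{eq:DVcomuts DT}. Throughout, I would work with the operators $d_T$ (the horizontal component of $d_A$) and $d_V=i_\xi d_A$ on $\Omega^\bullet_H(\fg_E)$, of which $D_T,D_V$ are the quotient descents; the passage to $\rL^\bullet$ is formal, since $\rI$, $\eta\wedge(\cdot)$, $i_\xi$ and $\cL_\xi$ are mutually compatible by Lemma \ref{lem:mapa eta omega contraction} and the proof of Lemma \ref{Lem:Lema I}. I would also use that on horizontal forms the inner product \eqref{eq:innerProduct} agrees with its transverse version, since $\ast\gamma=\ast_T\gamma\wedge\eta$ by \eqref{eq:star transverse star}.

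For $D_V^\ast=-D_V$: being Sasakian, $M$ is $K$-contact, so $\xi$ is Killing and its flow $\phi_t$ acts by isometries; moreover $\cL_\xi\eta=i_\xi d\eta+d(i_\xi\eta)=0$, so $\phi_t$ fixes $\eta$ and hence $\ast_T$ and the induced $\fg_E$-structure. As in the proof of Lemma \ref{Lem:Lema I}, $\cL_\xi$ preserves $\Omega^2_8$, hence $\rI$, so $D_V$ descends well-definedly from $\cL_\xi^A=i_\xi d_A$ and the $L^2$ pairing on $\rL^\bullet$ is $\phi_t$-invariant. Differentiating $(\phi_t^\ast\alpha,\phi_t^\ast\beta)=(\alpha,\beta)$ at $t=0$ then gives $(D_V\alpha,\beta)+(\alpha,D_V\beta)=0$.

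For $D_T^\ast=-\ast_T D_T\ast_T$: I would compute directly. For $\alpha\in\Omega^k_H(\fg_E)$ and $\beta\in\Omega^{k+1}_H(\fg_E)$, Lemma \ref{lem:orthogonality} gives $(D_T\alpha,\beta)=(d_A\alpha,\beta)=(\alpha,d_A^\ast\beta)$ (the vertical term $\eta\wedge D_V\alpha$ drops out), and only the horizontal part of $d_A^\ast\beta$ contributes. Writing $d_A^\ast=(-1)^{k+1}\ast d_A\ast$ on $(k+1)$-forms (note $\ast^2=\id$ on the $7$-manifold) and $\ast\beta=\ast_T\beta\wedge\eta$, one gets
$$
d_A\ast\beta=(d_T\ast_T\beta)\wedge\eta+(-1)^{5-k}\,\ast_T\beta\wedge\omega .
$$
Applying $\ast$ and using \eqref{eq:transverse star} in the form $\ast(\gamma\wedge\eta)=(-1)^{6-j}\ast_T\gamma$ for $\gamma\in\Omega^j_H$, the first summand yields $(-1)^{k}\ast_T d_T\ast_T\beta$, while the second becomes the vertical form $(-1)^{5-k}\ast_T(\ast_T\beta\wedge\omega)\wedge\eta$; hence $(d_A^\ast\beta)_H=(-1)^{k+1}(-1)^{k}\ast_T d_T\ast_T\beta=-\ast_T d_T\ast_T\beta$, i.e. $(D_T\alpha,\beta)=(\alpha,-\ast_T D_T\ast_T\beta)$. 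This is the Sasakian analogue of the identities $\partial_B^\ast=-\ast_T\partial_B\ast_T$ and $\Lambda=-\ast_T\rL_\omega\ast_T$ recorded before Lemma \ref{lem:deltaB caudrado}, against which I would cross-check the sign.

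Finally, for $D_V D_T^\ast=D_T^\ast D_V$: since $\cL_\xi$ commutes with $\ast$ ($\xi$ Killing) and with $\eta\wedge(\cdot)$ ($\cL_\xi\eta=0$), it commutes with $\ast_T$, so $D_V\ast_T=\ast_T D_V$ on the quotient; combining this with $D_T D_V=D_V D_T$ from \eqref{eq:DVcomuts DT} and the previous step,
\begin{align*}
D_V D_T^\ast
&= -D_V\ast_T D_T\ast_T
= -\ast_T D_V D_T\ast_T \\
&= -\ast_T D_T D_V\ast_T
= -\ast_T D_T\ast_T D_V
= D_T^\ast D_V .
\end{align*}
The main obstacle is the middle identity: one must keep straight the signs in $d_A^\ast=(-1)^{k+1}\ast d_A\ast$ and in \eqref{eq:transverse star}, and verify that the $\omega$-term produced when $d_A$ differentiates past $\eta$ is genuinely vertical, so that it disappears from the transverse pairing; one also has to confirm that the formula survives descent to $\rL^\bullet$, which it does because $\cL_\xi$ and $L_\sigma$ preserve the splitting $\Omega^2_H=\Omega^2_6\oplus\Omega^2_8\oplus\Omega^2_1$. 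The first and third identities are then comparatively routine.
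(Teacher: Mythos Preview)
Your proposal is correct and follows essentially the same strategy as the paper: for \eqref{eq:adjoint DV} both arguments differentiate the $\xi$-flow invariance of the pairing (the paper writes this as $0=\int_M\cL_\xi(\alpha,\ast_T\beta)\wedge\eta$), for \eqref{eq:adjoint DT} both reduce the transverse adjoint to the full $d_A^\ast$ via $\ast\beta=\ast_T\beta\wedge\eta$ and the relation \eqref{eq:transverse star}, and for \eqref{eq:comD_VD_T} both combine $[D_V,\ast_T]=0$ with \eqref{eq:DVcomuts DT}. The only cosmetic difference is direction: you compute $(D_T\alpha,\beta)$ and extract the horizontal part of $d_A^\ast\beta$, whereas the paper starts from $(\alpha,\ast_T D_T\ast_T\beta)$ and identifies it with $(i_\xi D\eta)^\ast$ applied to $\alpha$, then checks $i_\xi D(\eta\wedge\alpha)=-D_T\alpha$.
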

\begin{proof}
  For \eqref{eq:adjoint DV} note that, since $\cL_\xi$ coincides with $D_V$ on $\Omega_H^{\ast}(\fg_E)$, if $\xi$ is a Killing vector then $D_V$ and $\ast_T $ commute, and so 
\[
0=\int_M \cL_\xi(\alpha,\ast_T\beta)\wedge\eta=\int_M(D_V\alpha,\ast_T\beta)\wedge\eta+\int_M(\alpha,D_V\ast_T\beta)\wedge\eta.
\]
  Furthermore, since $D_TD_V=D_VD_T$ we obtain \eqref{eq:comD_VD_T}. To show \eqref{eq:adjoint DT}, let  $\alpha\in \Omega^{k-1}_H(\fg_E)$ and $\beta\in \Omega^k_H(\fg_E)$:  
\begin{align*}
    ( \alpha,(\ast_T D_T\ast_T)\beta ) &=\int_M\alpha\wedge\ast_T^2\left(D_T\ast_T\beta\right)\wedge\eta \\
    &=\int_M \alpha\wedge\ast_T^2\left[D(\ast_T\beta)-\eta\wedge D_V(\ast_T \beta) \right]\wedge\eta\\
    &=\int_M \alpha\wedge\ast_T^2 D(\ast(\eta\wedge\beta)) \wedge\eta\\
    &=\int_M \alpha\wedge\ast_T\left[i_\xi(\ast D\ast(\eta\wedge\beta))\right]\wedge\eta=(\alpha,i_\xi D^\ast \eta(\beta))\\
    &=(\eta^\ast Di_\xi^\ast(\alpha),\beta),
\end{align*}  
and the assertion  follows by observing that $i_\xi D\eta(\alpha)=i_\xi(D(\eta\wedge\alpha))=i_\xi(\omega\wedge\alpha-\eta\wedge D\alpha)=-D_T\alpha$.
\end{proof}

We now adapt a number of  fundamental insights from \cite[Proposition~3.3]{Baraglia2016}. We begin by introducing the transverse Laplacian on the complex $\rL^\bullet$:
\begin{definition}
\label{def:TransversalLaplacian}
The \emph{Laplacian} of $D$, with respect to the inner product defined in \eqref{eq:innerProduct}, is \begin{equation}
\label{eq:laplacian}
    \Delta
    :=DD^\ast+D^\ast D
    \colon \quad
    \rL^\bullet\to \rL^\bullet,
\end{equation}
and  the \emph{transverse Laplacian} is defined  by: 
\begin{equation}
\label{eq:laplacianT}
    \Delta_T
    :=D_TD^\ast_T+D^\ast_T D_T-D_V^2
    \colon \quad 
    \rL^\bullet\to \rL^\bullet.
\end{equation}
\end{definition} 

Clearly $\Delta_T$ and   $\Delta$ have the same symbol, so $\Delta_T$    is an elliptic operator. Let us denote the spaces of  $\Delta-$harmonic  and $\Delta_T-$harmonic forms,  respectively, by 
$$
\cH^{k}:=\ker(\Delta)\subset\rL^k
\qandq
\cH^{k}_T:=\ker(\Delta_T)\subset\rL^k. 
$$   
Since $\langle \Delta_T\alpha,\alpha \rangle=\Vert D_T\alpha \Vert^2+\Vert D_T^\ast\alpha \Vert^2+\Vert D_V\alpha \Vert^2$,
\begin{equation}
\label{eq:DT-harmonic}
    \alpha\in \rL^{k,0}
    \quad\text{is}\quad  
    \Delta_T\text{-harmonic if, and only if,}
    \quad 
    D_T\alpha=D_T^\ast\alpha=D_V\alpha=0.
\end{equation}
On basic forms, in particular, $\Delta_T$-harmonicity   is equivalent to $\Delta$-harmonicity. 
Note that $\Delta_T$ respects the bi-grading defined in \eqref{eq:bi-grading}, hence we can split 
$\cH^k_T$ into components $\cH^{k,d}_T$.
Since    $L_\eta\colon \rL^{k,0} \to\rL^{k,1}$ \eqref{eq:mapa eta omega} is an isomorphism,   we know from the outset that 
\begin{equation}
\label{eq:harmonicsByEta}
    \cH^{k,0}_T\cong \cH^{k,1}_T=\eta\wedge\left(\cH^{k,0}_T\right), \qforq k\in\{1,\cdots, 6\}.
\end{equation}
\begin{lemma}
\label{lem:previo1} 
There exists an isomorphism  $\phi\colon  \cH_T^{k,0}\tilde{\to} H_B^k$, where $\cH_T^{k,0}$ is  just above defined  and   $H_B^k$ is the $k$-th cohomology group of the basic complex \eqref{eq:BasicComplexDeformation2}.
\end{lemma}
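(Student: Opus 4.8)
The plan is to define $\phi$ as the map sending a $\Delta_T$-harmonic form $\alpha\in\cH_T^{k,0}$ to its basic cohomology class $[\alpha]\in H_B^k$, and then to show this is a well-defined isomorphism via Hodge-theoretic arguments for the transverse-elliptic complex $\rL_B^\bullet$ of \eqref{eq:BasicComplexDeformation2}. First I would check that $\phi$ is well-defined: by \eqref{eq:DT-harmonic}, any $\alpha\in\cH_T^{k,0}$ satisfies $D_V\alpha=0$, so $\alpha$ is basic, i.e.\ $\alpha\in\rL_B^k=\Ker(D_V)$; and it satisfies $D_T\alpha=0$, so since $D_T$ restricts to $D_B$ on basic forms, $\alpha$ is $D_B$-closed and represents a class in $H_B^k$. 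Thus $\phi$ is at least a well-defined linear map $\cH_T^{k,0}\to H_B^k$.

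Next I would establish injectivity. Suppose $\alpha\in\cH_T^{k,0}$ with $[\alpha]=0$ in $H_B^k$, i.e.\ $\alpha=D_B\gamma$ for some basic $\gamma\in\rL_B^{k-1}=\Omega_B^{k-1}(\fg_E)$. Since $\alpha$ is basic and $\Delta_T$-harmonic, and on basic forms $\Delta_T$-harmonicity coincides with $\Delta$-harmonicity (as noted just after \eqref{eq:DT-harmonic}, because $D_V$ kills basic forms and $D_T=D_B$ there), we also have $D_T^\ast\alpha=0$. Then
\begin{align*}
  \Vert\alpha\Vert^2
  &=\langle\alpha,D_B\gamma\rangle
  =\langle D_T^\ast\alpha,\gamma\rangle
  =0,
\end{align*}
using \eqref{eq:adjoint DT} and that $D_T$ acts as $D_B$ on basic forms (one must check $D_T^\ast$ preserves basicness, which follows from \eqref{eq:comD_VD_T}: $D_V D_T^\ast\alpha=D_T^\ast D_V\alpha=0$). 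Hence $\alpha=0$ and $\phi$ is injective.

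Finally, surjectivity is the main point, and the crux is the transverse Hodge decomposition for $\rL_B^\bullet$. Since $\rL_B^\bullet$ is elliptic transversely to the Reeb foliation, the general theory of \cite[Theorem~3.2.5]{kacimi1990operateurs} (or, equivalently, Hodge theory for the transverse-elliptic operator $\Delta_T$ restricted to basic forms) yields a decomposition $\Omega_B^k(\fg_E)=\cH_{T,B}^k\oplus\im(D_B)\oplus\im(D_B^\ast)$, where $\cH_{T,B}^k=\Ker(\Delta_T)\cap\rL_B^k$, and moreover $H_B^k\cong\cH_{T,B}^k$. It then remains to identify $\cH_{T,B}^k$ with $\cH_T^{k,0}$: by \eqref{eq:DT-harmonic} a basic form is $\Delta_T$-harmonic iff $D_T\alpha=D_T^\ast\alpha=0$ (the condition $D_V\alpha=0$ being automatic for basic forms), which is exactly the defining condition of $\cH_T^{k,0}$ once we observe $\cH_T^{k,0}\subset\rL^{k,0}$ consists precisely of basic forms by \eqref{eq:DT-harmonic}. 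Thus $\cH_T^{k,0}=\cH_{T,B}^k\cong H_B^k$, and this isomorphism is exactly $\phi$. The main obstacle I anticipate is being careful that the transverse Hodge theory genuinely applies to the $\fg_E$-twisted, $\rL^\bullet$-quotiented complex rather than to ordinary basic de Rham forms — one needs the ideal $\rI$ to be compatible with the transverse structure, which is guaranteed by Lemma \ref{Lem:Lema I} and Lemma \ref{lem:mapa eta omega contraction} (so that $D_V$, $D_T$, $\ast_T$ and the bigrading all descend consistently to $\rL^\bullet$), after which the cited results of \cite{kacimi1990operateurs} apply verbatim.
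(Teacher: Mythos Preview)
Your proof is correct and the well-definedness and injectivity arguments are essentially identical to the paper's. The real difference lies in surjectivity.

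For surjectivity you invoke the transverse-elliptic Hodge decomposition of \cite{kacimi1990operateurs} directly on the basic complex $\rL_B^\bullet$, obtaining $\rL_B^k=\cH_{T,B}^k\oplus\im(D_B)\oplus\im(D_B^\ast)$ as a black box, and then identify $\cH_{T,B}^k$ with $\cH_T^{k,0}$. The paper instead works on the \emph{full} complex $\rL^\bullet$, where $\Delta_T$ is genuinely elliptic (Proposition~\ref{prop:complexDeformation}), and uses ordinary elliptic Hodge theory to write a closed basic $\alpha$ as $\beta+\Delta_T\gamma$ with $\beta\in\Ker(\Delta_T)$; it then carries out a hands-on computation using the commutation relations \eqref{eq:DVcomuts DT}, \eqref{eq:comD_VD_T} and \eqref{eq:DT2} to show that $D_V\gamma$ is $\Delta_T$-harmonic, that $D_T D_T^\ast D_T\gamma=0$, and hence that $\alpha=\beta+D_B(D_T^\ast\gamma)$ with $D_T^\ast\gamma$ basic. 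Your route is shorter and conceptually clean, but depends on checking that the machinery of \cite{kacimi1990operateurs} applies verbatim to the $\fg_E$-twisted quotient complex $\rL_B^\bullet$ (you flag this correctly). The paper's route is longer but entirely self-contained: it needs only standard elliptic theory for $\Delta_T$ on $\rL^\bullet$, which the paper has already established, and the explicit manipulation makes transparent exactly where the $K$-contact hypothesis (via the commutation of $D_V$ with $D_T$, $D_T^\ast$, and $\ast_T$) enters.
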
 
\begin{proof}
  Let the morphism $ \phi\colon  \cH_T^{k,0}\to H_B^k$ send $\alpha\in \cH_T^{k,0}$ to its equivalence  class $[\alpha]\in H_B^k$. This is well-defined, by \eqref{eq:DT-harmonic}, since $\cH^k_T\subset \ker(D_V)=\Omega^k_B(\fg_E)$ and $D $ coincides with $D_T$ on basic forms, so indeed $\cH^k_T\subset \Ker(D_B)$, therefore  $\alpha$ defines a equivalence  class in $H^k_B$.\\ 

  We first check   injectivity of $\phi$. If $\phi(\alpha)=[\alpha]=0\in H_B^k$, then   
$$
\alpha=D_B\beta=D_T\beta+\eta\wedge D_V\beta,
\qwithq
\beta\in \Omega^{k-1}_B(\fg_E).
$$     
 $D_V\beta=0 $ because   $\beta$ is a  basic form, hence 
$$
\alpha=D_T\beta= D\beta.
$$
Applying $D^\ast_T$ to $\alpha$ follows that $0=D^\ast_T\alpha=D^\ast_T D_T\beta$, now taking inner product with $\beta$ in the last equality we obtain that $D_T\beta=0= \alpha$. \\

To check surjectivity,  let $\alpha$ be closed and basic, i.e, $\alpha\in \rL^{k,0}$ and $D_T\alpha=0=D_V\alpha.$  Elliptic theory implies that $\alpha=\beta+\Delta_T \gamma$,
where $\beta\in\Ker(\Delta_T)$. From  \eqref{eq:comD_VD_T},  we have $D_V\Delta_T=\Delta_TD_V$, hence 
$$
D_V\alpha=0=D_V\beta+D_V\Delta_T\gamma= \Delta_TD_V\gamma.
$$
It follows that $D_V\gamma$ is $\Delta_T$-harmonic, in particular $D_V^2\gamma=0$, and so  
\begin{align*}
    0
    &= D_T\alpha
    =D_T\Delta_T\gamma
    = D_T D_T^\ast D_T\gamma+D_T^2 D_T^\ast\gamma-D_TD_V^2\gamma\\
    &=-\omega\wedge D_V D_T^\ast\gamma+ D_TD_T^\ast D_T\gamma=-\omega\wedge D_T^\ast D_V \gamma+ D_TD_T^\ast D_T\gamma\\
    &=D_TD_T^\ast D_T\gamma.
\end{align*} 
This implies 
$$
0=\langle D_TD_T^\ast D_T\gamma,D_T\gamma \rangle=\Vert D_T^\ast D_T\gamma\Vert^2,
$$
so, $\alpha=\beta+D_T D_T^\ast\gamma$. Now, $D_T^\ast\gamma$ is a basic form, since $D_VD_T^\ast\gamma=D_T^\ast D_V\gamma=0$, so in fact $\alpha=\beta+D_B( D_T^\ast\gamma)$. This shows that every element in $H_B^k$ has a $\Delta_T$-harmonic representative, thus  $\phi$ is surjective.
\end{proof}

\begin{lemma}
\label{lem:complex A}
Consider the vector spaces  $A^k:=\cH_T^{k,0}\oplus\cH_T^{k-1,0}$, together with the differential $\hat{\rd} $   defined by:
 \begin{equation}
      \label{eq:complex A}
\begin{array}{crcl}
    \hat{\rd}\colon   & A^k & \to     & A^{k+1}   \\
    & (\alpha,\beta) & \mapsto & \hat{\rd}(\alpha,\beta):=  (\omega\wedge\beta,0).
\end{array}
\end{equation}
The following hold:
\begin{enumerate}[(i)]
    \item 
    $(A^\bullet,\hat{\rd})$  form a  chain complex.
    
    \item 
    There is a chain  map   $j^\bullet\colon (A^\bullet,\hat{\rd})\to(\rL^\bullet,D)$ into the complex \eqref{eq:complex L D}, defined by
\begin{equation}
    \label{eq:chain map}
    j^k\colon (\alpha,\beta)\in A^k\mapsto \alpha+\eta\wedge\beta\in  \rL^k,
    \qforq
    k=0,1,2,3.
\end{equation}

    \item 
    $\displaystyle \chi(A)=\sum_{k=0}^3(-1)^k\dim(H^k(A))=0.$
\end{enumerate}
\end{lemma}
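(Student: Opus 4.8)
The plan is to take the three assertions in order; the only step requiring more than bookkeeping is showing that the Lefschetz operator $L_\omega$ from \eqref{eq:mapa eta omega} behaves well with respect to the transverse Laplacian.

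For (i), the identity $\hat{\rd}^2=0$ is immediate: since $\hat{\rd}(\alpha,\beta)=(\omega\wedge\beta,0)$ has vanishing second slot, $\hat{\rd}\hat{\rd}(\alpha,\beta)=\hat{\rd}(\omega\wedge\beta,0)=(0,0)$. The point that genuinely needs checking is that $\hat{\rd}$ is well defined, i.e.\ that $L_\omega$ maps $\cH_T^{k-1,0}$ into $\cH_T^{k+1,0}$; as $L_\omega$ visibly carries $\rL^{k-1,0}=\rL^{k-1}_H$ into $\rL^{k+1}_H=\rL^{k+1,0}$, it suffices to prove $[\Delta_T,L_\omega]=0$ on $\rL^\bullet$. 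First one observes $D\eta=\omega$, $d\omega=0$, $i_\xi\omega=0$ and $\cL_\xi\omega=0$, so that $D_T\omega=D_V\omega=0$; since $D_T$ and $D_V$ are derivations of $(\rL^\bullet_H,\wedge)$, this gives $[D_T,L_\omega]=[D_V,L_\omega]=0$. For the adjoint one invokes the transverse Sasakian Kähler identities (Lemma~\ref{lem:deltaB caudrado}), which remain valid for the $\fg_E$–twisted operators $\partial_A,\bar\partial_A$ on the quotient complex $\rL^\bullet$ precisely because $F_A\in\Omega^2_8\subset\rI$ vanishes in $\rL^\bullet$; they yield $[D_T^\ast,L_\omega]=-\ii(\partial_A-\bar\partial_A)$, whence, using $\partial_A^2=\bar\partial_A^2=0$ in $\rL^\bullet$,
\[
  [\Delta_T,L_\omega]=D_T[D_T^\ast,L_\omega]+[D_T^\ast,L_\omega]D_T-[D_V^2,L_\omega]=-2\ii\,(\partial_A^2-\bar\partial_A^2)=0 .
\]
Hence $L_\omega(\ker\Delta_T)\subset\ker\Delta_T$, and restricting to bidegree $(k+1,0)$ gives $L_\omega\colon\cH_T^{k-1,0}\to\cH_T^{k+1,0}$, so $(A^\bullet,\hat{\rd})$ is a complex.

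For (ii), that each $j^k$ lands in $\rL^k$ is clear, since $\cH_T^{k,0}\subset\rL^{k,0}$ and $\eta\wedge\cH_T^{k-1,0}\subset\rL^{k-1,1}$. To verify $D\circ j^k=j^{k+1}\circ\hat{\rd}$, fix $(\alpha,\beta)\in A^k$; by \eqref{eq:DT-harmonic} both $\alpha$ and $\beta$ are $\Delta_T$–harmonic of vertical degree $0$, so $D\alpha=D_T\alpha+\eta\wedge D_V\alpha=0$ and likewise $D\beta=0$. Then $D(\eta\wedge\beta)=(d\eta)\wedge\beta-\eta\wedge D\beta=\omega\wedge\beta$, and therefore
\[
  D\,j^k(\alpha,\beta)=D\alpha+D(\eta\wedge\beta)=\omega\wedge\beta=j^{k+1}(\omega\wedge\beta,0)=j^{k+1}\hat{\rd}(\alpha,\beta).
\]

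For (iii), each $\cH_T^{\ell,0}$ is finite-dimensional (it is $\cong H^\ell_B$ by Lemma~\ref{lem:previo1}, equivalently the kernel of the elliptic operator $\Delta_T$ in that bidegree), so $A^\bullet$ is a bounded complex of finite-dimensional spaces and $\chi(A)=\sum_{k=0}^3(-1)^k\dim A^k$. Writing $b_\ell:=\dim\cH_T^{\ell,0}$ with $b_{-1}:=0$, one has $\dim A^k=b_k+b_{k-1}$, and the alternating sum telescopes to $\chi(A)=\sum_{k=0}^3(-1)^k(b_k+b_{k-1})=-b_3$. Finally $b_3=\dim\cH_T^{3,0}=0$, because $\cH_T^{3,0}\subset\rL^{3,0}=\rL^3_H$ while Proposition~\ref{prop:Lidentification}(i) exhibits $\rL^3\cong\eta\wedge\bigl(\Omega^2_6(\fg_E)\oplus\Omega^2_1(\fg_E)\bigr)$, which has pure vertical degree $1$. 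Hence $\chi(A)=0$. The main obstacle is the well-definedness step in (i): showing $L_\omega$ preserves $\Delta_T$–harmonicity is the only place transverse Hodge theory is used, and the subtlety there is that the Kähler identities of Lemma~\ref{lem:deltaB caudrado} are stated for scalar basic forms, so one must check their $\fg_E$–twisted versions on $\rL^\bullet$ — which holds exactly because passing to the quotient by $\rI$ annihilates $F_A$ and hence every curvature correction in those identities. Parts (ii) and (iii) are then purely formal.
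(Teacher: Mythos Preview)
Your proof is correct, but it takes a different route from the paper for part~(i).

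The paper handles the well-definedness of $\hat{\rd}$ simply by invoking the isomorphism $\cH_T^{k,0}\cong H_B^k$ of Lemma~\ref{lem:previo1}: since $\omega$ is basic and $D_B$-closed, $L_\omega$ induces a map $H_B^{k-1}\to H_B^{k+1}$, and one transports this back to the harmonic spaces via $\phi$. You instead prove the stronger statement that $L_\omega$ literally commutes with $\Delta_T$, so $\omega\wedge\beta$ is itself $\Delta_T$-harmonic rather than merely cohomologous to a harmonic form. This is more informative and is in fact tacitly used later in the paper (e.g.\ in the proof of Proposition~\ref{prop:previo2}, where $\omega\wedge\gamma$ is treated as the actual form, not just its class). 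One small correction to your justification: the transverse K\"ahler identities $[\Lambda,\partial_A]=-\ii\bar\partial_A^*$ and $[\Lambda,\bar\partial_A]=\ii\partial_A^*$ are pointwise algebraic and hold for $\fg_E$-twisted basic forms without any hypothesis on $F_A$; what actually uses the SDCI condition is only the vanishing $\partial_A^2=\bar\partial_A^2=0$ (equivalently $F_A^{2,0}=F_A^{0,2}=0$), and this already holds on $\Omega^\bullet$, not merely after passing to $\rL^\bullet$.

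Your treatments of (ii) and (iii) match the paper's, except that you make explicit the step $\cH_T^{3,0}\subset\rL^{3,0}=0$ (since $\rL^3$ is purely vertical by Proposition~\ref{prop:Lidentification}), which the paper uses implicitly when writing $A^3=\{0\}\oplus\cH_T^{2,0}$ in the diagram for~(ii).
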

\begin{proof} \quad
\begin{enumerate}[(i)]
    \item
    Clearly $\hat{\rd}^2=0$, and $\hat{\rd}$ is well-defined in view of the isomorphism $\cH_T^{k,0}\cong H_B^k$ from Lemma \ref{lem:previo1}:
    \[\begin{diagram}
        \node{(\alpha,\beta)\in\cH_T^{k,0}\oplus\cH_T^{k-1,0}} \arrow[2]{e,t,..}{} \arrow{s,l}{\rotatebox{90}{\scalebox{1}[1]{$\cong$}}} \arrow{ese}\node[2]{(\omega\wedge\beta,0)\in\cH_T^{k+1,0}\oplus\cH_T^{k,0}} \\
        \node{H_B^k\oplus H_B^{k-1}}\arrow[2]{e,t,..}{} \node[2]{(\omega\wedge\beta,0)\in H_B^{k+1}\oplus H_B^{k}}\arrow{n,r}{\rotatebox{90}{\scalebox{1}[1]{$\cong$}}}
    \end{diagram}.\]

    \item
    The maps $j^k$ in \eqref{eq:chain map} fit in the following diagram:
    \begin{equation}
    \label{eq:diagram complex A,d}
    \begin{diagram}
        \node{\cH_T^{0,0}\oplus\{0\}} \arrow{e,t}{\hat{\rd}^0=0} \arrow{s,l}{j^0}  
        \node {\cH_T^{1,0}\oplus\cH_T^{0,0}}\arrow{e,t}{\hat{\rd}^1} \arrow{s,r}{j^1}\node {\cH_T^{2,0}\oplus\cH_T^{1,0}}\arrow{e,t}{\hat{\rd}^2=0} \arrow{s,r}{j^2}\node {\{0\}\oplus\cH_T^{2,0}} \arrow{s,r}{j^3}\\
        \node{\rL^0}\arrow{e,b}{D} \node{\rL^1}\arrow{e,b}{D}\node{\rL^2}\arrow{e,b}{D}\node{\rL^3.}
    \end{diagram}
    \end{equation}
    To see that \eqref{eq:diagram complex A,d}  is commutative, the only nontrivial step to check is $Dj^1=j^2\hat{\rd}.$ 
    Given $(\alpha,\beta)\in A^1$, by definition, $D\alpha=0=D_V\beta$, hence
    \begin{align*}
        Dj^1(\alpha,\beta)&=D(\alpha+\eta\wedge\beta)=D\alpha+\omega\wedge\beta-\eta
        \wedge D_V\beta\\
        &=\omega\wedge\beta=j^2(\omega\wedge\beta,0)\\
        &=j^2\hat{\rd}(\alpha,\beta).
    \end{align*}

    \item
    We know, from Lemma \ref{lem:previo1}, that each chain  $A^k$ in \eqref{eq:complex A} is finite-dimensional, for $k=0,1,2,3$, and 
    \begin{align*}
        \chi(A)
        &:=\sum_{k=0}^3(-1)^k\dim(A^k)
        = \sum_{k=0}^3(-1)^k \left(\dim\left(\cH^{k,0}_T\right) +\dim\left(\cH^{k-1,0}_T\right)\right)\\
        &= 0.        
    \end{align*}
    The assertion now follows from the rank-nullity Theorem.
\qedhere
\end{enumerate}
\end{proof} 

\begin{lemma}
\label{lem:lema auxiliar}
Let $\rL^k$, $k= 1,2 $ be the chain spaces  defined in Proposition \ref{prop:Lidentification}. For each $[\alpha]\in H^k$, its  harmonic representative can be written as:
\begin{equation}
    \label{eq: dem previo2}
    \alpha=\beta+\eta\wedge\gamma,
    \qwithq
    D\alpha=D^\ast\alpha=0,
\end{equation}
such that  $D_V\beta=D_T^\ast\beta=0$ and   $D_V\gamma=D_T\gamma=0$.
\end{lemma}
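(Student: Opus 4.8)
The statement concerns the degrees $k=1,2$ of the complex $\rL^\bullet$, and the goal is to show that a $\Delta$-harmonic representative of $[\alpha]\in H^k$ splits as $\alpha=\beta+\eta\wedge\gamma$ with $\beta$ the horizontal part (in $\rL^{k,0}$) and $\eta\wedge\gamma$ the vertical part (in $\rL^{k,1}$), in such a way that $\beta$ is transversely harmonic ($D_V\beta=D_T^\ast\beta=0$) and $\gamma$ is transversely closed and $D_V$-closed ($D_V\gamma=D_T\gamma=0$). The starting point is Hodge theory for the elliptic complex $(\rL^\bullet,D)$ from Proposition~\ref{prop:complexDeformation}: every class has a unique harmonic representative $\alpha$ with $D\alpha=D^\ast\alpha=0$. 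I would then use the bi-grading $\rL^k=\rL^{k,0}\oplus\rL^{k,1}$ from \eqref{eq:Lk,0andL^k1}, writing $\alpha=\beta+\eta\wedge\gamma$ with $\beta\in\rL^{k,0}_H$ and $\gamma\in\rL^{k-1,0}_H$, and extract from the single equations $D\alpha=0$, $D^\ast\alpha=0$ their bi-degree components.

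The first key computation is to expand $D\alpha=0$ using $D=D_T+\eta\wedge D_V$ and the identity $D(\eta\wedge\gamma)=\omega\wedge\gamma-\eta\wedge D\gamma = (\omega\wedge\gamma+(\text{vertical piece}))$, being careful that $\omega\wedge\gamma$ is horizontal. Splitting by bi-degree, $D\alpha=0$ gives the horizontal equation $D_T\beta+\omega\wedge\gamma=0$ and the vertical equation $D_V\beta-D_T\gamma=0$ (signs to be fixed against the conventions in \eqref{eq:DV DT}–\eqref{eq:DT2}). The second key computation is the analogous decomposition of $D^\ast\alpha=0$; using the formal adjoints $D_V^\ast=-D_V$ \eqref{eq:adjoint DV} and $D_T^\ast=-\ast_T D_T\ast_T$ \eqref{eq:adjoint DT}, together with the adjoint of $L_\eta=\eta\wedge$ (which is contraction $i_\xi$, via \eqref{eq:compaEtaystar}/\eqref{eq:star transverse star}), one gets $D^\ast(\beta+\eta\wedge\gamma)=0$ splitting into a horizontal part $D_T^\ast\beta + (\text{term in }\gamma) = 0$ and a vertical part involving $D_V\gamma$ and $\Lambda$ (the transverse Lefschetz operator) applied to $D_T\beta$.

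The heart of the argument is then to combine these four scalar-type relations and eliminate cross-terms to force $D_V\beta=0$, $D_T^\ast\beta=0$, $D_V\gamma=0$, $D_T\gamma=0$ \emph{separately}. The mechanism I expect: from the vertical part of $D\alpha=0$, $D_V\beta=D_T\gamma$; pairing with $\beta$ (resp.\ $\gamma$) and using integration by parts plus $D_V^\ast=-D_V$ should yield $\Vert D_V\beta\Vert^2$ in terms of an inner product that vanishes by the $D^\ast\alpha=0$ relations — this is exactly the kind of cancellation used in the proof of Lemma~\ref{lem:previo1}, where $D_TD_T^\ast D_T\gamma=0$ was leveraged via $\langle D_TD_T^\ast D_T\gamma,D_T\gamma\rangle=\Vert D_T^\ast D_T\gamma\Vert^2$. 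Concretely, I anticipate computing $\langle\Delta\alpha,\alpha\rangle=0$ but decomposed by bi-degree, or equivalently using that $\Delta$ preserves the bi-grading only up to the $\omega\wedge$ and $i_\xi$ terms, so that $0=\Delta\alpha$ gives two equations whose inner products with $\beta$ and $\eta\wedge\gamma$ are sums of squares; positivity then kills each term. In the degrees $k=1,2$ the relevant spaces $\rL^{k,0}$ are $\Omega^0_H$, $\Omega^1_H$, $\Omega^2_{6\oplus1}$, $\eta\wedge\Omega^1_H$, etc., and one must check that $\omega\wedge(\cdot)$ and $\Lambda$ behave well on these pieces (e.g.\ $\omega\wedge\Omega^2_{6\oplus1}$ lands where expected, and $\Lambda$ annihilates the relevant primitive parts) — this is where the explicit identifications of Proposition~\ref{prop:Lidentification} and the eigenspace structure \eqref{eq:omega2} are needed.

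\textbf{Main obstacle.} The delicate point is keeping track of signs and of which terms are horizontal versus vertical when expanding $D$ and $D^\ast$ on $\eta\wedge\gamma$, since $D(\eta\wedge\gamma)$ produces \emph{both} the horizontal $\omega\wedge\gamma$ and a vertical $\eta\wedge D_T\gamma$, and similarly $D^\ast$ couples $\beta$ and $\gamma$ through $\Lambda$ and $i_\xi$. The risk is that the four relations are genuinely coupled and do \emph{not} decouple into the clean four conclusions without an extra input — most likely the transverse Kähler/Lefschetz identities (Lemma~\ref{lem:deltaB caudrado}) or the fact that $[\Lambda,L_\omega]$ acts as a known operator on $\rL^{k,0}$ for the small $k$ at hand, which controls the cross-terms $\langle\omega\wedge\gamma,\,D_T\beta\rangle$ versus $\langle\gamma,\Lambda D_T\beta\rangle$. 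I would handle this by first treating $k=1$ explicitly (where $\gamma\in\Omega^0_H$ and $\omega\wedge\gamma\in\Omega^2_{6\oplus1}$ is the $\Omega^2_1$-component, so $\Lambda(\omega\wedge\gamma)=3\gamma$ up to a constant), deducing the four vanishings, and then running the parallel but slightly heavier computation for $k=2$, where one additionally uses that the $\Omega^2_8$ part has been quotiented out so $D_T\beta\in\rL^{2,0}$ is automatically primitive-type in the relevant sense.
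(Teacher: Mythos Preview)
Your plan is correct and follows essentially the same route as the paper: write the harmonic representative as $\alpha=\beta+\eta\wedge\gamma$, expand $D\alpha=0$ and $D^\ast\alpha=0$ by bi-degree, and combine the resulting relations via integration by parts to force each piece to vanish.

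The one place where you are anticipating more difficulty than there is: the equation $D^\ast\alpha=0$ gives only a \emph{single} horizontal relation, not two. A direct computation (using $(\eta\wedge D_V)^\ast=-D_V\circ i_\xi$ and that $D_T^\ast$ annihilates the vertical part) yields simply
\[
D^\ast\alpha \;=\; D_T^\ast\beta - D_V\gamma \;=\; 0,
\]
so there is no $\Lambda$-term and no need for the Kähler--Lefschetz identities at all. You therefore have exactly three relations:
\[
(i)\ D_T\beta+\omega\wedge\gamma=0,\qquad
(ii)\ D_V\beta-D_T\gamma=0,\qquad
(iii)\ D_T^\ast\beta-D_V\gamma=0.
\]
The paper's trick to decouple these is much lighter than what you outline: apply $D_T$ to $(iii)$ and $D_V$ to $(ii)$, use the commutation $D_TD_V=D_VD_T$ \eqref{eq:DVcomuts DT} to get $D_TD_T^\ast\beta = D_V^2\beta$, and then pair with $\beta$; since $D_V^\ast=-D_V$ this reads
\[
\Vert D_T^\ast\beta\Vert^2 + \Vert D_V\beta\Vert^2 = 0,
\]
which kills both terms. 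Then $(ii)$ and $(iii)$ immediately give $D_T\gamma=0$ and $D_V\gamma=0$. Relation $(i)$ is not even used. So your ``main obstacle'' evaporates once you compute $D^\ast\alpha$ carefully, and the separate treatments of $k=1$ and $k=2$ you propose are unnecessary --- the argument is degree-independent.
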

\begin{proof}
The harmonic representative of $[\alpha]\in H^k$ has the general form $\alpha=\beta+\eta\wedge\gamma$ by definition of $\rL^k$ (see Proposition \ref{prop:Lidentification} item $(i)$). We know from \eqref{eq:DV DT} that $D\alpha=D_T\alpha+\eta\wedge D_V\alpha$, so
\begin{align*}
    0=D\alpha &=D_T\beta+\eta\wedge D_V\beta+ \omega\wedge\gamma-\eta\wedge(D_T\gamma+\eta\wedge D_V\gamma)\\
    &=D_T\beta+\omega\wedge\gamma+\eta\wedge\left(D_V\beta- D_T\gamma \right) 
\end{align*} 
and 
\begin{align*}
    0=D^\ast\alpha&=D_T^\ast\beta+(\eta\wedge D_V)^\ast\beta+D_T^\ast (\eta\wedge\gamma)+(\eta\wedge D_V)^\ast(\eta\wedge\gamma)\\
    &=D_T^\ast\beta-D_V(i_\xi(\eta\wedge\gamma))\\
    &=D_T^\ast\beta-D_V\gamma.
\end{align*}
In summary, we obtain the following relations: 
\begin{equation}
    \label{eq:relations dem previo 2}
\begin{array}{ccc}
    (i)\; D_T\beta+\omega\wedge\gamma=0,     
    & (ii)\;  D_V\beta-D_T\gamma=0, 
    & (iii)\;D_T^\ast\beta-D_V\gamma=0.  
\end{array}
\end{equation} 
Applying $D_T$ to $(iii)$ and $D_V$ to $(ii)$, and commuting by \eqref{eq:DVcomuts DT},  we obtain $D_T D_T^\ast\beta-D_V^2\beta=0$. Taking the inner product with $\beta$, we have 
$$
\Vert D_V\beta\Vert^2 +\Vert D_T^\ast\beta\Vert^2=0, 
$$
therefore $D_V\beta=D_T^\ast\beta=0$, and so  $D_V\gamma=D_T\gamma=0$.  
\end{proof}

\begin{proposition}
\label{prop:previo2}
The chain map $j^\ast\colon (A^\bullet,\hat{\rd})\to (\rL^\bullet,D)$ defined in \eqref{eq:chain map} is a quasi-isomorphism, i.e., the induced map in cohomology $j^k\colon H^k(A^\bullet)\to H^k(\rL^{\bullet})$ is an isomorphism, for each $k=0,1,2,3$.  
\end{proposition}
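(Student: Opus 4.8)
The plan is to show that $j^k\colon H^k(A^\bullet)\to H^k(\rL^\bullet)$ is an isomorphism for each $k=0,1,2,3$, by producing convenient harmonic representatives on each side. Since $M$ is closed and $\rL^\bullet$ is elliptic [Proposition~\ref{prop:complexDeformation}], Hodge theory identifies $H^k(\rL)$ with $\ker(\Delta)\subset\rL^k$, and Lemma~\ref{lem:lema auxiliar} describes these harmonic forms in degrees $k=1,2$ as $\alpha=\beta+\eta\wedge\gamma$ with $D_V\beta=D_T^\ast\beta=0$, $D_V\gamma=D_T\gamma=0$ and $D_T\beta+\omega\wedge\gamma=0$. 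On the $A^\bullet$-side I will use the isomorphism $\cH_T^{k,0}\cong H_B^k$ of Lemma~\ref{lem:previo1}, the Hodge decomposition of the transversely elliptic basic complex \eqref{eq:BasicComplexDeformation2} \cite{kacimi1990operateurs}, and, since $\xi$ is Killing and $D_V^\ast=-D_V$ [cf.\ \eqref{eq:adjoint DV}], the orthogonal splitting $\Omega^\bullet(\fg_E)=\ker D_V\oplus\,\mathrm{im}\,D_V$ together with its projection $P_B$ onto basic forms, which commutes with $D_T$, with $\omega\wedge(\cdot)$ and with $\ast_T$. The degenerate degrees are immediate: $\rL^0\cong\Omega^0(\fg_E)$ is purely horizontal, so $H^0(\rL)=\{s:d_As=0\}=\cH_T^{0,0}=H^0(A)$ and $j^0$ induces the identity; and $\rL^3$ is purely vertical, so a $\Delta$-harmonic $\phi\in\rL^3$ has the form $\phi=\eta\wedge\gamma$ with $D_V\gamma=D_T^\ast\gamma=0$ by a computation as in the proof of \eqref{eq:adjoint DT}, whence $H^3(\rL)\cong\eta\wedge\cH_T^{2,0}=j^3\bigl(H^3(A)\bigr)$.

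For injectivity in degrees $k=1,2$, suppose $x=(\alpha,\beta)\in A^k$ is a cocycle with $j^k(x)=\alpha+\eta\wedge\beta=D\mu$ for some $\mu=\mu_0+\eta\wedge\mu_1\in\rL^{k-1}$. Expanding $D\mu=\bigl(D_T\mu_0+\omega\wedge\mu_1\bigr)+\eta\wedge\bigl(D_V\mu_0-D_T\mu_1\bigr)$ and comparing horizontal and vertical parts gives $\beta=D_V\mu_0-D_T\mu_1$; pairing with $\beta$ and using $D_V\beta=D_T^\ast\beta=0$ yields $\Vert\beta\Vert^2=-\langle D_V\beta,\mu_0\rangle-\langle D_T^\ast\beta,\mu_1\rangle=0$, so $\beta=0$, whence also $D_V\mu_0=D_T\mu_1$. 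It then remains to see that $\alpha=D_T\mu_0+\omega\wedge\mu_1$ is $\hat{\rd}$-exact in $A^\bullet$: applying $P_B$ and using that $\alpha$ is basic, $\alpha=D_B(P_B\mu_0)+\omega\wedge(P_B\mu_1)$, while $D_B(P_B\mu_1)=P_B D_T\mu_1=P_B D_V\mu_0=0$, so $[\alpha]=L_\omega[P_B\mu_1]$ in $H_B^k$; identifying $\cH_T^{\bullet,0}\cong H_B^\bullet$, this means $x=\hat{\rd}^{\,k-1}(0,P_B\mu_1)$ up to harmonic projection, hence $[x]=0$ in $H^k(A^\bullet)$. (For $k=1$ there is no vertical component $\mu_1$, and the same pairings force $\alpha=\beta=0$, i.e.\ $x=0$.)

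For surjectivity in degrees $k=1,2$, take a class in $H^k(\rL)$ with $\Delta$-harmonic representative $\phi=\beta+\eta\wedge\gamma$ as above. Then $\gamma$ is basic and $D_B$-closed, so $\gamma=\gamma_0+D_B\tau$ with $\gamma_0\in\cH_T^{k-1,0}$ and $\tau$ basic; using $D(\eta\wedge\tau)=\omega\wedge\tau-\eta\wedge D_T\tau$ one rewrites $\phi\sim\beta'+\eta\wedge\gamma_0$ with $\beta':=\beta+\omega\wedge\tau$. This $\beta'$ is basic and lies in $\rL_H^k$, and $D_T\beta'=D_T\beta+\omega\wedge D_B\tau=-\omega\wedge\gamma+\omega\wedge(\gamma-\gamma_0)=-\omega\wedge\gamma_0$, which vanishes because $\rL_H^3=0$ (for $k=2$) or trivially (for $k=1$); hence $\beta'$ is $D_B$-closed, and its basic Hodge decomposition $\beta'=\beta'_{\mathrm{harm}}+D_B\lambda=\beta'_{\mathrm{harm}}+D\lambda$ gives $\phi\sim\beta'_{\mathrm{harm}}+\eta\wedge\gamma_0=j^k(\beta'_{\mathrm{harm}},\gamma_0)$ with $(\beta'_{\mathrm{harm}},\gamma_0)\in A^k$. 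Finally $D_T\beta+\omega\wedge\gamma=0$ exhibits $\omega\wedge\gamma$ as $D_B$-exact, so $\hat{\rd}^k(\beta'_{\mathrm{harm}},\gamma_0)=0$ (a point needing verification only for $k=1$, since $\hat{\rd}^2=0$), and the resulting class in $H^k(A^\bullet)$ is mapped by $j^k$ to the given class.

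The step I expect to be most delicate is surjectivity in degree $k=2$: tracking the correction term $\omega\wedge\tau$ that appears when $\gamma$ is replaced by its $\Delta_T$-harmonic representative $\gamma_0$, and checking that it recombines with $\beta$ into a genuinely $D_B$-closed element of $\Omega^2_{6\oplus1,B}(\fg_E)$. This leans on the vanishing $\rL_H^3=0$ from Proposition~\ref{prop:Lidentification} together with Hodge-theoretic facts for the basic complex \eqref{eq:BasicComplexDeformation2}, which is only \emph{transversely} elliptic, so the splitting $\Omega^\bullet(\fg_E)=\ker D_V\oplus\,\mathrm{im}\,D_V$, the averaging projection $P_B$, and the basic Hodge decomposition must all be invoked through the transverse Hodge theory of \cite{kacimi1990operateurs}, not ordinary elliptic theory. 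Once this is done, the commutative diagram \eqref{eq:diagram complex A,d} shows that $j^\bullet$ is a chain map inducing isomorphisms in every degree, i.e.\ a quasi-isomorphism; as a sanity check this forces $\chi(\rL)=\chi(A)=0$ [Lemma~\ref{lem:complex A}(iii)], in agreement with the symbol count $1-7+13-7=0$ for the fibre ranks of $\rL^0,\dots,\rL^3$.
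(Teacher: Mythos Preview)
Your overall strategy—using Lemma~\ref{lem:lema auxiliar} to decompose $\Delta$-harmonic representatives as $\beta+\eta\wedge\gamma$ and then matching them with pairs in $\cH_T^{k,0}\oplus\cH_T^{k-1,0}$—is essentially the paper's. Your treatment of $k=0,3$ is fine, and your direct injectivity argument at $k=2$ via the basic projection $P_B$ is a legitimate alternative to the paper's Euler-characteristic trick (the paper simply observes that $\chi(\rL^\bullet)=\chi(A^\bullet)=0$ and that $j^k$ is an isomorphism for $k\neq2$, forcing equality of dimensions at $k=2$).

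There is, however, a genuine gap in your surjectivity argument at $k=1$. You assert that $D_T\beta'=-\omega\wedge\gamma_0$ ``vanishes \dots\ trivially (for $k=1$)''. But in degree $1$ one has $\tau=0$, $\gamma_0=\gamma\in\Omega^0_B(\fg_E)$ and $\beta'=\beta$, so $D_T\beta'=D_T\beta=-\omega\wedge\gamma$ is a section of $\Omega^2_1\subset\rL_H^2$, which is \emph{not} zero a priori (your $k=2$ argument works precisely because $\rL_H^3=0$, a fact with no analogue for $\rL_H^2$). Without $D_T\beta'=0$ you cannot write the basic Hodge decomposition as $\beta'=\beta'_{\mathrm{harm}}+D_B\lambda$: there is a residual coexact piece $D_B^\ast\nu$, and $\phi$ no longer differs from $j^1(\beta'_{\mathrm{harm}},\gamma_0)$ by a $D$-exact term. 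Your later observation that $\omega\wedge\gamma=-D_T\beta$ is $D_B$-exact only establishes the cocycle condition $\hat{\rd}^1(\beta'_{\mathrm{harm}},\gamma_0)=0$ \emph{after} $\beta'_{\mathrm{harm}}$ has been produced; it does not supply the missing closedness.

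The paper closes this gap with a short direct computation: applying $D_T^\ast$ to relation~(i), $D_T\beta+\omega\wedge\gamma=0$, and using $\ast_T(\gamma\,\omega)=\tfrac12\gamma\,\omega^2$ together with $D_T\gamma=0$, one finds $D_T^\ast(\omega\wedge\gamma)=-\ast_T D_T\bigl(\tfrac12\gamma\,\omega^2\bigr)=0$, hence $D_T^\ast D_T\beta=0$ and, pairing with $\beta$, $D_T\beta=0$. This forces $\omega\wedge\gamma=0$ as a form, so $\beta\in\cH_T^{1,0}$ and $\gamma\in\cH_T^{0,0}$ already, with no correction needed. Once you insert this computation, the rest of your argument goes through.
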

\begin{proof} There are four cases to consider: 

\medskip
\noindent\underline{$k=0$}:  
This case  is trivial, since $j^0$ is essentially the inclusion map. 

\medskip\noindent \underline{$k=3$}: 
We must show that $H^3\cong \{0\}\oplus\cH^{2,0}_T$. From Hodge theory, we have isomorphisms $H^k\cong\cH^k$, so we  show that $\cH^3\cong \cH^{2,0}_T$. Indeed, $j^3$ is just  exterior multiplication by $\eta$, so
\begin{align*}
\cH^{2,0}_T &= \lbrace\gamma\in \rL^{2,0}=\rL^2_H\;\vert\; \Delta_T\gamma
             =0 \rbrace 
             = \left\lbrace \gamma\in  \Omega^{2}_6\oplus \Omega^{2}_{1}\; :\; D_T\gamma
             =D^\ast_T\gamma=D_V\alpha=0 \right\rbrace\\
            &\cong \left\lbrace \alpha\in \eta\wedge\left( \Omega^{2}_6\oplus \Omega^{2}_{1}\right) \; :\; D^\ast_T\alpha
             =D_V\alpha
             =0 \right\rbrace \\
            &=\cH^3\subseteq \rL^3.
\end{align*} 

\noindent \underline{$k=1$}: 
By \eqref{eq: dem previo2}, a class $[\alpha]\in H^1$ has a harmonic representative of the form, 
$$
    \alpha=\beta+\eta\wedge\gamma \in\Omega^1_H(\fg_E)\oplus\eta\wedge\Omega^{0}_H(\fg_E),
$$
and we first need to be sure that $\beta,\gamma$   are $\Delta_T$-harmonic. By     \eqref{eq:DT-harmonic} and Lemma \ref{lem:lema auxiliar}, it only remains to check that $D_T^{\ast}\gamma=0$ and  $D_T\beta=0$. The former holds because  
$\gamma\in \Omega^{0}_H(\fg_E)\subset \ker(D_T^\ast)$.
For the latter, take the inner product with $\beta$ after the following computation:
\begin{align*}
0 &=D^\ast_TD_T\beta+D_T^\ast(\omega\wedge\gamma)=D^\ast_TD_T\beta-\ast_TD_T\ast_T(\omega\wedge\gamma)\\
  &=D^\ast_TD_T\beta-\ast_T D_T(\ast(\eta\wedge\omega\wedge\gamma) =D^\ast_TD_T\beta-\ast_T D_T (\omega^2\wedge\gamma)\\
  &=D^\ast_TD_T\beta.
\end{align*}

Now, let us check that $j^1\colon H^1(A^\bullet)\to H^1(\rL^\bullet)$ is an isomorphism. From relation $(i)$ in \eqref{eq:relations dem previo 2}, we have    $\omega\wedge \gamma=0$, and thus  $(\beta,\gamma)\in\cH^{1,0}_T\oplus\cH^{0,0}_T$ and $\hat{\rd}(\beta,\gamma)=(\omega\wedge\gamma,0)=(0,0)$, i.e., $j^1$ is surjective. For injectivity, suppose that 
 $
[j^1(\beta,\gamma)]=[\beta+\eta\wedge\gamma]=0\in H^1(\rL^\bullet),
 $ 
so $\beta+\eta\wedge\gamma=D\delta$, for some $\delta\in \Omega^0(\fg_E)$, and so 
\begin{align*}
    0     
    &=\beta- D\delta+\eta\wedge D_V\delta +\eta\wedge\gamma-\eta\wedge  D_V\delta  
     =\beta-i_\xi(\eta\wedge D\delta)+\eta\wedge(\gamma-D_V\delta)\\
    &=\beta- D_T\delta +\eta\wedge(\gamma-D_V\delta).
\end{align*}
Comparing types, we have $\beta=D_T\delta$ and $\gamma=D_V\gamma$, but $\beta$ and $\gamma$ are $\Delta_T$-harmonic, so indeed $(\beta,\gamma)=(0,0)$.  

\medskip
\noindent \underline{$k=2$}: 
From Lemma  \ref{lem:lema auxiliar},
$ 
\alpha=\beta+\eta\wedge\gamma\in \left( \Omega^2_6\oplus\Omega^2_1\right)(\fg_E) \oplus\eta\wedge\Omega^1_H(\fg_E)=\rL^2, 
$
where $D_V\beta=D_T^\ast\beta=0$ and   $D_V\gamma=D_T\gamma=0$. Note that $\omega\wedge \gamma =0$, since  $\omega\wedge \gamma \in \rL^{3}=\rL^{2,1}\cong \eta\wedge(\Omega^2_6\oplus\Omega^2_1)(\fg_E)$, so relation $(i)$ in \eqref{eq:relations dem previo 2} implies that $D_T\beta=0$, i.e., $j^2\colon H^2(A^{\bullet})\to H^2(\rL^\bullet)$ is surjective. For injectivity, recall that the complex  $(\rL^\bullet,D)$ in \eqref{eq:complex L D} is elliptic [Proposition \ref{prop:complexDeformation}] on a compact odd-dimensional manifold, so 
$$
0=\sum(-1)^k\dim(H^k(\rL^\bullet)),
$$ 
similarly,   $0=\sum(-1)^k\dim(H^k(A^\bullet))$ (see item (iii) of Lemma   \ref{lem:complex A}). Since $j^k$ is an isomorphism for $k=0,1, 3$,   we  conclude that $\dim(H^2(\rL^\bullet))=\dim(H^2(A^\bullet))$, hence $j^2$ is also an isomorphism.  
\end{proof} 

\begin{remark}
Lemma   \ref{lem:previo1} and Proposition \ref{prop:previo2} can be shown in the same fashion if we adopt the ideal  $\rI=\f{Span}\{\Omega^2_6\}$, instead of $\rI=\f{Span}\{\Omega^2_8\}$.
\end{remark}

\begin{proposition}
\label{prop:omegaMap}
Assume the Sasakian manifold $M^7$ is closed (see Remark \ref{rem:closed}). Then the map 
$$
L_\omega\colon [\alpha] \in H^0_B \mapsto [\alpha\otimes\omega]\in H^2_B, 
$$ 
induced in cohomology by $L_\omega \colon \rL^k\to\rL^{k+2}$ [cf. Lemma \ref{lem:mapa eta omega contraction}], is injective. 
\end{proposition}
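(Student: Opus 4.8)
The plan is to analyze the two extremes of the basic complex $\rL^\bullet_B$. First I would identify $H^0_B$ explicitly. An element of $H^0_B$ is a basic section $\alpha\in\Omega^0_B(\fg_E)$ with $D_B\alpha=D_T\alpha=0$; since $D_T$ agrees with $d_A$ on basic forms, such $\alpha$ is a $d_A$-parallel section of $\fg_E$, i.e. $\alpha\in H^0(\rC)\cong\Ker(d_A)$. In particular, if $A$ is irreducible then $H^0_B=0$ and there is nothing to prove, so the content of the statement is really about reducible connections; but I would phrase the argument so it works regardless. The key point is that $L_\omega\colon \rL^0\to\rL^2$ descends to cohomology because $\omega\wedge(\cdot)$ preserves the ideal $\rI=\langle\Omega^2_8(\fg_E)\rangle$ [Lemma \ref{lem:mapa eta omega contraction}] and commutes with $D_T$ up to terms controlled by $D_V$ (which vanishes on basic forms): for $\alpha$ basic, $D_T(\omega\wedge\alpha)=\omega\wedge D_T\alpha$ since $d\omega=0$ and $d_A\alpha$ is basic. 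Hence $L_\omega$ is a well-defined chain map on the basic complex.

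Next I would prove injectivity by a Hodge-theoretic argument. Suppose $[\alpha]\in H^0_B$ with $L_\omega[\alpha]=[\omega\wedge\alpha]=0$ in $H^2_B$; I want $[\alpha]=0$, hence (since $H^0_B$ has no boundaries in degree $0$) $\alpha=0$. From the previous paragraph, $\alpha$ is $d_A$-parallel. Write $\omega\wedge\alpha=D_B\beta$ for some $\beta\in\Omega^1_B(\fg_E)$; since $D_B=D_T$ on basic forms, $\omega\wedge\alpha=D_T\beta$. Now pair with itself, using that $M$ is closed and the adjoint formulas \eqref{eq:adjoint DT}:
\[
\Vert \omega\wedge\alpha\Vert^2=\langle \omega\wedge\alpha, D_T\beta\rangle=\langle D_T^\ast(\omega\wedge\alpha),\beta\rangle.
\]
Since $\alpha$ is parallel, $D_T(\omega\wedge\alpha)=\omega\wedge D_T\alpha=0$, and I would compute $D_T^\ast(\omega\wedge\alpha)$ directly: by \eqref{eq:adjoint DT} and $\ast_T(\alpha)=\tfrac12 (\cdot)\wedge\omega^2$-type identities in the spirit of Lemma \ref{lem:eq star_T}, one finds $D_T^\ast(\omega\wedge\alpha)=-\ast_T D_T\ast_T(\omega\wedge\alpha)$, and $\ast_T(\omega\wedge\alpha)$ is (up to constant) $\alpha\wedge\omega$ again in complementary degree, on which $D_T$ again vanishes because $\alpha$ is parallel and $d\omega=0$. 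Thus $D_T^\ast(\omega\wedge\alpha)=0$, forcing $\Vert\omega\wedge\alpha\Vert^2=0$, so $\omega\wedge\alpha=0$ pointwise. Finally, wedging with $\omega$ on a degree-$0$ form is the Lefschetz-type map $\rL_\omega\colon \Omega^0\to\Omega^2_1$, which is injective (its transverse adjoint $\Lambda$ recovers $\alpha$ up to a nonzero constant since $\Lambda\rL_\omega = 3\,\mathrm{id}$ on $0$-forms in the transverse $6$-dimensional geometry); hence $\alpha=0$ and $[\alpha]=0$.

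The main obstacle I anticipate is the clean verification that $D_T^\ast(\omega\wedge\alpha)=0$, i.e. getting the transverse Hodge-star bookkeeping right: one must track the degree shift under $\ast_T\colon\Omega^k_H\to\Omega^{6-k}_H$ and confirm that $\ast_T(\omega\wedge\alpha)$ lies in the image of a $D_T$-closed form when $\alpha$ is $d_A$-parallel, rather than picking up a curvature contribution. An alternative, perhaps cleaner, route avoiding this computation: identify $H^0_B\cong\Ker(d_A)$ and $H^2_B$'s degree-$(2,0)$ part via Lemma \ref{lem:previo1} with $\cH_T^{2,0}$, observe that for parallel $\alpha$ the form $\omega\otimes\alpha$ is itself closed, basic, co-closed (as $D_T^\ast(\omega\otimes\alpha)$ involves $\Lambda$ of a parallel form plus $d_A\alpha=0$ terms), hence harmonic and nonzero whenever $\alpha\neq0$, because $\omega$ is nowhere vanishing and $\eta\wedge\omega^3$ is a volume form so $\langle\omega\otimes\alpha,\omega\otimes\alpha\rangle=3\Vert\alpha\Vert^2>0$. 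Either way the crux is that multiplication by the nowhere-vanishing transverse symplectic form is injective on sections and sends parallel sections to harmonic basic $2$-forms.
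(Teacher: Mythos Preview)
Your argument is correct and follows essentially the same idea as the paper, just organized slightly differently. The paper computes $\Vert\alpha\Vert^2$ rather than your $\Vert\omega\wedge\alpha\Vert^2$: writing $\alpha\omega=D_B\beta$, it rewrites $\Vert\alpha\Vert^2$ as a constant times $\int_M\langle D_B\beta\wedge\alpha\omega^2\rangle\wedge\eta$ and then shows the integrand is exact via an explicit Stokes computation, using $D_B(\alpha\omega^2)=0$ and that $\beta\wedge\alpha\omega^3$ is a horizontal $7$-form hence zero. This is exactly the dual of your adjoint step: since $\ast_T\omega=\tfrac12\omega^2$ in the $6$-dimensional transverse geometry, one has $\ast_T(\omega\otimes\alpha)=\tfrac12\omega^2\otimes\alpha$, and $D_T(\tfrac12\omega^2\otimes\alpha)=0$ because $d\omega=0$ and $d_A\alpha=0$; so $D_T^\ast(\omega\wedge\alpha)=0$ is immediate and your stated worry about the bookkeeping is unfounded. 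Your alternative phrasing---that $\omega\otimes\alpha$ is itself $\Delta_T$-harmonic and nonzero whenever $\alpha\neq0$---is perhaps the cleanest formulation of the same fact, and avoids the integration-by-parts step altogether.
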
 
\begin{proof} 
Since $D\omega=0$, the exterior product map for $\omega$ is a well-defined map in cohomology. Given $\alpha\in \ker(\omega\wedge \cdot)\subset H^0_B$, clearly $[\alpha\otimes \omega]=0\in H^2_B$, so there exists some $\beta\in \Omega^1_B$ such that $\alpha\otimes\omega=D_B\beta$. Then 
\begin{align*}
    \Vert \alpha \Vert^2 
    &=\displaystyle \int_M  \langle\alpha\wedge\ast\alpha\rangle =\displaystyle \int_M  \alpha\wedge\ast\alpha\wedge\frac{ \eta\wedge\omega^3}{3!}\\
    &= \frac{1}{6}\displaystyle\int_M \alpha\omega\wedge\ast_T(\alpha\omega) \wedge \eta \\
    &= \frac{1}{6}\displaystyle\int_M D_B\beta\wedge\alpha\omega^2 \wedge\eta. 
\end{align*}
Therefore, it suffices to show that the integrand is exact. Indeed:
\begin{align*}
    d\left(\langle\beta \wedge\alpha\omega^2\rangle \wedge\eta\right) 
    &=\left( D_B\beta\wedge \alpha\omega^2 +\beta\wedge D_B(\alpha\omega^2)\right)\wedge\eta+ \beta\wedge \alpha\omega^2\wedge\omega\\
    &=(D_B\beta\wedge\alpha\omega^2)\wedge\eta +(\beta\wedge\alpha\omega^2)\wedge \omega\\
    &= (D_B\beta\wedge\alpha\omega^2) \wedge\eta.\qedhere
\end{align*}
\end{proof} 

At an instanton, the cohomologies $H^\bullet$ and $H^\bullet_B$ of the complexes  \eqref{eq:complexDeformation} and \eqref{eq:BasicComplexDeformation2}, respectively, fit in a Gysin sequence  analogous to  \cite[Theorem~10.13]{tondeur2012foliations}: 

\begin{proposition}
\label{prop:gysinSequence}
At a SDCI, the complex \eqref{eq:complex A} induces a long exact sequence in cohomology: 
\begin{equation}
  \label{eq:gysinSequence}
    \begin{tikzcd}
 \cdots \arrow[r]
 & H^{k-2}_B(A) \arrow[r, "\omega\wedge"] 
 & H^{k}_B(A) \arrow[r]
 & H^k(A) \arrow[r]
 &  H^{k-1}_B(A) \arrow[r,"\omega\wedge"] 
 &  H^{k+1}_B(A) \arrow[r]
 & \cdots
\end{tikzcd}
\end{equation} 
\end{proposition}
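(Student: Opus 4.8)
The plan is to recognise the complex $(A^\bullet,\hat{\rd})$ of Lemma \ref{lem:complex A} as the mapping cone of ``cup product with $\omega$'' between two copies of the basic cohomology carrying the zero differential, and then to invoke the tautological long exact sequence of such a cone. First I would use the isomorphism $\cH_T^{k,0}\cong H^k_B$ of Lemma \ref{lem:previo1} to rewrite
$$
A^k\;\cong\;H^k_B\oplus H^{k-1}_B,\qquad \hat{\rd}(a,b)=(\omega\wedge b,0),
$$
which is exactly $\mathrm{Cone}\bigl(L_\omega\colon\cH_T^{\bullet-2,0}\to\cH_T^{\bullet,0}\bigr)$, source and target both with trivial differential; here the fact that $L_\omega$ descends to a map $H^{\bullet}_B\to H^{\bullet+2}_B$ is the content of Lemma \ref{lem:mapa eta omega contraction} together with $D\omega=0$ (as noted in the proof of Proposition \ref{prop:omegaMap}).

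Next I would package this as a short exact sequence of complexes. Set $C^k:=\cH_T^{k,0}\oplus 0\subset A^k$; since $\hat{\rd}(a,0)=0$ this is a subcomplex, with zero differential, so $H^k(C^\bullet)=\cH_T^{k,0}\cong H^k_B$. The quotient $Q^\bullet:=A^\bullet/C^\bullet$ has $Q^k\cong\cH_T^{k-1,0}$ and, because $\im\hat{\rd}\subseteq C^\bullet$, also zero induced differential, whence $H^k(Q^\bullet)\cong\cH_T^{k-1,0}\cong H^{k-1}_B$. The long exact sequence in cohomology attached to $0\to C^\bullet\to A^\bullet\to Q^\bullet\to 0$ then has the shape
$$
\cdots\to H^{k-2}_B\xrightarrow{\ \delta\ }H^k_B\to H^k(A^\bullet)\to H^{k-1}_B\xrightarrow{\ \delta\ }H^{k+1}_B\to\cdots .
$$

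It remains to pin down the connecting map and the middle term. The connecting homomorphism $\delta\colon H^{k-1}_B\cong H^k(Q^\bullet)\to H^{k+1}(C^\bullet)\cong H^{k+1}_B$ is computed the usual way: lift a class represented by $b\in\cH_T^{k-1,0}$ to $(0,b)\in A^k$ and apply $\hat{\rd}$, obtaining $(\omega\wedge b,0)\in C^{k+1}$, so under the harmonic identifications of Lemma \ref{lem:previo1} one has $\delta=L_\omega$. For the middle term, Proposition \ref{prop:previo2} asserts that $j^\bullet$ is a quasi-isomorphism, hence $H^k(A^\bullet)\cong H^k(\rL^\bullet)=H^k$. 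Substituting these identifications into the displayed sequence gives precisely \eqref{eq:gysinSequence}.

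I do not expect a genuine obstacle here: once Lemmas \ref{lem:previo1}--\ref{lem:complex A} and Proposition \ref{prop:previo2} are available, the argument is the purely formal long exact sequence of a short exact sequence of complexes. The only point needing attention is bookkeeping --- verifying that the harmonic-projection identification $\cH_T^{k,0}\cong H^k_B$ intertwines $\hat{\rd}$ with multiplication by $\omega$ in every degree (this is already built into Lemma \ref{lem:complex A}), and keeping the degree shifts straight so that $\delta$ lands as $L_\omega\colon H^{k-1}_B\to H^{k+1}_B$. Closedness of $M^7$ enters only through the cited auxiliary results (the transverse-elliptic Hodge theory underlying Lemma \ref{lem:previo1} and Proposition \ref{prop:previo2}).
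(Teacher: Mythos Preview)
Your argument is correct and is a genuinely different route from the paper's. You recognise $(A^\bullet,\hat{\rd})$ as the mapping cone of $L_\omega$ acting between copies of $(\cH_T^{\bullet,0},0)$, set up the short exact sequence $0\to C^\bullet\to A^\bullet\to Q^\bullet\to 0$ with $C^k=\cH_T^{k,0}\oplus 0$, and read off the long exact sequence; the connecting map is computed by the usual lift-and-differentiate recipe and is visibly $L_\omega$. This is clean and entirely formal once Lemmas \ref{lem:previo1} and \ref{lem:complex A} are in place.

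The paper instead verifies exactness position by position, exploiting that the complex is short (degrees $0$ through $3$): it computes $H^0(A)$, $H^1(A)$, $H^2(A)$, $H^3(A)$ explicitly and checks each segment of the sequence by hand. In doing so it \emph{uses} Proposition \ref{prop:omegaMap} (injectivity of $L_\omega\colon H^0_B\to H^2_B$, which needs $M$ closed) as an input to identify $H^1(A)\cong H^1_B$, and it also writes down concrete models for the maps $H^2_B\to H^2$ and $H^2\to H^1_B$ (the latter as $(i_\xi)_\ast$). Your approach bypasses Proposition \ref{prop:omegaMap} entirely --- exactness is automatic from the snake lemma --- so for the bare statement of the Gysin sequence you actually need less. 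What the paper's hands-on computation buys is the explicit description $H^2\cong \bigl(H^2_B/\omega\wedge H^0_B\bigr)\oplus H^1_B$ and the identification of the individual maps, which feeds directly into Corollary \ref{coro:H1=Hb} and the obstruction analysis of \S\ref{sec: obstructions}.
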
 
 \begin{proof} 
In order to show the exactness of \eqref{eq:gysinSequence}, we use the isomorphisms $H^\bullet(A)\cong H^\bullet $ [Proposition   \ref{prop:previo2}] and  $H^k_B\cong\cH^{k,0}_T$ [Lemma \ref{lem:previo1}]. We   recall that the differential $\hat{\rd}^k\colon A^k\to A^{k+1}$ from \eqref{eq:complex A} maps $(\alpha,\beta)\in\cH_T^{k,0}\oplus\cH_T^{k-1,0}$ to $(\omega\wedge\beta,0)\in\cH_T^{k+1,0}\oplus\cH_T^{k,0}$.    
\begin{itemize}
    \item 
First note that $ H^0(A)=\ker(\hat{\rd}^0)=\cH^{0,0}_T\cong H^0_B,$ hence  we obtain exactness  in 
$$
\begin{tikzcd}
 0 \arrow[r]
 & H^0_B \arrow[r] 
 & H^0 \arrow[r]
 & 0
\end{tikzcd}.
$$ 
Also note that $H^3(A)=\cH^{2,0}_T\cong H^2_B,$ hence we obtain exactness  in 
$$
\begin{tikzcd}
 0 \arrow[r]
 & H^3 \arrow[r] 
 & H^2_B \arrow[r]
 & 0
\end{tikzcd}.
$$ 
It remains to show  the exactness of 
$$
 \begin{tikzcd}
  0 \arrow[r, "\omega\wedge"] 
 &  H^1_B  \arrow[r]
 &  H^1    \arrow[r]
 &  H^0_B  \arrow[r,"\omega\wedge"] 
 &  H^2_B  \arrow[r]
 &  H^2    \arrow[r]
 &  H^1_B  \arrow[r, "\omega\wedge"]
 &  0.
\end{tikzcd}
$$
We proceed from left to right.
    \item
From Proposition $\ref{prop:omegaMap}$, the   map $L_\omega  \colon  H^0_B\to H^2_B$ is injective. 
Moreover,
\begin{align*}
    H^1(A)=\ker(\hat{\rd}^1)
    &=\{(\alpha,\beta)\in \cH_T^{1,0}\oplus\cH_T^{0,0} \vert (\omega\wedge\beta,0)=(0,0) \} \\
    &\cong\cH^{1,0}_T\oplus\{0\}\cong H^1_B,
\end{align*}
so mapping $H^1\to0$  gives exactness at $H^1_B$, $H^1$ and $H^0_B$. 

    \item
The  map $H^2_B\to H^2$ is induced by the inclusion $\rL^2_B\to\rL^2$, and its kernel consists of exact basic forms. By Proposition \ref{prop:previo2}, these are identified  with the image  of $L_\omega\colon \cH^{0}_T \to   \cH^{2,0}_T$, hence \eqref{eq:gysinSequence} is exact at $H^2_B$. 

    \item
We assert that  the map 
$$
(i_\xi)_\ast\colon H^2\to H^1_B,
$$
induced in cohomology by $i_\xi\colon L^2\to L^2_B$, is surjective. Indeed, if $\alpha\in \rL^1$ is basic and closed, the form $\eta\wedge \alpha$ belongs to $\rL^2$, and $D(\eta\wedge\alpha)=\omega\wedge\alpha$ is a basic $3$-form, i.e., $\omega\wedge\alpha\in \rL^{3,0}$[cf. \eqref{eq:Lk,0andL^k1}], so it is zero, and   $(i_\xi)_\ast[\eta\wedge\alpha]=[\alpha]$, as claimed.  

    \item
Finally, $H^2(A)=\frac{\cH_T^{2,0}\oplus\cH_T^{1,0}}{\im(\hat{\rd}^1)}$ and
\begin{align*}
    \im(d^1)
    &= \{(\alpha,\beta)\in \cH_T^{2,0}\oplus\cH_T^{1,0} \vert d(\theta,\gamma)=(\alpha,\beta),\quad\text{for some}\quad (\theta,\gamma)\in    \cH_T^{1,0}\oplus\cH_T^{0,0}  \}\\
    &= \{(\alpha,\beta)\in \cH_T^{2,0}\oplus\cH_T^{1,0} \vert  (\omega\wedge\gamma,0)=(\alpha,\beta),\gamma\in\cH_T^{0,0}  \}\\
    &\cong \omega\wedge(\cH^{0,0}_T)\oplus \{0\},
     \end{align*}
so,  $H^2\cong \frac{H^2_B}{\omega\wedge(\cH^{0,0}_T)}\oplus H^1_B\cong \frac{H^2_B}{\omega\wedge(H^0_B)}\oplus H^1_B$. \qedhere
\end{itemize}
\end{proof}

The following immediate consequence of Proposition  \ref{prop:gysinSequence} proves part \emph{(ii)} of Theorem \ref{thm:Intro}:  

\begin{corollary}
\label{coro:H1=Hb}
    At a SDCI, the inclusion  $\rL_B^1\to \rL^1$ induces an isomorphism $H^1_B\cong H^1$, thus the expected dimension of the moduli space \eqref{eq:modulispace} is $h^1_B:=\dim H^1_B$ \eqref{eq:BasicComplexDeformation2}.  
\end{corollary}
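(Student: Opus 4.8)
The plan is to read off the statement directly from the Gysin-type long exact sequence \eqref{eq:gysinSequence} established in Proposition \ref{prop:gysinSequence}. Recall that the inclusion of basic forms $\rL^1_B \hookrightarrow \rL^1$ is a chain map (since $D$ restricts to $D_B$ on basic forms, by the commutation $D_TD_V = D_VD_T$), hence induces a well-defined map $H^1_B \to H^1$ in cohomology; this is precisely the map appearing in the sequence. So the task reduces to showing this particular arrow is an isomorphism, i.e. that it is both injective and surjective.

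First I would isolate the relevant segment of \eqref{eq:gysinSequence}. Taking $k=1$ and using that $H^{-1}_B = 0$, the sequence gives
\begin{equation*}
    0 \to H^{-1}_B \xrightarrow{\ \omega\wedge\ } H^1_B \to H^1 \to H^0_B \xrightarrow{\ \omega\wedge\ } H^2_B.
\end{equation*}
Exactness at $H^1_B$ together with $H^{-1}_B = 0$ gives injectivity of $H^1_B \to H^1$ immediately. For surjectivity, exactness at $H^1$ tells us the image of $H^1_B \to H^1$ is the kernel of the connecting map $H^1 \to H^0_B$; so it suffices to show this connecting map is zero, equivalently (by exactness at $H^0_B$) that $\omega\wedge\colon H^0_B \to H^2_B$ is injective. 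But that is exactly the content of Proposition \ref{prop:omegaMap}. Hence $H^1 \to H^0_B$ is the zero map, the map $H^1_B \to H^1$ is surjective, and combined with injectivity it is an isomorphism.

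The remaining assertion of the corollary — that the expected dimension of the moduli space \eqref{eq:modulispace} equals $h^1_B := \dim H^1_B$ — then follows by combining this isomorphism with Theorem \ref{thm:Intro}--(i) (equivalently Propositions \ref{prop:deformationSpace} and \ref{prop:previo2}), which identify the tangent space $T_{[A]}\cM^\ast$ with $H^1(\rC) \cong H^1 \cong H^1(A)$. Chaining these identifications gives $T_{[A]}\cM^\ast \cong H^1_B$, so $\dim T_{[A]}\cM^\ast = h^1_B$ as claimed.

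There is essentially no obstacle here: the corollary is a bookkeeping consequence of two results already proven (Proposition \ref{prop:gysinSequence} and Proposition \ref{prop:omegaMap}), and the only point requiring a line of care is verifying that the map $H^1_B \to H^1$ named in the corollary statement (induced by the inclusion $\rL^1_B \to \rL^1$) genuinely coincides with the arrow $H^1_B \to H^1$ in the long exact sequence \eqref{eq:gysinSequence} — which it does by construction of that sequence in the proof of Proposition \ref{prop:gysinSequence}, since the map $H^k_B \to H^k$ there is precisely the one induced by inclusion of basic forms. If one prefers to avoid even invoking the full sequence, the same two-line argument can be run directly from exactness at the three relevant spots, citing Proposition \ref{prop:omegaMap} for the injectivity of $\omega\wedge$ on $H^0_B$.
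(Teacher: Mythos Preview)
Your proof is correct and takes essentially the same approach as the paper: the corollary is stated there as an immediate consequence of Proposition \ref{prop:gysinSequence}, and your argument simply makes explicit the diagram chase through the relevant segment of the Gysin sequence, invoking Proposition \ref{prop:omegaMap} for the injectivity of $\omega\wedge$ on $H^0_B$. Indeed, within the proof of Proposition \ref{prop:gysinSequence} the paper already computes $H^1(A)\cong\cH_T^{1,0}\cong H^1_B$ directly from the explicit form of $(A^\bullet,\hat{\rd})$ together with that same injectivity, so your exact-sequence extraction and the paper's computation are two presentations of the same step.
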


\subsection{Obstruction and smoothness}
\label{sec: obstructions}  

In this section we will establish that, if  the second cohomology group $H^2_B$ of the basic complex in \eqref{eq:BasicComplexDeformation2}  vanishes, then  the moduli space $\cM^\ast$ of irreducible SDCI defined in  \eqref{eq:modulispace} has a local structure of a smooth manifold (Corollary \ref{cor:ModuliSuave}). Our approach follows the general scheme of \cite[\S ~4.2]{Donaldson1990}, adapting to $7$ dimensions some crucial insights from \cite[\S 3.1 \& \S3.2]{Baraglia2016}.  
\subsubsection{The obstruction map}
\begin{lemma}
\label{lem:Maurer-Cartan}
Let $A$ be a SDCI and  $\alpha\in \rL^1\cong\Omega^1(\fg_E)$ [cf.  Proposition \ref{prop:Lidentification}]; the connection $A+\alpha$ remains a SDCI   if, and only if, $\alpha$ is a \emph{Maurer-Cartan element} of $(\rL^\bullet,D)$  [cf. \eqref{eq:brackets} and  \eqref{eq:complex L D}], i.e.,  
$$
D\alpha+\frac{1}{2}[\alpha,\alpha]=0 \in \rL^2.
$$ 
\end{lemma}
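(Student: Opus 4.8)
The plan is to unravel what the SDCI condition means for the connection $A+\alpha$ using the characterisation already in hand, and then match it against the Maurer--Cartan equation in the quotient complex $(\rL^\bullet,D)$. First I would recall from Proposition \ref{prop: selfdual = HYM} (equivalently, from the eigenspace description \eqref{eq:omega2} and Lemma \ref{lem:SDcharacterization}--(i)) that a connection is SDCI precisely when its curvature lies in $\Omega^2_8(\fg_E)$, i.e.\ the projection of the curvature onto $\Omega^2_{6\oplus1}(\fg_E)\oplus\Omega^2_V(\fg_E)$ vanishes. Since $A$ is already a SDCI, $F_A\in\Omega^2_8(\fg_E)$, hence $F_A$ maps to $0$ in $\rL^2=\Omega^2(\fg_E)/I$ with $I=\langle\Omega^2_8(\fg_E)\rangle$. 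The curvature of the deformed connection is
\begin{equation}
\label{eq:Fdeformed}
F_{A+\alpha}=F_A+d_A\alpha+\tfrac12[\alpha\wedge\alpha],
\end{equation}
so, passing to the quotient $\rL^2$ and using $D=p\circ d_A$ together with $[\,\cdot\wedge\cdot\,]$ descending to $\rL^\bullet$ (the graded Lie bracket is compatible with the ideal $I$, as $I$ is a Lie ideal), the image of $F_{A+\alpha}$ in $\rL^2$ equals $D\alpha+\tfrac12[\alpha\wedge\alpha]$.

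The logical core is then the equivalence: $A+\alpha$ is a SDCI $\iff F_{A+\alpha}\in\Omega^2_8(\fg_E) \iff$ the image of $F_{A+\alpha}$ in $\rL^2$ is zero. The forward direction is immediate. For the converse I need that the only curvature-type $2$-forms that vanish in $\rL^2$ are exactly those in $\Omega^2_8(\fg_E)$; this is clear because $\rL^2\cong\Omega^2_6(\fg_E)\oplus\Omega^2_1(\fg_E)\oplus\eta\wedge\Omega^1_H(\fg_E)$ by Proposition \ref{prop:Lidentification}--(i), so the quotient map $\Omega^2(\fg_E)\to\rL^2$ restricted to the complement $\Omega^2_{6\oplus1}(\fg_E)\oplus\Omega^2_V(\fg_E)$ of $\Omega^2_8(\fg_E)$ is injective — vanishing of the class in $\rL^2$ forces the $\Omega^2_{6\oplus1}\oplus\Omega^2_V$-component of $F_{A+\alpha}$ to vanish, which is exactly the SDCI equation $p(F_{A+\alpha})=0$ augmented by the vertical part. (One should note here that $F_A\in\Omega^2_8$ and $F_{A+\alpha}=F_A+d_A\alpha+\tfrac12[\alpha\wedge\alpha]$ automatically has vanishing vertical part once $A$ is SDCI and $\alpha$ is horizontal in the relevant sense, or else this is absorbed into the statement that the full curvature lands in $\Omega^2_8$; I would phrase the lemma so the bookkeeping with $\Omega^2_V$ is handled cleanly, perhaps by invoking Proposition \ref{prop:flatness}-type vanishing or simply tracking that $\Omega^2_V$ sits in $I$ as well — checking which, via Proposition \ref{prop:Lidentification}, $\Omega^2_V$ does \emph{not} lie in $I$, so the vertical component genuinely must be shown to vanish, which it does because $\eta\wedge i_\xi F_{A+\alpha}$ is controlled just as in \S\ref{sec: YM and CS}).

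Putting these together: in $\rL^2$, $[F_{A+\alpha}]=D\alpha+\tfrac12[\alpha,\alpha]$, and this class is $0$ iff $F_{A+\alpha}\in\Omega^2_8(\fg_E)$ iff $A+\alpha$ is SDCI. Hence $A+\alpha$ is SDCI $\iff D\alpha+\tfrac12[\alpha,\alpha]=0$ in $\rL^2$, which is the Maurer--Cartan equation. I expect the only genuinely delicate point to be the correct treatment of the vertical component $\Omega^2_V(\fg_E)$: one must confirm that imposing vanishing of the class in $\rL^2=\Omega^2(\fg_E)/I$ together with the structure of $F_{A+\alpha}$ really is equivalent to the full SDCI equation $F_{A+\alpha}=\ast(\sigma\wedge F_{A+\alpha})$, and not merely to its horizontal projection $p(F_{A+\alpha})=0$. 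This is resolved by the observation (already used implicitly throughout \S\ref{sec:moduli space CI}) that for the deformed connection the vertical curvature is determined by $d_V$ of the deformation, and the Maurer--Cartan element $\alpha\in\rL^1\cong\Omega^1(\fg_E)$ encodes this; alternatively one restricts attention, as the paper does, to deformations within the slice where this is automatic. The rest is the routine identification of $p\circ d_A$ with $D$ on $\rL^1\to\rL^2$ and of the wedge-commutator with the bracket \eqref{eq:brackets}, both of which are definitional.
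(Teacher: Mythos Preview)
Your argument is correct and follows the same line as the paper's: write $F_{A+\alpha}=F_A+d_A\alpha+\tfrac12[\alpha\wedge\alpha]$, observe that $F_A\in\Omega^2_8(\fg_E)$ already lies in $I$, and conclude that $F_{A+\alpha}\in\Omega^2_8(\fg_E)$ iff the class $D\alpha+\tfrac12[\alpha,\alpha]$ vanishes in $\rL^2=\Omega^2(\fg_E)/I$. The paper's proof is exactly this, in three lines.

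Your extended worry about the vertical component $\Omega^2_V(\fg_E)$ is unnecessary and self-inflicted. You correctly identify (via Proposition~\ref{prop:Lidentification}--(i)) that $I\cap\Omega^2(\fg_E)=\Omega^2_8(\fg_E)$, so the quotient map $\Omega^2(\fg_E)\to\rL^2$ is precisely the projection killing $\Omega^2_8$ and retaining $\Omega^2_{6\oplus1}\oplus\Omega^2_V$. Hence ``$[F_{A+\alpha}]=0$ in $\rL^2$'' already \emph{includes} the vanishing of the vertical part; there is nothing further to check, no slice restriction to invoke, and no appeal to \S\ref{sec: YM and CS}. Your confusion seems to come from momentarily conflating the horizontal projection $p\colon\Omega^2_H\to\Omega^2_{6\oplus1}$ of \eqref{eq:map P} with the full quotient $\Omega^2\to\rL^2$; the latter sees the vertical direction, the former does not. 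Once you drop that parenthetical, your proof matches the paper's.
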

\begin{proof}
Given $\alpha\in\Omega^{1}(\fg_E)$,  the curvature of the connection  $A+\alpha$   is   
$$
F_{A+\alpha}-F_A =D\alpha+\frac{1}{2}[\alpha,\alpha] \in \Omega^2(\fg_E).
$$
Hence $A+\alpha$ still has curvature in $\Omega^2_8(\fg_E)$, if, and only if, the Maurer-Cartan torsion $D\alpha+\frac{1}{2}[\alpha,\alpha]$ lies in $\rI=\langle\Omega^2_8(\fg_E)\rangle$, and therefore it vanishes in the quotient.
\end{proof} 
\begin{notation}
\label{not:A+}
We denote by $\cA^+$ the elements in $\rL^1$ satisfying the Maurer-Cartan condition.
\end{notation}
We introduce the $\rW^2_m$-Sobolev norms  on smooth sections $\Gamma(\fg_E)$: 
\begin{equation} 
\label{eq:sobolev norm}
    \Vert s\Vert_m:=\left(  \sum_{m+1}^k\int_{M}  \vert A^m(s(x))\vert^2\dvol\right)^{\frac{1}2},
    \quad m\in\n{N},
\end{equation}
where  $\nabla^{j}s \in\Gamma(\fg_E\otimes T^{0,j}(M))$, and  $\vert\nabla^{j}s\vert$ is determined by the metric on  $M$ and the fibrewise inner product \eqref{eq:innerProduct} on  $\fg_E$.  

Let $\cA _m$ denote  the space of $\rW_m^2$-connections on $E$, and define 
$\cG _{m+1}$ as the topological group of $\rW_{m+1}^{2}$-gauge transformations. For $m>3$, Sobolev multiplication holds \cite[Appendix~\rI\rI]{Donaldson1990}, hence $\cG _{m+1}$ has the structure of an infinite-dimensional Lie group modeled on a Hilbert space; its Lie algebra is the space of $\rW_{m+1}^{2}$ sections of $\End(\fg_E)$.  Moreover, the gauge group $\cG _{m+1}$  acts smoothly on $\cA _m$, so we denote by  
$$
\cB_m=\cA _m/\cG _{m+1}
$$ 
the Hausdorff orbit space with the quotient topology \cite[Lemma~4.2.4]{Donaldson1990}.  Let $\cM_m\subset\cB_m$  denote the set of moduli of solutions to the selfdual ($\lambda=1$) contact instanton equation \eqref{eq:introInstEquation} and, accordingly, denote by $\cM^\ast_m \subset \cM_m$ the stratum of irreducible elements.

We recall that   a contact instanton $A\in\cA$ defines a complex $(\rL^\bullet,D)$ [cf. \eqref{eq:complex L D}]. The $\rL^k$ spaces consist of smooth sections of vector bundles on $M$ (Proposition \ref{prop:Lidentification}), and we denote their $\rW^2_m$-completions by   $\rL^\bullet_m$.  From the formal adjoint of $D$, 
 $$
 D^{\ast}\colon \rL^{k}_m\to  \rL^{k-1}_{m-1},
 $$
we define the Laplacian $\Delta\colon \rL^k_m\to \rL^k_{m-2}$ as in   \eqref{eq:laplacian}. Denote by $G$  its Green operator, i.e., the inverse of  $\Delta$ on the orthogonal complement  $\ker(\Delta)^\perp$,   the orthogonal projection onto harmonic sections  is denote by
\begin{equation}
    \label{eq:map H}
 \bH\colon \rL^{k}_m\to \Ker(\Delta)\subset\rL^{k}_m   
\end{equation}
finally  set  
\begin{equation}
    \label{eq:delta}
    \delta:=D^{\ast}G.
\end{equation} 
\begin{definition}
\label{def:kuranishi map}
The \textbf{Kuranishi map} is defined by 
\begin{equation}
\label{eq:mapF}
\begin{array}{rrcl}
    \sF\colon & \rL^1_m   &\to&    \rL^1_m \\ 
    &\sF(\alpha)    &:=     & \alpha+\frac{1}{2}\delta[\alpha,\alpha]; 
\end{array} 
\end{equation} 
\end{definition}

\begin{lemma}
\label{lem:map F}
    The Kuranishi   map in Definition \ref{def:kuranishi map}  is invertible near $0\in\rL^1_m $ \eqref{eq:brackets}. 
\end{lemma}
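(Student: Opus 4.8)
The plan is to recognise $\sF=\id+Q$ as a smooth perturbation of the identity on the Hilbert space $\rL^1_m$, with $Q$ the homogeneous quadratic term $Q(\alpha):=\frac{1}{2}\delta[\alpha,\alpha]$, and then apply the inverse function theorem. First I would pin down the mapping properties that make $\sF$ well defined. For $m>3$ the Sobolev multiplication theorem \cite[Appendix~\rII]{Donaldson1990} shows that the graded bracket \eqref{eq:brackets} restricts to a bounded bilinear map $[\,\cdot\,,\,\cdot\,]\colon\rL^1_m\times\rL^1_m\to\rL^2_m$. Since $A$ is a selfdual contact instanton, $F_A\in\Omega^2_8(\fg_E)$ and hence the complex $(\rL^\bullet,D)$ is elliptic [Proposition~\ref{prop:complexDeformation}], so its Laplacian $\Delta=DD^\ast+D^\ast D$ is elliptic; consequently the Green operator gains two derivatives, $G\colon\rL^2_m\to\rL^2_{m+2}$, while the first-order operator $D^\ast\colon\rL^2_{m+2}\to\rL^1_{m+1}$ loses one. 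Therefore $\delta=D^\ast G$ maps $\rL^2_m$ boundedly into $\rL^1_{m+1}\hookrightarrow\rL^1_m$, and $Q$ is a bounded homogeneous quadratic map $\rL^1_m\to\rL^1_m$; in particular $\sF$ is a smooth (indeed polynomial, hence real-analytic) self-map of $\rL^1_m$.

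Next I would differentiate at the origin. Polarising $Q$ gives a bounded symmetric bilinear form $B(\alpha,\beta):=\frac14\bigl(\delta[\alpha,\beta]+\delta[\beta,\alpha]\bigr)$ with $Q(\alpha)=B(\alpha,\alpha)$, so the derivative of $Q$ at $0$ vanishes and the derivative of $\sF$ at $0$ is $\id_{\rL^1_m}$, a bounded invertible operator. Since $\sF(0)=0$ and $\sF$ is $C^\infty$ near $0$, the inverse function theorem for smooth maps of Hilbert spaces yields open neighbourhoods $U,V\ni 0$ in $\rL^1_m$ such that $\sF|_U\colon U\to V$ is a diffeomorphism with smooth inverse, which is the claim. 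Alternatively, and equivalently, one can solve $\sF(\alpha)=\beta$ by hand: for $\beta$ small the map $\alpha\mapsto\beta-\frac12\delta[\alpha,\alpha]$ contracts a small ball about $0$, using the Lipschitz estimate $\Vert\delta[\alpha,\alpha]-\delta[\alpha',\alpha']\Vert_m\le C\bigl(\Vert\alpha\Vert_m+\Vert\alpha'\Vert_m\bigr)\Vert\alpha-\alpha'\Vert_m$ (itself a consequence of bilinearity and Sobolev multiplication), so the Banach fixed-point theorem produces $\sF^{-1}(\beta)$ with smooth dependence on $\beta$.

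The only genuinely non-formal ingredient — and hence the step that needs care — is the mapping property $\delta\colon\rL^2_m\to\rL^1_m$. This rests on the ellipticity of $\Delta$ on the a priori \emph{non}-elliptic complex $\rL^\bullet$, which here does hold precisely because $F_A\in\Omega^2_8(\fg_E)$ makes $(\rL^\bullet,D)$ elliptic [Proposition~\ref{prop:complexDeformation}], together with the Sobolev multiplication estimate in dimension $7$, which is exactly what forces the standing hypothesis $m>3$. Everything else is the standard Kuranishi-model manipulation and introduces no new idea.
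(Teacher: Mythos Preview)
Your argument is correct and follows essentially the same route as the paper: compute $D\sF(0)=\id$ and invoke the inverse function theorem on the Hilbert space $\rL^1_m$, with the Sobolev hypothesis $m>3$ ensuring the quadratic term is smooth. You supply more of the analytic scaffolding (the mapping properties of $\delta=D^\ast G$ via ellipticity of the complex) than the paper, which simply asserts smoothness; conversely, the paper's proof appends an elliptic-regularity step showing that $\sF^{-1}$ of a harmonic element is smooth, a fact used downstream but not strictly part of the Lemma's statement, which you omit.
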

\begin{proof}
    For $m>3$, $\sF$ is a smooth map from the Hilbert space $\rL^1$ to itself. 
    The linearisation of $\sF$ at $0\in\rL^1_m$ is the identity: 
    $$
    \sF'(0)(\beta)
    =(\beta+t\delta[\beta,\beta])\vert_{t=0}=\beta,
    $$
    hence there exists a smooth inverse $\sF^{-1}$ near  $0\in\rL^1_m$. Take $c>0$ sufficiently small, such that
\begin{equation}
\label{eq:Uc}
    U_\epsilon=\{\beta\in\rL^1_m \vert \;\Delta\beta=0,\; \Vert \beta\Vert_m<\epsilon\}\subset\cH^1 \cap \f{Dom} (\sF^{-1}).
\end{equation}
    For $\beta\in U_\epsilon$ and  $\alpha:=\sF^{-1}(\beta)$, i.e., $\beta=\alpha+\frac{1}{2}\delta[\alpha,\alpha]$,  
    \begin{align*}
        0
        &=\Delta\beta=\Delta\alpha+\frac{1}{2}\Delta\delta[\alpha,\alpha]
        =\Delta\alpha +\frac{1}{2}D^\ast[\alpha,\alpha] -\frac{1}{2}D^\ast \bH[\alpha,\alpha]\\
        &=\Delta\alpha +\frac{1}{2}D^\ast[\alpha,\alpha],
    \end{align*}
    so, $\alpha$ is a smooth point, by elliptic regularity.  
\end{proof}
We define \emph{the obstruction map}   of the deformation complex $(\rL^\bullet,D)$ [cf. \eqref{eq:complex L D}]  by: 
\begin{equation}
\label{eq:obstructionmap}
\begin{array}{rcll}
    \Psi         \colon &U_\epsilon   &\to&    \cH^2 \\     &\Psi(\alpha)  &:=     & \bH[\sF^{-1}(\alpha),\sF^{-1}(\alpha)], 
\end{array}  
\end{equation}
where $U_\epsilon$ is defined in \eqref{eq:Uc} and $\cH^2=\Ker(\Delta)\subset \rL^2$.   
\begin{lemma}
\label{lem:Psi(0)}
For $m>3$, let $U_\epsilon$ be a neighbourhood  of $0\in\rL^1_m$ as in \eqref{eq:Uc}, on which the inverse  $\sF^{-1}$  of the map \eqref{eq:mapF} is defined. Then $\sF^{-1}$ maps $\Psi^{-1}(\{0\})$ diffeomorphically to a neighbourhood $W$ of the set 
$$
S=\{\alpha \in\rL^1\vert \alpha\in \Ker(D^\ast)\}\cap \cA^+. 
$$
where $\cA^+$ is defined in Notation \ref{not:A+}. 
\end{lemma}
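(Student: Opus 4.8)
The plan is to prove a single local biconditional and then invoke the inverse function theorem. Precisely, I would show that for $\alpha$ in a small enough ball about $0\in\rL^1_m$ one has $\alpha\in S$ — i.e.\ $D^\ast\alpha=0$ and $\alpha\in\cA^+$, the latter meaning $F:=D\alpha+\tfrac12[\alpha,\alpha]=0$ in $\rL^2$ [Lemma \ref{lem:Maurer-Cartan}] — if and only if $\sF(\alpha)\in U_\epsilon$ and $\Psi(\sF(\alpha))=0$. Granting this, since $\sF$ is a diffeomorphism near $0$ [Lemma \ref{lem:map F}], $\sF^{-1}$ carries $\Psi^{-1}(\{0\})\subset U_\epsilon\subset\cH^1$ bijectively onto $W:=S\cap\sF^{-1}(U_\epsilon)$, a relatively open neighbourhood of $0$ in $S$, and the restriction is a diffeomorphism onto its image (in particular finite-dimensional, sitting inside $\cH^1$). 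Throughout I will use the Hodge decomposition and Green operator $G$ of the elliptic complex $(\rL^\bullet,D)$ [Proposition \ref{prop:complexDeformation}], the identities $\bH D=0$, $D^\ast\bH=0$, $\Delta D^\ast G=D^\ast$, and Sobolev multiplication for $m>3$.

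For the forward implication, starting from $\alpha\in S$ I would use the Maurer--Cartan relation in the form $[\alpha,\alpha]=-2D\alpha$ to get $\bH[\alpha,\alpha]=-2\bH D\alpha=0$, and apply $D^\ast$ to $D\alpha+\tfrac12[\alpha,\alpha]=0$ to obtain, using $D^\ast\alpha=0$, the identity $\Delta\alpha=-\tfrac12 D^\ast[\alpha,\alpha]$; then $\Delta\sF(\alpha)=\Delta\alpha+\tfrac12\Delta D^\ast G[\alpha,\alpha]=\Delta\alpha+\tfrac12 D^\ast[\alpha,\alpha]=0$, so $\sF(\alpha)$ is harmonic, has small norm when $\alpha$ does (hence lies in $U_\epsilon$), and $\Psi(\sF(\alpha))=\bH[\alpha,\alpha]=0$.

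The converse is the heart of the matter. Put $\beta:=\sF(\alpha)\in U_\epsilon$ with $\bH[\alpha,\alpha]=\Psi(\beta)=0$; as in the proof of Lemma \ref{lem:map F}, $\Delta\beta=0$ gives $\Delta\alpha+\tfrac12 D^\ast[\alpha,\alpha]=0$, which I would rewrite via $[\alpha,\alpha]=2F-2D\alpha$ as $DD^\ast\alpha+D^\ast F=0$ in $\rL^1$. Since the two terms lie in the mutually orthogonal summands $D(\rL^0)$ and $D^\ast(\rL^2)$ of the Hodge decomposition, both vanish, yielding the Coulomb condition $D^\ast\alpha=0$ and the relation $D^\ast F=0$. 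Next, $\bH F=\bH D\alpha+\tfrac12\bH[\alpha,\alpha]=0$, so $F$ is orthogonal to $\cH^2$, and together with $D^\ast F=0$ and injectivity of $D^\ast$ on $\im(D\colon\rL^1\to\rL^2)$ this forces $F\in\im(D^\ast\colon\rL^3\to\rL^2)$; writing $F=D^\ast\mu$ with $\mu=DGF\in\im(D\colon\rL^2\to\rL^3)$, the Poincar\'e inequality for the complex (the spectral gap of $\Delta$ on $(\cH^\bullet)^\perp$) gives $\Vert\mu\Vert\le c^{-1}\Vert F\Vert$. Finally I would invoke the Bianchi identity $d_{A+\alpha}F_{A+\alpha}=0$: since $\rI$ is a graded Lie ideal and $p(F_{A+\alpha})=F$ under the quotient $p\colon\Omega^\bullet(\fg_E)\to\rL^\bullet$, it descends to $DF+[\alpha\wedge F]=0$ in $\rL^3$, whence
\[
\Vert F\Vert^2=\langle D^\ast\mu,F\rangle=\langle\mu,DF\rangle=-\langle\mu,[\alpha\wedge F]\rangle\le\Vert\mu\Vert\,\Vert[\alpha\wedge F]\Vert\le\frac{C}{c}\,\Vert\alpha\Vert_m\,\Vert F\Vert^2,
\]
$C$ being the Sobolev multiplication constant. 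Shrinking $\epsilon$ so that $\Vert\sF^{-1}(\beta)\Vert_m<c/C$ on $U_\epsilon$ forces $F=0$, i.e.\ $\alpha\in\cA^+$, so $\alpha\in S$; assembling the two implications (and shrinking $\epsilon$ once so both smallness constraints hold) yields the lemma with $W=S\cap\sF^{-1}(U_\epsilon)$.

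I expect the main obstacle to be precisely this last step of the converse: harmonicity of $\sF(\alpha)$ only delivers the \emph{linear} facts $D^\ast\alpha=0$ and $D^\ast F=0$, and promoting $D^\ast F=0$ to $F=0$ genuinely needs (i) the correct Bianchi identity on the quotient complex $(\rL^\bullet,D)$ — which relies on $\rI$ being a graded Lie ideal and on $F_A$ itself mapping to $0$ in $\rL^2$, so that $p(F_{A+\alpha})=D\alpha+\tfrac12[\alpha,\alpha]$ — and (ii) a quantitative absorption estimate valid only in a sufficiently small neighbourhood, so the constant $\epsilon$ in the statement is not merely cosmetic. The elliptic lower bound $\Vert D^\ast\mu\Vert\ge c\Vert\mu\Vert$ on $\im(D)$, the boundedness of $D^\ast G$ on Sobolev spaces, and the Sobolev multiplication estimates are the routine technical points I would quote from \cite{Donaldson1990}.
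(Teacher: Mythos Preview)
Your argument is correct, and the forward implication ($\alpha\in S\Rightarrow\sF(\alpha)\in\Psi^{-1}(0)$) matches the paper's. In the converse, however, the paper takes a different route. Rather than extracting $D^\ast F=0$ by Hodge orthogonality and then invoking the quotient Bianchi identity $DF+[\alpha\wedge F]=0$ for an $L^2$ energy estimate, the paper uses the Green--operator identity $D\delta=1-\bH-\delta D$ to obtain $F=\delta[D\alpha,\alpha]$ directly, substitutes $D\alpha=F-\tfrac12[\alpha,\alpha]$, and kills the triple bracket via the Jacobi relation $[[\alpha,\alpha],\alpha]=0$, arriving at the fixed--point equation $F=\delta[F,\alpha]$; then $\Vert F\Vert_m\le K\Vert\alpha\Vert_m\Vert F\Vert_m$ forces $F=0$ for small $\alpha$. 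The paper also obtains $D^\ast\alpha=0$ in one line from $(D^\ast)^2=0$ applied to $\beta=\alpha+\tfrac12\delta[\alpha,\alpha]$, rather than through orthogonality of $DD^\ast\alpha$ and $D^\ast F$. Both smallness absorptions are of the same strength; your route makes the geometric input explicit (Bianchi on the quotient, requiring that $\rI$ be a graded Lie ideal and $F_A\in\rI$), while the paper's stays entirely within the Kuranishi formalism, needing only Leibniz and Jacobi and never writing $F=D^\ast\mu$.
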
 
\begin{proof}
Given $\alpha\in\rL^k_m$,
\begin{align*}
D\delta(\alpha)&=(\Delta-D^\ast D)G\alpha=(1-\bH)\alpha-D^\ast GD\alpha\\
&=(1-\bH-\delta D)\alpha,
\end{align*}
i.e.,
\begin{equation}
\label{eq:Ddelta}
D\delta=1-\bH-\delta D.
\end{equation}
Now,  we show that $\sF^{-1}(\Psi^{-1}(0))\subset S$. If $\beta\in\Psi^{-1}(0)$, set  $\alpha=\sF^{-1}(\beta)$, and apply $D^\ast$ to   $\beta=\alpha+\frac{1}{2}\delta[\alpha,\alpha]$: 
\begin{align*}
0 &= D^\ast\beta=D^\ast\alpha+\frac{1}{2}(D^{\ast})^2G[\alpha,\alpha]= D^\ast\alpha.
\end{align*}
On the other hand, applying $D$ to $\beta $ and using   \eqref{eq:Ddelta}, we obtain 
\begin{align*}
    0  &  = D\beta = D\alpha+\frac{1}{2}D\delta[\alpha,\alpha]  =   D\alpha+\frac{1}{2}[\alpha,\alpha]  +\frac{1}{2}\Psi(\beta)-\delta[D\alpha,\alpha]\\
       &=D\alpha+\frac{1}{2}[\alpha,\alpha]-\delta[D\alpha,\alpha],
\end{align*}
so,  if $\delta[D\alpha,\alpha]=0,$ then $\sF^{-1}$ maps $\Psi^{-1}(0)$ into $S $. To see  that $\delta[D\alpha,\alpha]=0,$ put $x=\delta[D\alpha,\alpha]$ and  note that 
\begin{align*}
\delta[D\alpha,\alpha]&=\delta[-\frac{1}{2}[\delta,\delta]+\delta[D\alpha,\alpha],\alpha] =\delta[\delta[D\alpha,\alpha],\alpha],
\end{align*}
i.e.,  $x=\delta[x,\alpha]$. Now, for each $m>3$,   there exists $K>0$ such that
$$
\Vert x\Vert_m=\Vert \delta[x,\alpha]\Vert_m\leq K\Vert x\Vert_m\Vert \alpha\Vert_m,
$$
so we can take $c$ small enough that $\Vert \alpha\Vert_m<\frac{1}{K}$, hence  $x=0$, as claimed.\\ 
 
Conversely, let $\alpha\in S $ and set $\beta=\sF(\alpha)$, as above. Since  $D^\ast\beta=D^\ast\alpha=0$, 
\begin{align*}
D\beta&= D\alpha+\frac{1}{2}D\delta[\alpha,\alpha]=D\alpha+\frac{1}{2}(1-\bH-\delta D)[\alpha,\alpha] \\
&=-\frac{1}{2}[\alpha,\alpha]+\frac{1}{2}[\alpha,\alpha]-\bH[\alpha,\alpha]-\delta[D\alpha,\alpha]\\
&=-\bH[\alpha,\alpha].
\end{align*} 
The left-hand  side in the above equality is $D$-exact, whereas  the right-hand side is harmonic, hence 
$
D\beta=-\bH[\alpha,\alpha]=\Psi(\beta)=0.
$ 
Therefore  $\beta$ is harmonic and lies in $\Psi^{-1}(0).$
\end{proof} 
In the context of Lemma \ref{lem:Psi(0)}, as shown in \cite[\S 4.2.3]{Donaldson1990}, the tangent model is characterised as follows  
\begin{proposition}
\label{prop:neighbourhoods}  
Let $[A]\in\cM_m$ be a SDCI gauge modulus, and  denote its isotropy by    
$$
\Gamma_A:=\Aut(G)/Z(G),
$$
where $Z(G)$ is the centre of $G$. Let $c>0$ be small enough so that the inverse $\sF^{-1}$ is defined on $U_\epsilon$ \eqref{eq:Uc}. Then $\Psi^{-1}(0)/\Gamma_A$ is a neighborhood of $[A]$ in $\cM_m$. Furthermore, for $m>3$,  the natural map  $\cM_{m+1}\to\cM_m$ is a homeomorphism, so we may suppress the subscript $m$ in $\cM_m$.
\end{proposition}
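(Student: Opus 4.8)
The plan is to paste together the slice description of a neighbourhood of $[A]$ in $\cB_m$ with the Kuranishi picture built in Lemmas \ref{lem:map F}--\ref{lem:Psi(0)}, exactly as in \cite[\S 4.2.3]{Donaldson1990}. First I would invoke the slice theorem --- Lemma \ref{lem:slice lemma} for irreducible $A$, and \cite[\S 4.2.2]{Donaldson1990} in the reducible case --- to get, for $\epsilon$ small, a homeomorphism from $S_{A,\epsilon}=\{\alpha\in\Omega^1(\fg_E):d_A^\ast\alpha=0,\ \Vert\alpha\Vert_m<\epsilon\}$ modulo the (compact, properly acting) isotropy $\Gamma_A$ onto a neighbourhood of $[A]$ in $\cB_m$. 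Under the identification $\rL^1\cong\Omega^1(\fg_E)$ of Proposition \ref{prop:Lidentification} the ideal $\rI$ has no degree-$1$ part, so $D\colon\rL^0\to\rL^1$ is literally $d_A$, hence $D^\ast=d_A^\ast$ on $\rL^1$ and $S_{A,\epsilon}=\Ker(D^\ast)\cap\{\Vert\alpha\Vert_m<\epsilon\}$. By Lemma \ref{lem:Maurer-Cartan}, $A+\alpha$ is again a SDCI precisely when $\alpha\in\cA^+$ [cf. Notation \ref{not:A+}], so the set of SDCI moduli near $[A]$ is $W/\Gamma_A$ with $W=S_{A,\epsilon}\cap\cA^+$ --- and this $W$ is exactly the set $S$ appearing in Lemma \ref{lem:Psi(0)}, after shrinking $\epsilon$.

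Next I would transport this model to the obstruction space. Lemma \ref{lem:Psi(0)} gives that $\sF^{-1}$ carries $\Psi^{-1}(\{0\})$ diffeomorphically onto $W$. Since the Green operator $G$ of $\Delta$, the harmonic projection $\bH$, the differential $D$, the homotopy $\delta=D^\ast G$ and the bracket $[\cdot,\cdot]$ of \eqref{eq:brackets} are all natural under the $\Gamma_A$-action on $\rL^\bullet_m$, the maps $\sF$ and $\Psi$ are $\Gamma_A$-equivariant, so $\sF^{-1}$ descends to a homeomorphism $\Psi^{-1}(\{0\})/\Gamma_A\to W/\Gamma_A$; composing with the first step identifies $\Psi^{-1}(\{0\})/\Gamma_A$ with a neighbourhood of $[A]$ in $\cM_m$.

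For the $m$-independence I would run the usual elliptic-regularity argument. Given $[A]\in\cM_m$ in Coulomb gauge ($d_A^\ast\alpha=0$), the computation in the proof of Lemma \ref{lem:map F} shows that the SDCI equation and the gauge condition together force $\Delta\alpha+\frac{1}{2}D^\ast[\alpha,\alpha]=0$; as $\Delta$ has the same (elliptic) symbol as $\Delta_T$ --- Proposition \ref{prop:complexDeformation} --- and the nonlinear term is of lower order, bootstrapping raises $\alpha$ to $\rW^2_{m+1}$, indeed to $C^\infty$. Likewise a $\cG_m$-gauge transformation intertwining two $\rW^2_{m+1}$ SDCI connections satisfies an elliptic equation with $\rW^2_{m+1}$ coefficients, hence lies in $\cG_{m+1}$. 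Therefore the natural map $\cM_{m+1}\to\cM_m$ is a continuous bijection with continuous inverse, i.e. a homeomorphism, and one may drop the subscript.

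The main obstacle will be the bookkeeping that makes the abstract Maurer--Cartan problem of $(\rL^\bullet,D)$ coincide with the genuine moduli problem: that ``$\alpha\in\Ker(D^\ast)\cap\cA^+$ inside $\rL^1$'' really does parametrise SDCI connections in Coulomb gauge, that the $\Gamma_A$-action is compatible with every operator in sight, and that the elliptic-regularity bootstrap goes through with the SDCI equation in place of the ASD equation. Once the ellipticity results of \S\ref{sec:moduli space CI} are in hand all of this is routine, so the argument is in essence a transcription of \cite[\S 4.2.3]{Donaldson1990}.
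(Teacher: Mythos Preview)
Your proposal is correct and follows exactly the approach the paper takes: the paper does not write out a proof of this proposition at all, but simply states it as a consequence of \cite[\S 4.2.3]{Donaldson1990} in the context of Lemma \ref{lem:Psi(0)}. You have faithfully transcribed that standard Donaldson--Kronheimer argument, correctly adapted to the complex $(\rL^\bullet,D)$ via Lemmas \ref{lem:Maurer-Cartan}--\ref{lem:Psi(0)} and the ellipticity established in Proposition \ref{prop:complexDeformation}.
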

\begin{remark}
\label{rem:slice via kuranishi}
The Maurer-Cartan condition $D\alpha+\frac{1}{2}[\alpha,\alpha]=0$ [cf. Lemma \ref{lem:Maurer-Cartan}] is equivalent to 
$$
D(\sF(\alpha))=0 \qandq \bH[\alpha,\alpha]=0.
$$
Hence, by the slice Lemma \ref{lem:slice lemma}, a chart about $[A]\in \cM^\ast$ is given by $\sF(S_{A,\epsilon}^+)$, where  $S_{A,\epsilon}^+:=S_{A,\epsilon}\cap\cA^+$.    
\end{remark}
\begin{corollary}
\label{cor:ModuliSuave}
If $[A]\in\cM^\ast$ is an irreducible SDCI such that the obstruction map \eqref{eq:obstructionmap} vanishes, then  $\cM^{\ast}$ is a smooth manifold near $[A]$.
\end{corollary}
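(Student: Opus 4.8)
The plan is to assemble the pieces already in place: the local model via the Kuranishi map (Lemma~\ref{lem:map F}), the identification of the moduli space near $[A]$ with $\Psi^{-1}(0)/\Gamma_A$ (Proposition~\ref{prop:neighbourhoods}), and the characterisation of $\Psi^{-1}(0)$ in Lemma~\ref{lem:Psi(0)}. If the obstruction map $\Psi$ of \eqref{eq:obstructionmap} vanishes identically on $U_\epsilon$, then $\Psi^{-1}(0)=U_\epsilon$, which is an \emph{open subset of the finite-dimensional vector space} $\cH^1=\Ker(\Delta)\subset\rL^1$ (finite-dimensionality of $\cH^1$ is Hodge theory for the elliptic complex $(\rL^\bullet,D)$, Proposition~\ref{prop:complexDeformation}). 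Thus a neighbourhood of $[A]$ in $\cM^\ast$ is $U_\epsilon/\Gamma_A$. Since $[A]$ is irreducible, the isotropy $\Gamma_A=\Aut(G)/Z(G)$ acts trivially on the slice (the stabiliser of an irreducible connection in $\cG$ is the centre $Z(G)$), so $U_\epsilon/\Gamma_A\cong U_\epsilon$ is an open subset of $\cH^1\cong\R^{h^1}$, giving a smooth chart about $[A]$.

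\textbf{Carrying it out.} First I would invoke Lemma~\ref{lem:map F} to get the local diffeomorphism $\sF^{-1}$ defined on the neighbourhood $U_\epsilon\subset\cH^1$ of $0$ from \eqref{eq:Uc}. Next, by Lemma~\ref{lem:Psi(0)}, $\sF^{-1}$ restricts to a diffeomorphism of $\Psi^{-1}(0)$ onto a neighbourhood $W$ of $S=\{\alpha\in\rL^1\mid D^\ast\alpha=0\}\cap\cA^+$, i.e. onto the subset of the Coulomb slice $S^+_{A,\epsilon}$ cut out by the Maurer-Cartan equation (Remark~\ref{rem:slice via kuranishi}). Then Proposition~\ref{prop:neighbourhoods} identifies $\Psi^{-1}(0)/\Gamma_A$ with a neighbourhood of $[A]$ in $\cM_m$, and the passage $\cM_{m+1}\to\cM_m$ being a homeomorphism lets us drop the Sobolev index. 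The hypothesis $\Psi\equiv 0$ on $U_\epsilon$ — which by Proposition~\ref{prop:vanishingH2} is guaranteed when $H^2_B=0$, and the two are equivalent since $\cH^2\cong H^2\cong H^2_B/(\omega\wedge H^0_B)$ vanishes iff $H^2_B=0$ at an irreducible instanton — then forces $\Psi^{-1}(0)=U_\epsilon$. Finally, irreducibility gives $\Gamma_A$ trivial on the slice, so the chart is $U_\epsilon\subset\cH^1\cong H^1\cong H^1_B$, an open subset of a real vector space of dimension $h^1_B=-\indT(A)$ (Corollary~\ref{coro:H1=Hb}, using $h^0_B=h^2_B=0$). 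Hence $\cM^\ast$ is a smooth manifold of that dimension near $[A]$.

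\textbf{Main obstacle.} The only genuinely delicate point is ensuring that $\Psi\equiv 0$ on the whole neighbourhood $U_\epsilon$, not merely $\Psi(0)=0$: a priori the obstruction map is a nonlinear map $U_\epsilon\to\cH^2$ whose zero set could be a proper subvariety even when $\cH^2\ne 0$, while here we want the stronger conclusion that it vanishes identically. This is exactly where $\cH^2=\{0\}$ is used — if the target $\cH^2$ is the zero space then $\Psi$ is trivially the zero map, and $\Psi^{-1}(0)=U_\epsilon$ is automatic. So the corollary is really the specialisation of Proposition~\ref{prop:neighbourhoods} and Lemma~\ref{lem:Psi(0)} to the case $\cH^2=0$, together with the transfer $H^2_B=0\ \Rightarrow\ \cH^2=0$ already recorded above. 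A secondary (routine) point is verifying that the smooth structure on $U_\epsilon/\Gamma_A$ obtained this way agrees with the ambient topology of $\cM^\ast$ inherited from $\cB^\ast$, which follows from the slice Lemma~\ref{lem:slice lemma} and the homeomorphism statement in Proposition~\ref{prop:neighbourhoods}.
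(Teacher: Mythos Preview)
Your proof is correct and follows essentially the same approach as the paper, which treats this as an immediate consequence of Proposition~\ref{prop:neighbourhoods}: once $\Psi\equiv 0$ on $U_\epsilon$, the local model $\Psi^{-1}(0)/\Gamma_A$ becomes the open ball $U_\epsilon\subset\cH^1$ (with trivial isotropy since $A$ is irreducible), giving a smooth chart. One minor remark: your ``main obstacle'' paragraph conflates the hypothesis of this corollary (that $\Psi$ vanishes) with the sufficient condition $H^2_B=0$ that forces it; the latter is the content of the \emph{subsequent} Proposition~\ref{prop:vanishingH2}, not part of the present argument, so here you may simply take $\Psi\equiv 0$ as given.
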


\subsubsection{Cohomological vanishing of obstruction}
\label{sec:vanishing}
We now obtain sufficient cohomological conditions for the vanishing of the  obstruction map \eqref{eq:obstructionmap}, and thus the local smoothness  of  $\cM^\ast$  \eqref{eq:modulispace}. The transverse Laplacian $\Delta_T$ from \eqref{eq:laplacianT} satisfies, by ellipticity, 
$$
\rL^k=\ker(\Delta_T)\oplus\ker(\Delta_T)^\perp.
$$
Denote by $\bH_T\colon \rL^k\to\ker(\Delta_T) $ the projection onto $\Delta_T$-harmonic sections, by $G_T$ the Green operator of $\Delta_T$,  and set $\delta_T:=D_T^\ast G_T$.  
By the same argument applied to  \eqref{eq:mapF} in Lemma \ref{lem:map F}, the map
$$
\sF_T \colon\alpha\in  \rL^1_m \mapsto \alpha+\frac{1}{2}\delta_T[\alpha,\alpha]\in   L^1_m, 
$$
is an isomorphism near $0\in  \rL^1_m$, for $m>3$. Taking $c>0$ small enough, we set  
\begin{equation}
    \label{eq:Uc transverse}
U_\epsilon=\{\gamma\in\cH_T^1\;\vert\;\Vert \gamma\Vert_{\rL^2_m}<\epsilon\}    
\end{equation}
 such that $\sF^{-1}_T$ is defined on $U_\epsilon$. 

\begin{proposition}
\label{prop:vanishingH2}
    At an irreducible   SDCI   such that $H_B^2=0$, the obstruction $\Psi$  in \eqref{eq:obstructionmap}  vanishes.
\end{proposition}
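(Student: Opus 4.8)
The plan is to exploit the fact that, at an irreducible SDCI, every infinitesimal deformation is \emph{basic}, and then to show that the whole Kuranishi construction (Green operator, harmonic projection, Kuranishi map) restricts to the subspace of basic forms; the obstruction will then land in the transverse harmonic space $\cH_T^{2,0}\cong H^2_B$, which vanishes by hypothesis.

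\emph{Step 1: every harmonic deformation is basic.} Since $A$ is irreducible with semisimple structure group, $\Ker\big(d_A\colon\Omega^0(\fg_E)\to\Omega^1(\fg_E)\big)=0$, hence $\cH_T^{0,0}\cong H^0_B=0$ by Lemma \ref{lem:previo1}. Revisiting the case $k=1$ in the proof of Proposition \ref{prop:previo2}, the harmonic representative of any class in $H^1$ has the form $\alpha=\beta+\eta\wedge\gamma$ with $\beta,\gamma$ both $\Delta_T$-harmonic and $\gamma\in\cH_T^{0,0}=0$; thus $\cH^1=\cH_T^{1,0}\subset\Omega^1_B(\fg_E)$. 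In particular the chart domain $U_\epsilon\subset\cH^1$ from \eqref{eq:Uc} consists of small basic $1$-forms.

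\emph{Step 2: the Green operator, the harmonic projection and $\sF$ preserve basic forms.} On basic forms $D$ acts as $D_T$ and $D_V$ vanishes, so the same manipulation used for \eqref{eq:adjoint DT} gives $\Delta|_{\rL^\bullet_B}=\Delta_T|_{\rL^\bullet_B}$; since $\Delta$ is self-adjoint and maps $\rL^\bullet_B$ into itself, it also preserves the orthogonal complement, and therefore $G$ and $\bH$ restrict to the (closed) subspace of basic $\rW^2_m$-forms. Moreover, if $\alpha\in\Omega^1_B(\fg_E)$ then $[\alpha,\alpha]$ is horizontal, because $i_\xi[\alpha,\alpha]=0$, and it is $\cL_\xi$-invariant because $\alpha$ is, hence basic: $[\alpha,\alpha]\in\rL^2_B$. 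Combining these, $\delta[\alpha,\alpha]=D^\ast G[\alpha,\alpha]=D_T^\ast\big(G[\alpha,\alpha]\big)\in\Omega^1_B(\fg_E)$, so $\sF$ restricts to a smooth self-map of (the Sobolev completion of) $\rL^1_B$ with derivative the identity at $0$; by uniqueness of local inverses, $\sF^{-1}(\alpha)$ is basic for every small basic $\alpha$.

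\emph{Step 3: conclusion.} For $\alpha\in U_\epsilon$ put $\beta:=\sF^{-1}(\alpha)$, which is basic by Steps 1--2. Then $[\beta,\beta]\in\rL^2_B\subset\rL^{2,0}$, so $\Psi(\alpha)=\bH[\beta,\beta]$ is a $\Delta$-harmonic basic $2$-form; by the identity $\Delta|_{\rL^\bullet_B}=\Delta_T|_{\rL^\bullet_B}$ it is $\Delta_T$-harmonic, i.e. lies in $\cH_T^{2,0}$, which is isomorphic to $H^2_B=0$ by Lemma \ref{lem:previo1}. Hence $\Psi\equiv 0$ on $U_\epsilon$, so Corollary \ref{cor:ModuliSuave} applies and $\cM^\ast$ is smooth near $[A]$. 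The main obstacle is Step 2: verifying carefully that the elliptic-analytic operators built from $\Delta$ — its Green operator and harmonic projection — genuinely restrict to the closed subspace of basic forms, which rests on the identity $\Delta|_{\rL^\bullet_B}=\Delta_T|_{\rL^\bullet_B}$ together with self-adjointness and the compatibility of the $\rW^2_m$-completions with the basic decomposition (as $\Ker D_V$ is closed). An equivalent route, anticipated by the paper's introduction of $\sF_T,\delta_T,\bH_T$, is to observe that along basic deformations one has $\sF^{-1}=\sF_T^{-1}$ and $\bH=\bH_T$, so that $\Psi(\alpha)=\bH_T[\sF_T^{-1}(\alpha),\sF_T^{-1}(\alpha)]\in\cH_T^{2,0}=0$ directly.
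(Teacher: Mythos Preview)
Your argument is correct and takes a genuinely different route from the paper's own proof. The paper works with the \emph{transverse} Kuranishi map $\sF_T$ from the outset: for $\gamma\in U_\epsilon$ it sets $\alpha:=\sF_T^{-1}(\gamma)$, then verifies by successive computations that $D_V\alpha=0$, that $\alpha$ solves the Maurer--Cartan equation (using $D_T\delta_T=1+G_TD_V^2$ on $\rL^2$, which relies on $H^2_B=0$), and that $D_T^\ast\alpha=0$. Thus $\alpha\in S\cap\cA^+$, and Lemma~\ref{lem:Psi(0)} gives $\Psi(\sF(\alpha))=0$; since $\sF\circ\sF_T^{-1}$ has identity linearisation at $0$, $\Psi\equiv0$ follows.

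You instead observe that, at an irreducible SDCI, $\cH^1=\cH_T^{1,0}$ consists of basic forms, and then argue structurally: because $\Delta$ is self-adjoint and coincides with $\Delta_T$ on horizontal basic forms (this uses $D^\ast=D_T^\ast$ on horizontal forms, which is the content of the computation in the proof of Lemma~\ref{lem:lema auxiliar}), it block-diagonalises with respect to the closed subspace $\rL^{k,0}_B$, and hence so do $G$ and $\bH$. This forces $\sF$, and therefore $\sF^{-1}$, to preserve basic forms, so $\Psi(\alpha)=\bH[\sF^{-1}(\alpha),\sF^{-1}(\alpha)]$ lies in $\cH^2\cap\rL^{2,0}_B\subset\cH_T^{2,0}\cong H^2_B=0$. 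Your route is shorter and more conceptual, bypassing the explicit Maurer--Cartan verification; the paper's route is more hands-on and, as a by-product, explicitly exhibits the deformation $\alpha=\sF_T^{-1}(\gamma)$ as a Maurer--Cartan element in Coulomb gauge. The one point that deserves care in your write-up, as you note, is the functional-analytic claim that $G$ and $\bH$ restrict to $\rL^{k,0}_B$: this follows cleanly from self-adjointness of $\Delta$ once you have checked $\Delta(\rL^{k,0}_B)\subset\rL^{k,0}_B$, but you should make that containment explicit (it uses $D_T\colon\rL^{2,0}\to\rL^{3,0}=0$ and $D^\ast=D_T^\ast$ on horizontal forms) rather than appealing only to~\eqref{eq:adjoint DT}.
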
 
\begin{proof} 
By \eqref{eq:complexDeformation} and Corollary \ref{coro:H1=Hb}, infinitesimal contact instanton deformations are parametrised by $H^1 =H^1_B$. Fix $c>0$ small enough that $\sF^{-1}_T$ is defined on $U_\epsilon$, as in \eqref{eq:Uc transverse}. Since $D_T=0$ on $\rL^2_{m-1}, $ $\Delta_T$ simplifies to  $\Delta_T=D_TD_T^\ast-D_V^2$, and,  using the fact that $D_V$ commutes with the Green operator, we  obtain 
\begin{align*}
D_T\delta_T&=D_TD_T^\ast G_T=(\Delta_T+D_V^2)G_T\\
&=(1-\bH_T)+G_TD_V^2.
\end{align*}
Furthermore, the assumption and Lemma \ref{lem:previo1}  together imply that $0= H_B^2 \cong \cH^2_{T} $, so $\bH_T$ vanishes  on $\rL^2_{m-1}$, and  the above equation simplifies to 
\begin{equation}
\label{eq:8}
D_T\delta_T=1+G_TD_V^2.
\end{equation}
For $\gamma\in U_\epsilon$, let $\alpha:=\sF_T^{-1}(\gamma)$, so 
$\gamma=\alpha+\frac{1}{2}\delta_T[\alpha,\alpha]$. Since   $\gamma$   is $\Delta_T$-harmonic, it follows from \eqref{eq:DT-harmonic} that
$$
0=D_V\gamma=D_T\gamma=D_T^\ast\gamma.
$$ 
Now,  since   $D_V$ commutes with $D_T$ \eqref{eq:DVcomuts DT} and also with  $D_T^\ast$ and $G_T$, a similar argument to Lemma \ref{lem:Psi(0)}) shows that $\delta_T[D_V\alpha,\alpha]=0$, and so  
\begin{align*}
0 &=D_V\gamma=D_V\alpha+\frac{1}{2}D_V\delta_T[\alpha,\alpha]\\
   &=D_V\alpha+\frac{1}{2}  D_V D_T^\ast G_T[\alpha,\alpha] 
      =D_V\alpha+ \delta_T[D_V\alpha,\alpha] \\
   &=D_V\alpha.
 \end{align*}
Thus  $D_T\alpha=D\alpha$,  and we conclude from \eqref{eq:8} that 
\begin{align*}
0&=D_T\gamma=D_T\alpha+\frac{1}{2}D_T\delta_T[\alpha,\alpha]=D\alpha+ \frac{1}{2}( 1+G_TD_V^2)[\alpha,\alpha]\\
 &= D\alpha+ \frac{1}{2}[\alpha,\alpha],
\end{align*}
 i.e., $\alpha$ is a  Maurer-Cartan element [cf.  Lemma \ref{lem:Maurer-Cartan}].  
Furthermore, since   $D_T^2=-\omega\wedge D_V $ [cf. \eqref{eq:DT2}] and $D_V^\ast=-D_V$ [cf. \eqref{eq:adjoint DV}], we have    
$(D_T^\ast)^2=D_V\Lambda(\cdot)$, where $\Lambda$ is the formal  adjoint   of   $L_\omega $ \eqref{eq:mapa eta omega}. This implies 
\begin{align*}
0&=D_T^\ast\gamma=D_T^\ast\alpha+\frac{1}{2}D_T^\ast\delta_T[\alpha,\alpha]\\
 &= D_T^\ast\alpha+\frac{1}{2}(D_T^\ast)^2G_T[\alpha,\alpha]=D_T^\ast\alpha+\frac{1}{2}D_V\Lambda(G_T[\alpha,\alpha]) \\
 &=D_T^\ast\alpha.
\end{align*}
In summary,  $\alpha=\sF^{-1}_T(\gamma)\in S\cap \cA^+=\{\alpha\in\rL^1_m \vert D^\ast\alpha=0,\;D\alpha+\frac{1}{2}[\alpha,\alpha]=0 \}$, and it follows that   
$\sF_T^{-1}$ defines a map from $U_\epsilon$ into $S $. In Lemma \ref{lem:Psi(0)} it was shown that $\sF$  maps a neighborhood of $0\in S $ to a neighborhood of $\Psi^{-1}(0)$, hence, for $c>0$ small enough, 
\begin{align*}
\Psi\circ \sF\circ \sF_T^{-1}(\alpha)&=\Psi(\sF \sF_T^{-1}(\alpha))=\bH[\sF^{-1}\sF\sF_T^{-1}(\alpha),\sF^{-1}\sF\sF_T^{-1}(\alpha)]\\
&=\bH_T[\sF_T^{-1}(\alpha),\sF_T^{-1}(\alpha)]\\
&=0,
\end{align*}
since the linearisation of $\sF\circ \sF^{-1}$ at $0$ is the identity. Therefore  $\Psi=0$ on $U_\epsilon$, for sufficiently small $c$ .
\end{proof}

If $A$ is a contact instanton, the \emph{transverse index} of $A $ is defined as the index of the basic complex in \eqref{eq:BasicComplexDeformation2}, namely, 
\begin{equation}
    \label{eq:ind_T}
\indT(A)= \text{dim}(H^0_B)-\text{dim}(H^1_B)+\text{dim}(H^2_B).
\end{equation}
When  $A$ is irreducible we have $ H^0_B =\{0\}$.
If moreover $H^2_B=\{0\}$, it follows  from Proposition  \ref{prop:vanishingH2} and Corollary \ref{cor:ModuliSuave} that $\cM^\ast$ is locally a smooth manifold of dimension   computed by the transverse index  \eqref{eq:ind_T}: 
$$
\dim \cM^\ast= \dim H^1_B=-\indT (A).
$$  

\newpage
\section{Geometric structures on the contact instanton moduli space \texorpdfstring{$\cM^\ast$}{Lg}}
\label{sec:Geometry}
Arguably the most attractive prospect in studying the moduli space of  contact instantons  is the future construction of topological invariants for Sasakian $7$-manifolds, so we are particularly interested in geometric structures on $\cM^\ast$. By means of comparison, we know from \cite[\S~4.3]{Baraglia2016} that, under suitable assumptions, the moduli space of ASD contact instantons on a Sasakian $5$-manifold $(M^5,\eta,\xi,g,\Phi)$ is K\"ahler, and moreover hyper-K\"ahler in the transverse Calabi-Yau case. In the same spirit, we will show that $\cM^\ast$ carries a natural K\"ahler strucutre. 
\subsection{Riemannian metric on   \texorpdfstring{$\cM^\ast$}{Lg}}
We  define a Riemannian  metric on the (smooth stratum of the) moduli space of SDCI $\cM^\ast$  following a standard approach, e.g. \cites{itoh1988geometry,Baraglia2016}. From the slice Lemma \ref{lem:slice lemma}, we know that neighborhoods of a smooth point $[A]\in \cM^\ast \subset \cB^\ast$ have the form $\pi(S_{A,\epsilon})\subset\cB^\ast$, where  $\pi\colon\cA\to\cB$ is the quotient map  and
$$  
S_{A,\epsilon}=\{\alpha\in\rL^1 \vert \Vert\alpha\Vert_k<\epsilon \}\cap \Ker(D_A^\ast).
$$ 
In view of Proposition \ref{prop:Lidentification}, $\rL^1\cong\Omega^1(\fg_E)$ can be endowed with the inner product  \eqref{eq:innerProduct}: 
$$
(\alpha,\beta)=\displaystyle\int_M\langle\alpha\wedge\ast_T\beta\rangle\wedge \eta.
$$  
  Since $(\cdot, \cdot)$ is $\cG$-invariant, it induces naturally an inner product on the quotient space $\cB$, described explicitly on each neighbourhood $S_{A,\epsilon}$ as follows. By elementary Hodge theory, each   tangent space has  a unique orthogonal decomposition:
$$
  T_A\cA^\ast\cong\Omega^1(\fg_E)
  =\im(D_A)\oplus\Ker(D^\ast_{A}).
$$
  For a basic $1$-form $\beta\in \Omega^1_B(\fg) $, let us denote, accordingly, $\beta=\beta^v+\beta^h,$   with   $\beta^v\in \im(D_{A})$  and $\beta^h\in \Ker(D^\ast_{A}) $ (Figure \ref{fig:metricSlice}). We may now define a positive semi-definite inner product on $\Omega^1(\fg_E)$  which, a priori, depends on the choice of slice neighborhood:
  \begin{equation}
 \label{eq:inner product in TA}
  (\alpha,\beta)_{A}:=(\alpha^h,\beta^h), 
\end{equation} 
  \begin{figure}[h]
    \centering
    \includegraphics[width=0.5\textwidth]{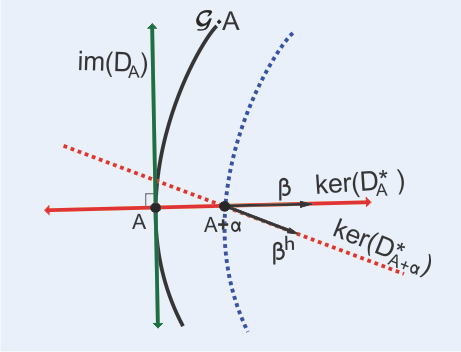} 
    \caption{\small{Slice-dependent inner product on $\cB^\ast$ obtained from $\Ker(D_{A+\alpha}^\ast)$.}} 
    \label{fig:metricSlice}
 \end{figure}
 
Fix a slice $S_{A,\epsilon}$. On the tangent space $T_{A+\alpha}S_{A,\epsilon}$ at  $A+\alpha\in S_{A,\epsilon}$, an inner product is defined by the restriction of \eqref{eq:inner product in TA}:
\begin{equation}
\label{eq:inner product in TAalpha}
    (\beta_1,\beta_2)_{A+\alpha}
    =(\beta_1^h,\beta_2^h), 
    \qwhereq 
    \beta_1^h,\beta_2^h\in \Ker(D_{A+\alpha}^\ast).    
\end{equation}
  Although the choice of the slice $S_{A,\epsilon}$ is not unique, we nonetheless have the following:
  
\begin{proposition}
\cite[Proposition~3.1]{itoh1988geometry}
  The restriction to a slice neighborhood of the inner product \eqref{eq:inner product in TA} defines a positive definite inner product on $\cB^\ast$, which is independent of the choice of slice neighborhoods.   
\end{proposition}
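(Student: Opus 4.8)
The plan is to recognise that, read in the right chart, the pairing \eqref{eq:inner product in TAalpha} is nothing but the restriction of the fixed $L^2$ inner product $(\cdot,\cdot)$ of \eqref{eq:innerProduct} to a linear subspace of $\Omega^1(\fg_E)$: positivity then becomes automatic, and slice-independence reduces to the gauge-equivariance of that $L^2$ structure. I would carry this out in two main steps (intrinsic reinterpretation, then slice-independence), following the standard scheme of \cite[Proposition~3.1]{itoh1988geometry}.

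\emph{Intrinsic reinterpretation and positivity.} First I would fix $[A]\in\cB^\ast$ and a Coulomb slice $S_{A,\epsilon}$ as in Lemma \ref{lem:slice lemma}, so that $\pi|_{S_{A,\epsilon}}$ is a chart about $[A]$. The key point is that at $A+\alpha\in S_{A,\epsilon}$ the differential of $\pi$, restricted to $T_{A+\alpha}S_{A,\epsilon}=\Ker(D_A^\ast)$, is the quotient map onto $T_{[A+\alpha]}\cB^\ast\cong\Omega^1(\fg_E)/\im(D_{A+\alpha})$, and post-composing with the Hodge isomorphism $\Omega^1(\fg_E)/\im(D_{A+\alpha})\xrightarrow{\sim}\Ker(D_{A+\alpha}^\ast)$ produces precisely the harmonic projection $\beta\mapsto\beta^h$ appearing in \eqref{eq:inner product in TAalpha}. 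Hence, transported through the chart, \eqref{eq:inner product in TAalpha} is the restriction of $(\cdot,\cdot)$ to the subspace $\Ker(D_{A+\alpha}^\ast)\subset\Omega^1(\fg_E)$. Since $(\cdot,\cdot)$ is built from a positive-definite $\Ad$-invariant form on $\fg$, it is positive definite, so any restriction of it is; shrinking $\epsilon$ if necessary so that $\beta\mapsto\beta^h$ stays an isomorphism $\Ker(D_A^\ast)\to\Ker(D_{A+\alpha}^\ast)$ along the whole slice (it is the identity at $\alpha=0$, where $\Ker(D_A^\ast)\cap\im(D_A)=0$ by Hodge theory, hence an isomorphism nearby by continuity), one gets a genuine positive-definite metric in each chart. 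Smoothness in $\alpha$ follows because $D_{A+\alpha}$ and $D_{A+\alpha}^\ast$ depend affinely on $\alpha$ and, at an irreducible connection, $D_{A+\alpha}^\ast D_{A+\alpha}$ on $\Omega^0(\fg_E)$ is invertible with smoothly varying Green operator.

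\emph{Independence of the slice.} If $[B]\in\cB^\ast$ lies in two chart domains, with representatives $B_1\in S_{A_1,\epsilon_1}$ and $B_2\in S_{A_2,\epsilon_2}$ of the same $\cG$-orbit, write $B_2=g\cdot B_1$ for some $g\in\cG$. Then $g$ acts on $\Omega^1(\fg_E)$ fibrewise by $\Ad_g$, hence by an isometry of $(\cdot,\cdot)$, and it intertwines the gauge-fixing operators, $D_{B_2}=g\,D_{B_1}\,g^{-1}$ and $D_{B_2}^\ast=g\,D_{B_1}^\ast\,g^{-1}$, so it carries $\Ker(D_{B_1}^\ast)$ isometrically onto $\Ker(D_{B_2}^\ast)$. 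Since $\pi\circ g=\pi$, the two identifications $T_{[B]}\cB^\ast\cong\Ker(D_{B_i}^\ast)$ supplied by the two charts differ exactly by this isometry, so the two restrictions of $(\cdot,\cdot)$ induce one and the same inner product on $T_{[B]}\cB^\ast$. Therefore \eqref{eq:inner product in TA} descends to a well-defined (and, by the previous step, smooth, positive-definite) Riemannian metric on $\cB^\ast$, independent of all choices.

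I expect the only genuinely delicate point to be the identification in the first step — namely verifying that the transition from the outer slice $S_{A,\epsilon}$ to the Coulomb slice based at $A+\alpha$ has derivative equal to the harmonic projection $\beta\mapsto\beta^h$, together with the attendant shrinking of $\epsilon$ needed to keep that projection an isomorphism. Once this is in place, positivity is a one-line consequence of positivity of $(\cdot,\cdot)$, and slice-independence is a formal consequence of the $\Ad$-invariance of the fibre metric on $\fg_E$.
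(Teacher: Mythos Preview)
Your proposal is correct and follows essentially the same approach as the paper's sketch: both reduce slice-independence to the $\cG$-invariance of the $L^2$ pairing \eqref{eq:innerProduct}, via the observation that the harmonic projections at gauge-equivalent connections are intertwined by the gauge transformation. Your phrasing is slightly more intrinsic---you identify $T_{[B]}\cB^\ast$ directly with $\Ker(D_B^\ast)$ and argue pointwise, whereas the paper parametrises the transition by a smooth family $\alpha\mapsto g_\alpha$ and computes the derivative $(\Psi_\ast)_{A+\alpha}(\beta)=g_\alpha\beta+D_{A'+\alpha'}\gamma$ explicitly---but the content is the same, and you add the useful remark (absent from the paper's sketch) that positivity requires $\beta\mapsto\beta^h$ to stay an isomorphism along the slice, hence a possible shrinking of $\epsilon$.
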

 \begin{proof}[Sketch of Proof]
  For clarity, we  outline the main ideas of the proof, referring the reader to the original reference for the full argument. Given another slice   $S_{A',\epsilon'}$ such that $\emptyset\neq \pi(S_{A,\epsilon})\cap\pi(S_{A',\epsilon'}) \subset\cB$, then there is a transition  smooth map 
  $$
  g\colon \alpha\in S_{A,\epsilon}\cap\pi^{-1}(\pi(S_{A',\epsilon'}))\mapsto g_\alpha\in \cG,
  $$
  satisfying   $g_\alpha(A+\alpha)=A'+\alpha'\in S_{A',\epsilon'}.$   This induces a diffeomorphism between slice neighborhoods  
  $$
  \begin{array}{rcl}
    \Psi\colon \quad S_{A,\epsilon}\cap\pi^{-1}\left(\pi(S_{A',\epsilon'})\right)
    &\to& S_{A',\epsilon'}\cap\pi^{-1}\left(\pi(S_{A ,\epsilon })\right)\\
      \Psi(A+\alpha) &:= & g_\alpha(A+\alpha)  
  \end{array} 
  $$
  whose derivative is 
  $$
  \begin{array}{rcl}
    \Psi_\ast\colon \quad T_{A+\alpha}S_{A,\epsilon}&\to& T_{A'+\alpha'}S_{A',\epsilon'}  \\
     (\Psi_\ast)_{A+\alpha}(\beta)&= & g_\alpha\beta+D_{A'+\alpha'}\gamma,
  \end{array}
  $$
  where $\gamma:=\dtzero \gamma_t\in  \Omega^0(\fg_E) $ for some curve $\gamma_t\in\Omega^0(\fg_E)$ with $\gamma_0=0$.   Hence $\Psi_\ast(\beta)^h=g_\alpha(\beta)$. Since $(\cdot,\cdot)$ in \eqref{eq:inner product in TA} is gauge-invariant, the induced inner product \eqref{eq:inner product in TAalpha} does not depend on the choice of the slice neighborhood.
  \end{proof}
  
Proceeding as in Lemma \ref{lem:map F},  the linearisation of the Kuranishi map    is given by: 
\begin{equation}
\label{eq:derivative kuranishi}
\begin{array}{rcl}
    \sF_{\ast}\colon \quad T_\alpha\left(\Omega^1(\fg_E)\right)
    &\to&
    T_{\sF(\alpha)} \left(\Omega^1(\fg_E)\right)  \\
   (\sF_\ast)_{A+\alpha }(\beta) 
    &:= &
    \beta+\delta[\alpha,\beta]
\end{array}
\end{equation}
where $\delta:=D^\ast_A G$ was defined in \eqref{eq:delta}. Hence, the tangent space to a slice neighborhood at $A+\alpha$ is characterised as 
  \begin{equation}
      \label{eq:slice at A+alpha}
      T_{A+\alpha}(S_{A,\epsilon}^+)=\{\beta\in\Omega^1(\fg_E) \vert D^\ast_{A}\beta=0, D_{A}\beta+[\alpha,\beta]=D_{A+\alpha}\beta=0  \},
  \end{equation}
where $S_{A,\epsilon}^+:=S_{A,\epsilon}\cap \cA^+$ [cf. Notation \ref{not:A+}] denotes the elements in $\rL^1$ satisfying the Maurer-Cartan condition. Taking $\alpha=0$ in  \eqref{eq:slice at A+alpha}, we see that $ T_{A}(S_{A,\epsilon}^+)=\cH^1$.
\begin{proposition}
\label{prop:Differntial kuranishi}
    The differential $\sF_{\ast}$ of the Kuranishi Map [cf. Definition \ref{def:kuranishi map}] restricted to $T_{A}(S_{A,\epsilon}^+)$ is given by
    \[
    \begin{array}{rcl}
       \sF_{\ast}\colon \quad T_{A}(S_{A,\epsilon}^+)
       &\to &
       T_{A}(U_\epsilon)  \\
       \beta &\mapsto&  \bH_A\beta 
    \end{array}
    \]
  where we identify $A+\alpha$ with $\alpha$ and $U_\epsilon$   is the neighbourhood on which the inverse $\sF^{-1}$ is defined \eqref{eq:Uc}  and $\bH_A\beta$ is the harmonic part of $\beta$ \eqref{eq:map H}. 
\end{proposition}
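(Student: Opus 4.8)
The plan is to expand the explicit formula \eqref{eq:derivative kuranishi} for $\sF_\ast$ along the constrained slice directions \eqref{eq:slice at A+alpha} and collapse the result via the Hodge identity \eqref{eq:Ddelta}. Observe first that, by \eqref{eq:Uc}, the neighbourhood $U_\epsilon$ is an open ball in the finite-dimensional linear space $\cH^1=\ker(\Delta)\subset\rL^1$, so its tangent space at any point is canonically $\cH^1$; under the identification of a slice point $A+\alpha$ with the vector $\alpha$, the claim therefore reads: for every $A+\alpha\in S_{A,\epsilon}^{+}$ and every $\beta\in T_{A+\alpha}(S_{A,\epsilon}^{+})$ one has $(\sF_\ast)_{A+\alpha}(\beta)=\bH_A\beta$ (the basepoint case $\alpha=0$ then recovering the identity on $T_A(S_{A,\epsilon}^{+})=\cH^1$). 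All the operators in play --- the Green operator $G$ of $\Delta$, the harmonic projector $\bH_A$ \eqref{eq:map H}, and $\delta=D^\ast G$ \eqref{eq:delta} --- are those attached to the complex $(\rL^\bullet,D)$, which is elliptic by Proposition \ref{prop:complexDeformation}, so the usual Hodge relations apply, in particular $D\delta=1-\bH-\delta D$ from \eqref{eq:Ddelta}.

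I would then substitute the slice constraints. By \eqref{eq:slice at A+alpha}, any $\beta\in T_{A+\alpha}(S_{A,\epsilon}^{+})$ satisfies $D_A^\ast\beta=0$ and $D_A\beta+[\alpha,\beta]=0$, i.e. $[\alpha,\beta]=-D_A\beta$. Feeding this into \eqref{eq:derivative kuranishi} and rewriting \eqref{eq:Ddelta} as $\delta D=1-\bH-D\delta$,
\[
(\sF_\ast)_{A+\alpha}(\beta)=\beta+\delta[\alpha,\beta]=\beta-\delta D_A\beta=\bH_A\beta+D_A\delta\beta,
\]
so the proposition reduces to the vanishing of the correction term $D_A\delta\beta$. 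This uses only the remaining slice condition $D_A^\ast\beta=0$: since $G$ commutes with $D_A^\ast$ (both commute with $\Delta$ and preserve the splitting $\rL^\bullet=\ker(\Delta)\oplus\ker(\Delta)^{\perp}$), $\delta\beta=D_A^\ast G\beta=G D_A^\ast\beta=0$; equivalently $D_A^\ast D_A\delta\beta=D_A^\ast D_A D_A^\ast G\beta=D_A^\ast\Delta G\beta=D_A^\ast(1-\bH_A)\beta=D_A^\ast\beta=0$, whence $\Vert D_A\delta\beta\Vert^{2}=\langle\delta\beta,D_A^\ast D_A\delta\beta\rangle=0$. In either case $D_A\delta\beta=0$ and $(\sF_\ast)_{A+\alpha}(\beta)=\bH_A\beta$, as claimed.

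I do not anticipate a real obstacle here: once the algebraic setup is in place the computation is three lines. The only point requiring care is to confirm that the whole Hodge-theoretic package --- existence of $G$ and $\bH_A$, the identity \eqref{eq:Ddelta}, and the commutations of $G$ and $D_A^\ast$ with $\Delta$ --- transfers to the ideal-quotient complex $(\rL^\bullet,D)$ rather than to genuine $\fg_E$-valued forms; this is precisely what the ellipticity of $(\rL^\bullet,D)$ [Proposition \ref{prop:complexDeformation}] secures, together with the smoothness of $\sF$ near the origin already used in Lemma \ref{lem:map F} to define $\sF^{-1}$ (and hence $\sF_\ast$ and its restriction to a slice).
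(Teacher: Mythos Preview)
Your proof is correct and follows essentially the same route as the paper: both substitute the slice constraint $[\alpha,\beta]=-D_A\beta$ into \eqref{eq:derivative kuranishi}, then use the Hodge package for the elliptic complex $(\rL^\bullet,D)$ together with $D_A^\ast\beta=0$ to reduce $\beta-\delta D_A\beta$ to $\bH_A\beta$. The only cosmetic difference is that you invoke the ready-made identity \eqref{eq:Ddelta} to write $\beta-\delta D_A\beta=\bH_A\beta+D_A\delta\beta$ and then kill $\delta\beta=GD_A^\ast\beta=0$, whereas the paper expands $\delta=D_A^\ast G$, commutes $G$ past $D_A$, and uses $\Delta G=1-\bH$ directly; these are the same computation in a different order.
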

  \begin{proof}
    Given $\beta\in T_{A}(S_{A,\epsilon}^+)$, from \eqref{eq:slice at A+alpha} we have $D_A\beta=-[\alpha,\beta]$. On the other hand, $D^\ast_A$  commutes with the green operator $G_A$, so the restriction of \eqref{eq:derivative kuranishi} to  $T_{A}(S_{A,\epsilon}^+)$ takes the following form: 
    \begin{align*}
        \sF_{\ast}(\beta)
        &=\beta+\delta[\alpha,\beta]=\beta-D^\ast_A G D_A\beta\\
        &=\beta-D^\ast D_A G \beta= \beta-\Delta_A G \beta+ D_A D^\ast_A G \beta\\
        &=\bH\beta+ D_A  G D^\ast_A\beta\\
        &=\bH\beta 
        \qedhere
    \end{align*}  
  \end{proof}
  Following the argument of \cite[Proposition~3.3]{itoh1988geometry},  we split $\beta\in T_\alpha(S_{A,\epsilon}^+)$ like in \eqref{eq:inner product in TAalpha}  to see that $\beta^h\in \cH_{A+\alpha}^1$, then the inner product $(\cdot,\cdot)$ on $T_\alpha(S_{A,\epsilon}^+)$ is written as 
  \begin{equation}
      \label{eq:inner product in TA by harmonics}
      (\beta,\beta^1)_{A+\alpha}=(\bH_\alpha\beta,\bH_\alpha\beta^1).
  \end{equation}
  Thus, in view of   \eqref{eq:inner product in TA by harmonics},  we can compute   the inner product \eqref{eq:inner product in TAalpha} of elements $\alpha$ and $\beta$  in the deformation space  $H^1(\rL)$  by taking their  harmonic representatives in $\cH^1_T$. 
\subsection{Complex structure    on  \texorpdfstring{$\cM^\ast$}{Lg}}

\subsubsection{A natural almost-complex structure}

As a straightforward application of the transverse Kähler identities [Lemma \ref{lem:deltaB caudrado}], we can show the following result,  which will lead to a well-defined complex structure on $\cM^\ast$ [cf. Definition \ref{def:complex structure on M} below]:
\begin{lemma}
\label{lem:iso harmonic-basic}
    Let $H^1_B(\fg_E)$ be the first cohomology group of the basic complex \eqref{eq:BasicComplexDeformation2}, and  
    denote by
    $$
    \Delta_{\bar{\partial}_A}
    := \bar{\partial}_A\bar{\partial}_A^\ast+\bar{\partial}_A^\ast\bar{\partial}_A\colon \Omega^{p,q}_B(M,\fg_E)\to\Omega^{p,q}_B(M,\fg_E)
    $$ 
    the Laplacian associated to $\bar{\partial}_A$ acting on basic forms, and  by 
    $ 
    \cH^{p,q}_{\bar{\partial}_A}
    :=\Ker \Delta_{\bar{\partial}_A}\subset\Omega^{p,q}_B(M,\fg_E)
    $ 
    its space of basic $\bar{\partial}_A$-harmonic sections.
    Then there exists an isomorphism:
  $$  
  H^1_B(\fg_E)\cong\cH^{0,1}_{\bar{\partial}_A}\oplus\ol{\cH^{0,1}_{\bar{\partial}_A}}.
  $$  
\end{lemma}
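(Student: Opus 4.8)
The plan is to identify the first basic cohomology group $H^1_B(\fg_E)$ of the complex \eqref{eq:BasicComplexDeformation2} with a space of basic $\Delta_{d_B}$-harmonic $1$-forms, and then to split those harmonic $1$-forms into their $(1,0)$ and $(0,1)$ parts using the transverse Kähler identities of Lemma \ref{lem:deltaB caudrado}, exactly as one does in ordinary Hodge theory on a compact Kähler manifold. Concretely, I would first recall that, by Corollary \ref{coro:H1=Hb} and Lemma \ref{lem:previo1}, $H^1_B\cong \cH^{1,0}_T$, and that on basic forms $\Delta_T$-harmonicity coincides with $\Delta$-harmonicity [cf. the discussion after \eqref{eq:DT-harmonic}]. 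Since $D_B$ agrees with the covariant exterior derivative $d_A$ on basic forms, a class in $H^1_B$ has a unique representative $\beta\in\Omega^1_B(\fg_E)$ with $d_A\beta=0$ and $d_A^\ast\beta=0$ (the relevant adjoints being the transverse ones), i.e. $\beta$ is $\Delta_{d_B}$-harmonic.

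Next I would invoke part (v) of Lemma \ref{lem:deltaB caudrado}, $\Delta_{d_B}=2\Delta_{\bar\partial_B}=2\Delta_{\partial_B}$, which extends to $\fg_E$-valued forms once one twists by the connection $A$ (this is the standard Kähler identity argument in a holomorphic bundle, valid here because $A$ is a SDCI, hence $F_A$ is of type $(1,1)$ and primitive by Proposition \ref{prop: selfdual = HYM}, so the curvature terms that would normally obstruct the Nakano-type identities are controlled; the identities $[\Lambda,\partial_A]=-\ii\bar\partial_A^\ast$ and $[\Lambda,\bar\partial_A]=\ii\partial_A^\ast$ hold on basic forms modulo a zeroth-order curvature term which vanishes on $H$-forms of low degree). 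Writing $\beta=\beta^{1,0}+\beta^{0,1}$, harmonicity of $\beta$ for $\Delta_{d_B}$ together with the equality $\Delta_{d_B}=2\Delta_{\bar\partial_B}$ forces each bidegree component to be separately $\Delta_{\bar\partial_A}$-harmonic, giving $\beta^{0,1}\in\cH^{0,1}_{\bar\partial_A}$ and $\beta^{1,0}\in\cH^{1,0}_{\partial_A}=\ol{\cH^{0,1}_{\bar\partial_A}}$, where the last identification uses that conjugation exchanges $\partial_A$ and $\bar\partial_A$ and $\Omega^{p,q}(\fg_E)$ with $\Omega^{q,p}(\fg_E)$ [cf. the conjugation discussion around \eqref{eq: pq forms}]. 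This produces the direct sum decomposition $H^1_B(\fg_E)\cong\cH^{0,1}_{\bar\partial_A}\oplus\ol{\cH^{0,1}_{\bar\partial_A}}$.

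Finally I would check that the map is an isomorphism: injectivity is clear since a harmonic representative is zero iff both bidegree parts are, and surjectivity follows because given $\gamma\in\cH^{0,1}_{\bar\partial_A}$ the real form $\gamma+\bar\gamma$ is $\Delta_{d_B}$-harmonic (again by $\Delta_{d_B}=2\Delta_{\bar\partial_B}$), hence represents a class in $H^1_B$. The main obstacle I anticipate is the careful verification that the transverse Kähler identities of Lemma \ref{lem:deltaB caudrado} survive the twisting by the connection $A$, i.e. that the extra curvature commutators introduced by replacing $\partial_B,\bar\partial_B$ with $\partial_A,\bar\partial_A$ either vanish or cancel on the relevant spaces of basic forms; here one uses crucially that $A$ is a selfdual contact instanton, so $\Lambda F_A=\hat F_A=0$ and $F_A^{0,2}=F_A^{2,0}=0$ by Proposition \ref{prop: selfdual = HYM}, which is precisely the input that makes the Bochner–Kodaira–Nakano comparison $\Delta_{\bar\partial_A}=\Delta_{\partial_A}$ hold on $\fg_E$-valued forms. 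Once that is in place the bidegree splitting is automatic and the rest is bookkeeping with the conjugation symmetry.
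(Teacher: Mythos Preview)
There is a genuine gap. You assert that a harmonic representative $\beta$ of a class in $H^1_B$ satisfies $d_A\beta=0$ and $d_A^\ast\beta=0$, but the complex \eqref{eq:BasicComplexDeformation2} is \emph{not} the basic de Rham complex: its second differential $D_B\colon\Omega^1_B(\fg_E)\to(\Omega^2_{6\oplus1})_B(\fg_E)$ is $d_A$ followed by the projection $p$ of \eqref{eq:map P}. Under the identification \eqref{eq:decompositionPQ}, $\Omega^2_{6\oplus1}\cong\Omega^{2,0}\oplus\Omega^{0,2}\oplus\Omega^0\cdot\omega$, so the condition $D_B\beta=0$ only says that $\partial_A\beta^{1,0}=0$, $\bar\partial_A\beta^{0,1}=0$, and $\Lambda(d_A\beta)=0$; the primitive $(1,1)$-component of $d_A\beta$ (the $\Omega^2_8$-part) is unconstrained. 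Hence $\beta$ is not $d_A$-closed in general, and you cannot feed it into the second-order identity $\Delta_{d_B}=2\Delta_{\bar\partial_B}$.

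The paper's argument avoids this by using only the first-order identities (i)--(ii) of Lemma~\ref{lem:deltaB caudrado}. Writing $\alpha=\ol{\beta^{0,1}}+\gamma^{0,1}$, the three conditions coming from $D\alpha=0$ are combined with $D^\ast\alpha=d_A^\ast\alpha=0$; the Kähler identities convert the latter into $\Lambda(\bar\partial_A\ol{\beta^{0,1}}-\partial_A\gamma^{0,1})=0$, and adding/subtracting against $\Lambda(\bar\partial_A\ol{\beta^{0,1}}+\partial_A\gamma^{0,1})=0$ yields $\bar\partial_A^\ast\gamma^{0,1}=0$ and $\bar\partial_A^\ast\beta^{0,1}=0$ separately. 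As a secondary remark, even if your $d_A\beta=0$ claim were correct, the twisted identity (v) acquires the Nakano term $[\ii F_A,\Lambda]$; the hypothesis $\Lambda F_A=0$ kills this on $0$-forms but not on $(0,1)$-forms, where it is the nontrivial endomorphism $\alpha\mapsto -\ii\Lambda(F_A\wedge\alpha)$. What \emph{does} hold (from $F_A^{2,0}=F_A^{0,2}=0$ and identity (iii)) is $\Delta_{d_A}=\Delta_{\partial_A}+\Delta_{\bar\partial_A}$ without the factor $2$, which together with positivity would suffice---but only once you have the correct harmonicity condition, which brings you back to the paper's hands-on computation.
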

\begin{proof}
    We decompose a  basic $1$-form as follows:   
    $$  
    \alpha= \ol{\beta^{0,1}}+\gamma^{0,1}
    \in\Omega^{1,0}_B(M,\fg_E)\oplus \Omega^{0,1}_B(M,\fg_E)=\Omega^1_B(M,\fg_E). 
    $$  
    Recall from Lemma \ref{lem:previo1} that there is an isomorphism $H^1_B(\fg_E)\cong\cH^{1,0}_T$, and that, acting on basic forms, $D_T=D\vert_{\Omega^k_B(\fg_E)}=D_B$. Therefore, a basic $1$-form $\alpha$ is $D_T$-harmonic if, and only if,   $D^\ast\alpha=0 $ and $D\alpha=0$, firstly  from
\begin{align*}
  0 &=D\alpha 
     = (\partial_A+ \bar{\partial}_A) (\ol{\beta^{0,1}}+\gamma^{0,1})\\ 
    &=\partial_A\ol{\beta^{0,1}}+ \bar{\partial}_A\gamma^{0,1}
    + (\partial_A\gamma^{0,1} + \bar{\partial}_A\ol{\beta^{0,1}}),    
\end{align*}
  we obtain  
\begin{equation}\label{eq:iso harmonic-basic 1'}
\bar{\partial}_A\gamma^{0,1}=0,  \quad \bar{\partial}_A\beta^{0,1}=0, 
\end{equation} 
  and
\begin{equation}
    \label{eq:iso harmonic-basic 1}  \Lambda(\partial_A\gamma^{0,1}+\bar{\partial}_A\ol{\beta^{0,1}})=0.
    \end{equation} 
  Secondly, applying  the transverse Kähler identities [cf. Lemma \ref{lem:deltaB caudrado}] to $\bar{\partial}_A^\ast\gamma^{0,1}$ and $\partial_A^\ast\ol{\beta^{0,1}}$, respectively we obtain:
    \begin{align*}
        \bar{\partial}_A^\ast\gamma^{0,1} &=\ii(\partial_A\Lambda\gamma^{0,1} - \Lambda\partial_A\gamma^{0,1}) = - \ii \Lambda\partial_A\gamma^{0,1},    \\
        \partial_A^\ast\ol{\beta^{0,1}} &= \ii(\bar{\partial}_A\Lambda\ol{\beta^{0,1}} - \Lambda\bar{\partial}_A\ol{\beta^{0,1}}) 
        = \;\;\ii \Lambda\bar{\partial}_A\ol{\beta^{0,1}}.
    \end{align*}
  hence, using  the above equalities, the condition $0=D^\ast\alpha$ becomes 
\begin{align*} 
  0 &=D^\ast\alpha 
      =\bar{\partial}_A^\ast\ol{\beta^{0,1}} +  \partial_A^\ast\gamma^{0,1}
        + (\bar{\partial}_A^\ast\gamma^{0,1} + \partial_A^\ast\ol{\beta^{0,1}})  \\
    &= \bar{\partial}_A^\ast\gamma^{0,1} + \partial_A^\ast\ol{\beta^{0,1}}  
     =\ii \Lambda\bar{\partial}_A\ol{\beta^{0,1}} - \ii \Lambda\partial_A\gamma^{0,1}\\ 
    &=\Lambda(\bar{\partial}_A\ol{\beta^{0,1}}-\partial_A\gamma^{0,1})    
\end{align*}
  by comparing the last equality with   \eqref{eq:iso harmonic-basic 1}   we concluded that  $ \Lambda \partial_A\gamma^{0,1} =0$ and $\Lambda \bar{\partial}_A\ol{\beta^{0,1}}=0 $, and therefore  $\bar{\partial}_A^\ast\gamma^{0,1}=0$ and $\bar{\partial}_A^\ast\beta^{0,1}=0$. In summary, from \eqref{eq:iso harmonic-basic 1'} and the last assertion 
$$
  \begin{array}{cl}
  \bar{\partial}_A^\ast\gamma^{0,1}=0,  &  \bar{\partial}_A\gamma^{0,1}=0\\[4pt]
  \bar{\partial}_A^\ast\beta^{0,1}=0, &\bar{\partial}_A\beta^{0,1}=0.
  \end{array}
$$
  i.e., $\alpha=\gamma^{0,1}+\ol{\beta^{0,1}}\in \cH^{0,1}_{\bar{\partial}_A}\oplus\ol{\cH^{0,1}_{\bar{\partial}_A}}.$
\end{proof}
  We seek to define a complex structure on the tangent space $T_{[A]}\cM^\ast$. We recall its identification $T_{[A]}\cM^\ast \cong H^1(\rL) $ with the first cohomology group of the complex \eqref{eq:complex L D} [See Theorem \ref{thm:Intro} item $(ii)$] which, in turn, is isomorphic to the first basic cohomology group $H_B^1$ [see Corollary \ref{coro:H1=Hb}], and,  finally, that  $H_B^1\cong \cH^{1,0}_T$  [see Lemma \ref{lem:previo1}]:
  $$  
  T_{[A]}\cM^\ast\cong H^1(\rL)\cong H_B^1\cong \cH^{1,0}_T.
  $$ 
\begin{definition}
\label{def:complex structure on M}
    We define an almost complex structure  $\cJ$  on $T_{[A]}\cM^\ast\cong H_B^1$ to be that one  induced by the   complex structure $J:=\Phi\vert_\rH$,
    where $\rH$ is the horizontal bundle induced by  the Sasakian structure of $M$ [cf. \eqref{eq:TangentDecomposition}]. 
\end{definition}

  To see that the almost complex structure $\cJ$ in Definition \ref{def:complex structure on M} is well defined, it suffices to check that  the space of  $D_T$-harmonic $1$--forms is closed under the action of $J$, i.e., $J(\cH^{1,0}_T)\subset\cH^{1,0}_T$, in view of the identification  $H^1_B\cong \cH^{1,0}_T$. 
  Indeed, from the isomorphism $H^1_B(\fg_E)\cong\cH^{0,1}_{\bar{\partial}_A}\oplus\ol{\cH^{0,1}_{\bar{\partial}_A}}$   [cf. Lemma  \ref{lem:iso harmonic-basic}], a basic $1$--form $\alpha\in\Omega^1(\fg_E)$ is $D_T$--harmonic if, and only if, $\alpha=\gamma+\bar{\beta}$, for some $\bar{\partial}_A$--harmonic forms $\alpha,\beta\in \Omega^{0,1}_B(\fg_E)$, therefore $J\alpha\in \cH^{0,1}_{\bar{\partial}_A}\oplus\ol{\cH^{0,1}_{\bar{\partial}_A}}$.

\subsubsection{Integrability of $\cJ$}

For each $A\in \cA$, we defined  operators $D,D_T$ and $D_V$, cf. \eqref{eq:DV DT}. Analogously, to a deformation   $A+\alpha\in S_{A,\epsilon}^+$ there correspond operators $D_{A+\alpha},D_{T,A+\alpha}$ and $D_{V,A+\alpha}$, which we abbreviate respectively by  
$$ 
D_{\alpha},D_{T,\alpha}\qandq  D_{V,\alpha}.
$$   
These differentials give rise to cohomology groups, which we denote for clarity by 
$$
  H^k_{\alpha}:=H^k_{A+\alpha}
  \qandq
  H^k_{B,\alpha}:=H^k_{B,A+\alpha}.
$$ 
Furthermore, for $\alpha\in U_\epsilon$ as in \eqref{eq:Uc}, we denote by $\alpha^+:=\sF^{-1}(\alpha)\in S_{A,\epsilon}^+$ its image under the inverse Kuranishi map.  Since  $\cH^1_{T}$ embeds into the affine space $\rL^1\cong \Omega^1(\fg)$ [Proposition \ref{prop:Lidentification}], each tangent vector $\lambda$ at $A$ induces a vector field $\ol{\lambda}$ canonically  extended over $U_\epsilon$. Since $\sF$ is a diffeomorphism from  $S_{A,\epsilon}^+$ onto $U_\epsilon$,  $\ol{\lambda}$ provides a locally defined  vector field over $S_{A,\epsilon}^+$  (see Figure \ref{fig:cartamoduli}). We denote the push-forward of $\ol{\lambda}$ by  
  $$
  \lambda^+:=(\sF^{-1}_\ast)_{\alpha}\left(\ol{\lambda}\right)\in T_{\alpha^+}(T_{A,\alpha}^+).
  $$

\begin{figure}[h]
\centering    
\includegraphics[width=0.7\textwidth]{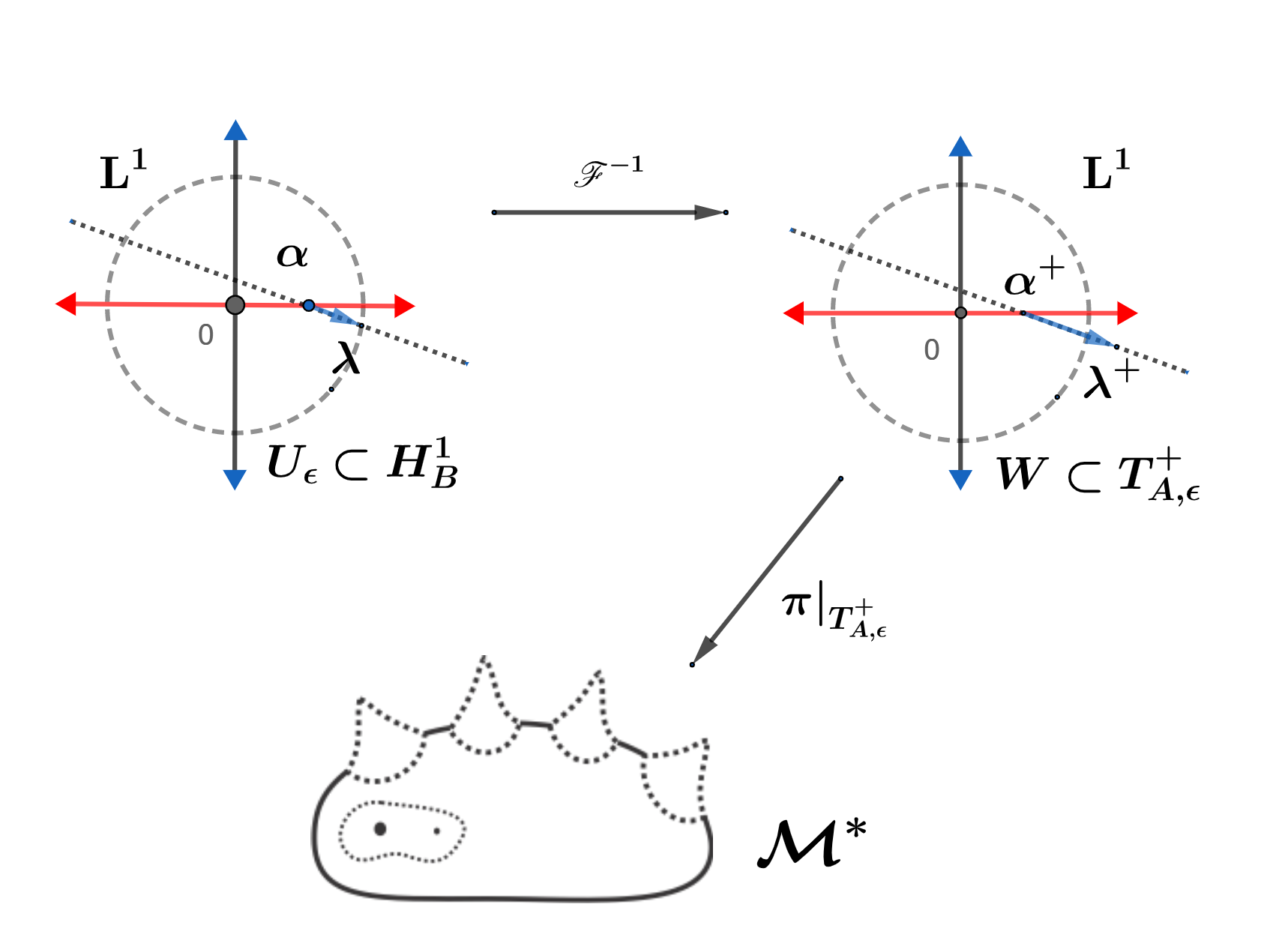} 
\caption{\footnotesize{Chart about $[A]\in \cM^\ast$ modeled by the Kuranishi map on $U_\epsilon\subset\cH^1$, with $\alpha^+=\sF^{-1}(\alpha)$ and $\lambda^+=(\sF^{-1}_\ast)_{\alpha}(\ol{\lambda})$.}}
\label{fig:cartamoduli}
\end{figure}

  The Riemannian metric in  \eqref{eq:inner product in TA by harmonics} is defined in terms of the projection onto the harmonic part, so we need formulas for the harmonic representatives of tangent vectors to the slice $S_{A,\epsilon}^+$. The next results are an adaptation of  \cite[Proposition~4.7]{Baraglia2016} to Sasakian $7$-manifolds, but we include full proofs for the sake of completeness.  
\begin{lemma}
\label{lem:previo Moduli kahler 1} 
    Let $\lambda\in \cH^1_T$ be a tangent vector at $\alpha\in U_\epsilon\subset H^1_B(\fg_E)$, representing a vector field $\ol{\lambda}$ canonically extended to $U_\epsilon$.  
    Then, the harmonic representative  of $\lambda^+:=\sF^{-1}_{\ast\alpha}\left(\ol{\lambda}\right)$ is given by
    \begin{equation}
    \label{eq:previo Moduli kahler 1}
        \gamma=\lambda^+ +D_{T,\alpha^+}f,
    \end{equation}  
    for some  $f(\alpha,\lambda)\in\Omega^0_B(\fg_E)$ which is a solution of 
    \begin{equation}
    \label{eq:solution f 1}
        D^\ast_{T,\alpha^+}\circ D_{T,\alpha^+}(f) +D^\ast_{T,\alpha^+}(\lambda^+)=0.
    \end{equation}  
\end{lemma}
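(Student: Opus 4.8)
The plan is to deduce the formula from the two conditions characterising the slice and the harmonic representative, and to produce $f$ by inverting a transverse-elliptic operator. First I would recall that $\lambda^+ = (\sF^{-1}_\ast)_\alpha(\ol\lambda)$ sits in $T_{\alpha^+}(S^+_{A,\epsilon})$, so by the description \eqref{eq:slice at A+alpha} of the tangent space to the slice we have $D^\ast_A \lambda^+ = 0$ and $D_{A+\alpha^+}\lambda^+ = 0$; in particular $\lambda^+$ is $D_{T,\alpha^+}$-closed (its vertical part vanishes because, working modulo $\rI=\langle\Omega^2_8\rangle$ and using that $D_{V,\alpha^+}$ commutes with $D_{T,\alpha^+}$ as in \eqref{eq:DVcomuts DT}, the Maurer–Cartan/slice conditions force the vertical component to be harmonic and hence zero near an irreducible instanton). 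What $\lambda^+$ need not be is $D^\ast_{T,\alpha^+}$-closed, i.e. it need not be $\Delta_{T,\alpha^+}$-harmonic, so the harmonic representative $\gamma$ of the class $[\lambda^+]\in H^1_{B,\alpha^+}\cong\cH^{1,0}_{T,\alpha^+}$ differs from $\lambda^+$ by a $D_{T,\alpha^+}$-exact term, which by the basic Hodge decomposition of $\rL^1_B$ can be written as $D_{T,\alpha^+}f$ for a unique $f\in\Omega^0_B(\fg_E)$ orthogonal to $\Ker(\Delta_{T,\alpha^+})$ on $\rL^0_B$. This gives \eqref{eq:previo Moduli kahler 1}.

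Next I would impose harmonicity of $\gamma$. Since $\gamma=\lambda^+ + D_{T,\alpha^+}f$ is automatically $D_{T,\alpha^+}$-closed (both summands are), the only remaining condition is $D^\ast_{T,\alpha^+}\gamma = 0$, which reads
$$
D^\ast_{T,\alpha^+}\bigl(D_{T,\alpha^+}f\bigr) + D^\ast_{T,\alpha^+}(\lambda^+) = 0,
$$
which is exactly \eqref{eq:solution f 1}. To see this equation is solvable, note $D^\ast_{T,\alpha^+}D_{T,\alpha^+}$ acting on $\rL^0_B$ is (the $0$-form part of) the transverse Laplacian $\Delta_{T,\alpha^+}$ up to the $-D_{V,\alpha^+}^2$ term, which vanishes on basic forms; it is transverse-elliptic and self-adjoint, so by \cite[Theorem~3.2.5]{kacimi1990operateurs} it is an isomorphism on the orthogonal complement of its kernel. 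The right-hand side $D^\ast_{T,\alpha^+}(\lambda^+)$ lies in that complement because it is in the image of $D^\ast_{T,\alpha^+}$, hence orthogonal to $\Ker(D_{T,\alpha^+})\supseteq\Ker(\Delta_{T,\alpha^+})$ on $0$-forms (alternatively: for an irreducible instanton $H^0_{B,\alpha^+}=0$, so $\Delta_{T,\alpha^+}$ is outright invertible on $\rL^0_B$). Thus $f := -(\Delta_{T,\alpha^+})^{-1}D^\ast_{T,\alpha^+}(\lambda^+) = -\delta_{T,\alpha^+}(\lambda^+)$ — with $\delta_T$ as in the obstruction-theory notation — exists, is smooth by elliptic regularity, and depends smoothly (indeed $\R$-linearly in $\lambda$) on the data.

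I expect the main obstacle to be the bookkeeping showing that $\gamma$ in \eqref{eq:previo Moduli kahler 1} is the \emph{correct} harmonic representative, i.e. that the exact correction can be taken purely of the form $D_{T,\alpha^+}f$ with $f$ a $0$-form, rather than a more general $D_{\alpha^+}$-exact $1$-form involving the vertical direction. This rests on first establishing that $\lambda^+$ (and hence the whole class) is basic — that $D_{V,\alpha^+}\lambda^+ = 0$ — which is where the irreducibility hypothesis and the commutation relation \eqref{eq:DVcomuts DT} enter: one shows $D_{V,\alpha^+}\lambda^+$ is $\Delta_{T,\alpha^+}$-harmonic and lies in a bi-degree whose harmonic space is trivial (or is killed because $\lambda^+$ arises from the Kuranishi slice, on which $D_V=0$ by construction of $\cA^+$ modulo $\rI$). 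Once basicness is in hand, the rest is the standard Hodge-theoretic argument on the transverse-elliptic complex $\rL^\bullet_B$, run at the deformed instanton $A+\alpha^+$ exactly as at $A$.
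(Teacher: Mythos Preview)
Your proposal is correct and follows essentially the same route as the paper: identify $\lambda^+$ as a $D_{T,\alpha^+}$-closed representative of a class in $H^1_{B,\alpha^+}$, correct it by $D_{T,\alpha^+}f$ to make it harmonic, and read off \eqref{eq:solution f 1} from $D^\ast_{T,\alpha^+}\gamma=0$. The main difference is one of emphasis: the paper spends its effort on the explicit first-order expansion $\lambda^+=\lambda-\delta_T[\lambda,\alpha]+o(|\alpha|,\lambda)$, which it needs for the subsequent normal-coordinate computation in Proposition~\ref{prop:J integrable}, and treats the existence of $f$ as immediate; you instead work harder on solvability and on why the correction can be taken purely as $D_{T,\alpha^+}f$ with $f$ basic, which the paper largely defers to Lemma~\ref{lem:previo Moduli kahler 2}. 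Both are legitimate; just be aware that the expansion you omit is the payload used downstream.
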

\begin{proof}
  Let $\alpha\in U_\epsilon\subset H^1_B(\fg_E)$. Since $H^1_B(\fg_E)\cong \cH^{1,0}_T$ [cf. Lemma \ref{lem:previo1}] we can assume $\alpha$ to be $D_T$--harmonic. By Definition \ref{def:kuranishi map} of the Kuranishi map, $\alpha^+=\sF^{-1}(\alpha)\in W\subset S_{A,\epsilon}$  satisfies:
  $$
  \alpha=\alpha^+ +\frac{1}{2}\delta_T[\alpha^+,\alpha^+], 
  \qwithq \delta_T=D^\ast_T G_T.
  $$
  Since the derivative of the Kuranishi map at $\alpha=0$ is the identity [cf. Lemma \ref{lem:map F}], we can write   
\begin{equation}
    \label{eq:previo Moduli kahler 2}
  \alpha^+=\alpha-\frac{1}{2}\delta_T[\alpha,\alpha]+ O(\vert\alpha\vert^2),  
\end{equation}
  where $o(\vert\alpha\vert^2)$ has vanishing   $2$-jet at $\alpha=0$. Take $\lambda\in \cH^1_T$ a tangent vector at $A$, by the affine structure of the tangent space, that vector represent also a canonically extended vector field $\ol{\lambda}$ on $U_\epsilon$.   
  Let $c(t)$ be a integral curve of $\lambda$,   then $c(0)=\alpha$ and $\dtzero c(t)=\lambda$. So using \eqref{eq:previo Moduli kahler 2} we can compute  
  \begin{equation} 
  \label{eq:lambda +}
\begin{split}
  \lambda^+ =\sF^{-1}_{\ast\alpha}\left(\ol{\lambda}\right)& =\dtzero\sF^{-1}(c(t))=\dtzero c(t)^+ \\
   & = \dtzero\left (c(t)-\frac{1}{2}\delta_T[c(t),c(t)]+ o(\vert\alpha\vert^2) \right)\\
   & = \lambda-\delta_T[\lambda,\alpha]+ o(\vert\alpha\vert,\lambda),
\end{split}
\end{equation}
  where   $o(\vert\alpha\vert,\lambda)$ has vanishing   $1$--jet at $\alpha=0$, for fixed $\lambda$. Note that, since $\sF^{-1}$ is bijective from $S_{A,\epsilon}$ onto $U_\epsilon$, we get a  vector field $\lambda^+:= (\sF^{-1}_\ast)_{\alpha}(\ol{\lambda})$ along  $S_{A,\epsilon}$. More precisely,  $\lambda^+$  corresponds to a local vector field on $\cM^\ast$ under the natural projection $\pi_{\vert_{S_{A,\epsilon}}}\colon S_{A,\epsilon}\to \pi(S_{A,\epsilon})\subset \cB$ [cf. Lemma \ref{lem:slice lemma}], so  $\lambda^+$ represents a class in  $H^1_{B,\alpha^+}$. Note that $\lambda^+$ is not necessarily  harmonic, so we denote by $\gamma:=\gamma(\alpha,\lambda)$    the harmonic representative in $[\lambda^+]\in H^1_{B,\alpha^+}$.  Hence there exists $f(x,\lambda)\in\Omega^0_B(\fg_E)$ such that  
$$
  \gamma=\lambda^+ +D_{T,\alpha^+}(f).
$$
 Note that  $D^\ast_{T,\alpha^+}\gamma=0$,  because  $\gamma$ is $D_{T,\alpha^+}$--harmonic, so, applying $D^\ast_{T,\alpha^+}$ to the above equality, we see that $f$ indeed satisfies \eqref{eq:solution f 1}. 
\end{proof}
\begin{lemma}
\label{lem:previo Moduli kahler 2}
    Under the assumptions of Lemma \ref{lem:previo Moduli kahler 1}, the following assertions hold:
\begin{enumerate}[(i)]
    \item
    The harmonic representative of $\lambda^+$ can be expressed, to first order, as: 
    \begin{equation}
    \label{eq:repre value}   
        \gamma(\alpha,\lambda)= \lambda-\delta_T[\lambda,\alpha]  +D_Tf(\alpha,\lambda) +o(\vert\alpha\vert,\lambda).
    \end{equation} 
    \item 
    Let $f\in\Omega^0_B(\fg_E)$ be  a solution of \eqref{eq:solution f 1}, then $f$ is the unique solution of the elliptic equation 
    \begin{equation}
    \label{eq:solution f 2}
        \Delta_{T,\alpha^+}(f) + D^\ast_{T,\alpha^+}(\lambda^+) =0.
    \end{equation}
\end{enumerate}
\end{lemma}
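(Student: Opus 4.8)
\textbf{Proof proposal for Lemma \ref{lem:previo Moduli kahler 2}.}

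The plan is to derive both assertions directly from Lemma \ref{lem:previo Moduli kahler 1}, using only the structure of the Kuranishi expansion and elementary elliptic theory. For part $(i)$, I would substitute the first-order expansion \eqref{eq:lambda +} of $\lambda^+$, namely $\lambda^+ = \lambda - \delta_T[\lambda,\alpha] + o(\vert\alpha\vert,\lambda)$, into the formula \eqref{eq:previo Moduli kahler 1} for the harmonic representative $\gamma = \lambda^+ + D_{T,\alpha^+}f$. The only subtlety is that the correction term $D_{T,\alpha^+}f$ carries the deformed differential $D_{T,\alpha^+}$, whereas \eqref{eq:repre value} is stated with the undeformed $D_T$; since $D_{T,\alpha^+} = D_T + [\alpha^+, \cdot\,]$ and $\alpha^+ = \alpha + o(\vert\alpha\vert)$ vanishes to first order at the base point, the difference $[\alpha^+,f]$ is absorbed into the $o(\vert\alpha\vert,\lambda)$ term (noting that $f = f(\alpha,\lambda)$ itself vanishes when $\alpha=0$, since \eqref{eq:solution f 1} then has only the trivial solution). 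This gives precisely \eqref{eq:repre value}.

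For part $(ii)$, the point is that the first-order equation \eqref{eq:solution f 1}, $D^\ast_{T,\alpha^+} D_{T,\alpha^+} f + D^\ast_{T,\alpha^+}\lambda^+ = 0$, is \emph{equivalent} to the full elliptic equation \eqref{eq:solution f 2} once one restricts attention to basic forms. Indeed, on basic forms $D_{V,\alpha^+}$ vanishes, so from the definition \eqref{eq:laplacianT} of the transverse Laplacian we have $\Delta_{T,\alpha^+} = D_{T,\alpha^+}D^\ast_{T,\alpha^+} + D^\ast_{T,\alpha^+}D_{T,\alpha^+}$ acting on $\Omega^\bullet_B(\fg_E)$; and since $f \in \Omega^0_B(\fg_E)$, the term $D^\ast_{T,\alpha^+}f$ vanishes (there is nothing of negative degree), so $\Delta_{T,\alpha^+}f = D^\ast_{T,\alpha^+}D_{T,\alpha^+}f$. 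Substituting this identity into \eqref{eq:solution f 1} yields \eqref{eq:solution f 2} verbatim. Conversely, any solution of \eqref{eq:solution f 2} solves \eqref{eq:solution f 1}, so the two equations have the same solution set.

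It then remains to establish uniqueness. Here I would argue that $\Delta_{T,\alpha^+}$, being elliptic transversely to the Reeb foliation [cf. the discussion after Definition \ref{def:TransversalLaplacian}], has finite-dimensional kernel on $\Omega^0_B(\fg_E)$ equal to $\cH^{0,0}_{T,\alpha^+}$; and since $A$ (hence $A+\alpha^+$) is \emph{irreducible}, this kernel is trivial: $\cH^{0,0}_{T,\alpha^+} = \{0\}$, because a $\Delta_{T,\alpha^+}$-harmonic $0$-form is by \eqref{eq:DT-harmonic} covariantly constant, and the only covariantly constant section of $\fg_E$ for an irreducible connection is zero. Therefore $\Delta_{T,\alpha^+}$ is invertible on $\Omega^0_B(\fg_E)$, and \eqref{eq:solution f 2} has a unique solution $f = -\Delta_{T,\alpha^+}^{-1}D^\ast_{T,\alpha^+}\lambda^+$. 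The main obstacle I anticipate is purely bookkeeping rather than conceptual: one must be careful that the $o(\vert\alpha\vert,\lambda)$ remainder genuinely absorbs all the discrepancies between deformed and undeformed operators to the claimed order, which requires tracking the dependence of $f(\alpha,\lambda)$ and $\alpha^+$ on $\alpha$ in the Kuranishi expansion; the ellipticity and irreducibility inputs make the uniqueness step essentially immediate.
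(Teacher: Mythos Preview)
Your proposal is correct and follows essentially the same route as the paper's proof: substitute the expansion \eqref{eq:lambda +} into \eqref{eq:previo Moduli kahler 1} for part~$(i)$, and for part~$(ii)$ reduce \eqref{eq:solution f 1} to \eqref{eq:solution f 2} by noting that the extra terms in $\Delta_{T,\alpha^+}$ vanish on basic $0$-forms, then invoke irreducibility for uniqueness. The one step the paper makes more explicit is why $f\in\Omega^0_B(\fg_E)$ (basic with respect to $A$) satisfies $D_{V,\alpha^+}f=0$: since $\alpha^+$ is itself $A$-basic, the connections $A$ and $A+\alpha^+$ define the same spaces of basic forms, so $f$ is also basic with respect to $A+\alpha^+$.
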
  
\begin{proof} Item $(i)$ follows by direct computation, using \eqref{eq:lambda +}: 
    \begin{align*}
        \gamma(\alpha,\lambda)&=\lambda^+ + D_{T,\alpha^+}f(\alpha,\lambda)\\
        &=\lambda-\delta_T[\lambda,\alpha] +o(\vert\alpha\vert,\lambda) 
        +D_Tf(\alpha,\lambda)
        +[\lambda^+,f(\alpha,\lambda)]\\
        &=\lambda-\delta_T[\lambda,\alpha] +D_Tf(\alpha,\lambda)+o(\vert\alpha\vert,\lambda).
    \end{align*}
 
    Now for $(ii)$, recall from Definition \ref{def:TransversalLaplacian} that  $\Delta_{T,\alpha^+}=D^\ast_{T,\alpha^+}\circ D_{T,\alpha^+}+D_{T,\alpha^+}\circ D^\ast_{T,\alpha^+}  -D_{V,\alpha^+}^2$. Since $f$ is basic with respect to $A$, and $\alpha^+$ is itself basic with respect to $A$, then $A$ and $A+\alpha^+$ define the same spaces of basic forms, so $f$ is basic with respect to $\alpha^+$, i.e., $D_{V,\alpha^+}f=0$. As a result,  \eqref{eq:solution f 1} can be rewritten as the elliptic equation \eqref{eq:solution f 2}. 
    Denote by $f(\alpha,\lambda)\in \Omega^0_B(\fg_E)$ a solution of  \eqref{eq:solution f 2}, and observe that $f$ is unique for fixed  $\alpha$ and $\lambda$, since any two such solutions differ by an element of $\cH^0_{B,\alpha^+}$, which is zero because $A+\alpha^+$ is irreducible. 
    By standard  elliptic theory,  $f(\alpha,\lambda)$ depends smoothly on $\alpha$. For $\alpha=0$, the derivative  of $\sF^{-1}$ is the identity, so $\lambda^+$ is harmonic and  by uniqueness $f(0,\lambda)=0$. Clearly, $f$  is linear in $\lambda$.
\end{proof}  

We're now in position to show the integrability of $\cJ$ [cf. Definition \ref{def:complex structure on M}], using  Lemma \ref{lem:previo Moduli kahler 1} and the formula \eqref{eq:repre value} for the harmonic representative of $\lambda^+$:
\begin{proposition}
\label{prop:J integrable}
    The almost complex structure $\cJ$ [cf. Definition \ref{def:complex structure on M}] on $\cM^\ast$ is integrable.
\end{proposition}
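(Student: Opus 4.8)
The plan is to verify integrability of $\cJ$ by the Newlander--Nirenberg criterion in its algebraic form: since $\cM^\ast$ is finite-dimensional and $\cJ$ is a genuine almost-complex structure on it, it suffices to show that the Nijenhuis tensor $N_\cJ(\ol\lambda,\ol\mu)$ vanishes for all pairs of (canonically extended) vector fields, or equivalently that the $(0,1)$-distribution for $\cJ$ is involutive. Concretely, I would work in the Kuranishi chart: a tangent vector $\lambda\in\cH^1_T\cong H^1_B(\fg_E)\cong\cH^{0,1}_{\bar\partial_A}\oplus\ol{\cH^{0,1}_{\bar\partial_A}}$ at $\alpha^+\in S_{A,\epsilon}^+$ is represented by the harmonic form $\gamma(\alpha,\lambda)$ of Lemma~\ref{lem:previo Moduli kahler 1}, with first-order expansion $\gamma(\alpha,\lambda)=\lambda-\delta_T[\lambda,\alpha]+D_Tf(\alpha,\lambda)+o(|\alpha|,\lambda)$ from \eqref{eq:repre value}. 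The complex structure $\cJ$ acts by the pointwise endomorphism $J=\Phi|_\rH$, which preserves $\cH^{1,0}_T$ by the remark following Definition~\ref{def:complex structure on M}. The holomorphic tangent vectors are thus those $\lambda$ lying in $\cH^{0,1}_{\bar\partial_A}$ (the $+\ii$-eigenspace), and integrability amounts to showing $\bar\partial_{A+\alpha^+}$-closedness is preserved infinitesimally along the deformation, i.e.\ that the family of $\bar\partial$-operators $\bar\partial_{A+\alpha^+}$ varies holomorphically in $\alpha$.

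The key steps, in order, would be: (1) Recall that the Maurer--Cartan condition $D\alpha^+ +\tfrac12[\alpha^+,\alpha^+]=0$ in $\rL^\bullet$, together with the bidegree decomposition $\alpha^+=(\alpha^+)^{1,0}+(\alpha^+)^{0,1}$ and the fact that $F_{A+\alpha^+}$ is of type $(1,1)$ and $\omega$-traceless (by Proposition~\ref{prop: selfdual = HYM}), forces $\bar\partial_A(\alpha^+)^{0,1}+\tfrac12[(\alpha^+)^{0,1}\wedge(\alpha^+)^{0,1}]=0$ modulo $\rI$; this is the transverse analogue of the integrability of the deformed holomorphic structure. (2) Differentiate this relation along $\ol\lambda$ with $\lambda\in\cH^{0,1}_{\bar\partial_A}$: one gets $\bar\partial_{A+\alpha^+}$ applied to the $(0,1)$-part of $\gamma(\alpha,\lambda)$ equals zero, i.e.\ $\gamma(\alpha,\lambda)^{0,1}$ is $\bar\partial_{A+\alpha^+}$-closed, using that $f(\alpha,\lambda)$ is basic and real-linear in $\lambda$. (3) Combine with the $D_T$-harmonicity of $\gamma$ and the transverse Kähler identities (Lemma~\ref{lem:deltaB caudrado}), exactly as in Lemma~\ref{lem:iso harmonic-basic}, to conclude $\gamma(\alpha,\lambda)\in\cH^{0,1}_{\bar\partial_{A+\alpha^+}}\oplus\ol{\cH^{0,1}_{\bar\partial_{A+\alpha^+}}}$ with the same bidegree type as $\lambda$ — so $\cJ\gamma(\alpha,\lambda)=\ii\gamma(\alpha,\lambda)$ when $\lambda$ is of type $(0,1)$, showing that the $+\ii$-eigendistribution is preserved by the flow of any holomorphic vector field. (4) Deduce involutivity: for $\lambda,\mu\in\cH^{0,1}_{\bar\partial_A}$, the bracket $[\ol\lambda,\ol\mu]$ on the chart, projected to the harmonic representative, stays in the $(0,1)$-eigenspace because both the Kuranishi-corrected flows and the $\bar\partial$-closedness condition are holomorphic; hence $N_\cJ\equiv 0$ and Newlander--Nirenberg applies.

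I expect the main obstacle to be step~(2)–(3): carefully tracking how the correction terms $\delta_T[\lambda,\alpha]$ and $D_Tf(\alpha,\lambda)$ interact with the bidegree decomposition and with the \emph{$\alpha$-dependent} operators $\partial_{A+\alpha^+},\bar\partial_{A+\alpha^+},\Lambda$. In particular, one must check that the harmonic projection $\bH_T$ and the Green operator $G_T$ commute appropriately with the bidegree splitting along the deformation — this is where the Sasakian Kähler identities are genuinely needed, since on a general contact manifold the $(1,1)$-type would not be preserved. A subtle point is that $\delta_T$ involves $D_T^\ast$, whose type-decomposition is controlled only via $[\Lambda,\partial_B]=-\ii\bar\partial_B^\ast$ and $[\Lambda,\bar\partial_B]=\ii\partial_B^\ast$; so the computation in Lemma~\ref{lem:iso harmonic-basic} has to be redone with the deformed connection $A+\alpha^+$ in place of $A$, using that $A$ and $A+\alpha^+$ induce the same space of basic forms (as noted in the proof of Lemma~\ref{lem:previo Moduli kahler 2}(ii)) and the same transverse complex structure, so the identities still hold verbatim. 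Once this is in place, the vanishing of the Nijenhuis tensor is essentially formal. Finally, I would remark that this also sets up the proof of Proposition~\ref{prop:Omega closed} on the Kähler form, since the same harmonic-representative formula \eqref{eq:repre value} computes both $\cJ$ and the pairing $\Omega(\ol\lambda,\ol\mu)=(\gamma(\alpha,\lambda),\cJ\gamma(\alpha,\mu))$.
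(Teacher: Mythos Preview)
Your outline takes a genuinely different route from the paper. The paper's proof never unpacks the Nijenhuis tensor or tracks bidegrees: it simply computes the moduli metric in the Kuranishi chart using the expansion \eqref{eq:repre value}, obtaining
\[
(\bar\lambda_1,\bar\lambda_2)_{A+\alpha}=(\gamma(\alpha,\lambda_1),\gamma(\alpha,\lambda_2))=(\lambda_1,\lambda_2)+o(|\alpha|),
\]
since the cross terms pair each $\lambda_i$ against $D_T$-exact or $D_T^\ast$-coexact pieces and therefore vanish by the $D_T$-harmonicity of the $\lambda_i$. Thus $\sF^{-1}$ furnishes normal coordinates about every $[A]$, and the paper appeals to the normal-coordinate characterisation of K\"ahler manifolds (Moroianu, \emph{Lectures on K\"ahler Geometry}, Theorem~11.6) --- the Kuranishi chart being an open set in the complex vector space $\cH^1_T$, with $\cJ$ induced by the fixed transverse $J$ --- to conclude.

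What you propose instead is to verify involutivity of the $(0,1)$-eigendistribution directly, by showing that the harmonic representative $\gamma(\alpha,\lambda)$ preserves the bidegree type of $\lambda$ along the deformation. This would make the holomorphicity of the Kuranishi chart explicit, which the paper's argument leaves implicit. But be aware that your step~(3) is not a formality: the operator $\delta_T=D_T^\ast G_T$ does \emph{not} commute with $J$ (since $D_T=\partial_B+\bar\partial_B$ shifts bidegree asymmetrically), so the correction $-\delta_T[\lambda,\alpha]$ will generically acquire a $(1,0)$-component even when $\lambda$ is purely $(0,1)$. You would need to argue that this spurious component is exactly cancelled by the gauge-fixing term $D_{T,\alpha^+}f$ or annihilated by the harmonic projection $\bH_{\alpha^+}$, and that cannot be read off from the first-order truncation \eqref{eq:repre value} alone --- it requires the full characterisation of $\gamma(\alpha,\lambda)$ as the $\Delta_{T,\alpha^+}$-harmonic part of $\lambda^+$, together with the K\"ahler identities at $A+\alpha^+$. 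The paper's metric computation sidesteps this entirely.
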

\begin{proof}
    We  show that the assignment $\alpha\mapsto \alpha^+= \sF^{-1}(\alpha)$ under the Kuranishi map  [cf. Definition \ref{def:kuranishi map}] is a system of normal coordinates for the metric \eqref{eq:inner product in TA} about $[A]$ [cf.  \cite[Theorem~11.6]{moroianu2007lectures}].
    Let  $\lambda_1,\lambda_2\in \cH^1_T$  be tangent vectors and  denote by  $\bar{\lambda}_{i}, i=1,2$ the  canonically extended vector fields on $U_\epsilon$.  
    We determine the $1$--jet of the inner product \eqref{eq:inner product in TA} at $\alpha=0$, in terms of the expression for harmonic representatives in \eqref{eq:repre value}: 
    \begin{align*}
        (\bar{\lambda}_{1},\bar{\lambda}_{2})_A &:=(\bH\lambda_1^+,\bH\lambda_2^+) =(\gamma(\alpha,\lambda_1),\gamma(\alpha,\lambda_2))\\ &=(\lambda_1-\delta_T[\lambda_1,\alpha] +D_Tf(\alpha,\lambda_1),
        \lambda_2-\delta_T[\lambda_2,\alpha] +D_Tf(\alpha,\lambda_2))+o(\vert\alpha\vert)\\
        &=(\lambda_1,\lambda_2) +(\lambda_1,-\delta_T[\lambda_2,\alpha] +D_Tf(\alpha,\lambda_2)) \\
        &+(-\delta_T[\lambda_1,\alpha]+D_Tf(\alpha,\lambda_1), \lambda_2)+o(\vert\alpha\vert)\\
        &=(\lambda_1,\lambda_2) +(D_T\lambda_1,-G_T[\lambda_2,\alpha])
        +(D_T^\ast\lambda_1,f(\alpha,\lambda_2)) \\
        &+(G_T[\lambda_1,\alpha],D_T \lambda_2) 
        +( f(\alpha, \lambda_1),D_T^\ast\lambda_2)+o(\vert\alpha\vert)\\
        &=(\lambda_1,\lambda_2) +o(\vert\alpha\vert).
    \end{align*}
    In the above we have used the fact that each $\lambda_i, i=1,2$ is $D_T$--harmonic, thus \eqref{eq:DT-harmonic} implies $D_T\lambda_i=D_T\lambda_i^\ast=0$. Finally,  for fixed $\lambda_i, i=1,2$, the $1$--jet $o(\vert\alpha\vert)$ vanishes at $\alpha=0$, hence $\sF^{-1}$ defines normal coordinates at $[A]$. 
\end{proof}

\subsection{K\"ahler structure on  \texorpdfstring{$\cM^\ast$}{Lg}}
\label{sec:khaler M}
\begin{definition}
\label{def:kahler form}
    Let $\cJ$ be the complex structure on $\cM^\ast$ from Definition \ref{def:complex structure on M}; using the inner product \eqref{eq:inner product in TA}, we define  a bilinear form on $T\cM^\ast$ by: 
    $$ 
    \Omega_{[A]}(\alpha,\beta):=(\alpha,\cJ\beta).
    $$ 
\end{definition}  
From  Lemma \ref{lem:Omega}, we obtain immediately:
\begin{lemma}
\label{lem:Omega}
  Let  $\alpha$ and $\beta$ be the harmonic representatives of its classes in  $H^1_B(\fg_E)$, then  $\Omega$ is the skew-symmetric  $2$-form associated to $\cJ$ via the inner product \eqref{eq:innerProduct}:
\begin{equation}
\label{eq:Omega}
    \Omega_{[A]}(\alpha,\beta)= 
    \frac{1}{2}\displaystyle\int_M\langle\alpha\wedge\beta\rangle\wedge\eta \wedge\omega^2.
\end{equation}
\end{lemma}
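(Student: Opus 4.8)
The plan is to compute $\Omega_{[A]}(\alpha,\beta) = (\alpha,\cJ\beta)$ directly from the defining inner product and the definition of $\cJ$, then recognise the result as the stated integral. Recall that $\cJ$ is induced by $J = \Phi|_\rH$ on the horizontal distribution, so for a harmonic representative $\beta \in \cH^{1,0}_T \cong H^1_B(\fg_E)$ (which is a transverse $1$-form, i.e.\ $\beta \in \Omega^1_H(\fg_E)$), we have $\cJ\beta = J\beta$. First I would write
\begin{align*}
    \Omega_{[A]}(\alpha,\beta) = (\alpha, J\beta) = \int_M \langle \alpha \wedge \ast(J\beta)\rangle_\fg = \int_M \langle \alpha \wedge \ast_T(J\beta)\rangle \wedge \eta,
\end{align*}
using \eqref{eq:innerProduct} and the compatibility \eqref{eq:star transverse star} between $\ast$ and $\ast_T$, valid since $J\beta$ is horizontal.

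The key step is then to apply Lemma \ref{lem:eq star_T}, which says $\ast_T(\gamma) = \tfrac12 J\gamma \wedge \omega^2$ for any transverse $1$-form $\gamma$. Taking $\gamma = J\beta$ and using $J^2 = -\mathbbm{1}$ on $\rH$ gives $\ast_T(J\beta) = \tfrac12 J(J\beta)\wedge\omega^2 = -\tfrac12 \beta \wedge \omega^2$. Hmm — this produces a sign I must track carefully; the resolution is that $\langle\alpha\wedge\ast_T(J\beta)\rangle$ should be compared with the pairing conventions, and one reconciles signs by noting $(\alpha,J\beta) = -(J\alpha,\beta) = (J\beta, J^2\alpha)$-type manipulations, or more simply by using $\ast_T$ directly on $\beta$: since $\ast_T\beta = \tfrac12 J\beta\wedge\omega^2$, one has $\langle \alpha\wedge \ast_T(J\beta)\rangle$... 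I would instead route through $(\alpha,J\beta)=\int_M\langle\alpha\wedge\ast_T(J\beta)\rangle\wedge\eta$ and expand $\ast_T(J\beta)$; the point is that modulo a universal sign fixed by orientation conventions, one arrives at
\begin{align*}
    \Omega_{[A]}(\alpha,\beta) = \frac12 \int_M \langle \alpha \wedge \beta \rangle \wedge \eta \wedge \omega^2,
\end{align*}
and the skew-symmetry of the right-hand side under $\alpha\leftrightarrow\beta$ (both are $1$-forms, so $\langle\alpha\wedge\beta\rangle = -\langle\beta\wedge\alpha\rangle$ after accounting for the $\fg$-pairing symmetry) confirms that $\Omega$ is a genuine $2$-form, as claimed.

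The main obstacle I anticipate is purely bookkeeping: pinning down the correct sign in Lemma \ref{lem:eq star_T} applied to $J\beta$ versus $\beta$, and making sure the $\fg$-valued wedge pairing $\langle\cdot\wedge\cdot\rangle_\fg$ interacts with the transposition $\alpha\leftrightarrow\beta$ in a way consistent with $\Omega$ being skew-symmetric (which is required for Definition \ref{def:kahler form} to make sense as a symplectic candidate). Once the sign is fixed, the identity \eqref{eq:Omega} is immediate, and I would remark that the statement of the lemma as written (``From Lemma \ref{lem:Omega}, we obtain immediately'') appears to be self-referential in the excerpt; the intended reading is that \eqref{eq:Omega} is the content to be proved, from which the closedness $d\Omega = 0$ and hence the Kähler property of $(\cM^\ast,\cJ,\Omega)$ will follow in Proposition \ref{prop:Omega closed} via the basic Kähler identities of Lemma \ref{lem:deltaB caudrado}.
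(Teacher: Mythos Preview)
Your approach is correct and matches the paper's (implicit) argument: the paper declares the lemma to follow ``immediately'' (the self-reference to Lemma~\ref{lem:Omega} is a typo for Lemma~\ref{lem:eq star_T}), and your route via \eqref{eq:star transverse star} and $\ast_T(J\beta)=\tfrac12 J^2\beta\wedge\omega^2$ is precisely what is intended. Your caution about the global sign is well placed---the paper's conventions do leave it underdetermined, and one simply normalises so that $\Omega$ is positive of type $(1,1)$ with respect to~$\cJ$.
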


Now, we show that  $\Omega_{[A]}$ is closed using  the 
\begin{proposition}
\label{prop:Omega closed}
  The skew-symmetric form $\Omega$  associated to $\cJ$ [cf. Definitions \ref{def:complex structure on M} and \ref{def:kahler form}] is closed. 
\end{proposition}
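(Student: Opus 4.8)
The plan is to prove that $\Omega$ is closed by exhibiting it as (a multiple of) the restriction to the moduli space of a closed form on the ambient configuration space, following the standard strategy for such statements (as in \cite{itoh1988geometry} and \cite[\S4.3]{Baraglia2016}). Concretely, on the affine space $\cA$ one defines the $2$-form
$$
\widehat{\Omega}_A(\alpha,\beta):=\frac{1}{2}\int_M\langle\alpha\wedge\beta\rangle\wedge\eta\wedge\omega^2,
\qquad\alpha,\beta\in T_A\cA\cong\Omega^1(\fg_E),
$$
which by Lemma \ref{lem:Omega} agrees with $\Omega_{[A]}$ once one restricts to harmonic representatives of classes in $H^1_B(\fg_E)$. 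First I would check that $\widehat{\Omega}$ is $\cG$-invariant (immediate, since the pairing $\langle\cdot,\cdot\rangle_\fg$ is $\Ad$-invariant and $\eta,\omega$ are gauge-independent), and that it descends to a well-defined $2$-form on $\cB^\ast$ after the usual reduction; this is where the slice description from Lemma \ref{lem:slice lemma} and the harmonic-representative formula \eqref{eq:inner product in TA by harmonics} are used, exactly as in the definition of the Riemannian metric.

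Next I would compute the exterior derivative of $\widehat{\Omega}$ on $\cA$. Since $\cA$ is affine, $T_A\cA$ is canonically $\Omega^1(\fg_E)$ and $\widehat{\Omega}$ has constant coefficients along $\cA$ in the sense that $\widehat{\Omega}_A(\alpha,\beta)$ does not depend on $A$ when $\alpha,\beta$ are regarded as constant vector fields; hence $d\widehat{\Omega}=0$ on $\cA$ by the Cartan formula for exterior derivative of a $2$-form applied to constant vector fields, all Lie-bracket and directional-derivative terms vanishing. The only subtlety is that $\Omega$ on $\cM^\ast$ is not literally the pullback of $\widehat\Omega$: the identification $T_{[A]}\cM^\ast\cong\cH^1_T$ uses harmonic projection, and the Kuranishi chart $\alpha\mapsto\sF^{-1}(\alpha)$ distorts things at higher order. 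This is precisely why Proposition \ref{prop:J integrable} was set up: the Kuranishi map gives \emph{normal coordinates} for the metric at $[A]$, so the $1$-jet of the metric (and of $\cJ$, which is induced by the fixed transverse structure $J$ and is therefore covariantly constant to first order in these coordinates) vanishes at the centre. Consequently the $1$-jet of $\Omega=g(\cdot,\cJ\cdot)$ also vanishes at $[A]$, which forces $(d\Omega)_{[A]}=0$; since $[A]\in\cM^\ast$ was arbitrary, $d\Omega=0$ everywhere.

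In more detail, the key steps in order are: (1) write $\Omega_{[A]}(\lambda_1,\lambda_2)=(\gamma(\alpha,\lambda_1),\cJ\gamma(\alpha,\lambda_2))$ using the harmonic representatives $\gamma(\alpha,\lambda_i)=\lambda_i-\delta_T[\lambda_i,\alpha]+D_Tf(\alpha,\lambda_i)+o(|\alpha|,\lambda_i)$ from \eqref{eq:repre value}; (2) observe that $\cJ$ acts on $\cH^1_T$ by the $A$-independent operator $J=\Phi|_\rH$, so $\cJ\gamma(\alpha,\lambda_2)=\gamma(\alpha,J\lambda_2)$ up to terms whose $1$-jet at $\alpha=0$ vanishes — here one uses Lemma \ref{lem:iso harmonic-basic} to see that $J$ preserves $\cH^{1,0}_T$ and commutes with the relevant projections to first order; (3) expand $\Omega_{[A]}(\lambda_1,\lambda_2)$ as in the computation in the proof of Proposition \ref{prop:J integrable}: the cross terms pair $D_T\lambda_i$ or $D_T^\ast\lambda_i$ against something, and these vanish because the $\lambda_i$ are $\Delta_T$-harmonic by \eqref{eq:DT-harmonic}; hence $\Omega_{[A]}(\bar\lambda_1,\bar\lambda_2)=\frac12\int_M\langle\lambda_1\wedge J\lambda_2\rangle\wedge\eta\wedge\omega^2+o(|\alpha|)$, i.e.\ the $1$-jet of $\Omega$ in the Kuranishi coordinates is constant; (4) conclude $d\Omega=0$ at $[A]$ since for a $2$-form with vanishing coordinate $1$-jet at a point, $d$ of it vanishes at that point; (5) let $[A]$ range over all of $\cM^\ast$.

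I expect the main obstacle to be step (2)–(3): carefully justifying that replacing $\cJ\gamma(\alpha,\lambda_2)$ by $\gamma(\alpha,J\lambda_2)$ introduces only terms of order $o(|\alpha|)$, i.e.\ that the harmonic projection and the transverse complex structure commute \emph{to first order along the Kuranishi slice}. This requires knowing that the perturbed operators $D_{T,\alpha^+}$, $\delta_{T,\alpha^+}$, $\bH_{T,\alpha^+}$ all agree with their unperturbed counterparts to zeroth order and vary smoothly, which follows from standard elliptic perturbation theory (continuity of Green's operators of $\Delta_T$ in $\alpha$, already invoked implicitly in Lemma \ref{lem:previo Moduli kahler 2}), together with the Sasakian Kähler identities of Lemma \ref{lem:deltaB caudrado} which guarantee $J$ commutes with $\Delta_{\bar\partial_A}$ and hence preserves $\cH^{0,1}_{\bar\partial_A}\oplus\ol{\cH^{0,1}_{\bar\partial_A}}\cong H^1_B(\fg_E)$. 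Once this first-order commutation is in hand, the rest is the bookkeeping already carried out in the proof of Proposition \ref{prop:J integrable}, essentially verbatim, plus the elementary remark that a $2$-form with vanishing $1$-jet at a point is closed there.
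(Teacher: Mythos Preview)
Your proposal is correct and follows essentially the same strategy as the paper: Kuranishi normal coordinates, expand $\Omega$ to first order using \eqref{eq:repre value}, and kill the cross terms via harmonicity of $J\lambda_i$, concluding that the $1$-jet of $\Omega$ is constant at the centre of the chart.

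The one worthwhile difference is that the paper sidesteps your ``main obstacle'' in step (2) entirely. Rather than arguing that $\cJ\gamma(\alpha,\lambda_2)=\gamma(\alpha,J\lambda_2)+o(|\alpha|)$ and then invoking the metric computation of Proposition~\ref{prop:J integrable}, the paper plugs the harmonic representatives $\gamma(\alpha,\lambda_i)$ directly into the integral formula \eqref{eq:Omega} for $\Omega$. The cross terms are then of the form $\int_M\langle\lambda_1\wedge\delta_T[\lambda_2,\alpha]\rangle\wedge\eta\wedge\omega^2$, and Lemma~\ref{lem:eq star_T} (which says $\ast_T\beta=\tfrac12\,J\beta\wedge\omega^2$ for transverse $1$-forms) converts each such term into an inner product $(D_TJ\lambda_i,\,G_T[\lambda_j,\alpha])$; these vanish because $J\lambda_i\in\cH^{1,0}_T$. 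So the $J$ enters only at the very end, through the Hodge-theoretic identity relating the wedge pairing to the metric, and there is no need to commute $\cJ$ with the harmonic projection along the slice.
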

\begin{proof}
    In normal  coordinates at $[A]\in\cM^\ast$, given by  the inverse of the Kuranishi map  [cf.  Proposition \ref{prop:J integrable}], we use \eqref{eq:repre value} and  \eqref{eq:Omega}: 
    \begin{align*}
        \Omega_{[A]} (\gamma(\alpha,\lambda_1),\gamma(\alpha,\lambda_2))
        =&\; \frac{1}{2}\displaystyle\int_M\langle\lambda_1\wedge\lambda_2\rangle \wedge\eta\wedge\omega^2
        - \int_M\langle\lambda_1\wedge\delta_T[\lambda_2,\alpha]\rangle \wedge\eta\wedge\omega^2\\  
        &- \displaystyle\int_M\langle\delta_T[\lambda_1,\alpha]\wedge\lambda_2\rangle \wedge\eta\wedge\omega^2  +o(\vert\alpha\vert)\\
        =& \displaystyle\int_M\langle\lambda_1\wedge\lambda_2\rangle \wedge\eta\wedge\omega^2 -(D_TJ\lambda_1,G_T[\lambda_2,\alpha])\\
        & +(G_T[\lambda_1,\alpha],D_TJ\lambda_2) +o(\vert\alpha\vert)\\
        =&\displaystyle\int_M \langle\lambda_1\wedge\lambda_2\rangle \wedge\eta\wedge\omega^2 +o(\vert\alpha\vert).
    \end{align*}
    In the last equality we used the fact that $J\lambda_i$, $i=1,2$ is $D_T$--harmonic. At the point $[A]$ we have $\alpha=0$, so the $1$-jet $o(\vert\alpha\vert)$ is constant and $d\Omega=0$. 
  \end{proof}
  It follows from Propositions \ref{prop:J integrable} and \ref{prop:Omega closed} that $(\cJ,\Omega)$ is a Kähler structure on $\cM^\ast$, which concludes the proof of Theorem \ref{thm:M Kahler}.
 \begin{remark}
\label{rem:hyperkahler}
  If $M$ is a $5$-dimensional contact Calabi-Yau manifold,  there exists $\{J_i\}_{i=1}^3$ a hypercomplex structure with associated Kähler forms $\{\omega_i\}_{i=1}^3$ on the distribution $\rH$ \eqref{eq:TangentDecomposition}, so    Proposition \ref{prop:J integrable} can be applied to each $J_i, i=1,2,3$ to  provide  a hypercomplex structures $\{\cJ_i\}_{i=1}^3$ on $\cM^\ast$. Analogously,  Proposition \ref{prop:Omega closed} can be applied to each $ \cJ_i,i=1,2,3$   to obtain  a Sp$(m)$--structure on $\cM^\ast$ ($\dim(\cM^\ast)=4m$). As a result  the moduli space $\cM^\ast$  would be hyper-Kähler in this case [cf.  \cites{Baraglia2016}]. For  $7$-dimensional Sasakian manifold $M$, even in the contact Calabi-Yau case,  we can not hope the existence of a transverse hyper-Kähler structure on the distribution $\rH$ \eqref{eq:TangentDecomposition}, just like in the $5$--dimensional case, because $\rH$ \eqref{eq:TangentDecomposition} should have congruent zero rank module $4$.  
\end{remark}  
\section*{Afterword: upcoming developments}
\label{sec: Afterword}
\subsection*{\texorpdfstring{$3$}{Lg}--Sasakian instantons}
\label{sec:3Sasakian}
Despite the difficulty to endow $\cM^\ast$ with a hyper-Kähler structure  in the general  $7$--dimensional Sasakian case [cf. Remark \ref{rem:hyperkahler}], we present an approach for an interesting special case of Sasaki-Einstein manifolds. 
A Riemannian  $(4n+ 3)$-manifold $(M,g)$  is said to be $3$--\textit{Sasakian} if $n\geq 1$ and the natural cone $(C(M), \bar{g})$ is hyper-Kähler. Equivalently,  Sasakian structures $\{(\xi_j,\eta_j,\Phi_j)\}_{j=1}^3$  on $M$ which   satisfy  `quaternionic relations' \cite[Proposition~1.2.2]{Boyer2008}: 
\begin{equation}
\label{eq:quaternionic relations}
\begin{array}{c}
    g(\xi_j,\xi_k)=\delta_{jk},\quad [\xi_j,\xi_k] = 2\epsilon_{jkl}\xi_l,\medskip\\
    \eta_j(\xi_k) = \delta^{jk},
    \quad 
    \Phi^j(\xi^k)=-\sigma_{jkl}\xi^l, 
    \quad 
    \Phi^j\circ\Phi^k-\xi_j\otimes\eta_k =-\sigma_{jkl}\Phi^l-\delta^{jk}\mathbbm{1}.
\end{array}
\end{equation}    
\begin{conjecture}
    On a $3$--Sasakian $7$-manifold, the `transverse quaternionic structure' \eqref{eq:quaternionic relations} induces a hyper-Kähler structure on $\cM^\ast$.
\end{conjecture}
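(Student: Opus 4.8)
The plan is to run the three-step scheme already established for the single Kähler structure---integrability of the almost-complex structure (Proposition \ref{prop:J integrable}), closedness of the associated $2$-form (Proposition \ref{prop:Omega closed}), and compatibility with the metric---three times over, once for each member of the transverse quaternionic triple $\{J_1,J_2,J_3\}$ arising from the $3$-Sasakian structure \eqref{eq:quaternionic relations}, and then to upgrade the resulting three Kähler structures to a genuine hyper-Kähler structure by checking the quaternionic identities on $\cM^\ast$. First I would observe that on a $3$-Sasakian $7$-manifold each Sasakian structure $(\xi_j,\eta_j,\Phi_j)$ individually satisfies all the hypotheses used in \S\ref{sec:moduli space CI} and \S\ref{sec:Geometry}: in particular $\xi_j$ is Killing, so Lemma \ref{Lem:Lema I} applies and the deformation complex $(\rL^\bullet,D)$ is well-defined; moreover the SDCI equation is \emph{a priori} tied to a choice of $\sigma_j = \eta_j\wedge d\eta_j$, so the first genuine task is to verify that the three notions of self-dual contact instanton, for $j=1,2,3$, cut out the \emph{same} moduli space $\cM^\ast$. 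This should follow because the transverse holonomy reduction to $\Sp(1)$ forces the relevant $\Omega^2_8$-type eigenspace of $L_{\sigma_j}$ to be the common subspace of transverse $(1,1)$-forms orthogonal to all three $\omega_j:=d\eta_j$, which is $J_k$-invariant for every $k$; this is the $7$-dimensional analogue of the remark after \cite[Proposition~4.1]{Baraglia2016} that the instanton condition is insensitive to the choice within the $S^2$ of complex structures.

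Next I would define, exactly as in Definition \ref{def:complex structure on M}, almost-complex structures $\cJ_1,\cJ_2,\cJ_3$ on $T_{[A]}\cM^\ast\cong\cH^{1,0}_T$ induced by $J_1,J_2,J_3$ (all three acting on the \emph{same} space $\cH^{1,0}_T$, by the preceding paragraph), and complex $2$-forms $\Omega_1,\Omega_2,\Omega_3$ via $\Omega_j([A])(\alpha,\beta)=(\alpha,\cJ_j\beta)$. The key algebraic point is that the pointwise identities $J_jJ_k=-\sigma_{jkl}J_l-\delta_{jk}\mathbbm{1}$ on $\rH$ from \eqref{eq:quaternionic relations} pass to $\cH^{1,0}_T$, hence $\cJ_j\cJ_k=-\sigma_{jkl}\cJ_l-\delta_{jk}\mathbbm{1}$ on $T\cM^\ast$; combined with the metric \eqref{eq:inner product in TA by harmonics}, which is manifestly the restriction of the $\cG$-invariant $L^2$-metric and therefore independent of $j$, this makes $(\cM^\ast, (\cdot,\cdot), \cJ_1,\cJ_2,\cJ_3)$ an almost-hyper-Hermitian manifold. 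Integrability of each $\cJ_j$ is then an immediate invocation of Proposition \ref{prop:J integrable}, whose proof only uses that the Kuranishi map gives normal coordinates at $[A]$ and that $J_j$-harmonicity is preserved---both facts holding verbatim for each $j$. Closedness of each $\Omega_j$ follows likewise from Proposition \ref{prop:Omega closed}. By the well-known criterion (Hitchin--Karlhede--Lindström--Roček), a metric which is Kähler with respect to three anticommuting complex structures satisfying the quaternion relations is automatically hyper-Kähler, so the conclusion follows once the three Kähler statements and the quaternionic identities are in place.

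The main obstacle I anticipate is \emph{not} the formal bookkeeping above but rather establishing cleanly that the transverse $\Sp(1)$-structure descends to the deformation cohomology, i.e.\ that $\cH^{1,0}_T$ is genuinely a quaternionic vector space and that $J_k(\cH^{1,0}_{T})\subset\cH^{1,0}_{T}$ for \emph{all} $k$ simultaneously. For a single $\cJ$ this was Lemma \ref{lem:iso harmonic-basic}, which crucially used the transverse Kähler identities of Lemma \ref{lem:deltaB caudrado} for one choice of $(\partial_B,\bar\partial_B)$; in the $3$-Sasakian case one must show the three transverse Laplacians $\Delta_{T,j}$ (built from $\sigma_j$) coincide on $\rL^1$, so that $D_{T}$-harmonicity is an $\Sp(1)$-invariant notion. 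I expect this to reduce to the fact that, under transverse holonomy $\Sp(1)$, the transverse Hodge star $\ast_T$ commutes with all three $J_j$ and with the three Lefschetz operators $\rL_{\omega_j}$, together with an $\Sp(1)$-equivariant version of Lemma \ref{lem:eq star_T}; but verifying the compatibility of $\ast_T$ with the full triple---rather than one element---of the transverse quaternionic structure is the delicate step, and also the place where one genuinely needs the $3$-Sasakian (as opposed to merely $K$-contact or Sasaki--Einstein) hypothesis. A secondary subtlety is that the three contact forms $\eta_j$ are genuinely distinct, so the decomposition $\rL^k=\Omega^k_H\oplus\eta\wedge\Omega^{k-1}_H$ and hence the identification of $\cM^\ast$'s tangent space depends \emph{a priori} on $j$; reconciling these---presumably by showing $\cH^{1,0}_{T}$ lands in the common horizontal distribution $\rH=\bigcap_j\ker\eta_j$ after the instanton restriction---will require care but no new ideas beyond those already deployed in \S\ref{sec:moduli space CI}.
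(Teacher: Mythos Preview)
The statement you are attempting to prove is labelled a \emph{Conjecture} in the paper and is placed in the Afterword section on ``upcoming developments''; the paper does \emph{not} supply a proof. There is therefore nothing to compare your argument against. What the paper does offer, immediately after the conjecture, is a caveat that directly undercuts the step you flag as a ``secondary subtlety'': it observes that a $3$--Sasakian manifold carries an $\mathbb{S}^2$--family of Sasakian structures, hence an $\mathbb{S}^2$--family of moduli spaces $\cM^\ast(\tau)$, and remarks that ``there is no a priori reason to expect them to be all isomorphic.'' In other words, the very point your plan needs---that the three SDCI conditions for $\sigma_j=\eta_j\wedge d\eta_j$, $j=1,2,3$, cut out the \emph{same} $\cM^\ast$---is singled out by the authors as an open question rather than an expected formality.

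Your outline is otherwise sensible and correctly isolates the two genuine difficulties: (i) showing that the common $\Omega^2_8$--type eigenspace is invariant under all three $J_j$ so that the instanton condition is $\Sp(1)$--equivariant, and (ii) reconciling the three distinct $6$--dimensional horizontal distributions $\rH_j=\ker\eta_j$ with the single $4$--dimensional quaternionic distribution $\bigcap_j\ker\eta_j$. But note that the argument you sketch for (i)---that $\Omega^2_8$ should coincide with the space of transverse $(1,1)$--forms orthogonal to all three $\omega_j$---cannot be right on dimensional grounds: for each fixed $j$ the space $\Omega^2_8$ has rank $8$ inside the $15$--dimensional $\Omega^2_{H_j}$, whereas the common intersection of three such conditions on the $4$--dimensional quaternionic slice would be much smaller. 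The analogy with \cite[Proposition~4.1]{Baraglia2016} is not direct, because in five dimensions the transverse geometry is $4$--dimensional and the hyper-K\"ahler identities hold on the nose; here each transverse slice is $6$--dimensional and only a $4$--dimensional sub-distribution carries the quaternionic structure. Resolving this mismatch is the substance of the conjecture, not a bookkeeping detail, and your proposal does not yet contain an idea for it.
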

Since a $3$--Sasakian manifold admits actually a $\n{S}^2$-family  of Sasakian structures, one should expect to obtain a family $\cM^\ast(\tau), \tau\in \n{S}^2$ of moduli spaces of irreducible contact instantons, with respect to the $3$-forms $\eta(\tau)\wedge d\eta(\tau)$, cf. \eqref{eq:contacIns1}. 
It would then be interesting to assess the relations between the moduli spaces in such a family; there is no a priori reason to expect them to be all isomorphic.
\subsection*{An orientation on the moduli space}
Defining an orientation on a moduli space $\cM^\ast$ of   connections on a principal bundle $P\to M$ cut out by some elliptic condition is an essential step in defining enumerative invariants. The recent article   \cite{joyce2020orientations} provides a package of general techniques to obtain a canonical orientation for a large class of such gauge-theoretic problems. This choice occurs as a global section of the principal $ \n{Z}_2$--bundle $\pi^\ast \colon i^\ast(\cO^{E_\bullet}_P)\to \cM^\ast$, pulled back from the orientation bundle $\pi\colon\cO^{E_\bullet}_P\to \cB_P$  under the inclusion $i\colon \cM^\ast\to \cB_P$. In view of that important progress, it should then be relatively straightforward to apply their techniques and prove \cite[Problem~1.3]{joyce2020orientations} for our moduli space of contact instantons.


%
\subsection*{Sufficient conditions for the obstruction vanishing \texorpdfstring{$H^2_B=0$}{Lg}}

In view of Proposition \ref{prop:vanishingH2}, one might apply Bochner-type methods to arrange the vanishing of $H^2_B$, see e.g. \cites{itoh1983moduli}. Then it is not very hard to establish a result along the following lines, which we state here as an announcement, while postponing the proof to a more detailed upcoming work:
\begin{conjecture}
\label{conj: t-sc=0 => H_B^2=0}
Let $\bE\to M$ be a Sasakian $G$-bundle [Definition \ref{def:SasakianoHol}] over a compact   Sasakian $7$-manifold with  positive transverse scalar curvature. Then, at each irreducible selfdual  contact instanton $A$ on $\bE$,  the second basic cohomology group   $H_B^2 $ vanishes.    
\end{conjecture}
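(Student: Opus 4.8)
\textbf{Proof proposal for Conjecture \ref{conj: t-sc=0 => H_B^2=0}.}

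The plan is to run a Bochner–Weitzenböck argument on the basic complex \eqref{eq:BasicComplexDeformation2}, transferring the analysis to the $\bar\partial_A$-Dolbeault picture provided by Lemma \ref{lem:iso harmonic-basic}. First I would fix an irreducible SDCI connection $A$ and observe, via Lemma \ref{lem:previo1}, that $H^2_B \cong \cH^{2,0}_T$, the space of $\Delta_T$-harmonic elements of $\rL^{2,0}=(\Omega^2_{6\oplus1})_H(\fg_E)$; but since $A$ is SDCI, the curvature lies in $\Omega^2_8(\fg_E)$, so one checks that $\cH^{2,0}_T$ is actually concentrated in $\Omega^2_{1}(\fg_E)\cong\Omega^0(\fg_E)\cdot\omega$, i.e. a class in $H^2_B$ is represented by $\phi\otimes\omega$ for a basic section $\phi\in\Omega^0_B(\fg_E)$ — this is the transverse analogue of the classical fact that, for an ASD/HYM connection in four/six dimensions, the relevant obstruction sits in $H^0$ twisted by a line. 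Actually, I expect the cleanest route is to use the Hodge–star pairing with \eqref{eq:Omega}-type identities: by Lemma \ref{lem:eq star_T} and the transverse Kähler identities of Lemma \ref{lem:deltaB caudrado}, an element of $\cH^2_{B}$ of type $(1,1)$ proportional to $\omega$ corresponds under $\Lambda$ to a basic $\bar\partial_A$-harmonic section of $\fg_E$, hence to an element of $\cH^0_{\bar\partial_A}$.

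The key steps, in order, are as follows. (1) Reduce $H^2_B$ to basic $\bar\partial_A$-harmonic $0$-forms, using Lemma \ref{lem:iso harmonic-basic}, the SDCI condition $F_A\in\Omega^2_8$, and the transverse Kähler identity $[\Lambda,\bar\partial_B]=\ii\partial_B^\ast$ to identify $\cH^2_{B}\cong \cH^{0,0}_{\bar\partial_A}\oplus\overline{\cH^{0,0}_{\bar\partial_A}}$ or, more precisely, to show that the composition $\Lambda\circ(\text{harmonic rep.})$ lands in $\Ker\bar\partial_A\cap\Ker\bar\partial_A^\ast$ on $\Omega^{0,0}_B(\fg_E)$. (2) For such a basic section $\phi$ with $\bar\partial_A\phi=0$, establish the transverse Weitzenböck formula
\begin{equation}
\label{eq:transverseWeitzenbock}
  \Delta_{\bar\partial_A}\phi = \frac12\nabla_B^\ast\nabla_B\phi + \mathcal{R}(\phi),
\end{equation}
where $\nabla_B$ is the transverse covariant derivative and $\mathcal{R}$ is a zeroth-order curvature term; the point is that the transverse Ricci curvature of a Sasakian manifold equals the Ricci curvature of the Levi-Civita connection restricted to $\rH$ plus a correction, so that positivity of the transverse scalar curvature forces $\mathcal{R}$ to have a sign. (3) Integrate \eqref{eq:transverseWeitzenbock} against $\phi$ over $M$ with the volume form $\eta\wedge\omega^3/3!$, using that $\phi$ is basic so all boundary/vertical terms vanish; positivity of the transverse scalar curvature then yields $\nabla_B\phi=0$ and $\mathcal{R}(\phi)=0$, whence $\phi$ is a covariantly constant section of $\fg_E$, i.e. an infinitesimal automorphism. (4) Conclude: since $A$ is irreducible, the stabiliser is central, so $\cH^0_{\bar\partial_A}=0$ (there are no nonzero parallel sections of the adjoint bundle of an irreducible connection), hence $H^2_B=0$.

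The main obstacle I anticipate is step (2): getting the curvature term $\mathcal{R}$ in \eqref{eq:transverseWeitzenbock} in a form where \emph{transverse} scalar curvature (rather than full scalar curvature, or Ricci) controls the sign. On a Sasakian $7$-manifold the transverse Levi-Civita connection differs from the ambient one by terms built from $\Phi$ and $\eta$ (cf. the Sasakian compatibility relations (iv)–(v) in \S2), and the Bochner identity for $\bar\partial_B$-harmonic $(0,0)$-forms twisted by $\fg_E$ will pick up both the transverse Ricci tensor contracted against $\phi$ and the self-dual curvature $F_A\in\Omega^2_8$ acting on $\phi$; one must show the $F_A$-term is nonnegative (it should vanish identically on $0$-forms, since $F_A$ acts by the bracket and $\Lambda F_A=0$) and that the transverse Ricci term is pointwise bounded below by a positive multiple of the transverse scalar curvature when acting on the scalar-type piece. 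This is where the positivity hypothesis enters decisively, and where a careful local computation in the Darboux frame of \S\ref{sec: eigenspaces of Lphi}, combined with the structure equations, will be needed — hence the deferral to the detailed follow-up work. A secondary technical point is the regularity and Hodge decomposition for the transversely elliptic operator $\Delta_{\bar\partial_A}$ on basic forms, but this is standard after \cite{kacimi1990operateurs} and already invoked in \S\ref{sec:moduli space CI}.
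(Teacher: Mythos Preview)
The paper does not actually prove this statement: it is presented as a conjecture in the Afterword, with the proof explicitly ``postponed to a more detailed upcoming work.'' The only hint the paper gives is to ``apply Bochner-type methods'' in the spirit of \cites{itoh1983moduli}. So there is no paper proof to compare against; your Bochner--Weitzenb\"ock strategy is consistent with the authors' stated intention.

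That said, your step~(1) has a real gap. You assert that $\cH^{2,0}_T$ is ``concentrated in $\Omega^2_1(\fg_E)$'' and then identify $\cH^2_B\cong \cH^{0,0}_{\bar\partial_A}\oplus\overline{\cH^{0,0}_{\bar\partial_A}}$. But by Proposition~\ref{prop:Lidentification} and \eqref{eq:decompositionPQ}, a $\Delta_T$-harmonic representative of a class in $H^2_B$ lives in $(\Omega^2_{6\oplus 1})_B(\fg_E)$, and $\Omega^2_6\cong\Omega^{2,0}\oplus\Omega^{0,2}$. Nothing in the SDCI condition $F_A\in\Omega^2_8$ forces the $(2,0)+(0,2)$ component of such a harmonic form to vanish; the natural analogue of Lemma~\ref{lem:iso harmonic-basic} in degree~$2$ would give something like $H^2_B\cong \cH^{0,2}_{\bar\partial_A}\oplus\overline{\cH^{0,2}_{\bar\partial_A}}\oplus(\text{a piece from }\cH^{0,0}_{\bar\partial_A})$, not the identification you wrote. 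In fact the $\Omega^2_1$-piece you do treat is the \emph{easy} one: a basic $\bar\partial_A$-harmonic section of $\fg_E$ is parallel already by irreducibility together with $\Lambda F_A=0$ (no curvature sign needed), so steps~(2)--(4) as written never actually use the positive transverse scalar curvature hypothesis.

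The substance of the conjecture is therefore the vanishing of the $(0,2)$-type obstruction $\cH^{0,2}_{\bar\partial_A}(\fg_E)$, and that is exactly where a transverse Weitzenb\"ock formula on $\Omega^{0,2}_B(\fg_E)$---with curvature term controlled by the transverse scalar (or Ricci) curvature---must do the work, in direct analogy with Itoh's argument on $\Omega^+$ in four dimensions. Your outline skips this entirely. You should redirect the Bochner computation to $(0,2)$-forms and verify that the zeroth-order term in the Weitzenb\"ock identity for $\Delta_{\bar\partial_A}$ on $\Omega^{0,2}_B(\fg_E)$ is controlled by the transverse scalar curvature (plus an $F_A$-term that is harmless because $F_A\in\Omega^2_8$); this is the nontrivial local computation the paper defers.
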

In particular, since the transverse scalar curvature on a Sasaki-Einstein manifold is always positive [cf \cite[Proposition~1.1.9]{boyer19983}]: 
\begin{corollary}
The moduli space of irreducible SDCI over a Sasaki-Einstein manifold is smooth. 
\end{corollary}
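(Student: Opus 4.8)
The statement to prove is the final Corollary: the moduli space of irreducible selfdual contact instantons over a Sasaki--Einstein $7$-manifold is smooth. The plan is to deduce this as an immediate combination of two facts already in play: the smoothness criterion of Theorem~\ref{thm:Intro}--(iii) together with Proposition~\ref{prop:vanishingH2} (which reduces local smoothness of $\cM^\ast$ to the vanishing $H^2_B = 0$), and the vanishing $H^2_B = 0$ itself, which on a Sasaki--Einstein manifold should follow from Conjecture~\ref{conj: t-sc=0 => H_B^2=0} since a Sasaki--Einstein metric has strictly positive transverse scalar curvature by \cite[Proposition~1.1.9]{boyer19983}. Thus the corollary is a formal consequence once Conjecture~\ref{conj: t-sc=0 => H_B^2=0} is granted, and the only genuine content is the positivity of the transverse scalar curvature and the Weitzenb\"ock/Bochner argument behind the conjecture.

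First I would recall the basic setup: on a Sasaki--Einstein $7$-manifold $(M^7,\eta,\xi,g,\Phi)$ the Reeb field $\xi$ is Killing (Einstein $\Rightarrow$ $K$-contact in the compact case, as noted before Lemma~\ref{Lem:Lema I}), so the machinery of \S\ref{sec:moduli space CI} applies verbatim: the basic complex \eqref{eq:BasicComplexDeformation2} is transverse-elliptic, and $H^2_B$ is identified with the transverse-harmonic space $\cH^{2,0}_T$ via Lemma~\ref{lem:previo1}. Next I would invoke the transverse Einstein condition: since $M$ is Sasaki--Einstein, the transverse K\"ahler metric on the Reeb foliation is a \emph{positive} K\"ahler--Einstein metric on the (local) leaf space, hence has positive transverse scalar curvature; I would cite \cite[Proposition~1.1.9]{boyer19983} for this. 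Then I would apply Conjecture~\ref{conj: t-sc=0 => H_B^2=0} (stated for compact Sasakian $7$-manifolds with positive transverse scalar curvature) to conclude that at every irreducible SDCI $A$ on the bundle $\bE$ one has $H^2_B = 0$.

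Finally, with $H^2_B=0$ established at each irreducible SDCI, Proposition~\ref{prop:vanishingH2} gives that the obstruction map $\Psi$ of \eqref{eq:obstructionmap} vanishes near every such $A$, and then Corollary~\ref{cor:ModuliSuave} says $\cM^\ast$ is a smooth manifold near $[A]$; since this holds at every point, $\cM^\ast$ is smooth, with local dimension $-\indT(A)$ by Theorem~\ref{thm:Intro}--(iii). One should also record the reduced form of the index in this case: irreducibility forces $H^0_B = 0$ and the vanishing gives $H^2_B = 0$, so $\indT(A) = -\dim H^1_B$ and $\dim \cM^\ast = \dim H^1_B$.

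\textbf{Main obstacle.} The only non-formal step is Conjecture~\ref{conj: t-sc=0 => H_B^2=0} itself, whose proof is deferred in this paper. The hard part will be the Bochner--Weitzenb\"ock identity for the transverse-elliptic basic complex \eqref{eq:BasicComplexDeformation2}: one needs a Weitzenb\"ock formula for a basic $\Delta_T$-harmonic section of $(\Omega^2_{6\oplus 1})_B(\fg_E)$ in which the curvature term is controlled by (a positive multiple of) the transverse scalar curvature, so that a nonzero harmonic representative is incompatible with positive transverse curvature. This is essentially a foliated/transverse analogue of the classical vanishing theorem for the self-dual part of the curvature on positive-scalar-curvature $4$-manifolds, adapted to the $\Omega^2_{6\oplus 1}$ summand and to the transverse geometry of the Reeb foliation; the technical subtlety is that the relevant Laplacian is only transverse-elliptic, so one works with basic forms and uses that $\xi$ is Killing (so $\cL_\xi$ commutes with $\ast_T$ and with the curvature terms) to run the integration by parts on $M$ against $\eta\wedge\omega^3$. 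Once that Weitzenb\"ock inequality is in hand, positivity of the transverse scalar curvature forces the $\fg_E$-valued part to be parallel, and irreducibility then forces it to vanish, giving $H^2_B=0$. Since in the present excerpt this is postponed, for the purposes of the corollary I would simply cite Conjecture~\ref{conj: t-sc=0 => H_B^2=0} as the input, flagging that its proof is the subject of upcoming work.
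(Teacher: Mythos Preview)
Your proposal is correct and matches the paper's approach exactly: the paper's entire justification is the single sentence ``since the transverse scalar curvature on a Sasaki--Einstein manifold is always positive [cf.\ \cite[Proposition~1.1.9]{boyer19983}]'', making the Corollary a formal consequence of Conjecture~\ref{conj: t-sc=0 => H_B^2=0} combined with Theorem~\ref{thm:Intro}--(iii) (via Proposition~\ref{prop:vanishingH2} and Corollary~\ref{cor:ModuliSuave}). Your sketch of the underlying Bochner/Weitzenb\"ock mechanism for the conjecture in fact goes beyond what the paper offers, since the paper explicitly defers that argument to subsequent work.
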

\subsection*{Computations of transverse index in particular examples}

Determining the dimension of a moduli space of irreducible and unobstructed contact instantons amounts  to  computing the transverse index \eqref{eq:ind_T}. In    \cite[\S~5]{Baraglia2016},  this is performed by replacing the foliated complex \cite[(3.3)]{Baraglia2016} 
$$
\begin{tikzcd}
   0\arrow[r] 
 &\Omega^0_B( \fg_E) \arrow[r,"D_B"]
 & \Omega^1_B( \fg_E) \arrow[r, "D_B"]
 &  \Omega^+_B (\fg_E) \arrow[r] 
 &  0
\end{tikzcd} 
$$
with a complex involving a suitable lifted $\bT$-action on $\fg_E$ \cite[Propositon~2.8]{Baraglia2016}. Hence the dimension of $\cM^\ast $  is computed from the index of the  symbol complex 
$$
\begin{tikzcd}
\pi^\ast(H^\ast\otimes\fg_E) \arrow[rr,"\sigma(Q)(\varsigma)"]
 &
 & \pi^\ast\left(\left(\Lambda^+H^\ast\oplus\R\right)\otimes\fg_E\right)
\end{tikzcd},
$$
associated to the transverse elliptic operator \cite[(5.1)]{Baraglia2016} 
$$
\begin{tikzcd}
Q=(D_T^\ast,D_T) \colon \Omega^1_H(\fg_E) \arrow[r]
 & \Omega^0_H(\fg_E)\oplus \Omega^+_H(\fg_E)
\end{tikzcd}. 
$$
The transverse index can then be computed in several interesting cases (see  \cite[\S\S~5.2 \& 5.3]{Baraglia2016}). We expect the same approach to be applicable to the index of the  symbol complex
$$
\begin{tikzcd}
\pi^\ast(H^\ast\otimes\fg_E) \arrow[rr,"\sigma(\mathbbm{D}_A)(\varsigma)"]
 &
 & \pi^\ast\left(\left(\Lambda^2_{6\oplus 2}H^\ast\oplus\R\right)\otimes\fg_E\right)
\end{tikzcd},
$$
associated to the transverse elliptic operator $\mathbbm{D}_A:=d_7\oplus d^\ast_A$ defined in  \eqref{eq:Euler characteristic}:
$$
\begin{tikzcd}
 \Omega^1_H(\fg_E) \arrow[r,"\mathbbm{D}_A"]
 &\Omega^0_H(\fg_E) \oplus \left(\Omega_{6\oplus 1}^2 \oplus\Omega^2_H\right)(\fg_E).
\end{tikzcd}  
$$ 

\newpage
\appendix
\newpage
\section{Sasakian vector bundles}
\label{apendixA} 

We gather here some general results and definitions on  `holomorphic' vector bundles over Sasakian manifolds. This concept obviously does not make strict sense as in classical complex geometry, but it admits a straightforward adaptation in terms of the transverse complex structure. All results and definitions in this Appendix stem from the original insights in  \cites{Biswas2010}. We adopt the usual  notation for a $(2n+1)$-dimensional Sasakian manifold $(M,\eta,\xi,\Phi,g)$. Standard references for Sasakian geometry are \cites{Boyer2008,blair2010contact}.

\subsection{Differential forms on Sasakian manifolds}
    \label{sec: App differential forms}
 The contact structure induces  an  orthogonal  decomposition of the tangent bundle,
\begin{equation}
\label{eq:TangentDecomposition}
TM= H\oplus\R\cdot\xi=: H\oplus N_\xi,
\end{equation}
where   $\ker(\eta)=: H\subset TM$ is the   distribution of rank $2n$ transverse to the Reeb field $\xi$, and the restriction $J=\Phi\vert _H$ defines an almost complex structure on $H$. We write  indistinctly $\xi\lrcorner \alpha$ or  $i_\xi(\alpha)$ to denote the interior product by $\xi$.
We denote the complexification of the tangent bundle  by $TM_\C:=TM\otimes_{R}\C$ and also, respectively,  $N_\xi^\C$ and $H_\C$.

\begin{definition}
    \label{defFormTransverse}
A differential form $\alpha\in\Omega^{k}(M)$ is called  \emph{transverse} if $i_\xi\alpha=0.$ \ If moreover $i_\xi d\alpha=0$, then $\alpha$ is said to be \emph{basic} (i.e., $S^1$-invariant).
\end{definition} 

Let $\Omega^{k}_{H}(M)=\Gamma(M,\Lambda^{k}H^{\ast})$  denote the space of transverse $k$-forms.  Let  $\alpha\in \Omega^p_H(M)$ a locally defined, transverse    complex  form and   $x\in M$. Since $i_\xi\alpha_x=0$, the evaluation of  $\alpha_x$ on $\Lambda^{p}( (TM_\C)_x)$   is determined  by the values of $\alpha_x$ on  the subbundle  $\Lambda^{p}((H_\C)_x)\subset\Lambda^{p}( (TM_\C)_x)$.  We denote the complexification of the transverse complex structure $\Phi$ by 
\begin{equation}\label{Phi_C}
    \Phi_{x}^{\C}:=\Phi\vert_{ H_x}\otimes_{\R}\C\colon  (H_\C)_x\to  (H_\C)_x.
\end{equation}
Since $(\Phi\vert_{ H_x})^{2}=-\mathbbm{1}$, the eigenvalues of  $\Phi_{x}^{\C}$ are $\pm\bi:=\pm\sqrt{-1}$, and the complexified horizontal distribution splits accordingly:
\begin{equation}
    \label{eq:Hpq}
    (H_\C)_x= H_{x}^{1,0}\oplus  H_{x}^{0,1}.
\end{equation}
For $p,q\geq 0$ we set 
$$ 
H_x^{p,q}:= \Lambda^{p} (H_x^{1,0})^\ast\otimes \Lambda^{q} (H_x^{0,1})^\ast \subset\Lambda^{p+q} (H_{x})^\ast\otimes_{\R}\C,
$$ 
therefore
\begin{equation}
\label{eq:Quebra1}
    \Lambda^{d}(H_\C)_x^\ast
    =\bigoplus_{i=0}^{d} (H_{x}^{i,d-i})^\ast,
    \quad 0\leq d \leq 2n.
\end{equation}
A section $\alpha$ is said to be  of type $(a,d-a)$. If the evaluation of $\alpha$ on $ H_{x}^{i,d-i}$ is zero for all  $i\neq a$. The decomposition \eqref{eq:Quebra1} gives a decomposition into a direct sum of vector bundles  
$$
\Omega^{d}_{H_\C}(M)=\bigoplus_{i=0}^{d} \Omega^{i,d-i}_{H_\C}(M),
$$
where $\Omega^{pq}_{H_\C}(M)$ denotes $\Gamma(M,\Lambda^p(H_\C)^\ast\otimes\Lambda^q(H_\C)^\ast)$. Taking into account the dimension of   $N_\xi$ and $H$,
it follows that
\begin{align*}
    \Lambda^{d}(TM_\C)^\ast 
    &=\Lambda^{d}( (H_\C)^\ast\oplus (N_\xi^\C)^\ast)\\
    &=\Lambda^{d}( H_\C)^\ast\oplus\left( \Lambda^{d-1}( H_\C)^\ast\otimes\Lambda^{1}(N_\xi^\C)^\ast\right) \oplus \left( \Lambda^{d-2}( H_\C)^\ast\otimes\Lambda^{2}(N_\xi^\C)^\ast\right) \oplus\dots\\
    &\cdots\oplus\left( \Lambda^1( H_\C)^\ast\otimes\Lambda^1(N_\xi^\C)^\ast\right) \oplus\Lambda^d(N_\xi^\C)^\ast\\
    &\simeq\Lambda^d( H_\C)^\ast\oplus\left(\eta\otimes\Lambda^{d-1} (H_\C)^\ast\right)
\end{align*}
hence, combining with the decomposition \eqref{eq:Quebra1},
\begin{equation}
\label{eq:decompostion k forms}
    \Omega^{d}(M)
    =\left(
    \bigoplus_{i=0}^{d} \Omega_{H_\C}^{i,d-i}(M) 
    \right) 
    \oplus\left(
    \eta\otimes\left( \bigoplus_{j=0}^{d-1}\Omega_{H_\C}^{j,d-j-1}(M)\right)  
    \right). 
\end{equation}
\begin{proposition}
\label{prop: omega (1,1)}
The transverse  $(1,1)$-form  $\omega:=d\eta\vert_{H}$ is the fundamental symplectic form on $H$.
\end{proposition}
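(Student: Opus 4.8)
\textbf{Proof plan for Proposition \ref{prop: omega (1,1)}.} The statement is essentially a packaging of two well-known facts about contact and Sasakian structures, so the proof will be short and direct, consisting of three verifications: that $\omega := d\eta|_H$ is transverse (horizontal), that it is of type $(1,1)$ with respect to $J = \Phi|_H$, and that it is nondegenerate on $H$, hence a symplectic form there.

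First I would establish horizontality. Since $\xi$ is the Reeb field, by definition $i_\xi\eta = 1$ and $i_\xi d\eta = 0$; the latter is exactly the statement that $d\eta$ is transverse, so its restriction to $H = \ker(\eta)$ is well-defined and $i_\xi d\eta = 0$ confirms $\omega \in \Omega^2_H(M)$. Next, for the type decomposition, I would use the compatibility relation (iii) of the Sasakian structure, $g(\Phi X, \Phi Y) = g(X,Y) - \eta(X)\eta(Y)$, together with the standard identity $d\eta(X,Y) = g(X,\Phi Y)$ for vector fields on $M$ (which follows from relation (iv), $\nabla^g_X\xi = -\Phi X$, and the formula $d\eta(X,Y) = X\eta(Y) - Y\eta(X) - \eta([X,Y]) = g(\nabla_X\xi, Y) - g(\nabla_Y\xi, X)$, noting $\nabla$ is torsion-free and metric). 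Then for $X,Y \in H$,
$$
\omega(JX, JY) = g(JX, \Phi JY) = g(\Phi X, \Phi(\Phi Y)) = g(\Phi X, \Phi Y) = g(X,Y) - \eta(X)\eta(Y) = g(X, \Phi Y) = \omega(X,Y),
$$
using $\Phi^2 = -I$ on $H$ (relation (ii)) and $\eta|_H = 0$; invariance of $\omega$ under $J$ is precisely the condition that $\omega$ has type $(1,1)$ on $H_\C$ in the sense of \eqref{eq:decompositionPQ}. Finally, nondegeneracy on $H$: if $X \in H$ satisfies $\omega(X, Y) = g(X, \Phi Y) = 0$ for all $Y \in H$, then in particular taking $Y = -\Phi X \in H$ gives $0 = g(X, \Phi(-\Phi X)) = g(X, X)$, so $X = 0$; hence $\omega$ is a nondegenerate skew form on the even-rank bundle $H$, i.e. the fundamental symplectic form.

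There is no serious obstacle here — the only mild subtlety is to be careful about which sign/normalization convention is used for $d\eta(X,Y) = g(X,\Phi Y)$ versus the compatibility relation $\Phi\circ\Phi = -I_{TM} + \eta\otimes\xi$, so that the computation $\omega(JX,JY) = \omega(X,Y)$ comes out correctly; once the identity $d\eta(X,Y) = g(X, \Phi Y)$ is pinned down from relation (iv) of the Sasakian axioms, everything follows formally. I would also remark that the agreement $\omega = d\eta$ as used throughout \S\ref{sec:complex spliting}, where $\omega$ was written in local Darboux coordinates as $dx^{14} + dx^{25} + dx^{36}$, is consistent with this intrinsic description, and that nowhere-vanishing of $\omega^n = (d\eta)^n$ on $H$ (equivalently, $\eta \wedge (d\eta)^n \neq 0$) is just the contact condition, already invoked in \S\ref{sec: eigenspaces of Lphi}.
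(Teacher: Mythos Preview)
The paper does not actually prove this proposition; it is stated without proof in the appendix as a standard fact of Sasakian geometry (implicitly referring to the references \cites{Boyer2008,blair2010contact}). Your proposal therefore supplies an argument where the paper gives none, and the overall strategy --- verify horizontality via $i_\xi d\eta=0$, establish $J$-invariance from the identity $d\eta(X,Y)=g(X,\Phi Y)$ (derived from axiom (iv)), and check nondegeneracy directly --- is sound and is exactly how one would prove it from first principles.

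That said, your displayed chain of equalities contains errors. You write $g(\Phi X,\Phi(\Phi Y))=g(\Phi X,\Phi Y)$, but on $H$ one has $\Phi^2 Y=-Y$, not $\Phi Y$. The subsequent step $g(X,Y)-\eta(X)\eta(Y)=g(X,\Phi Y)$ is also false: on $H$ the left side is $g(X,Y)$, which is symmetric, while the right side is skew. A correct version, using the skew-symmetry $g(\Phi X,Y)=-g(X,\Phi Y)$ (which follows from (iii), as you note), reads
\[
\omega(JX,JY)=g(\Phi X,\Phi^2 Y)=-g(\Phi X,Y)=g(X,\Phi Y)=\omega(X,Y).
\]
So the conclusion and the method are right; only the intermediate algebra needs to be cleaned up --- precisely the ``mild subtlety'' you flagged yourself.
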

\subsection{Partial connections}
Consider an integrable subbundle  $S\subset TM_\C$, i.e. the sections of $S$ are closed under the Lie bracket. Of course we have in mind the particular case in which $S=N_\xi^\C\subset TM_\C$, but we state the first few definitions in general terms.
\begin{definition}
Consider a complex vector bundle $E\to M$, a \emph{partial connection} on  $E$ in the direction of   $S$ is a smooth  operator 
$D\colon \Gamma(E)\to \Gamma(\Lambda^1 S^{\ast})\otimes \Gamma(E)$, satisfying the  `Leibniz rule' 
$$
D(fs)=fD(s)+q_S(df)\otimes s,  
$$
where the projection $q_S\colon(TM_\C)^{\ast}\to S^{\ast}$ is the dual of the inclusion  $S\hookrightarrow TM_\C$.
\end{definition}

Since the distribution  $S$ is integrable, smooth sections of $\Ker(q_S)$ are closed under the exterior derivation \cite[Section 3.2]{Biswas2010}, this induces an exterior derivative on the smooth sections of $S^{\ast}$:
$$
\hat{d}\colon \Lambda^1 S^{\ast}\to \Lambda^{2}S^{\ast}.
$$
Consider a partial connection $D$ on  $S$ and the operator
$D_1\colon \Gamma(\Lambda^1S^{\ast})\otimes \Gamma(E)\to \Gamma(\Lambda^{2} S^{\ast})\otimes \Gamma(E)$ defined by 
$$
D_1(\theta\otimes s)= \hat{d}\theta\otimes s-\theta\wedge D(s).
$$
Their composition 
\[
  \Gamma(E)\xrightarrow[{}]{D}\Gamma(E)\otimes\Gamma(\Lambda^1S^{\ast})\xrightarrow[{}]{D_1}\Gamma(E)\otimes \Gamma(\Lambda^{2}S^{\ast})
\]
defines a torsion  $D_1\circ D:=K(D)\in \Gamma \left( \Lambda^{2}S^{\ast}\otimes \End(E)\right) $.
This section is called the  \emph{curvature} of  $D$, we will say that $D$ is a  \emph{flat connection} if $K(D)\equiv 0$.
We denote the  \emph{extended anti-holomorphic} $(n + 1)$–dimensional foliation 
\begin{equation}
\label{eq:ExtendedFoliationH}
    \wt{H}^{0,1}:=H^{0,1}\oplus(N_\xi^\C)\subset TM\otimes_{R}\C,
\end{equation}
where $H^{0,1}$ is defined in   \eqref{eq:Hpq}, it is shown in  \cite[Lemma 3.2]{Biswas2010}  that the distribution in   \eqref{eq:ExtendedFoliationH} is integrable. 

\begin{definition}
\label{def:fibradoSasakiano}
A (complex) \emph{Sasakian vector bundle} on a Sasakian manifold  $(M,g,\xi)$ is a pair $(E,D_0)$. Where  $D_0$ is a partial connection in the direction of     subbundle $N_\xi\subset TM$ and  $E\to M$ is a complex vector bundle.
\end{definition}
Notice that  $N_\xi^\C\subset \wt{H}^{0,1},$ therefore,   we can consider    partial connections along $\wt{H}^{0,1}$, which define by restriction a partial connection along $N_\xi^\C$. Furthermore,  $N_\xi$ is a $1-$dimensional foliation on M, so any partial connection along $N_\xi$ is flat. When the context is clear, we denote by  $\bar{\partial}$  a  flat partial connection $D$ along $\wt{H}^{0,1}$ such that $\bar{\partial}\vert_{N_\xi}=D_0$,  and  we abbreviate the notation by $\textbf{E}:=(E,D_0)$. 
\begin{definition} 
    \label{def:SasakianoHol}
A \emph{holomorphic Sasakian vector bundle} on a Sasakian manifold  $(M,g,\xi)$ is a pair  $\cE:=(\bE,\bar{\partial})$, where  $\bE=(E,D_0)$ is a Sasakian vector bundle  and  $\bar{\partial}$ is a flat connection on $E$ along $\wt{H}^{0,1}$ \eqref{eq:ExtendedFoliationH}. 
\end{definition}

\subsection{Hermitian and holomorphic structures}
\label{HolHermitianVectorBundles}

We define a Hermitian structure on $\textbf{E}$ as a smooth Hermitian structure $h$  on $E$ which is compatible with $D_0$:
$$
d(h(s_1, s_2))\vert_{N_\xi}= h(D_0(s_1), s_2) + h(s_1, D_0(s_2))
\qforq
s_1,s_2\in\Gamma(E).
$$
A unitary connection on $(\textbf{E}, h)$ is a connection $A$ on $E$ such that $d_A$ preserves $h$ in the usual sense.

A connection $A\in \cl{A}(E)$ induces a partial connection along $\widetilde{H}^{0,1}$ \eqref{eq:ExtendedFoliationH} given by $D_{\widetilde{H}^{0,1}}:=d_A\vert_{\widetilde{H}^{0,1}}$. If it  coincides with $\bar{\partial}$, then $A$ is called a \emph{integrable connection} on $\cE$.  Let $\cA(\cE)\subset\cA(E)$ denote the subset of integrable connections on $\textbf{E}$. The class of connections mutually compatible with the holomorphic structure and the Hermitian metric is the natural analogue of the concept of \emph{Chern connection}.

\begin{proposition}
    \label{prop:ChernConnection}
Let $(\cl{E},\bar{\partial})$ be a holomorphic Sasakian bundle with Hermitian structure, then
there exists a unique unitary and integrable Chern connection $A_h$ on $\cl{E}$ and $F_{A_h}\in \Omega^{1,1}$, moreover the    expression
$$
\det\left(\mathbbm{1}_E+\frac{i}{2\pi}H_{A_h}\right) =\sum_{j=0}^{n}c_j(\cl{E},h) 
$$ 
 defines closed Chern forms $c_j(\cl{E},h)\in\Omega^{j,j}(M).$
\end{proposition}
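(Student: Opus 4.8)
The plan is to reproduce, transversally to the Reeb foliation, the classical construction of the Chern connection on a holomorphic Hermitian bundle, and then track the type of the curvature. First I would establish \emph{existence} by working in a local \emph{holomorphic frame}. Since $\bar\partial$ is a flat partial connection along the integrable distribution $\wt{H}^{0,1}$ \eqref{eq:ExtendedFoliationH} --- integrability being \cite[Lemma~3.2]{Biswas2010} --- the holomorphic Frobenius theorem yields, near any point of $M$, a local frame $\{e_i\}$ of $E$ with $\bar\partial e_i=0$; in particular each $e_i$ is $D_0$-flat along $\xi$. Setting $h_{i\bar\jmath}:=h(e_i,e_j)$, the $D_0$-compatibility of the Hermitian structure forces $\xi\bigl(h_{i\bar\jmath}\bigr)=h(D_0e_i,e_j)+h(e_i,D_0e_j)=0$, so the matrix $h=(h_{i\bar\jmath})$ consists of \emph{basic} functions. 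I would then take the connection matrix to be $\theta:=(\partial_B h)\,h^{-1}$ (matrix notation, with the appropriate index convention): this is a basic $\fg_E$-valued $1$-form of transverse type $(1,0)$, it is compatible with $h$, and it annihilates $\wt{H}^{0,1}$, so $A_h:=d+\theta$ is unitary and integrable; since two holomorphic frames differ by a transversely holomorphic change of basis, $\theta$ transforms in the usual way and $A_h$ is globally well-defined.

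For \emph{uniqueness} I would argue as in the Kähler case: if $A_h,A'$ are both unitary and integrable, then $a:=A_h-A'\in\Omega^1(\fg_E)$ vanishes on $\wt{H}^{0,1}$ (both restrict to $\bar\partial$ there, hence to $D_0$ along $\xi$), so $a$ is horizontal of transverse type $(1,0)$; at the same time $a$ is $h$-skew-adjoint, $a^{\ast_h}=-a$, and $h$-adjunction conjugates the form part, so $a^{\ast_h}$ has transverse type $(0,1)$. A nonzero horizontal $1$-form cannot be of pure type $(1,0)$ and $(0,1)$ simultaneously, whence $a=0$.

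Next I would pin down the curvature type. In a holomorphic frame $F_{A_h}=d\theta+\theta\wedge\theta$, and since $\theta=(\partial_B h)\,h^{-1}$ with $h$ basic, the pointwise identity $\partial_B\theta+\theta\wedge\theta=0$ holds --- it is formally the classical one, as on basic forms $\partial_B,\bar\partial_B$ obey the same algebraic identities as $\partial,\bar\partial$ on a complex manifold --- so $F_{A_h}=\bar\partial_B\theta$. As $\theta$ is basic of transverse type $(1,0)$, the form $\bar\partial_B\theta$ is basic of transverse type $(1,1)$; equivalently $i_\xi F_{A_h}=0$ and $F_{A_h}^{2,0}=F_{A_h}^{0,2}=0$, i.e. $F_{A_h}\in\Omega^{1,1}(M)$. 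For the Chern forms I would expand $\det\bigl(\mathbbm{1}_E+\frac{i}{2\pi}F_{A_h}\bigr)=\sum_{j=0}^n c_j(\cl E,h)$: the coefficient $c_j(\cl E,h)$ is the $j$-th elementary symmetric function in $\frac{i}{2\pi}F_{A_h}$, an $\Ad$-invariant degree-$j$ polynomial in a basic $(1,1)$-form, so, wedge products of basic $(1,1)$-forms being basic $(j,j)$-forms, $c_j(\cl E,h)\in\Omega^{j,j}(M)$. Closedness is the usual Chern--Weil argument: from the Bianchi identity $d_{A_h}F_{A_h}=0$ one gets $d\,P(F_{A_h})=0$ for every invariant polynomial $P$, and on basic forms $d$ restricts to the basic differential, so each $c_j(\cl E,h)$ is closed.

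The routine parts are the gluing of $\theta$ across holomorphic charts and the Chern--Weil closedness. The step I expect to require the most care is the reduction to the classical complex-geometric picture, namely verifying that on basic $\fg_E$-valued forms the transverse operators $\partial_B,\bar\partial_B$ satisfy exactly the formal identities of $\partial,\bar\partial$ (so that the formula $\theta=(\partial_B h)\,h^{-1}$ is meaningful and $\partial_B\theta+\theta\wedge\theta=0$ holds). This is immediate because, locally, basic forms descend to the leaf space; once it is in place the whole argument runs parallel to \cite[Proposition~2.1.56]{Donaldson1990} and standard Chern--Weil theory.
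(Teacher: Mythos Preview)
Your argument is correct and follows the natural route: reduce to the transverse K\"ahler picture by working in a local $\bar\partial$-flat frame, observe that the Hermitian matrix is basic, write $\theta=(\partial_B h)h^{-1}$, and then run the classical Chern-connection computation verbatim on basic forms. The uniqueness argument and the Chern--Weil closedness are standard and carry over unchanged.

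Regarding comparison: the paper does not actually prove this proposition. It is stated in Appendix~\ref{apendixA} as background imported from \cites{Biswas2010}, where the construction is carried out in detail; the appendix explicitly says all results there ``stem from the original insights in \cites{Biswas2010}''. Your write-up is essentially a faithful reconstruction of Biswas's argument, specialised to the Sasakian setting. The only point worth tightening is the appeal to ``the holomorphic Frobenius theorem'' for the existence of local $\bar\partial$-flat frames: what is really being used is the Koszul--Malgrange/Newlander--Nirenberg theorem applied transversally (equivalently, on the local leaf space of the Reeb foliation, which is genuinely complex), and this is exactly what \cites{Biswas2010} establishes. Once that is granted, the rest of your proof is complete.
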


\bibliographystyle{abbrv}  
\bibliography{Bibliografia-2018-06}

\end{document}